\tikzset{dummy/.style= {circle,fill,draw,inner sep=0pt,minimum size=1.2mm}}
\tikzset{vertex/.style={fill, circle, minimum size=.1cm, inner sep=0pt}}
\numberwithin{equation}{section} 
\numberwithin{figure}{section}
\newcommand{\newrefformat}[2]{}
\newcommand\restr[2]{{
  \left.\kern-\nulldelimiterspace 
  #1 
  \vphantom{\big|} 
  \right|_{#2} 
  }}
\crefname{lemma}{Lemma}{Lemmas}
\crefname{theorem}{Theorem}{Theorems}
\crefname{definition}{Definition}{Definitions}
\crefname{proposition}{Proposition}{Propositions}
\crefname{remark}{Remark}{Remarks}
\crefname{corollary}{Corollary}{Corollaries}
\crefname{equation}{Equation}{Equations}
\crefname{construction}{Construction}{Constructions}
\crefname{ex}{Example}{Examples}
\crefname{appsec}{Appendix}{Appendices}
\crefname{subsection}{Subsection}{Subsections}
\theoremstyle{plain}
\newtheorem{theorem}[equation]{Theorem}
\newtheorem{corollary}[equation]{Corollary}
\newtheorem{proposition}[equation]{Proposition}
\newtheorem{lemma}[equation]{Lemma}
\newtheorem{introtheorem}{Theorem}
\crefname{introtheorem}{Theorem}{Theorems}
\theoremstyle{definition}
\newtheorem{definition}[equation]{Definition}
\newtheorem{example}[equation]{Example}
\newtheorem{remark}[equation]{Remark}
\author[M. E. Calle]{Maxine E. Calle}             
\email{callem@sas.upenn.edu}
\address{Department of Mathematics,
         University of Pennsylvania,
         Philadelphia, PA, 19104,
         USA}
\author[D. Chan]{David Chan}
\email{chandav2@msu.edu}
\address{Department of Mathematics,
         Michigan State University,
         East Lansing, MI, 48824
         USA}
\author[A. Mejia]{Andres Mejia}
\email{am3399@cam.ac.uk}
\address{Centre for Mathematical Sciences, 
Wilberforce Road, Cambridge, 
CB3 0WA, United Kingdom}
\date{\today}
\keywords{Equivariant algebraic K-theory, algebraic K-theory of spaces, linearization map, Wall finiteness obstruction, Whitehead torsion}
\subjclass[2020]{19D10 
55P91 
55P42 
18F25 
}
\newcommand{\ZZ}{\mathbb{Z}}
\newcommand{\abs}[1]{\left\lvert#1\right\rvert}
\newcommand{\cat}[1]{\mathscr{#1}}
\newcommand{\Loop}{\Omega}
\newcommand{\op}{\operatorname{op}}
\newcommand{\id}{\operatorname{id}}
\newcommand{\Alg}{\operatorname{Alg}}
\newcommand{\Map}{\operatorname{Map}}
\newcommand{\Sing}{\operatorname{Sing}}
\newcommand{\cof}{\rightarrowtail}
\newcommand{\Mod}{\mathcal{M}od}
\newcommand{\Set}{\mathrm{Set}}
\newcommand{\FP}{\mathrm{FP}}
\newcommand{\Perf}{\mathrm{Perf}}
\newcommand{\fd}{\mathrm{fd}}
\newcommand{\Ab}{\mathrm{Ab}}
\newcommand{\Cat}{\mathrm{Cat}}
\newcommand{\Gpd}{\mathrm{Gpd}}
\newcommand{\Fun}{\mathrm{Fun}}
\newcommand{\Span}{\mathrm{Span}}
\newcommand{\Top}{\mathrm{Top}}
\newcommand{\Sp}{\mathrm{Sp}}
\newcommand{\Wald}{\mathrm{Wald}}
\newcommand{\ex}{\mathrm{ex}}
\newcommand{\eff}{\mathrm{eff}}
\newcommand{\Coeff}{\mathrm{Coeff}}
\newcommand{\tr}{\mathrm{tr}}
\newcommand{\res}{\mathrm{res}}
\newcommand{\Mack}{\mathrm{Mack}}
\newcommand{\ps}{\mathrm{ps}}
\newcommand{\Proj}{\mathrm{Proj}}
\begin{document}
\begin{abstract}
    We introduce a version of algebraic $K$-theory for coefficient systems of rings which is valued in genuine $G$-spectra for a finite group $G$. We use this construction to build a genuine $G$-spectrum $K_G(\ZZ[\underline{\pi_1(X)}])$ associated to a $G$-space $X$, which provides a home for equivariant versions of classical invariants like the Wall finiteness obstruction and Whitehead torsion. We provide a comparison between our $K$-theory spectrum and the equivariant $A$-theory of Malkiewich--Merling via a genuine equivariant linearization map.
\end{abstract}

\title[A linearization map for $A_G(X)$]{A linearization map for\\genuine equivariant algebraic $K$-theory}

\maketitle

\setcounter{tocdepth}{1}
\tableofcontents
\section{Introduction}

The algebraic $K$-theory of a ring $R$ is a spectrum $K(R)$ whose homotopy groups encode information about the structure of $R$. We can also use $K$-theory to obtain information about a topological space $X$ by studying $K(\ZZ[\pi_1(X)])$. The $K$-theory of $\ZZ[\pi_1(X)]$ encodes important geometric invariants, like the Euler characteristic, Wall finiteness obstruction, and Whitehead torsion \cite{wall,whiteheadTorsion}. 

When the space $X$ comes with extra symmetries, in the form of a group action by a finite group $G$, we want to adapt these invariants to capture these symmetries. In this setting, the group ring $\ZZ[\pi_1(X)]$ inherits a $G$-action and so its $K$-theory spectrum $K(\ZZ[\pi_1(X)])$ is a spectrum with $G$-action, i.e.\ a \textit{na\"ive $G$-spectrum}. However, this na\"ive $G$-spectrum is not refined enough to fully capture the $G$-homotopy type of $X$. There are equivariant analogues of the $K$-groups of $\ZZ[\pi_1(X)]$ that are known to contain versions of Wall finiteness \cite{Baglivo, andrzejewski:1986} and Whitehead torsion \cite{illman, araki/kawakubo:scob}. In particular, L\"uck constructs a spectrum whose low-degree homotopy groups (and quotients thereof) house these invariants \cite{Luck}.

In this paper, we show that these invariants live in the $G$-fixed points of a \textit{genuine $G$-spectrum} $K_G(\ZZ[\underline{\pi_1(X)}])$, which is a genuinely equivariant refinement of $K(\ZZ[\pi_1(X)])$. We use the framework of \textit{spectral Mackey functors} to model genuine $G$-spectra, following Guillou--May \cite{GuillouMay:2011}. 
Since $\ZZ[\pi_1(X)]$ has a $G$-action, one way we could obtain a genuine $K$-theory spectrum is via work of Merling \cite{merling:2017}. For a ring $R$ with $G$-action, Merling constructs a genuine $G$-spectrum $K_{\theta}(R)$. When $|G|$ is invertible in $R$, $K_{\theta}(R)$ can be understood through its fixed points
\[
    K_{\theta}(R)^H\simeq K(R^H_{\theta}[W_GH]),
\]where $R^H_{\theta}[W_GH]$ is the \emph{twisted group ring} (see \cref{defn: twisted group ring}) constructed from the $H$-fixed points $R^H$ through the action of the Weyl group $W_GH = N_G(H)/H$. While $K_{\theta}(\ZZ[\pi_1(X)])$ captures the data of the fixed points of the ring $\ZZ[\pi_1(X)]$, it fails to accurately describe the data of the fixed points of the space $X$ due to the fact that $(\ZZ[\pi_1(X)])^H\neq \ZZ[\pi_1(X^H)]$. 

Instead, to construct an equivariant version of $K({\ZZ[\pi_1(X)]})$, we follow the philosophy of Elmendorf \cite{elmendorf}, which says that the equivariant homotopy theory of a $G$-space $X$ can be understood by studying the collection of fixed-point spaces $\{X^H\}_{H\leq G}$ along with \textit{restriction maps} $X^H\to X^K$ whenever $K\leq H$. In particular, this collection is a space-valued presheaf on the \emph{orbit category} $\mathcal{O}_G$, which is the full subcategory of $G$-sets spanned by the objects $G/H$ for $H\leq G$. Such a presheaf is sometimes called a \emph{coefficient system of spaces}. 

In general, if $\cat{C}$ is any category then a $\cat{C}$-valued presheaf on $\mathcal{O}_G$ is called a coefficient system in $\cat{C}$. In this paper, we build an equivariant $K$-theory machine that takes as input coefficient systems in the category of rings. The output of this machine is a genuine $G$-spectrum, whose fixed-point spectra admit a highly desirable splitting. 

\begin{introtheorem}[\cref{KTheoryExists,K theory splitting}]\label{intro K theory splitting}
    Let $G$ be a finite group. For any coefficient system of rings $\underline{S}\colon \mathcal{O}^{\rm op}_{G}\to \mathrm{Ring}$ there is a genuine $G$-spectrum $K_G(\underline{S})$ whose $G$-fixed points split
    \[
        K_G(\underline{S})^G \simeq \prod\limits_{(H)\leq G} K(\underline{S}^H_{\theta})
    \]
    where $K(\underline{S}^H_{\theta})$ is the ordinary algebraic $K$-theory of the twisted group ring $\underline{S}^H_{\theta} = \underline{S}(G/H)_\theta[WH]$.
\end{introtheorem}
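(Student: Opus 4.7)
The plan is to construct $K_G(\underline{S})$ via the Guillou--May model of genuine $G$-spectra as spectral Mackey functors on the Burnside category, and then compute its $G$-fixed-point spectrum by decomposing the relevant $G$-equivariant module category according to isotropy.

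First, I would package the input $\underline{S}$ into a Mackey functor of permutative (or Waldhausen) categories whose value at a transitive $G$-set $G/H$ is a category $\cat{P}(\underline{S})(G/H)$ of finitely generated free modules over the restricted coefficient system $\underline{S}|_{\mathcal{O}_H}$. The restriction maps in the Mackey structure are induced by restriction of coefficient systems along subgroup inclusions, while the transfers come from induction of free modules, with the double-coset formula following from the standard Mackey decomposition. Applying a $K$-theory machine (such as Segal's $\Gamma$-space or Waldhausen's $S_\bullet$ construction) levelwise then yields a spectral Mackey functor and hence, via Guillou--May, the desired genuine $G$-spectrum $K_G(\underline{S})$ with $H$-fixed spectrum modeled by $K\bigl(\cat{P}(\underline{S})(G/H)\bigr)$.

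To obtain the splitting, I would analyze the permutative category $\cat{P}(\underline{S})(G/G)$ of $G$-equivariant free $\underline{S}$-modules. The key observation is that every such module decomposes uniquely (up to isomorphism) as a direct sum indexed by conjugacy classes of subgroups, according to the orbit type of its generators: the $(H)$-isotypic summand consists of free summands whose generators have stabilizer conjugate to $H$. Morphisms between two such generators of orbit type $G/H$ are parametrized by elements of $\mathrm{Aut}_{\mathcal{O}_G}(G/H) = W_GH$ acting on $\underline{S}(G/H)$ via the twist $\theta$ coming from functoriality of $\underline{S}$. Consequently the $(H)$-isotypic block identifies, as a permutative category, with the category of finitely generated free modules over the twisted group ring $\underline{S}(G/H)_{\theta}[W_GH] = \underline{S}^H_\theta$, and since $K$-theory preserves products of permutative categories we obtain
\[
    K_G(\underline{S})^G \simeq \prod_{(H)\leq(G)} K(\underline{S}^H_\theta).
\]

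The main obstacle is making the categorical splitting by isotropy type sufficiently rigid. One must produce a functorial orthogonal direct-sum decomposition of every equivariant $\underline{S}$-module that is compatible with the permutative (symmetric monoidal) structure, so that it descends to a product decomposition of the $K$-theory spectrum itself and not merely of its homotopy groups. Establishing the Morita-type equivalence between the $(H)$-block and free modules over $\underline{S}^H_\theta$ also requires careful bookkeeping with the twist---in particular, verifying that the $W_GH$-action inherited from $\mathrm{Aut}_{\mathcal{O}_G}(G/H)$ is precisely the $\theta$-twist appearing in the definition of the twisted group ring---after which the theorem follows from the additivity of $K$-theory.
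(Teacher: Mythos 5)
Your construction of $K_G(\underline{S})$ as a spectral Mackey functor on the Burnside $2$-category, with restriction/transfer of modules over restricted coefficient systems as the structure maps, is essentially the paper's approach (modulo technicalities about strictifying pseudo-functors). However, there are two genuine problems with the proposal, the second of which is fatal to the splitting argument as written.

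First, you take finitely generated \emph{free} modules, whereas the paper takes finitely generated \emph{projective} modules. This is not a cosmetic difference: the $(H)$-block of free $\underline{S}$-modules would identify with the category of finitely generated free $\underline{S}^H_\theta$-modules, giving $K^{\mathrm{free}}(\underline{S}^H_\theta)$ on each factor, whereas the theorem asserts the ordinary (projective) $K$-theory $K(\underline{S}^H_\theta)$. These differ at $\pi_0$ whenever $\underline{S}^H_\theta$ has non-stably-free projectives (e.g.\ $\ZZ[C_{23}]$, as the paper's \cref{example: reduced K of constant Z} notes). So the statement you are trying to prove would not even hold for the spectrum you build.

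Second, and more seriously, the claimed ``functorial orthogonal direct-sum decomposition'' by isotropy type does not exist, and the category of free (or projective) $\underline{S}$-modules is \emph{not} the product of the $(H)$-isotypic blocks. While every free module $\underline{S}_X$ decomposes as an object according to the isotropy of the orbits of $X$, the morphisms between modules of different isotropy types are in general nonzero. For instance, with $G = C_2$ and $\underline{S}$ the constant $\ZZ$ coefficient ring, there are nonzero maps in both directions between $\underline{S}_{G/e}$ and $\underline{S}_{G/G}$ (the unit at level $G/C_2$ in one direction, the ``transfer'' in the other). Consequently the equivalence of permutative categories you want --- between $\cat{P}(\underline{S})(G/G)$ and the product of twisted-group-ring module categories --- fails, and ``$K$-theory preserves products of permutative categories'' never gets to bite. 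You flag this as the ``main obstacle,'' but it is not a rigidity issue to be engineered away: the obstruction is real.

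The paper's argument avoids this by replacing the putative product decomposition with a \emph{filtration} of $\mathcal{P} = \mathrm{Proj}_{\underline{S}}$ by full subcategories $\mathcal{P}_{\leq i}$ of modules whose isotropy is bounded by $H_i$ (in a fixed linear refinement of the subconjugacy order). The key lemma is \cref{projective splitting sequence}: every $P\in\mathcal{P}_{\leq i}$ has a canonical, split exact subobject $P_i \subset P$ which is $H_i$-generated, with $P/P_i\in\mathcal{P}_{\leq (i-1)}$. This split sequence is unique and its formation is exact, so the Waldhausen additivity theorem applied to the category of such split exact sequences yields $K(\mathcal{P}_{\leq i})\simeq K(\mathcal{P}_i)\times K(\mathcal{P}_{\leq(i-1)})$. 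Induction then gives the product. The identification of $\mathcal{P}_i$ with $\mathrm{Proj}_{\underline{S}^{H_i}_\theta}$ via evaluation at $G/H_i$ and its quasi-inverse $\Phi_i$ (a Yoneda-theoretic construction using that $\underline{S}_{G/H_i}$ corepresents evaluation) is the part of the paper's argument corresponding to the Morita-type claim you had essentially correct. The moral is that what splits is the $K$-theory spectrum (via additivity on split cofiber sequences), not the underlying Waldhausen category.
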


Our construction of genuine $G$-spectra makes use of some technical result which reframes a result of \cite{malkiewich/merling:2016} in terms of $2$-categorical input.  The main results are \cref{MMProp4.6} and \cref{Theorem: construction examples and maps} which describe exactly what data needs to be provided to construct examples of spectral Mackey functors and morphisms between them.  We expect that these results should have applications beyond the present work.

An important example of a coefficient system of rings in this paper is the following. Given a $G$-space $X$ for a finite group $G$, we can form a coefficient system of rings $\ZZ[\underline{\pi_1(X)}]\colon \mathcal{O}_G^{\rm op}\to {\rm Ring}$ given by \[
\ZZ[\underline{\pi_1(X)}](G/H):=\mathbb Z[\pi_1(X^H)].
\] We then obtain a genuine $G$-spectrum $K_G(\ZZ[\underline{\pi_1(X)}])$ and when $X^H$ is connected for all $H$ the $G$-fixed points split as\[
K_G(\ZZ[\underline{\pi_1(X)}])^G \simeq \prod\limits_{(H)\leq G} K(\ZZ[\pi_1(X^H_{hW_GH})]). 
\] 
In this case, we recover a result of L\"uck \cite[\S 10]{Luck}, although we note that \cref{intro K theory splitting} does not seem to be comparable to L\"uck's work in general. Via this identification with L\"uck's constructions, we immediately deduce that the genuine $G$-spectrum $K_G(\ZZ[\underline{\pi_1(X)}])$ provides a home for equivariant analogues of the Euler characteristic, the Wall finiteness obstruction, and Whitehead torsion.

The spectrum $K(\ZZ[\pi_1(X)])$ is closely related to another spectrum $A(X)$ called the \textit{Waldhausen $A$-theory} of $X$, whose homotopy groups describe the stable geometry of $X$. For example, the stable parametrized $h$-cobordism theorem relates the $A$-theory of a smooth manifold $M$ to a space of stable $h$-cobordisms which contains information about the diffeomorphism group of $M$ \cite{waldhausen:1983,WaldhausenJahrenRognes}. The spectrum $A(X)$ comes equipped with a \emph{linearization map} \[\ell\colon A(X)\to K(\ZZ[\pi_1(X)]),\] which is known to be $2$-connected \cite{waldhausen:1983}. Consequently, the Wall finiteness obstruction and Whitehead torsion (which live in quotients of $K_0(\ZZ[\pi_1(X)])$ and $K_1(\ZZ[\pi_1(X)])$, respectively) can be lifted to elements of the homotopy groups of $A(X)$. The linearization, together with the Dundas--Goodwillie--McCarthy theorem, has been utilized to great effect for computations \cite{Dundas,Klein-Rognes}.

In \cref{sec:Equivariant Linearization}, we construct a version of the linearization map which relates $K_G(\ZZ[\underline{\pi_1(X)}])$ to the \textit{genuine equivariant $A$-theory spectrum} of $X$ constructed in \cite{malkiewich/merling:2016}. Given a $G$-space $X$ (for $G$ a finite group), Malkiewich--Merling construct a genuine $G$-spectrum $A_G(X)$ which fits into an equivariant stable parametrized $h$-cobordism theorem \cite{malkiewich/merling:2022}. An important property of their construction, proved in \cite{badzioch/dorabiala:2016}, is that the fixed points of $A_G(X)$ split as\begin{equation}\label{A theory splitting}
    A_G(X)^G \simeq \prod_{(H) \leq G} A(X^{H}_{hW_GH}),
\end{equation}
which is reminiscent of the splitting of \cref{intro K theory splitting}. A consequence of our work is that this genuine $G$-spectrum encodes the expected geometric invariants of the $G$-space $X$.

\begin{introtheorem}[{\cref{corollary: wall,corollary: whitehead}}]\label{intro wall}
    For a $G$-connected $G$-space $X$, the equivariant Euler characteristic, Wall finiteness obstruction, and Whitehead torsion are realized as elements in the homotopy groups of $A_G(X)$.
\end{introtheorem}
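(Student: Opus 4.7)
The plan is to play \cref{intro K theory splitting} off against the Malkiewich--Merling splitting \eqref{A theory splitting} by means of the equivariant linearization map constructed in \cref{sec:Equivariant Linearization}. First, I would use \cref{intro K theory splitting} together with the identification with L\"uck's constructions (as discussed in the paragraph following \cref{intro K theory splitting}) to pin down where the equivariant Euler characteristic, Wall finiteness obstruction, and Whitehead torsion live: namely, as elements in (appropriate quotients of) $\pi_0$ and $\pi_1$ of $K_G(\ZZ[\underline{\pi_1(X)}])^G \simeq \prod_{(H)\leq (G)} K(\ZZ[\pi_1(X^H_{hW_GH})])$. For a $G$-connected $X$, each factor in this product is the classical algebraic $K$-theory of a group ring for which these invariants are the familiar non-equivariant ones, organized across the conjugacy classes of subgroups of $G$.

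Next, I would apply the linearization map $\ell_G\colon A_G(X)\to K_G(\ZZ[\underline{\pi_1(X)}])$ promised in \cref{sec:Equivariant Linearization} and pass to $G$-fixed points. The crucial point is that, under the two splittings, $\ell_G^G$ is compatible with the product decompositions factor-by-factor: on the $(H)$-summand it must agree with the classical linearization
\[
\ell\colon A(X^H_{hW_GH}) \longrightarrow K(\ZZ[\pi_1(X^H_{hW_GH})]).
\]
Establishing this compatibility is where I would expect the real work to lie; it should follow from the naturality of the linearization for each conjugacy class of subgroups, combined with the description of the fixed-point splitting of $A_G(X)$ from \cite{badzioch/dorabiala:2016} and the splitting in \cref{intro K theory splitting}.

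Once this factorwise identification is in hand, the rest is straightforward. By Waldhausen \cite{waldhausen:1983}, each classical linearization $\ell$ is $2$-connected, so induces isomorphisms on $\pi_0$ and $\pi_1$. Since $G$ is finite, the product is over finitely many conjugacy classes $(H)\leq (G)$, and a finite product of $2$-connected maps is $2$-connected; hence $\ell_G^G$ is $2$-connected. Consequently the Wall finiteness obstruction (a class in a quotient of $\pi_0 K_G(\ZZ[\underline{\pi_1(X)}])^G$) and the Whitehead torsion (in a quotient of $\pi_1$) lift uniquely to classes in $\pi_0^G A_G(X)$ and $\pi_1^G A_G(X)$, respectively, and the same is true for the equivariant Euler characteristic in $\pi_0^G A_G(X)$. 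This yields the two corollaries referenced in the statement.
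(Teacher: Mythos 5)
Your proposal matches the paper's argument: construct the equivariant linearization, show it is compatible with the Badzioch--Dorabiala splitting of $A_G(X)^G$ and the splitting of $K_G(\ZZ[\underline{\pi_1(X)}])^G$ factor-by-factor (this is \cref{linearization splitting}, and it is indeed where the paper invests the real work), deduce $2$-connectivity from the classical case (\cref{linearization is 2 connected}), identify the target with L\"uck's $K$-theory of the equivariant fundamental category, and pull L\"uck's invariants back along the resulting isomorphisms on $\pi_0^G$ and $\pi_1^G$. The one small imprecision is your phrase that the Whitehead torsion ``lifts uniquely to $\pi_1^G A_G(X)$'': since $\mathrm{Wh}^G(X)$ is a \emph{quotient} of $K_1(\Pi_G(X))$, the paper's \cref{corollary: whitehead} instead exhibits a surjection $q\colon \pi_1^G(A_G(X))\to \mathrm{Wh}^G(X)$ and identifies $q([f])=\tau_G(f)$ for the class $[f]$ of a $G$-homotopy equivalence, rather than claiming a unique lift of the torsion itself.
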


These equivariant invariants were originally constructed in work of Andzejewki, Baglivo, L\"uck, and Illman \cite{andrzejewski:1986,Baglivo,Luck,illman}, and our work provides, to our knowledge, the first connection between these invariants and the genuine equivariant $A$-theory. To prove \cref{intro wall}, we construct a map of genuine $G$-spectra $L\colon A_G(X)\to K_G(\ZZ[\underline{\pi_1(X)}])$ and respects the splittings on fixed points.

\begin{introtheorem}[\cref{linearization is 2 connected}] \label{intro linearization theorem}
    For any based $G$-space $X$ with $X^H$ connected for all $H \leq G$, there is an equivariant linearization map
    \[
        L\colon A_G(X)\to K_G(\underline{\ZZ[\pi_1X]})
    \]
    whose underlying map of spectra is the classical linearization map.  Moreover, this map is equivariantly $2$-connected, meaning it induces isomorphisms on $\pi_0^H$ and $\pi_1^H$ for all $H\leq G$.
\end{introtheorem}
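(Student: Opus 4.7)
The plan is to build $L$ as a morphism of spectral Mackey functors and then use the fixed-point splittings of \cref{intro K theory splitting} together with equation~(1.1) to reduce equivariant $2$-connectedness to the classical statement of Waldhausen.

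For the construction, I would first assemble the classical linearization into a natural transformation at the level of input data. The spectrum $A_G(X)$ arises from a coefficient system of Waldhausen categories $\underline{\mathcal{R}(X)}\colon \mathcal{O}_G^{\mathrm{op}} \to \mathrm{WaldCat}$ whose value on $G/H$ is (a suitable model of) finite retractive $CW$-complexes over $X^H$. Composing pointwise with the classical linearization functor, which sends a retractive space $Y \to X^H$ to the cellular chain complex $C_*(\widetilde Y, \widetilde{X^H})$ as a complex of projective $\ZZ[\pi_1(X^H)]$-modules, produces a map of coefficient systems into a Waldhausen-category input whose associated spectral Mackey functor is $K_G(\ZZ[\underline{\pi_1 X}])$. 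The key point is that both $A_G(X)$ and $K_G(\ZZ[\underline{\pi_1 X}])$ are obtained from their inputs by the \emph{same} Guillou--May style spectral Mackey functor construction, so a natural transformation on inputs produces a map of spectral Mackey functors, yielding $L\colon A_G(X) \to K_G(\ZZ[\underline{\pi_1 X}])$. Restricting to trivial $G$-actions shows $L$ recovers the classical linearization on underlying spectra.

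For equivariant $2$-connectedness, restrict both sides to $H$-spectra. The proof of \cref{intro K theory splitting}, applied to the restricted coefficient system, gives
\[
K_G(\ZZ[\underline{\pi_1 X}])^H \simeq \prod_{(K) \leq (H)} K\bigl(\ZZ[\pi_1(X^K_{hW_H K})]\bigr),
\]
matching the analogous splitting of $A_G(X)^H$ from \cite{badzioch/dorabiala:2016}. By naturality of the construction of $L$ in coefficient system data, $L^H$ respects the splitting, and on the $(K)$-factor it is identified with the classical linearization $A(X^K_{hW_H K}) \to K(\ZZ[\pi_1(X^K_{hW_H K})])$. Waldhausen's theorem says each such factor is $2$-connected, so $\pi_n^H(L)$ is an isomorphism for $n \in \{0,1\}$ as required.

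The main obstacle is verifying that $L$ is genuinely a morphism of spectral Mackey functors, i.e.\ that the transfer maps on both sides commute with pointwise linearization. Restrictions are automatic, but transfers on the $A$-theory side are built from induction of parametrized spaces while transfers on the $K$-theory side use induction of modules over twisted group rings; one must check that taking cellular chains intertwines these. Once the Mackey-functor compatibility is established, identifying $L^H$ with a product of classical linearizations on the splittings is a matter of unraveling definitions, and the $2$-connectivity conclusion follows formally.
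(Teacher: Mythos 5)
Your high-level strategy matches the paper's: build $L$ as a morphism of spectral Mackey functors and deduce $2$-connectedness from the classical case via the fixed-point splittings. However, there are three substantive gaps.

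First, you misdescribe the Mackey-functor inputs. $A_G(X)$ is not built from a functor $\mathcal{O}_G^{\mathrm{op}} \to \mathrm{WaldCat}$ with values ``retractive spaces over $X^H$''; that description would only encode restrictions, and it also gets the categories wrong. The paper works with a pseudo-functor $\mathbb{B}^G \to \mathrm{Wald}$ whose value at $H$ is $R^H_{\fd}(X)$, the category of $H$-\emph{equivariant} finitely dominated retractive spaces over $X$ (viewed as an $H$-space). Correspondingly, the linearization $L^H$ does not send $Y$ to a chain complex of $\ZZ[\pi_1(X^H)]$-modules; it sends $Y$ to a chain complex of coefficient systems on $\mathcal{O}_H$, namely $H/J \mapsto C_*(\widetilde{Y^J}, \widetilde{X^J})$. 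The target is perfect complexes over the coefficient \emph{ring} $R^G_H(\ZZ[\underline{\pi_1 X}])$. Without this, the comparison to transfers on the $K$-theory side cannot even be set up.

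Second, the linearization functor does not preserve pushouts along cofibrations, so it is not exact in the usual sense, and you cannot simply apply the $K$-theory functor to a natural transformation of Waldhausen inputs. The paper instead checks that $L^H$ preserves homotopy cocartesian squares and passes through Thomason's $T^{\square}_\bullet$-construction to obtain a map on $K$-theory. Your proposal silently assumes this step away.

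Third, you assert that $L^H$ respects the splittings ``by naturality,'' but the splitting on each side is an equivalence produced by the additivity theorem and a filtration argument, not an identity, so compatibility must be proved. The paper's proof (\cref{linearization splitting}) constructs an explicit natural transformation filling the relevant square and then shows its components are quasi-isomorphisms; this requires a nontrivial comparison of $\widetilde{Y^{H_i}}$ with $\widetilde{Y^{H_i}_{hWH_i}}$ using the split fibration $X^{H_i} \to X^{H_i}_{hWH_i} \to BWH_i$ and a careful analysis of fundamental groups. Once you supply these three missing ingredients, the reduction to Waldhausen's $2$-connectedness theorem goes through exactly as you sketch.
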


The following theorem shows that the splittings of \cref{A theory splitting} and \cref{intro K theory splitting} are compatible, allowing us to deduce facts about equivariant linearization from the non-equivariant situation (such as the connectivity claim of \cref{intro linearization theorem}).

\begin{introtheorem}[\cref{linearization splitting}]\label{intro theorem L compatible}
    Let $X$ be a $G$-space.  For any subgroup $H\leq G$ there is a homotopy commutative diagram of spectra:
    \[
        \begin{tikzcd}
            A_G(X)^G \ar{d} \ar["L^G"]{r} & K_G(\underline{\ZZ[\pi_1X]})^G \ar{d}\\
            A(X^H_{hWH}) \ar[swap, "\ell"]{r} & K(\ZZ[\pi_1(X^H_{hWH})])
        \end{tikzcd}
    \]
    where the unlabeled vertical maps are projection maps coming from the splittings and $\ell$ is the classical linearization map for the space $X^H_{hWH}$.
\end{introtheorem}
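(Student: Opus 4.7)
The plan is to trace both vertical projections through the spectral Mackey functor framework used to model genuine $G$-spectra, and exploit the fact that $L$ is a morphism in that category.

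The first step is to identify the vertical projection maps. Both splittings are instances of the standard tom Dieck decomposition: for a genuine $G$-spectrum $E$ one has $E^G \simeq \prod_{(H)\leq (G)}(\Phi^H E)_{hWH}$, and the projection onto the $(H)$-summand factors through the geometric fixed-point construction $\Phi^H$. For $E = A_G(X)$, the Badzioch--Dorabiala theorem \cite{badzioch/dorabiala:2016} identifies this summand with $A(X^H_{hWH})$, and \cref{intro K theory splitting} performs the analogous identification for $E = K_G(\underline{\ZZ[\pi_1X]})$, yielding $K(\ZZ[\pi_1(X^H_{hWH})])$.

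The second step is to invoke functoriality of $\Phi^H$: any morphism of genuine $G$-spectra induces a map on geometric fixed points, natural in the spectrum. Therefore $L\colon A_G(X)\to K_G(\underline{\ZZ[\pi_1X]})$, constructed as a Mackey functor morphism in \cref{sec:Equivariant Linearization}, produces an induced map $\Phi^H L$, and passing to $WH$-homotopy orbits gives a map compatible with the tom Dieck projections. This immediately yields a homotopy commutative square of the desired shape, modulo identifying the induced map on the $(H)$-summand with $\ell$.

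The main obstacle is carrying out this last identification: showing that, under the given equivalences of summands, the map $(\Phi^H L)_{hWH}$ recovers the classical linearization $\ell\colon A(X^H_{hWH})\to K(\ZZ[\pi_1(X^H_{hWH})])$. To verify this, I would unpack the construction of $L$ at the Mackey-functor level $G/H$: by design, the underlying input at $G/H$ is a Waldhausen-type category of retractive spaces over $X^H$, and the map to the $K$-theory of $\ZZ[\underline{\pi_1(X)}](G/H) = \ZZ[\pi_1(X^H)]$ is the non-equivariant linearization applied to $X^H$. Passing to homotopy $WH$-orbits on both sides, and using naturality of the classical linearization with respect to the residual $WH$-action, then recovers $\ell$ for the space $X^H_{hWH}$, completing the identification.
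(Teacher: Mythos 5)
Your high-level plan — route the compatibility through naturality of some canonical construction that produces both splittings — is a sensible instinct, but as written the proposal has several genuine gaps that are not merely matters of detail.

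First, you assume without justification that the splitting of $A_G(X)^G$ from Badzioch--Dorabiala and the splitting of $K_G(\ZZ[\underline{\pi_1(X)}])^G$ from \cref{intro K theory splitting} are both the tom Dieck decomposition $E^G \simeq \prod_{(H)}(\Phi^H E)_{hWH}$. The paper constructs the $K$-theory splitting by filtering the category of projective $\underline{S}$-modules by isotropy and identifying the associated graded via the evaluation functor $ev_{G/H_i}$ (\cref{K theory splitting} and \cref{subsec:perf}), and the $A$-theory splitting is constructed by a parallel categorical filtration — neither is manifestly a geometric fixed-point construction, and the spectral Mackey functor model used here does not come with a ready-made $\Phi^H$. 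Identifying these explicit splittings with the tom Dieck splitting, and showing the spectra are split so that tom Dieck even produces a product decomposition, would itself require a nontrivial comparison that you have not supplied. The paper's proof of \cref{linearization splitting} sidesteps this entirely by showing directly that $L^G$ respects the isotropy filtrations on the Waldhausen categories: if $Y \in R_{\leq i}(X)$ then $L^G(Y)$ lands in $\mathrm{Perf}_{\leq i}$, so $L^G$ descends to each associated-graded piece.

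Second, your unpacking of $L$ at level $G/H$ is factually wrong in a way that matters. The category $A_G(X)(G/H)$ is $K(R^H_{\fd}(X))$, where $R^H_{\fd}(X)$ consists of $H$-equivariant finitely dominated retractive spaces over $X$ \emph{with its $H$-action} — not retractive spaces over $X^H$. Likewise $K_G(\underline{\ZZ[\pi_1X]})(G/H) = K(\mathcal{P}^H_{\underline{S}})$ is the $K$-theory of projective modules over an $H$-coefficient system, which itself splits into factors indexed by conjugacy classes of subgroups of $H$, not merely $K(\ZZ[\pi_1(X^H)])$. So $L^H$ is not the non-equivariant linearization for $X^H$, and the reduction you describe does not go through as stated.

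Third, and most substantively, the real mathematical content of the paper's proof is the verification that the natural transformation $\epsilon$ filling the square
\[
\begin{tikzcd}
R_i(X) \ar["L^G_i"]{r} \ar[swap,"p"]{d} & \mathcal{P}_i(\ZZ[\underline{\pi_1(X)}]) \ar["q"]{d} \\
R_{\fd}(X^{H_i}_{hWH_i}) \ar[swap,"\ell_{H_i}"]{r} & \mathrm{Perf}(\ZZ[\pi_1(X^{H_i}_{hWH_i})])
\end{tikzcd}
\]
is a levelwise quasi-isomorphism. This requires proving that the map of pairs $(\widetilde{Y^{H_i}},\widetilde{X^{H_i}})\to(\widetilde{Y^{H_i}_{hWH_i}},\widetilde{X^{H_i}_{hWH_i}})$ induces an isomorphism on relative homology, which in turn needs a delicate fundamental-group computation (because $\widetilde{Y^{H_i}}$ is the pullback of $\widetilde{X^{H_i}}$ and need not be simply connected). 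Your proposal compresses all of this into the phrase ``naturality of the classical linearization with respect to the residual $WH$-action,'' which does not supply the argument. Until you furnish the homology-isomorphism step (or a genuine substitute), the identification of the factor map with $\ell$ is not established.
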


\subsection{Outline}
In \cref{sec:bg on coeff sys}, we review necessary background on coefficient systems needed for \cref{sec:KT of CS}, where we construct the $K$-theory of a coefficient ring $\underline{S}$ as a genuine $G$-spectrum $K_G(\underline{S})$ and prove \cref{intro K theory splitting} (see \cref{subsec:KT split}).
In \cref{sec:Equivariant Linearization}, we first recall the construction of equivariant $A$-theory (see \cref{sec:bckgd on A theory}) and then define and analyze the equivariant linearization map, proving \cref{intro linearization theorem} and \cref{intro theorem L compatible}. Finally, in \cref{subsec: geometric invariants}, we apply earlier results of \cref{sec:Equivariant Linearization} to study the geometric invariants of $G$-spaces, proving \cref{intro wall}. There are two appendices.   The first contains a technical proof of a theorem used in \cref{sec:KT of CS}. The second contains a number of technical results which deal with the construction of morphisms of genuine $G$-spectra using the framework of spectral Mackey functors.

\subsection{Notation and Conventions}

Throughout the paper we fix a finite group $G$ and use $H$ to denote a subgroup of $G$. For any $g\in G$, we denote the conjugate subgroup $g^{-1}Hg$ by $H^g$.  
A coefficient ring will be denoted by $\underline{S}$ and an arbitrary coefficient system by $\underline{M}$.  We denote the evaluation of any coefficient system $\underline{M}$ at the orbit $G/H$ by either $M(G/H)$ or $\underline{M}^H$.  We will often abbreviate the twisted group ring $\underline{S}^H_{\theta}[WH]$ to simply $\underline{S}^H_{\theta}$.

Unless otherwise stated, when we refer to the $K$-theory of a Waldhausen category we will always mean the $K$-theory symmetric spectrum obtained by via $S_{\bullet}'$-construction as in \cite{BM:08}.  This is a varaint of Waldhausen $S_{\bullet}$-construction which is better suited to our purposes; see \cref{subsec: appendix S dot prime} for a review of this construction.

\subsection{Acknowledgments} We would like to thank Anna Marie Bohmann, Anish Chedalavada, Liam Keenan, Cary Malkiewich, Mona Merling, and Maximilien P\'eroux for helpful conversations, as well as the referee for their comments which greatly improved the paper.  We would also like to thank Bohmann and Merling for providing financial support which facilitated this collaboration. The first- and third-named authors were partially supported by NSF grant DGE-1845298. The second-named author was partially supported by NSF grants DMS-2104300 and DMS-2135960.

\section{Background on coefficient systems}\label{sec:bg on coeff sys}

In this section we review some background on coefficient systems and coefficient rings.  We begin by showing that the category of modules over coefficient rings is an abelian cateogry with enough projectives.  We then recall the notions of induction, restriction, and conjugation of coefficient systems and determine some of their basic properties needed for the construction of the equviariant $K$-theory spectrum of a coefficient ring in \cref{sec:KT of CS}.  Let $\mathcal{O}_G$ denote the orbit category of $G$, i.e.\ the full subcategory of finite $G$-sets on the objects $G/H$ for subgroups $H\leq G$.

\begin{definition}
    A \textit{coefficient system} is a functor $\underline{M}\colon \mathcal{O}^{op}_G\to {\rm Ab}$ from the orbit category to the category of abelian groups. We denote the evaluation of a coefficient system $\underline{M}$ at orbit $G/H$ by $\underline{M}^H$.
    
    A morphism of coefficient systems is a natural transformation of functors, and the category of $G$-coefficient systems is denoted $\mathrm{Coeff}^G$.
\end{definition}

When $H\leq K$ there is a canonical map $G/H\to G/K$ in $\mathcal{O}_G$ which sends $eH$ to $eK$.  If $\underline M$ is a coefficient system we call the induced map $R^K_H\colon \underline{M}^K\to \underline{M}^H$ the \emph{restriction} from $K$ to $H$.  When $H=K$, the endomorphism group $\mathcal{O}_G(G/H,G/H)\cong N_G(H)/H =W_GH$ is called the \emph{Weyl group} of $H$ in $G$. There is a natural left action of $W_GH$ on $\underline{M}^H$.

\begin{definition}
    We call a coefficient system $\underline{S}$ a \textit{coefficient system of rings} or a \textit{coefficient ring} if $\underline{S}$ is valued in the category of rings, i.e. $\underline{S}^H\in {\rm Ring}$ for all $H\leq G$ and the restriction maps and Weyl group actions are ring homomorphisms.
\end{definition}

\begin{example}\label{free coefficient systems}
    Let $X$ be a finite $G$-set and define a coefficient system $A_X$ by 
    \[
        A_X^H = \ZZ\{X^H\},
    \]
    the free abelian group on the $H$-fixed points of $X$. For any $X$ there are natural bijections 
    \[
        X^H\cong \mathrm{Set}^G(G/H,X)
    \] 
    which supply the restriction maps for $A_X$. Explicitly, if $f\colon G/H\to G/K$ is a map in $\mathrm{Set}^G$, there is an induced pullback map $f^*\colon A_X^K\to A_X^H$. The $W_GH$-action on $A_X^H$ is through the action of $WH$ on the fixed point set $X^H$.  This makes $A_X$ a functor on the opposite of the orbit category.  
\end{example}

\begin{example}\label{ex:constant Z coeff system}
     When $X=G/G$, we have $A_{G/G}^H  \cong \ZZ$ for all $H$ and all restriction maps are the identity; this is sometimes called the constant $\ZZ$ coefficient system. 
\end{example}

\begin{example}\label{ex:FP(R)}
    Let $R$ be a ring with $G$-action through ring automorphisms. The \emph{fixed point} coefficient ring of $R$ is the coefficient ring defined by \[\FP(R)^H = R^H,\] the $H$-fixed points of $R$.  The restriction maps are given by inclusion of fixed points and the Weyl group actions are given by the natural action on fixed points.
\end{example}

The category of coefficient systems has a monoidal product $\square$ given by taking levelwise tensor products.  That is, \[(\underline{M}\square \underline{N})^H = \underline{M}^H\otimes \underline{N}^H.\]  Note that the unit is the constant $\ZZ$ coefficient system $A_{G/G}$ from \cref{ex:constant Z coeff system}.  It is a straightforward exercise to see that coefficient rings are exactly the monoids in this monoidal category.  With this observation, we can make sense of modules over a coefficient ring in the usual way and study their algebraic theory.  We denote the category of (left) modules over $\underline{S}$ by $\mathrm{Mod}_{\underline{S}}$.

We now give a more concrete characterization of $\underline{S}$-modules, which requires the following definition.

\begin{definition}\label{defn: twisted group ring}
    Let $R$ be a ring with $G$-action and denote the action of $g\in G$ on $r\in R$ by $r^g$.  The twisted group ring $R_{\theta}[G]$ of $R$ is the same abelian group as the usual group ring $R[G]$ but with multiplication 
    \[
        (r_1g_1)(r_2g_2) = (r_1r_2^{g_1})g_1g_2.
    \]
    We will often abbreviate $R_{\theta}[G]$ to simply $R_{\theta}$ when the group $G$ is clear.
\end{definition}

\begin{lemma}\label{twisted group ring modules}
    Let $R$ be a ring with $G$-action.  A module over $R_{\theta}[G]$ is equivalent to an $R$-module $M$ together with an action of $G$ on the abelian group of $M$ subject to the condition $g(rm) = r^g(gm)$ for all $g\in G$, $m\in M$ and $r\in R$.
\end{lemma}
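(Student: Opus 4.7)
The plan is to establish the equivalence by exhibiting explicit mutually inverse constructions.

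In one direction, starting from an $R_\theta[G]$-module $M$, I would restrict the action along the two natural maps $R \to R_\theta[G]$, $r \mapsto r\cdot e$, and $G \to R_\theta[G]^\times$, $g \mapsto 1\cdot g$. The first is a ring homomorphism (since $(r_1\cdot e)(r_2\cdot e) = r_1 r_2^{e}\cdot e = r_1 r_2 \cdot e$) and the second lands in the units, so $M$ acquires both an underlying $R$-module structure and a $G$-action on its abelian group. The compatibility condition is then forced by the defining multiplication of the twisted group ring: one computes
\[
    (1\cdot g)(r\cdot e) \;=\; (1\cdot r^g)\,g\cdot e \;=\; r^g\cdot g
\]
in $R_\theta[G]$, and applying both sides to $m\in M$ gives exactly $g(rm)=r^g(gm)$.

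Conversely, given an $R$-module $M$ equipped with a $G$-action satisfying $g(rm)=r^g(gm)$, I would define an $R_\theta[G]$-action on $M$ on generators by $(rg)\cdot m := r(gm)$ and extend additively. The only nontrivial axiom to check is associativity on simple tensors, i.e.\ that $(r_1 g_1)\cdot\bigl((r_2 g_2)\cdot m\bigr) = \bigl((r_1 g_1)(r_2 g_2)\bigr)\cdot m$. The left-hand side is $r_1 g_1\bigl(r_2(g_2 m)\bigr)$, and applying the compatibility hypothesis to the element $g_2 m$ turns this into $r_1 r_2^{g_1}\bigl(g_1 g_2 m\bigr)$, which matches the right-hand side by the twisted multiplication rule. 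Unitality is immediate.

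The two constructions are manifestly mutually inverse, since the images of $R$ and $G$ generate $R_\theta[G]$, so a module structure is determined by its restrictions to these subrings/subgroups. The whole verification is routine; the only point requiring care is the bookkeeping for the twisted multiplication, which is precisely what the compatibility condition $g(rm)=r^g(gm)$ is designed to encode. There is no real obstacle to overcome here — the main value of the lemma is to reframe $\underline{S}$-modules in a concrete form that will be usable in the constructions of \cref{sec:KT of CS}.
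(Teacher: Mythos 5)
Your proof is correct and follows the same approach as the paper's: restriction along $R \hookrightarrow R_\theta[G]$ and $G \to R_\theta[G]^\times$ in one direction, and the formula $(rg)\cdot m := r(gm)$ in the other. The paper states these steps more tersely and leaves the verifications as "readily checked," whereas you have carried them out explicitly, but there is no difference in method.
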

\begin{proof}
    The inclusion $R\to R_{\theta}[G]$ is a ring map and thus every $R_{\theta}[G]$-module is an $R$-module by restriction of scalars.  If $M$ is an $R_{\theta}[G]$-module, the actions by elements $g\in R_{\theta}[G]$ determine a $G$-action on $M$ that satisfies the stated condition.  Conversely, if $M$ is an $R$-module with appropriate $G$-action then we define $(rg)\cdot m = r(gm)$ and it is readily checked that this defines an $R_{\theta}[G]$-module structure on $M$.
\end{proof}

\begin{example}
    Let $F\subset L$ be a finite Galois extension with Galois group $G$.  Then for every $F$-vector space $V$, the extended $L$-module $L\otimes_F V$ is a module over $L_{\theta}[G]$.  The starting point of Galois descent is that the converse is also true: every $L_{\theta}[G]$-module is isomorphic to $L\otimes_F V$ for some $V$.
\end{example}

The twisted group ring construction allows us to give a levelwise condition on coefficient systems to check if they are modules over a coefficient ring.

\begin{lemma}\label{modules over coeff rings}
    Let $\underline{S}$ be a coefficient ring.  A coefficient system $\underline{M}$ is a module over $\underline{S}$ if and only if for all $H\leq G$ the abelian group $\underline{M}^H$ is a module over the twisted group ring $\underline{S}^H_{\theta}[W_GH]$ such that the action maps commute with the restrictions in the evident way.
\end{lemma}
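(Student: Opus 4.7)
The plan is to unpack the definition of a module over $\underline{S}$ in the monoidal category $(\mathrm{Coeff}^G, \square)$, extract a levelwise $\underline{S}^H$-action for each $H$, and then identify the additional compatibility with the Weyl group action as exactly the twisted group ring condition from \cref{twisted group ring modules}. The forward direction starts from a module structure, i.e.\ a morphism of coefficient systems $\mu\colon \underline{S}\square\underline{M}\to \underline{M}$ satisfying the usual associativity and unit diagrams. Evaluating at $G/H$ yields $\mu^H\colon \underline{S}^H\otimes \underline{M}^H\to \underline{M}^H$, which (by the levelwise associativity and unit axioms) is an $\underline{S}^H$-module structure on the abelian group $\underline{M}^H$.

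Next I would exploit naturality of $\mu$ with respect to morphisms in $\mathcal{O}_G^{\mathrm{op}}$. For the Weyl group endomorphisms $w \in W_GH\cong \mathcal{O}_G(G/H,G/H)$, the induced map on $\underline{S}\square \underline{M}$ is diagonal, so naturality reads $w\cdot \mu^H(s\otimes m) = \mu^H((w\cdot s)\otimes (w\cdot m))$, i.e.\ $w(sm) = s^w(wm)$ in the notation of \cref{defn: twisted group ring}. This is precisely the hypothesis of \cref{twisted group ring modules}, so combining the $\underline{S}^H$-module structure with the $W_GH$-action produces a module over $\underline{S}^H_\theta[W_GH]$. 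Naturality of $\mu$ with respect to the remaining maps $G/H\to G/K$ in $\mathcal{O}_G$ then translates into the asserted compatibility between the levelwise twisted group ring actions and the restriction maps: if $R^K_H\colon \underline{M}^K \to \underline{M}^H$ denotes restriction, then $R^K_H(sm) = R^K_H(s)\cdot R^K_H(m)$.

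For the converse, I would reverse the process: given $\underline{S}^H_\theta[W_GH]$-module structures on each $\underline{M}^H$ compatible with restrictions, apply \cref{twisted group ring modules} in reverse to each level to obtain an $\underline{S}^H$-module structure together with a $W_GH$-action satisfying $w(sm)=s^w(wm)$. These assemble into levelwise maps $\mu^H\colon \underline{S}^H\otimes \underline{M}^H \to \underline{M}^H$; the compatibility with restrictions gives naturality with respect to all maps $G/H\to G/K$ in $\mathcal{O}_G$, and the condition $w(sm)=s^w(wm)$ gives naturality with respect to Weyl group endomorphisms. Since every morphism in $\mathcal{O}_G$ factors as a composite of a canonical map $G/H\to G/K$ (for $H\leq K$) and a Weyl group automorphism, these two checks are sufficient to promote the $\mu^H$ to a morphism of coefficient systems $\mu\colon \underline{S}\square \underline{M}\to \underline{M}$. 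The unit and associativity diagrams for $\mu$ hold because they hold at each level.

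The only substantive point, and the one I expect to require the most care, is the factorization claim for morphisms in $\mathcal{O}_G$ used in the converse: one has to verify that naturality with respect to the canonical quotients $G/H\to G/K$ together with naturality under $W_GH$ automorphisms implies naturality with respect to an arbitrary $G$-map $G/H\to G/K$ (which corresponds to a subconjugacy $H\leq K^g$ for some $g\in G$). This amounts to a standard calculation using the presentation of $\mathcal{O}_G$ and the interaction of conjugation with restriction, but it is the only place where the proof uses more than a direct translation between definitions.
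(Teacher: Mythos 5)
Your forward direction follows the paper's argument essentially verbatim and is fine. The paper dispenses with the converse in four words (``The converse is similar''), so there is no paper proof to compare against there; your attempt to supply the details is reasonable, but it contains a genuine gap in the very step you flag.

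The factorization claim --- that every morphism in $\mathcal{O}_G$ is a composite of a canonical quotient $G/H\to G/K$ (with $H\leq K$) and a Weyl automorphism --- is false. A general morphism $G/H\to G/K$ sends $eH\mapsto gK$ with $H^g\leq K$, and it factors as the conjugation isomorphism $G/H\xrightarrow{\cong} G/H^g$, $xH\mapsto xgH^g$, followed by the canonical quotient $G/H^g\to G/K$. When $g\notin N_G(H)$ that first map is \emph{not} a Weyl automorphism, and it is not reachable from $G/H$ using quotients and Weyl automorphisms: quotients only enlarge the subgroup, so any composite of quotients and Weyl endomorphisms starting at $G/H$ lands in some $G/K$ with $H\leq K$, which excludes $G/H^g$ whenever $H^g\neq H$ and $H\not\leq H^g$. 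Concretely, take $G=S_3$, $H=\langle(12)\rangle$, $H^g=\langle(13)\rangle$: the only objects reachable from $G/H$ via quotients are $G/H$ and $G/G$, so the isomorphism $G/H\cong G/\langle(13)\rangle$ lies outside the subcategory you are generating.

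Consequently, checking naturality of the $\mu^H$ against canonical quotients and Weyl endomorphisms does \emph{not} suffice to promote them to a morphism of coefficient systems. You must also verify compatibility with the conjugation isomorphisms $c_g\colon\underline{M}^{H^g}\to\underline{M}^H$ for $g\notin N_G(H)$, and this is an independent condition, not a consequence of the other two. (One can build a coefficient ring $\underline{S}$ supported only on the conjugacy class of $H=\langle(12)\rangle$ in $S_3$, where all Weyl groups are trivial and the restrictions to smaller subgroups are zero, so that levelwise $\underline{S}^H_\theta$-module structures compatible with restrictions impose no constraint at all, yet a genuine $\underline{S}$-module structure still requires the conjugation isomorphisms to intertwine the actions.) The paper's phrase ``commute with the restrictions in the evident way'' should be read as shorthand for full coefficient-system naturality --- that is, compatibility with all morphisms of $\mathcal{O}_G$, including the cross-conjugacy isomorphisms. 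To repair the converse, replace the incorrect factorization with the correct one (conjugation followed by quotient) and add conjugation compatibility to the list of conditions being verified.
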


\begin{proof}
    If $\underline{M}$ is an $\underline{S}$-module then by definition we have maps $\mu_H\colon \underline{S}^H\otimes \underline{M}^H\to \underline{M}^H$ for all $H$ which gives $\underline{M}^H$ a module structure over $\underline{S}^H$ for all $H$.  Furthermore, since $\underline{M}$ is a coefficient system, $\underline{M}^H$ comes with a $W_GH$-action and so we just need to check that this action satisfies the condition of \cref{twisted group ring modules}.  This is immediate from the fact that the maps $\mu_H$ assemble into a map of coefficient systems, and hence are $W_GH$-equivariant. Thus $\underline{M}^H$ is a module over the twisted group rings $\underline{S}^H_{\theta}[WH]$.  The converse is similar.
\end{proof}

For the rest of the paper we abbreviate $\underline{S}^H_{\theta}[W_GH]$ to simply $\underline{S}^H_{\theta}$.
We highlight a few important classes of $\underline{S}$-modules.
\begin{definition}
For a $G$-set $X$, we write $\underline{S}_{X}$ for the coefficient system $\underline{S}\square A_X$, where $A_X$ is the coefficient system from \cref{free coefficient systems}; this is always a left $\underline{S}$-module.  We say
\begin{enumerate}
    \item an $\underline{S}$-module is \emph{(finite) free} if it is isomorphic to $\underline{S}_{X}$ for some (finite) $G$-set $X$, 
    \item an $\underline{S}$-module $M$ is \emph{finitely generated} if it admits a surjection $\underline{S}_{X}\twoheadrightarrow \underline{M}$ where $X$ is some finite $G$-set,
    \item an $\underline{S}$-module is \emph{projective} if it is a direct summand of a free module.
\end{enumerate}
\end{definition}
\begin{remark}
    Being a free (or projective) $\underline{S}$-module is not a levelwise condition.  That is, $\underline{M}^H$ could be a free $\underline{S}^H$-module for all $H\leq G$ but $\underline{M}$ might not be a free $\underline{S}$-module.
    
    For example, let $G = C_2$ be the cyclic group with $2$-elements and let $\underline{S} = A_{G/G}$ be the unit. A $\underline{S}$-module is therefore just a coefficient system. Define $\underline{M}$ by $\underline{M}^{C_2} = \ZZ$ and $\underline{M}^e = 0$.  The evident surjection $\underline{S}\to\underline{M}$ does not split, implying that $\underline{M}$ is neither projective nor free. This example also shows that it does not suffice for $\underline{M}^H$ to be a free module over $\underline{S}^H_{\theta}$ for all $H$. 
\end{remark}

The following lemmas justify the use of the terminology ``free'' and ``projective.''

\begin{lemma}
    For any $\underline{S}$-module $\underline{M}$ and any $H\leq G$ there is an isomorphism of abelian groups $\mathrm{Mod}_{\underline{S}}(\underline{S}_{G/H},\underline{M})\cong \underline{M}^H$.
\end{lemma}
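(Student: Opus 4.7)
The plan is to exhibit the isomorphism as a two-step consequence of Yoneda together with the standard free--forgetful adjunction for modules over a monoid in a monoidal category.

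First I would identify $A_{G/H}$ as the Yoneda-type representing object in $\mathrm{Coeff}^G$. Concretely, since $A_{G/H}^K = \ZZ\{(G/H)^K\} = \ZZ\{\mathcal{O}_G(G/K,G/H)\}$ is the free abelian group on the hom-set in the orbit category, the Ab-enriched Yoneda lemma gives a natural isomorphism
\[
    \mathrm{Coeff}^G(A_{G/H}, \underline{M}) \;\cong\; \underline{M}^H
\]
sending a map $\varphi\colon A_{G/H}\to \underline{M}$ to $\varphi_H(e H)\in \underline{M}^H$, with inverse sending $m\in \underline{M}^H$ to the coefficient system map determined by $f\mapsto f^{*}(m)$ for each $f\in \mathcal{O}_G(G/K,G/H)$. (This is a direct unwinding of the definitions; the naturality square amounts to the fact that restriction maps in $\underline{M}$ respect composition in $\mathcal{O}_G$.)

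Second, because $\underline{S}$ is a monoid in the symmetric monoidal category $(\mathrm{Coeff}^G,\square)$ with unit $A_{G/G}$, the category $\mathrm{Mod}_{\underline{S}}$ is monadic over $\mathrm{Coeff}^G$ via the free--forgetful adjunction
\[
    \underline{S}\square (-)\colon \mathrm{Coeff}^G \rightleftarrows \mathrm{Mod}_{\underline{S}}\colon U.
\]
Applying this adjunction to $A_{G/H}$ and composing with the Yoneda identification from the previous step yields
\[
    \mathrm{Mod}_{\underline{S}}(\underline{S}_{G/H}, \underline{M}) = \mathrm{Mod}_{\underline{S}}(\underline{S}\square A_{G/H}, \underline{M}) \;\cong\; \mathrm{Coeff}^G(A_{G/H}, U\underline{M}) \;\cong\; \underline{M}^H,
\]
which is the claimed isomorphism of abelian groups. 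Under this composite, an $\underline{S}$-linear map $\Phi\colon \underline{S}_{G/H}\to \underline{M}$ corresponds to the element $\Phi_H(1\otimes eH)\in \underline{M}^H$.

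There is no real obstacle here; the only thing worth being careful about is confirming that the free--forgetful adjunction for $\underline{S}$-modules exists and is given by $\underline{S}\square(-)$, which follows formally from $\underline{S}$ being a monoid in a symmetric monoidal category and $\underline{S}\square(-)$ preserving the relevant structure. Everything else is a mechanical application of Yoneda in the Ab-enriched presheaf category on $\mathcal{O}_G$.
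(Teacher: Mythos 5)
Your proof is correct and takes essentially the same route as the paper: the paper's one-line argument is exactly the chain $\mathrm{Mod}_{\underline{S}}(\underline{S}\square A_{G/H},\underline{M})\cong \mathrm{Coeff}^G(A_{G/H},\underline{M})\cong \underline{M}^H$ via the free--forgetful (extension--restriction of scalars) adjunction and the Yoneda lemma. You simply unwind the Yoneda isomorphism explicitly, which is a reasonable elaboration but not a different argument.
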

\begin{proof}
    We have a string of isomorphisms $\mathrm{Mod}_{\underline{S}}(\underline{S}_{G/H},\underline{M})\cong \mathrm{Coeff}^G(A_X,\underline{M})\cong \underline{M}^H$ where the first uses the usual extension-restriction of scalars adjunction and the second is the Yoneda lemma.
\end{proof}

The proof of the next lemma is identical to the usual proof for projective modules over a ring.

\begin{lemma}
    A finitely generated $\underline{S}$-module $\underline{M}$ is projective if and only if the functor $\mathrm{Mod}_{\underline{S}}(\underline{M},-)$ is right exact.
\end{lemma}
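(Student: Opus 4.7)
The plan is to imitate the standard proof for modules over a ring, splitting into two implications.

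For the forward direction, I would first analyze the free case. Writing any $G$-set as $X = \coprod_i G/H_i$, the free module decomposes as $\underline{S}_X \cong \bigoplus_i \underline{S}_{G/H_i}$. By the preceding lemma, $\mathrm{Mod}_{\underline{S}}(\underline{S}_X, -) \cong \prod_i (-)^{H_i}$. I would then invoke the general fact that limits and colimits in presheaf categories of modules over a monoid are computed objectwise on $\mathcal{O}_G^{\mathrm{op}}$, so that evaluation at each orbit $G/H$ is an exact functor from $\mathrm{Mod}_{\underline{S}}$ to $\mathrm{Ab}$. Since products of short exact sequences in $\mathrm{Ab}$ are exact, it follows that $\mathrm{Mod}_{\underline{S}}(\underline{S}_X, -)$ is exact. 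If $\underline{M}$ is a direct summand of $\underline{S}_X$, then $\mathrm{Mod}_{\underline{S}}(\underline{M}, -)$ is a retract of this exact functor, and is therefore exact and in particular right exact.

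For the converse, I would use the standard lifting argument. Since $\underline{M}$ is finitely generated, fix a surjection $\pi\colon \underline{S}_X \twoheadrightarrow \underline{M}$ with $X$ a finite $G$-set, and let $\underline{K} = \ker(\pi)$. Applying the assumed right exactness of $\mathrm{Mod}_{\underline{S}}(\underline{M}, -)$ to the short exact sequence $0 \to \underline{K} \to \underline{S}_X \to \underline{M} \to 0$ produces a surjection
\[
\mathrm{Mod}_{\underline{S}}(\underline{M}, \underline{S}_X) \twoheadrightarrow \mathrm{Mod}_{\underline{S}}(\underline{M}, \underline{M}).
\]
Lifting the identity on $\underline{M}$ yields a section of $\pi$, exhibiting $\underline{M}$ as a direct summand of the free module $\underline{S}_X$, hence projective.

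The main (and essentially the only) technical input beyond the preceding lemma is that exactness in $\mathrm{Mod}_{\underline{S}}$ is detected objectwise, which is a standard fact about modules over a monoid in a pointwise-abelian presheaf category and requires no genuinely equivariant content. I expect a short sentence to suffice for that check, with the remainder of the proof being a transcription of the classical argument.
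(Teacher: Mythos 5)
Your proof is correct and takes the same route the paper gestures at: the paper itself gives no written argument, remarking only that ``The proof of the next lemma is identical to the usual proof for projective modules over a ring,'' and your proposal is exactly a transcription of that classical argument, with the one new ingredient (that exactness in $\mathrm{Mod}_{\underline{S}}$ is detected objectwise) correctly identified and justified. No issues.
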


The category $\mathrm{Mod}_S$ is an abelian category, and the subcategory of projectives is an exact subcategory in the sense of \cite[\S II.7]{Kbook}.

\begin{corollary}
    The abelian category $\mathrm{Mod}_{S}$ has enough projectives.  The subcategory of projective objects is an exact category with admissible monics and epics given by split morphisms which are levelwise injections and surjections, respectively.
\end{corollary}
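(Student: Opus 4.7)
The proof splits into two independent parts: establishing that $\mathrm{Mod}_{\underline{S}}$ has enough projectives, and verifying that the full subcategory of projectives, equipped with the stated class of admissible morphisms, satisfies Quillen's axioms for an exact category.

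For the first part, the plan is to produce an explicit surjection from a free $\underline{S}$-module onto an arbitrary $\underline{S}$-module $\underline{M}$. By the previous lemma, each element $m\in\underline{M}^H$ corresponds to a morphism $\underline{S}_{G/H}\to\underline{M}$ sending the generator to $m$. Assembling these over all pairs $(H,m)$ into a single morphism
\[
    \bigoplus_{H\leq G}\bigoplus_{m\in\underline{M}^H}\underline{S}_{G/H}\longrightarrow \underline{M}
\]
produces the required surjection: the source is the free module on the $G$-set $X=\coprod_{H,m}G/H$, and for each subgroup $K$, every element of $\underline{M}^K$ lies in the image of the $(K,m)$-summand by construction. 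Since kernels and cokernels in $\mathrm{Mod}_{\underline{S}}$ are computed levelwise (as the $\underline{S}$-module structure is a levelwise condition by \cref{modules over coeff rings}), surjectivity in the category is detected pointwise, and this suffices.

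For the second part, the plan is to check the Quillen axioms for the class of split short exact sequences between projectives, noting that split monics in $\mathrm{Mod}_{\underline{S}}$ are automatically levelwise injective because a retraction in the functor category restricts to a retraction at every orbit $G/H$ (and dually for split epics). The substantive observation is that direct sums of projectives are projective, so the middle term of any split short exact sequence with projective outer terms is itself projective, keeping the class within the subcategory. With this closure in hand, closure under isomorphism, composition of admissible monics, and the existence of pushouts of admissible monics along arbitrary morphisms to projectives (together with the dual statements for epics) follow from the corresponding facts about split sequences in any additive category; the pushout of a split monic along any map is again a split monic, and the pushout object, being a summand of a direct sum of projectives, is projective.

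The main obstacle is really a bookkeeping one: one must make sure each axiom is verified \emph{inside} the subcategory of projectives rather than merely in the ambient abelian category. Once the closure of projectives under direct sums and retracts is used to keep all intermediate objects projective, the verification reduces to the familiar manipulations with split sequences, and nothing deeper is required.
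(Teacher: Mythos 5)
Your argument is correct, and it fills in details the paper leaves implicit: the corollary is stated without proof, as a direct consequence of the representability lemma $\mathrm{Mod}_{\underline{S}}(\underline{S}_{G/H},\underline{M})\cong\underline{M}^H$ and the characterization of projectives. Your explicit surjection $\bigoplus_{H,m}\underline{S}_{G/H}\twoheadrightarrow\underline{M}$ is exactly the canonical one suggested by that lemma, and your verification of Quillen's axioms via the split exact structure (using closure of projectives under retracts and finite direct sums) is the standard argument. One small imprecision: the reason kernels and cokernels are computed levelwise is not quite that the $\underline{S}$-module structure is a levelwise condition (\cref{modules over coeff rings}), but rather that the forgetful functor $\mathrm{Mod}_{\underline{S}}\to\mathrm{Coeff}^G$ creates (co)limits (since $\square$ preserves colimits in each variable) and $\mathrm{Coeff}^G$ is a functor category, where (co)limits are objectwise. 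This is the fact you actually need to conclude that levelwise surjectivity detects epimorphisms.
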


The category of finitely generated projective $\underline{S}$-modules is denoted $\mathrm{Proj}_{\underline{S}}$.

\subsection{Induction and restriction}\label{subsec:induction restriction}

For $H\leq K\leq G$, there is a functor $\rho\colon \mathcal{O}_H\to \mathcal{O}_K$ defined on objects by $\rho(H/J) = K/J$. On morphisms, $\rho(H/J\xrightarrow{f} H/L)$ is defined to be the unique map so that the square
\[
    \begin{tikzcd}[column sep = large]
        K\times_H H/J \ar["K\times_Hf"]{r} \ar[swap, "\cong"]{d} & K\times_H H/L \ar["\cong"]{d} \\
        K/J\ar[swap, "\rho(f)"]{r} & K/L
    \end{tikzcd}
\]
commutes.  The left vertical isomorphism is defined by $[k,hJ]\mapsto (kh)J$, the right isomorphism is similar.  This definition also gives us a functor $\rho^{op}\colon \mathcal{O}_H^{op}\to \mathcal{O}_K^{op}$.  
\begin{definition}\label{definition: induction and restriction}
    For $H\leq K$, the \emph{restriction} functor \[R^K_H\colon \mathrm{Coeff}^K\to \mathrm{Coeff}^H\] is the functor $(\rho^{op})^*$. The \emph{induction} functor \[I^K_H\colon \mathrm{Coeff}^H\to \mathrm{Coeff}^K\] is defined by left Kan extension $\rho^{op}_!$.
\end{definition}

The following lemma, known as \emph{Frobenius reciprocity}, shows that the induction and restriction functors interact well with the box product. 
\begin{lemma}\label{Frobenius reciprocity}
    For any $K$-coefficient system $\underline{M}$ and $H$-coefficient system $\underline{N}$ there is a natural isomorphism
    \[
        \underline{M}\square I^K_H(\underline{N}) \xrightarrow{\sim} I^K_H(R^K_H(\underline{M})\square \underline{N})
    \]
    of $K$-coefficient systems.
\end{lemma}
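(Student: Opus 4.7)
The plan is to reduce to representable coefficient systems via a colimit-preservation argument and then verify the isomorphism on representables using a classical bijection of $K$-sets.

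Both $(\underline{M},\underline{N}) \mapsto \underline{M} \square I^K_H \underline{N}$ and $(\underline{M},\underline{N}) \mapsto I^K_H(R^K_H\underline{M} \square \underline{N})$ are bifunctors $\mathrm{Coeff}^K \times \mathrm{Coeff}^H \to \mathrm{Coeff}^K$ which preserve colimits in each variable: $\square$ is the levelwise tensor of abelian groups and hence preserves colimits in each slot, $I^K_H$ is a left adjoint (to $R^K_H$), and $R^K_H$ is precomposition by $\rho^{op}$, hence strong monoidal for $\square$ and both limit- and colimit-preserving. Since every coefficient system is a colimit of representables $A_X$, it therefore suffices to verify the isomorphism when $\underline{M}=A_X$ and $\underline{N}=A_Y$ for a finite $K$-set $X$ and a finite $H$-set $Y$.

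For such representables, direct computations give $A_X \square A_Y = A_{X \times Y}$, $R^K_H A_X = A_{X|_H}$ (where $X|_H$ is $X$ regarded as an $H$-set), and $I^K_H A_Y = A_{K \times_H Y}$. The last identity follows from $I^K_H A_{H/L} = A_{K/L}$ (the left Kan extension of a representable is representable at the image object) together with the fact that $A_{(-)}$ converts coproducts of $H$-sets to coproducts of coefficient systems. Under these identifications, the assertion reduces to
\[
A_{X \times (K \times_H Y)} \;\cong\; A_{K \times_H (X|_H \times Y)},
\]
which is induced by the classical $K$-set bijection $(x,[k,y]) \mapsto [k,(k^{-1}x,y)]$, with inverse $[k,(x,y)] \mapsto (kx,[k,y])$.

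The main subtlety is that the natural map is most readily constructed in the \emph{opposite} direction to the lemma statement. By the strong monoidality of $R^K_H$ together with the adjunction $I^K_H \dashv R^K_H$, there is a canonical map $\Psi\colon I^K_H(R^K_H\underline{M} \square \underline{N}) \to \underline{M} \square I^K_H \underline{N}$ corresponding under adjunction to $\mathrm{id}\square \eta\colon R^K_H\underline{M} \square \underline{N} \to R^K_H\underline{M} \square R^K_H I^K_H \underline{N}$, where $\eta$ is the unit of the adjunction. One unwinds that on representable inputs $\Psi$ matches the $K$-set bijection displayed above, so by the colimit-preservation argument $\Psi$ is a natural isomorphism; the map asserted in the lemma is then $\Psi^{-1}$.
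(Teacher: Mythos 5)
Your proof follows essentially the same approach as the paper: reduce to representable coefficient systems using colimit preservation in each variable, then verify the claim on representables via the $K$-set bijection $(x,[k,y])\mapsto [k,(k^{-1}x,y)]$. Your extra care in explicitly constructing the natural map $\Psi$ (via strong monoidality of $R^K_H$, the unit $\eta$, and the $I^K_H\dashv R^K_H$ adjunction) is a nice refinement that the paper leaves implicit, since the reduction-to-representables argument implicitly requires a globally defined natural transformation to compare against.
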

\begin{proof}
    Since induction, restriction, and the box product all commute with colimits, it suffices to check the case where $\underline{M}=A_X$ and $N=A_Y$ for some $G$-sets $X$ and $Y$.  In this case, the right side is isomorphic to $A_{K\times_H(Res^K_H(X)\times Y)}$, where $Res^K_H(X)$ is $X$, considered as an $H$-set.  The left side is $A_{X\times (K\times_H Y)}$.  It is enough to show that the two representing $K$-sets are isomorphic.  The map
    \[
        X\times (K\times_H Y)\to K\times_H(Res^K_H(X)\times Y)
    \]
    given by $(x,[k,y])\mapsto [k,(k^{-1}x,y)]$ is $K$-equivariant and easily checked to be a bijection, completing the proof.
\end{proof}

Since $R^K_H$ is strong monoidal, it restricts to a functor \[R^K_H\colon \mathrm{Mod}_{R^G_K(\underline{S})}\to \mathrm{Mod}_{R^G_H(\underline{S})}\] for any $G$-coefficient ring $\underline{S}$.  Even though $I^K_H$ is not monoidal, Frobenius reciprocity implies that $I^K_H$ also gives a functor on module categories.  Indeed, if $\underline{M}$ is a module over $R^G_H(\underline{S})$ then we have a composite
\[
    R^G_K(\underline{S})\square I^K_H(\underline{M})\xrightarrow{\sim} I^K_H(R^G_H(\underline{S})\square \underline{M})\to I^K_H(\underline{M})
\]
where the last map is $I^K_H$ applied to the action map of $R^G_H(\underline{S})$ on $\underline{M}$.  This composite defines a $R^G_K(\underline{S})$-module structure on $I^K_H(\underline{M})$.  Since both $I^K_H$ and $R^K_H$ preserve free objects and direct sums, we also get induction and restriction functors on categories of projective objects.  We summarize this discussion in the following proposition.

\begin{proposition}\label{induced functors on projectives}
    The coefficient system induction and restriction functors $I^K_H$ and $R^K_H$ restrict to adjunctions 
    \[
    \begin{tikzcd}[column sep =large]
        \mathrm{Mod}_{R^G_K(\underline{S})} \ar[shift left=2mm, "I^K_H"]{r} & \mathrm{Mod}_{R^G_H(\underline{S})} \ar[shift left=2mm,"R^K_H"]{l}
    \end{tikzcd}
\]
and 
\[
    \begin{tikzcd}[column sep =large]
    \mathrm{Proj}_{R^G_K(\underline{S})} \ar[shift left=2mm, "I^K_H"]{r} & \mathrm{Proj}_{R^G_H(\underline{S})} \ar[shift left=2mm,"R^K_H"]{l}
    \end{tikzcd}
\]
where the top arrow denotes the left adjoint.

\end{proposition}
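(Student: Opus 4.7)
The plan is to bootstrap from the coefficient-system adjunction $I^K_H \dashv R^K_H$, which is immediate from the definition of $I^K_H$ as a left Kan extension. The first step is to verify that the unit $\underline{M} \to R^K_H I^K_H \underline{M}$ and counit $I^K_H R^K_H \underline{N} \to \underline{N}$ of this adjunction are module maps, once $I^K_H(\underline{M})$ has been equipped with the $R^G_K(\underline{S})$-module structure defined via \cref{Frobenius reciprocity}. This is essentially a diagram chase using the naturality of the Frobenius reciprocity isomorphism together with the strong monoidality of $R^K_H$, but it is the most subtle point of the argument: one must carefully track how the module action on $I^K_H(\underline{M})$, built out of the Frobenius isomorphism, interacts with the coefficient-system adjunction counit.

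Second, to obtain the adjunction on finitely generated projective modules, I would argue that each functor preserves $\mathrm{Proj}$. Both functors preserve direct summands (trivially), so it suffices to show that each sends finitely generated free modules to finitely generated free modules. On the restriction side, strong monoidality gives
\[
    R^K_H\bigl(R^G_K(\underline{S})_X\bigr) \cong R^G_H(\underline{S})_{\mathrm{Res}^K_H X},
\]
which is finite free over $R^G_H(\underline{S})$ whenever $X$ is a finite $K$-set. On the induction side, \cref{Frobenius reciprocity} applied with $\underline{M} = R^G_K(\underline{S})$ and $\underline{N} = A_Y$ reduces the computation to identifying $I^K_H(A_Y)$. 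For $Y = H/L$, the universal property of left Kan extension together with the Yoneda lemma gives $I^K_H(A_{H/L}) \cong A_{K/L}$, and extending by colimits yields $I^K_H(A_Y) \cong A_{K \times_H Y}$. Since $[K:H] < \infty$, $K \times_H Y$ is a finite $K$-set whenever $Y$ is a finite $H$-set, so
\[
    I^K_H\bigl(R^G_H(\underline{S})_Y\bigr) \cong R^G_K(\underline{S})_{K \times_H Y}
\]
is finite free.

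The main obstacle, as indicated, is the compatibility verification in the first step; the unit and counit of the coefficient-system adjunction are defined at the level of presheaves, and checking that they are module maps requires unwinding the action maps produced by Frobenius reciprocity. Once that bookkeeping is in place, the rest is a formal consequence of Frobenius reciprocity, strong monoidality of restriction, and the fact that both induction and restriction send finite $G$-sets to finite $G$-sets.
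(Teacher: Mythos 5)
Your proposal matches the paper's argument: the paper derives the module-category functors from strong monoidality of $R^K_H$ and the Frobenius-reciprocity module structure on $I^K_H$, and then passes to projectives by noting that both functors preserve free modules and direct summands (via $R^K_H(\underline{S}_X) \cong R^G_H(\underline{S})_{\mathrm{Res}^K_H X}$ and $I^K_H(\underline{S}_Y) \cong R^G_K(\underline{S})_{K\times_H Y}$), exactly as you do. The paper presents this as the discussion preceding the proposition rather than a formal proof, and in particular does not spell out the check that the unit and counit of the coefficient-system adjunction are $\underline{S}$-module maps, so the ``main obstacle'' you flag is a real subtlety but one the source leaves implicit as well.
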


It is straightforward to see how to compute the restriction functor.  We can also compute the induction functors fairly efficiently.

\begin{lemma}\label{induction on coefficient systems}
    For any $H\leq K\leq G$, any $H$-coefficient system $\underline{M}$, and any $J\leq K$, there is an isomorphism
    \[
        I^K_H(\underline{M})^J\cong \bigoplus\limits_{x\in (K/H)^J} \underline{M}^{J^{x}}
    \]
    where $J^x=x^{-1}Jx$ is the conjugate subgroup.
\end{lemma}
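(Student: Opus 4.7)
The plan is to reduce the claim to the case of a representable coefficient system, where it becomes a concrete set-theoretic bijection.

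First, I would observe that both sides of the asserted isomorphism are colimit-preserving functors of $\underline{M}$. The left-hand side is the composite of the induction $I^K_H$ (which is a left adjoint, being defined as a left Kan extension) with evaluation at $K/J$, and colimits in $\mathrm{Coeff}^K$ are computed pointwise. The right-hand side is a finite direct sum---finite since $G$ is---of evaluations of $\underline{M}$ at various subgroups of $H$. Since every $H$-coefficient system is a colimit of the representables $A_{H/L}$ for $L\leq H$ (the $\ZZ$-linearized Yoneda embedding is dense), it suffices to check the identity when $\underline{M}=A_{H/L}$.

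Next, I would identify $I^K_H(A_{H/L})$ using the induction-restriction adjunction and the Yoneda lemma. For any $\underline{N}\in\mathrm{Coeff}^K$,
\[
\mathrm{Hom}_{\mathrm{Coeff}^K}\bigl(I^K_H(A_{H/L}),\underline{N}\bigr)\cong \mathrm{Hom}_{\mathrm{Coeff}^H}\bigl(A_{H/L},R^K_H(\underline{N})\bigr)\cong R^K_H(\underline{N})^L=\underline{N}(K/L),
\]
so $I^K_H(A_{H/L})\cong A_{K/L}$. The desired isomorphism then reduces to exhibiting a natural bijection of $J$-sets
\[
(K/L)^J\;\cong\;\bigsqcup_{xH\in(K/H)^J}(H/L)^{J^x},
\]
where for each coset in $(K/H)^J$ a representative $x$ is chosen; different choices differ by right multiplication by an element of $H$, which conjugates $J^x$ within $H$ and yields canonically isomorphic fixed-point sets.

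To produce the bijection, I would use the projection $K/L\to K/H$, $kL\mapsto kH$. Since $L\leq H$, any $J$-fixed coset $kL$ (equivalently, $J^k\leq L$) projects to a $J$-fixed coset $kH\in(K/H)^J$. For a fixed representative $x$ of a class in $(K/H)^J$, the fiber over $xH$ is $\{xhL:h\in H\}$, which is in bijection with $H/L$ via $hL\mapsto xhL$; such a coset $xhL$ is $J$-fixed precisely when $h^{-1}J^xh\leq L$, i.e.\ when $hL\in(H/L)^{J^x}$. Partitioning $(K/L)^J$ by its image in $(K/H)^J$ then yields the claimed bijection. The main obstacle I expect is pure bookkeeping---tracking how the choice of coset representative affects $J^x$ by $H$-conjugation and checking that the right-hand side is well-defined up to canonical isomorphism---after which both the adjunction computation and the set-theoretic bijection are routine.
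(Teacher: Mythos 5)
Your proposal is correct and takes essentially the same approach as the paper's proof: both reduce to the representable case $\underline{M}=A_{H/L}$ using that both sides are colimit-preserving in $\underline{M}$, identify $I^K_H(A_{H/L})\cong A_{K/L}$ via adjunction and Yoneda, and then establish the bijection $(K/L)^J\cong\coprod_{xH\in(K/H)^J}(H/L)^{J^x}$ by fibering the projection $K/L\to K/H$. The only cosmetic difference is that you invoke density of the linearized representables directly, whereas the paper reduces in two steps (general $\to$ finitely generated $\to$ free $A_X$ $\to$ orbit $A_{H/L}$); these are equivalent.
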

\begin{proof}
    Every coefficient system is the directed colimit of its finitely generated sub-coefficient systems so it suffices to prove the result in the finitely generated case, since induction is a left adjoint and hence commutes with colimits.  Moreover, since every finitely generated coefficient system is a quotient of $A_X$ for some finite $X$, we can reduce further to proving the claim for $A_X$.  Finally, if $X\cong B\amalg C$ then we have $A_X\cong A_{B}\oplus A_C$ and thus we can reduce to the case $\underline{M} = A_{H/L}$ for some $L\leq H$.  

    In this case, the interplay of representable functors and left Kan extensions give an isomorphism 
    \[
        I_H^K(A_{H/L})\cong A_{K/L}
    \]
    and, unwinding the definitions, it remains to establish a natural bijection
    \[
        (K/L)^J\cong \coprod\limits_{x\in (K/H)^J} (H/L)^{J^x}
    \]
    for every $J\leq K$. 

    First note that if $x\in K$ such that $xH\in (K/H)^J$, then for all $j\in J$ we have $x^{-1}jx \in H$ and thus we have a map 
\begin{equation}\label{fixed points of orbits}
    (K/H)^J\to \{ xH\in K/H\mid J^x\subset H\}.
\end{equation}
It is straightforward to check that this map is a bijection.  Similarly, for any $xH\in (K/H)^J$ we have a bijection
\begin{equation}\label{fixed points of orbits 2}
    (H/L)^{J^x} \leftrightarrow \{yL\in H/L\mid J^{xy}\subset L\}
\end{equation}

Let $f\colon K/L\to K/H$ be the canonical quotient which induces a map $F\colon (K/L)^J\to (K/H)^{J}$. Thus we can express
    \[
        (K/L)^J = \coprod\limits_{xH\in (K/H)^{J}} F^{-1}(xH) 
        = \coprod\limits_{xH\in (K/H)^{J}} \{ xy_iL \mid J^{xy_i}\subset L\}
    \]
    where $y_1,\dots, y_n$ are a transversal of the cosets $H/L$.  The claimed bijection follows from this and the bijection \eqref{fixed points of orbits 2}.
\end{proof}
\begin{remark}\label{remark: induction preserves upward vanishing}
    Note that if $M$ is an $H$-coefficient system such that $M(H/L)=0$ for all $L\neq e$ then the lemma implies that $I^K_H(M)(K/P)=0$ for $P\neq e$.  
\end{remark}
\begin{corollary}\label{IndResExact}
    Induction and restriction are both exact functors between abelian categories.
\end{corollary}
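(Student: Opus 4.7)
The plan is to reduce both claims to the fact that (co)limits in the $\mathrm{Ab}$-valued functor category $\mathrm{Coeff}^G = \mathrm{Fun}(\mathcal{O}_G^{\mathrm{op}}, \mathrm{Ab})$ are computed pointwise. Consequently, a sequence of coefficient systems is exact if and only if its evaluation at every orbit $G/J$ is an exact sequence of abelian groups. Moreover, exactness of a sequence of modules over a coefficient ring can be checked on underlying coefficient systems, since kernels and cokernels in $\mathrm{Mod}_{\underline{S}}$ are computed in $\mathrm{Coeff}^G$ and then equipped with their induced module structure. With these reductions in hand, it suffices to check levelwise exactness of each functor on coefficient systems.

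For restriction, the argument is essentially formal: by definition $R^K_H = (\rho^{\mathrm{op}})^*$ is precomposition with a functor $\rho^{\mathrm{op}}\colon \mathcal{O}_H^{\mathrm{op}} \to \mathcal{O}_K^{\mathrm{op}}$, so
\[
R^K_H(\underline{M})^J = \underline{M}^{\rho(H/J)} = \underline{M}^{J}
\]
for every $J \leq H$. Thus $R^K_H$ is a relabelling of the indexing category that preserves kernels and cokernels pointwise, so it is exact.

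For induction, I would invoke the explicit formula of \cref{induction on coefficient systems}: for any $H$-coefficient system $\underline{M}$ and any $J \leq K$,
\[
I^K_H(\underline{M})^J \cong \bigoplus_{x \in (K/H)^J} \underline{M}^{J^x}.
\]
This exhibits the evaluation of $I^K_H$ at $J$ as a finite direct sum of evaluations of $\underline{M}$. Each evaluation functor $\underline{M} \mapsto \underline{M}^{J^x}$ is exact, and finite direct sums of exact functors into $\mathrm{Ab}$ are exact, so $I^K_H(-)^J$ is exact for every $J$. Combined with the preceding reduction, this proves $I^K_H$ is exact on $\mathrm{Coeff}^H$ and hence on $\mathrm{Mod}_{R^G_H(\underline{S})}$.

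There is no real obstacle here beyond recording these reductions carefully; the content of the proof is already encoded in the pointwise formula of \cref{induction on coefficient systems} and in the general pointwise nature of (co)limits in the relevant functor categories.
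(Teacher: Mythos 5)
Your proposal is correct and is essentially the proof the paper intends: the corollary is placed immediately after \cref{induction on coefficient systems} precisely so that exactness of induction is read off from the levelwise direct-sum formula, while exactness of restriction follows since $R^K_H$ is precomposition (i.e.\ $R^K_H(\underline{M})^J = \underline{M}^J$). Your added remark that exactness of $\underline{S}$-module sequences is detected on underlying coefficient systems, where (co)limits are computed pointwise, is exactly the right bookkeeping to transport the statement from $\mathrm{Coeff}$ to the module categories of \cref{induced functors on projectives}.
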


We will later need a special case of \cref{induction on coefficient systems} for the fixed point coefficient system $\FP(R)^H = R^H$ where $R$ is a ring with $G$-action. 

\begin{corollary}\label{induction is extension of scalars}
    Let $H\leq K\leq G$ and let $e\leq G$ denote the trivial group.  For any module $\underline{M}$ over $R^G_H(\FP(R))$, there is an isomorphism of $R_{\theta}[K]$-modules \[I^K_H(\underline{M})^e\cong R_{\theta}[K]\otimes_{R_{\theta}[H]}\underline{M}^e.\]
\end{corollary}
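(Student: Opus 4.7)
The plan is to exhibit the isomorphism as the $R_\theta[K]$-linear extension of the adjunction unit, and then verify the result on free modules.

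First I would use the adjunction $I^K_H \dashv R^K_H$ of \cref{induced functors on projectives}: the unit $\eta_{\underline{M}}\colon \underline{M}\to R^K_H I^K_H\underline{M}$ is a map of $R^G_H(\FP(R))$-modules, so evaluating at $H/e$ gives an $R_\theta[H]$-linear map $\underline{M}^e\to I^K_H(\underline{M})^e$. Since the target is naturally an $R_\theta[K]$-module by \cref{modules over coeff rings}, extension of scalars yields a natural map
\[
\varphi_{\underline{M}}\colon R_\theta[K]\otimes_{R_\theta[H]}\underline{M}^e \longrightarrow I^K_H(\underline{M})^e.
\]
Both source and target are right-exact functors of $\underline{M}$ and preserve direct sums: induction is in fact exact by \cref{IndResExact}, evaluation at an orbit is exact since colimits in $\mathrm{Ab}$ are computed levelwise, and $R_\theta[K]$ is free (hence flat) over $R_\theta[H]$. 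Since every $R^G_H(\FP(R))$-module admits a presentation by free modules of the form $R^G_H(\FP(R))_{H/L}$ for $L\le H$, it suffices to check that $\varphi_{\underline{M}}$ is an isomorphism on these.

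For the free case, Frobenius reciprocity (\cref{Frobenius reciprocity}) combined with the identification $I^K_H(A_{H/L})\cong A_{K/L}$ from the proof of \cref{induction on coefficient systems} yields $I^K_H(R^G_H(\FP(R))_{H/L})\cong R^G_K(\FP(R))_{K/L}$, whose value at $e$ is $R\{K/L\}$ as an $R_\theta[K]$-module. On the other hand, choosing coset representatives $k_1,\dots,k_n$ of $K/H$ gives a standard $R_\theta[K]$-linear isomorphism $R_\theta[K]\otimes_{R_\theta[H]} R\{H/L\}\cong R\{K/L\}$ sending $k_i\otimes(r\otimes yL)\mapsto r^{k_i}\otimes k_iyL$, and tracing $\varphi$ through the adjunction unit confirms it realizes this map.

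The main obstacle is verifying $K$-equivariance in the free case: the $K=W_K(e)$-action on $I^K_H(\underline{M})^e$ involves both a permutation of the direct summands indexed by $K/H$ (from \cref{induction on coefficient systems}) and the twisting by the $G$-action on $R$, and these must match left multiplication in $R_\theta[K]$ via the compatibility $g(rm)=r^g(gm)$ of \cref{twisted group ring modules}. This check reduces to a direct computation on coset representatives using $(r^a)^b=r^{ba}$, after which the rest of the proof is routine.
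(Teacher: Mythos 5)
Your proposal is correct, but it takes a genuinely different route from the paper's proof. The paper proves the isomorphism directly for an arbitrary module $\underline{M}$: it applies \cref{induction on coefficient systems} (with $J = e$, so that $(K/H)^e = K/H$) to read off $I^K_H(\underline{M})^e \cong \bigoplus_{i=1}^n \underline{M}^e$ together with the explicit $K$-action (permuting summands by the $K$-action on cosets, with residual twisting by elements of $H$), and then observes that $R_\theta[K]\otimes_{R_\theta[H]} N \cong \bigoplus_{i=1}^n N$ carries the identical action because the coset representatives $\gamma_1,\dots,\gamma_n$ form a basis of $R_\theta[K]$ as a right $R_\theta[H]$-module. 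That is a single unwinding; no reduction is needed. Your proof instead builds a natural map $\varphi_{\underline{M}}$ from the unit of the $I^K_H \dashv R^K_H$ adjunction and extension of scalars, then reduces to the free case $\underline{M} = R^G_H(\FP(R))_{H/L}$ using right-exactness and preservation of direct sums on both sides, and finally identifies both sides with $R\{K/L\}$ via Frobenius reciprocity and $I^K_H(A_{H/L}) \cong A_{K/L}$. The trade-off: your argument buys a canonical, manifestly natural transformation and a formal reduction to generators, at the cost of the extra reduction step and the explicit equivariance check you flag at the end (which is essentially the same coset bookkeeping the paper performs once, for general $\underline{M}$). One small point worth making explicit in your reduction: the 5-lemma step needs $\varphi$ to be a natural transformation, which it is by construction, and both functors preserve arbitrary (not just finite) direct sums, so the presentation by free modules can be taken without finiteness hypotheses.
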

\begin{proof}
    Let $\gamma_i,\dots \gamma_n$ denote a collection of representatives for left cosets $K/H$.  By \cref{induction on coefficient systems}
    \[
        I^K_H(\underline{M})^e \cong \bigoplus\limits_{i=1}^n \underline{M}^e
    \]
    where the $K$-action on the right permutes the summands via the action of $H$ on the $\gamma_i$ and then acts on the the copies of $\underline{M}^e$ by the residual elements of $H$ which arise from this action.  
    
    On the other hand, $\gamma_1,\dots, \gamma_n$ form a basis for $R_{\theta}[K]$ as a right $R_{\theta}[H]$-module.  It follows that 
    \[
        R_{\theta}[K]\otimes_{R_{\theta}[H]} N\cong \bigoplus\limits_{i=1}^n N 
    \]
    for any left $R_{\theta}[H]$-module $N$, with $H$-action given exactly as described previously, which implies the result. 
\end{proof}

In addition to induction and restriction, there is a third functor relating categories of coefficient systems.  If $g\in G$ and $H\leq G$ then there are evident isomorphisms of categories $\mathrm{Conj}_H^g\colon \mathcal{O}^{op}_{H}\to \mathcal{O}^{op}_{H^g}$ which send an orbit $H/L$ to $H/L^g$.  We define the \emph{conjugation} functor
\begin{definition}\label{definition: conjugation functors}
    The \emph{conjugation} functor 
    \[
     c_g\colon \mathrm{Coeff}^H\to \mathrm{Coeff}^{H^g}
    \]
    is the restriction along $\mathrm{Conj}_{H^g}^{g^{-1}}
\colon \mathcal{O}^{\op}_{H^g}\to \mathcal{O}^{\op}_{H}$.  
\end{definition}
The following properties are immediately checked.
\begin{lemma}\label{conjugation lemmas}
    For any $H\leq K$ and $g,h\in G$,
    \begin{enumerate}
        \item  there are equalities $c_gc_h = c_{hg}$,
        \item  there are equalities $c_gI^K_H = I^{K^g}_{H^g}c_g$ and $c_gR^K_H = R^{K^g}_{H^g}c_g$,
        \item if $h\in H$ then $c_h\colon \mathrm{Coeff}^H\to\mathrm{Coeff}^H$ is the identity functor.
    \end{enumerate}
\end{lemma}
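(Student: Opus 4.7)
The plan is to reduce all three statements to direct manipulations of the underlying conjugation functors on orbit categories. Recall that $c_g$ is defined as pullback along $\mathrm{Conj}^{g^{-1}}_{H^g}\colon \mathcal{O}^{op}_{H^g}\to \mathcal{O}^{op}_H$, and that pullback along functors is contravariantly functorial. Hence compositions of $c_g$'s, as well as interactions with the restriction functor $R^K_H$ (itself a pullback), can be analyzed by composing the underlying $\mathrm{Conj}$ functors in the opposite order and then verifying equality of those composites.

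For (i), unwinding yields $c_g c_h = (\mathrm{Conj}^{h^{-1}}_{H^h} \circ \mathrm{Conj}^{g^{-1}}_{H^{hg}})^{\ast}$, so it suffices to check that this composite of orbit-category functors equals $\mathrm{Conj}^{(hg)^{-1}}_{H^{hg}}$. On an object $H^{hg}/M$ one computes $(M^{g^{-1}})^{h^{-1}} = M^{(hg)^{-1}}$ using the identity $(L^a)^b = L^{ab}$ in $G$, and a parallel check handles morphisms. For the restriction half of (ii), the same strategy shows that both $c_g R^K_H$ and $R^{K^g}_{H^g} c_g$ are pullbacks along functors $\mathcal{O}^{op}_{H^g}\to \mathcal{O}^{op}_K$ each of which sends $H^g/J$ to $K/J^{g^{-1}}$ (through $\mathcal{O}^{op}_H$ and $\mathcal{O}^{op}_{K^g}$ respectively). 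The induction half of (ii) then follows by adjointness: by (i) the functor $c_g$ is an equivalence with inverse $c_{g^{-1}}$, and combining this with $I^K_H\dashv R^K_H$, one takes left adjoints of the restriction identity and then relabels (replacing $g$ by $g^{-1}$ and $H,K$ by $H^g,K^g$) to recover $c_g I^K_H = I^{K^g}_{H^g} c_g$.

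For (iii), when $h\in H$ we have $H^h = H$, but $\mathrm{Conj}^{h^{-1}}_H$ sends $H/L$ to $H/hLh^{-1}$, which is not literally $H/L$ in $\mathcal{O}_H$. There is, however, a canonical natural isomorphism $\mathrm{Conj}^{h^{-1}}_H \Rightarrow \mathrm{id}_{\mathcal{O}_H}$ whose component at $H/L$ is the $H$-equivariant isomorphism $H/hLh^{-1} \xrightarrow{\sim} H/L$ given by $a(hLh^{-1})\mapsto ahL$; a short check shows this is well-defined, $H$-equivariant, and natural in $H/L$. Pulling back produces a natural isomorphism $c_h \cong \mathrm{id}_{\mathrm{Coeff}^H}$, which is the sense in which the statement holds. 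The main subtlety of the proof lies precisely here: (iii) should be interpreted up to the above canonical natural isomorphism rather than as strict equality of functors; once this is acknowledged, the remaining verifications in (i) and (ii) are routine unpackings of the definitions.
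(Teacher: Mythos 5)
The paper offers no actual proof here; it just says the properties are ``immediately checked'' and then, in the remark immediately after, insists that all three items are \emph{strict equalities} of functors, not natural isomorphisms. Your observation about part (iii) is therefore not a routine unpacking --- it is a genuine challenge to that remark, and I believe you are right. With $\mathcal{O}_H$ defined as the full subcategory of $H$-sets whose objects are the coset spaces $H/L$ indexed by subgroups $L\leq H$, and with $\mathrm{Conj}^{h^{-1}}_H$ sending $H/L$ to the \emph{different} object $H/hLh^{-1}$, the pullback $c_h$ applied to a coefficient system $\underline{M}$ produces $(c_h\underline{M})^L = \underline{M}^{hLh^{-1}}$, which need not literally equal $\underline{M}^L$. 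Your explicit natural isomorphism $a(hLh^{-1})\mapsto ahL$ is the correct fix, and it does produce $c_h\cong\mathrm{id}$. Note that this matters downstream: the appendix (Lemma~\ref{coherence lemma}(3)) cites this equality to show the definition of $\mathcal{P}(\omega)$ is independent of the choice of coset representative $y_i$, and if only a canonical natural isomorphism is available there, that argument would need to be reworked (though since the paper is ultimately only building a pseudo-functor, this is repairable rather than fatal).

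Your treatment of (i) and the restriction half of (ii) is correct and does produce genuine equalities, since both sides are pullbacks along strictly equal compositions of functors between orbit categories. However, you should hold the induction half of (ii) to the same standard you applied to (iii): your argument ``takes left adjoints'' of the restriction identity, and left adjoints are unique only up to canonical natural isomorphism. Since $I^K_H$ is defined by left Kan extension (a choice of colimits), the conclusion $c_g I^K_H = I^{K^g}_{H^g} c_g$ cannot be extracted as a literal equality from adjointness alone; at best you obtain $c_g I^K_H \cong I^{K^g}_{H^g} c_g$. You flag the equality/iso distinction carefully for (iii) but let it slide here, where the same caveat applies. In short: your proposal correctly establishes (i) and (ii)-restriction as equalities, correctly identifies that (iii) is only a canonical isomorphism, but should also note that (ii)-induction faces the same issue rather than folding it in with the ``routine'' parts.
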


We stress that the above lemma uses equalities and not natural isomorphisms of functors.

We end this section with a compatibility formula for induction and restriction.  
\begin{proposition}\label{Ind Res double coset}
    For any $J,H\leq K$, there is a natural isomorphism of functors
    \[
        R^K_HI^K_J\cong \bigoplus\limits_{\gamma\in J\backslash K/H} I^H_{H\cap J^{\gamma}}R^{J^{\gamma}}_{H\cap J^{\gamma}} c_{\gamma} 
    \]
    where the $\gamma$ run over a a set of representatives of the double cosets $J\backslash K/H$.
\end{proposition}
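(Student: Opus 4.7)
The plan is to verify the isomorphism pointwise on each level $L \leq H$ using the explicit description of induction from \cref{induction on coefficient systems}, and then assemble these into a natural isomorphism of $H$-coefficient systems.

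First I would compute both sides at level $L$. Since restriction is just precomposition,
\[
    (R^K_H I^K_J \underline{M})^L \;=\; (I^K_J \underline{M})^L \;\cong\; \bigoplus_{xJ \in (K/J)^L} \underline{M}^{L^{x}}
\]
by \cref{induction on coefficient systems}. For the right side, unwinding the definition of $c_\gamma$ shows that $(c_\gamma \underline{M})^{L'} = \underline{M}^{(L')^{\gamma^{-1}}}$ for $L' \leq J^\gamma$. Combining this with \cref{induction on coefficient systems} applied to $I^H_{P_\gamma}$ (where $P_\gamma = H\cap J^\gamma$) and the identity $(L^y)^{\gamma^{-1}} = L^{y\gamma^{-1}}$ yields
\[
    \bigl(I^H_{P_\gamma} R^{J^\gamma}_{P_\gamma} c_\gamma \underline{M}\bigr)^L \;\cong\; \bigoplus_{yP_\gamma \in (H/P_\gamma)^L} \underline{M}^{L^{y\gamma^{-1}}}.
\]

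The key step is to construct an $H$-equivariant bijection
\[
    \coprod_{\gamma \in J\backslash K/H} H/P_\gamma \;\xrightarrow{\;\sim\;}\; K/J, \qquad yP_\gamma \longmapsto y\gamma^{-1}J.
\]
This is well-defined because $\gamma P_\gamma \gamma^{-1} \subseteq J$, is evidently $H$-equivariant, and is a bijection since the orbits $H\gamma^{-1}J/J$ exhaust $K/J$ and the stabilizer of $\gamma^{-1}J$ in $H$ is exactly $P_\gamma$. Taking $L$-fixed points identifies the indexing sets of the two direct sums above, and by construction the summand at $yP_\gamma$ on the right is paired with the summand at $x = y\gamma^{-1}$ on the left (both equal to $\underline{M}^{L^{y\gamma^{-1}}}$).

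To finish, I would check naturality in both $L$ and $\underline{M}$. Naturality in $L$ is automatic because the bijection above is one of $H$-sets and fixed-point formation is functorial; naturality in $\underline{M}$ follows from the corresponding naturality of \cref{induction on coefficient systems}. The main obstacle is the bookkeeping with conjugation: verifying carefully that $c_\gamma$ produces $L^{y\gamma^{-1}}$ (and not some other conjugate) in the indexing subgroup, and ensuring that the choice of double coset representative $\gamma \in J\backslash K/H$ correctly aligns with the $H$-orbit decomposition of $K/J$ (which is naturally indexed by $H\backslash K/J$, so implicitly requires the bijection $J\gamma H \leftrightarrow H\gamma^{-1}J$).
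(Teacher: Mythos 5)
Your proof is correct, but it takes a genuinely different route from the paper's. The paper's proof is a one-sentence reduction: since induction, restriction, and conjugation all commute with colimits, it suffices to check the formula on the free objects $A_X$ for finite $J$-sets $X$; on these all three functors are computed by applying the corresponding $G$-set operations to $X$ and then taking $A_{(-)}$, so the result follows directly from the classical Mackey double coset formula for $G$-sets. Your proof instead computes both sides levelwise via \cref{induction on coefficient systems} and builds the isomorphism by hand from the $H$-equivariant bijection
\[
    \coprod_{\gamma \in J\backslash K/H} H/(H\cap J^\gamma) \xrightarrow{\;\sim\;} K/J, \qquad yP_\gamma \mapsto y\gamma^{-1}J,
\]
which is itself the Mackey formula for $G$-sets applied to the trivial $J$-set. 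So both proofs ultimately rest on the same classical decomposition, applied at different stages. Your computation of the right-hand side at level $L$, including the identity $(L^y)^{\gamma^{-1}} = L^{y\gamma^{-1}}$ from the conjugation functor and the verification that the stabilizer of $\gamma^{-1}J$ in $H$ is exactly $P_\gamma = H\cap J^\gamma$, is correct, and you rightly flag the $J\gamma H \leftrightarrow H\gamma^{-1}J$ inversion between double coset spaces. The tradeoff: your approach makes the levelwise structure of the isomorphism completely explicit (useful if one ever needs the comparison map concretely), but you pay for this with the extra bookkeeping required to verify that the levelwise identifications commute with all morphisms in $\mathcal{O}_H$ (not only restrictions between nested subgroups but also the Weyl group actions), a step you assert rather than spell out; the paper's reduction to free objects routes around all of this because maps out of $A_X$ are controlled by the Yoneda lemma, so naturality comes for free.
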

\begin{proof}
    All of restriction, induction, and conjugation commute with colimits so it suffices to check this result on the free coefficient systems $A_X$ for a finite $J$-set $X$.  In this case, the three functors can be computed by applying the corresponding functors to the $G$-set $X$ and then taking the corresponding free object.  The result is now immediate from the classical Mackey double coset formula.
\end{proof}

\section{A \textit{K}-theory construction for coefficient systems}\label{sec:KT of CS}

The algebraic $K$-theory of a ring $R$ is classically defined using the category of finitely generated projective modules over $R$. We will similarly define $K$-theory of a coefficient ring $\underline{S}$ using the category $\mathcal{P}^G(\underline{S})$ of finitely-generated projective modules over $\underline{S}$. Indeed, the category $\mathcal{P}^G(\underline{S})$ is a Waldhausen category with cofibrations given by monics (levelwise injections) and weak equivalences given by isomorphisms, so we may apply Waldhausen's $K$-theory construction \cite{waldhausen:1983}.  

One way to define a $K$-theory spectrum for $\underline{S}$ is to simply take the $K$-theory of $\mathcal{P}^G(\underline{S})$. Although this may be an interesting construction, it is not a genuine $G$-spectrum. We use the induction and restriction functors from \cref{subsec:induction restriction} to provide the extra structure needed to produce a genuine $G$-spectrum. 

The model of genuine $G$-spectra we use in this paper are \emph{spectral Mackey functors} which, loosely, can be described as systems of ordinary spectra parametrized the subgroups of $G$ and connected by transfer and restriction maps.  Work of Guillou--May shows that the category of spectral Mackey functors is Quillen equivalent to the category of genuine $G$-spectra \cite{GuillouMay:2011}; this result can be viewed as a stable version of Elmendorf's theorem.  An $\infty$-categorical approach to spectral Mackey functors has been developed and studied by Barwick and co-authors \cite{BarwickOne,Barwick2}.  Explicit comparisons between these $\infty$-categorical models and the classical genuine equivariant stable homotopy category can be found in \cite{Nardin, CMNN}.  

The spectral Mackey functor approach to genuine $G$-spectra is particularly well-suited to constructions involving $K$-theory.  Following the approach of Bohmann--Osorno \cite{BohmannOsorno:2015} and Malkiewich--Merling \cite{malkiewich/merling:2016}, one can construct examples of genuine $G$-spectra by first building a \emph{categorical Mackey functor} $\mathcal{M}$. It will suffice to think of this as a collection of categories $\mathcal{M}^H$ indexed on the subgroups of $G$ and connected by restrictions and transfers.  If the constituent categories $\mathcal{M}^H$ are sufficiently well-structured, i.e.\ they are all Waldhausen or permutative, then one can apply a multiplicative form of $K$-theory (see \cite{GMMO-final}) levelwise to obtain a spectral Mackey functor.  

This framework reduces the production of the genuine $K$-theory spectrum of a coefficient ring into two parts: first, we find input categories for the machine that produce the desired spectrum after applying $K$-theory; second, we need to check that these categories fit together coherently. We will take on the former task in this section and delay the full details of the latter to \cref{sec:tech pf app}.

For a coefficient ring $\underline{S}$, we write $\mathcal{P}^H_{\underline{S}}$ for the category of projective modules over the $H$-coefficient ring $R^G_H(\underline{S})$.  These categories are exact and \cref{induced functors on projectives,IndResExact} tell us that induction and restriction are exact functors 
\[
    \begin{tikzcd}[column sep =large]
        \mathcal P^H_{\underline{S}} \ar[shift left=2mm, "I^K_H"]{r} & \mathcal P^K_{\underline{S}} \ar[shift left=2mm,"R^K_H"]{l}
    \end{tikzcd}
\]
for any $H\leq K$.  Since exact functors extend to give maps on $K$-theory spectra, we obtain maps of spectra 
\[
\begin{tikzcd}[column sep = large]
        K(\mathcal P^H_{\underline{S}}) \ar[shift left=2mm, "K(I^K_H)"]{r} & K(\mathcal P^K_{\underline{S}}) \ar[shift left=2mm,"K(R^K_H)"]{l}
    \end{tikzcd}
\]

This data is the essence of the following theorem.
\begin{theorem}\label{KTheoryExists} Let $\underline{S}$ be a ring $G$-coefficient system. There is a genuine $G$-spectrum $K_G(\underline{S})$, called the \emph{equivariant algebraic $K$-theory} of $\underline{S}$  with \[K_G(\underline{S})^H\simeq K(\mathcal{P}^H_{\underline{S}}).\]  After passing through these equivalences, the transfers, restrictions, and conjugations of $K_G(\underline{S})$ agree with $K(I^H_K)$, $K(R^H_K)$ and $K(c_g)$ respectively.
\end{theorem}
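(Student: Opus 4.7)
The plan is to realize $K_G(\underline{S})$ as a spectral Mackey functor in the Guillou--May framework by first assembling the data $\{\mathcal{P}^H_{\underline{S}}\}_{H\leq G}$ with induction, restriction, and conjugation into a \emph{categorical Mackey functor} $\mathcal{M}_{\underline{S}}$, and then applying a multiplicative $K$-theory machine (such as the one developed in \cite{GMMO-final}) levelwise.  Since each $\mathcal{P}^H_{\underline{S}}$ is exact and the structure functors $I^K_H$, $R^K_H$, and $c_g$ are all exact by \cref{induced functors on projectives,IndResExact,conjugation lemmas}, applying $K$-theory levelwise will produce $K_G(\underline{S})^H \simeq K(\mathcal{P}^H_{\underline{S}})$ with the claimed structure maps, \emph{provided} the categorical data is sufficiently coherent to feed into the machine.

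First I would assemble the raw data.  To the object $G/H$ of the orbit category assign $\mathcal{M}_{\underline{S}}(G/H) := \mathcal{P}^H_{\underline{S}}$; to an inclusion $H\leq K$ assign the restriction functor $R^K_H$ as the covariant structure map and the induction functor $I^K_H$ as the transfer; and to each $g\in G$ assign $c_g$.  The next step is to check the categorical compatibilities.  Functoriality of restriction along chains $L\leq H\leq K$ is automatic from its definition as precomposition with $\rho^{\mathrm{op}}$; functoriality of induction follows from uniqueness (up to canonical isomorphism) of left Kan extensions; and functoriality of conjugation in $g$ is \cref{conjugation lemmas}(1).  The interactions $c_gI^K_H = I^{K^g}_{H^g}c_g$ and $c_gR^K_H = R^{K^g}_{H^g}c_g$ together with the triviality of $c_h$ for $h\in H$, both given as \emph{equalities} by \cref{conjugation lemmas}, are exactly what is needed to avoid a layer of coherence headaches.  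The categorical Mackey double coset isomorphism
\[
R^K_H I^K_J \;\cong\; \bigoplus_{\gamma \in J\backslash K/H} I^H_{H\cap J^\gamma}\, R^{J^\gamma}_{H\cap J^\gamma}\, c_\gamma
\]
is supplied by \cref{Ind Res double coset} and models the composition of spans in the Burnside category.

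The main obstacle is verifying the higher coherence needed so that $\mathcal{M}_{\underline{S}}$ genuinely defines a (pseudo)functor from the spanwise Burnside $2$-category $\mathcal{G}\mathcal{B}$ into an appropriate $2$-category of Waldhausen (or exact) categories.  Concretely, the double coset isomorphism above must be compatible with composition of spans and with the conjugation equalities, amounting to a finite list of pasting diagrams that must commute.  Rather than grind through these in the main text, I would relegate the verification to \cref{sec:tech pf app}, where they reduce to careful bookkeeping with explicit choices of double coset representatives and can be checked first on representable coefficient systems $A_X$ (as in the proofs of \cref{induction on coefficient systems,Ind Res double coset}) and then extended by colimits.

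Once the categorical Mackey functor is in place, the Guillou--May (or GMMO) machinery produces a spectral Mackey functor $K_G(\underline{S})$, equivalently a genuine $G$-spectrum.  The identification $K_G(\underline{S})^H \simeq K(\mathcal{P}^H_{\underline{S}})$ and the claim that its transfers, restrictions, and conjugations are given by $K(I^K_H)$, $K(R^K_H)$, and $K(c_g)$ are then immediate from the levelwise nature of the multiplicative $K$-theory functor, since $K$ is applied objectwise in $\mathcal{M}_{\underline{S}}$ and sends each structural functor to its induced map of $K$-theory spectra.
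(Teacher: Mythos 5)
Your proposal follows essentially the same strategy as the paper: build a categorical (pseudo-)Mackey functor from $\{\mathcal{P}^H_{\underline{S}}\}$ using the exact induction/restriction/conjugation functors and the structural lemmas (\cref{conjugation lemmas}, \cref{Ind Res double coset}, \cref{induced functors on projectives}), defer the pasting/coherence verification to the appendix, and feed the result into the Guillou--May/Malkiewich--Merling $K$-theory machine. The only difference is in how the coherence is organized: the paper's appendix constructs an explicit pseudo-functor $\mathbb{B}^G\to\mathrm{Wald}$ and obtains the composition constraints by recognizing $\mathcal{P}(\omega)$ as a left Kan extension (so uniqueness of adjoints supplies the coherent isomorphisms), then strictifies via \cref{pseudo is enough} before invoking \cite[Proposition 4.6]{malkiewich/merling:2016} — but you already point to exactly this idea when you invoke uniqueness of left Kan extensions and checking on representables $A_X$, so the content is the same.
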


\begin{remark}
    The construction of \cref{KTheoryExists} can be adapted with essentially no changes to produce $K$-theory spectra from Green functors or equivariant ring spectra. Although we do not pursue these constructions in this paper, we believe them to be of independent interest.
\end{remark}

The proof of \cref{KTheoryExists} is rather involved, and requires some careful book-keeping of functors. In the remainder of this section we give the proof with the exception of a few technical details which are deferred to \cref{sec:tech pf app}.

Before proceeding we need some notation. There is a strict $2$-category $\mathbb{B}^G$ whose objects are the subgroups of $G$.  The morphism category $\mathbb{B}^G(H,K)$ is a category whose objects consist, essentially, of formal combinations of transfers, restrictions, and conjugations.  This category is made explicit in \cref{sec:tech pf app}.

\begin{proposition}[{c.f.\ \cite[Proposition 4.6]{malkiewich/merling:2016}}] \label{MMProp4.6}
    Suppose $F\colon \mathbb{B}^G\to {\rm Wald}$ is a strict $2$-functor, where the codomain is the $2$-category of Waldhausen categories, exact functors, and natural transformations.  Additionally, suppose this $2$-functor satisfies the additional condition:
    \begin{itemize}
    \item[($\star$)] for any $A\in F(H)$ and any $S,T\in \mathbb{B}^G(H,K)$ the canonical map \[F(S)(A)\vee F(T)(A)\to F(S\amalg T)(A)\] is an isomorphism and $F(\emptyset)(A) \cong 0_K$ where $\vee$ is the coproduct in $F(K)$ and $0_K\in F(K)$ is the zero object. 
    \end{itemize}
    Then there is a spectral Mackey functor $K_G(F)$ with \[K_G(F)^H \simeq K(F(H))\] and transfers, restrictions, and conjugations given by applying $K$ to the categorical transfers, restrictions, and conjugations induced by $F$. 
\end{proposition}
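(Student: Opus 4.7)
The plan is to realize $K_G(F)$ as the output of the multiplicative equivariant $K$-theory machine of \cite{GMMO-final} applied to $F$, closely following the template of \cite[Proposition 4.6]{malkiewich/merling:2016}. The first step is to view the strict $2$-functor $F\colon \mathbb{B}^G\to {\rm Wald}$ as the coherent data of Waldhausen categories $F(H)$, together with exact functors $F(H)\to F(K)$ indexed by the objects of the morphism category $\mathbb{B}^G(H,K)$ (which are generated by formal composites of transfers, restrictions, and conjugations), subject to the $2$-cell relations encoded in $\mathbb{B}^G$.

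Second, I would apply Waldhausen's $S_\bullet$-construction, viewed as a functor into connective orthogonal spectra, levelwise. Because the $K$-theory functor is $2$-functorial on exact functors and their natural isomorphisms, the composite $K\circ F$ produces, for every $H\leq G$, a spectrum $K(F(H))$ together with spectrum-level transfer, restriction, and conjugation maps, and coherent homotopies implementing the relations of $\mathbb{B}^G$.

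Third, I would promote this pre-Mackey data to a genuine spectral Mackey functor using condition $(\star)$, which is exactly the additivity hypothesis needed to invoke the Bohmann--Osorno and Malkiewich--Merling machine. The condition says that $F$ carries disjoint unions of morphisms in $\mathbb{B}^G(H,K)$ to categorical coproducts (wedges) in ${\rm Wald}(F(H),F(K))$ and the empty morphism to the zero functor. After applying $K$-theory, the Waldhausen additivity theorem identifies $K$ of such a coproduct of exact functors with the wedge of the corresponding maps of spectra, so $K\circ F$ is additive on morphisms. This is precisely the input required to produce a genuine spectral Mackey functor in the sense of \cite{GuillouMay:2011}, and once one feeds $K\circ F$ into the machine the identifications $K_G(F)^H\simeq K(F(H))$ and of transfers, restrictions, and conjugations with $K(I^H_K)$, $K(R^H_K)$, and $K(c_g)$ are automatic from the construction.

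The main obstacle, and the reason for the forward reference to \cref{sec:tech pf app}, is the coherence bookkeeping: one must verify that every relation in $\mathbb{B}^G$ is realized by a canonical natural isomorphism in ${\rm Wald}$, and that these natural isomorphisms assemble into a sufficiently strict structure for the multiplicative $K$-theory machine to accept the input. In particular, one must carefully track how the strict $2$-functoriality of $F$, together with the additivity $(\star)$, rigidifies into the categorical bimorphism data expected by \cite{GMMO-final}. Once this bookkeeping is carried out, the construction of $K_G(F)$ is essentially formal.
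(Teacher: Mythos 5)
Your outline correctly identifies the overall strategy: the proposition is proved by reducing to the Malkiewich--Merling input theorem (stated in this paper as \cref{Appendix B 1-7}, after \cite[Proposition 4.6]{malkiewich/merling:2016}), which lists explicit categorical data sufficient to produce a spectral Mackey functor via the Guillou--May and Bohmann--Osorno machinery. But the proposal elides the actual work, and the places where it is concrete point in the wrong direction.

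The genuine gap is that you never address the mismatch between the morphism categories $S_{H,K}$ of $\mathbb{B}^G$ (unbased spans) and the categories $S^+_{H,K}$ of \emph{retractive} finite $G$-sets that parametrize the required data in \cref{Appendix B 1-7}. A strict $2$-functor $F$ gives you exact functors $F(S)$ only for objects $S\in S_{H,K}$, and natural transformations $F(\varphi)$ only for morphisms $\varphi$ in $S_{H,K}$. But a morphism $f\colon S\to T$ in $S^+_{H,K}$ can send elements of $S$ into the basepoint section, and this has no analogue in $S_{H,K}$. One must therefore \emph{define} $R^f$ for such $f$, which is done using the decomposition $S_\circ = U^f_\circ\amalg (S\setminus U^f)$, the $(\star)$-isomorphism, and the collapse map onto one wedge summand — and then verify by a nontrivial diagram chase that $R^{g\circ f}=R^g\circ R^f$ (item (4)) and that these transformations interact correctly with horizontal composition (item (7)). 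This is the substance of the appendix proof and it is not "essentially formal."

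Separately, your invocation of the Waldhausen additivity theorem at the spectrum level is misplaced. The verification that must be carried out happens entirely before $K$-theory is applied: the conditions (1)--(7) are conditions on Waldhausen categories, exact functors, and natural transformations. Condition $(\star)$ is used categorically to show $R^{(\emptyset_+)}\cong 0$ and $R^{(S_\circ\amalg T_\circ)_+}\cong R^S\vee R^T$ on the nose (up to iso), not to establish a stable splitting after applying $K$. Similarly, "applying $S_\bullet$ levelwise and then coherently assembling spectrum-level homotopies" is not what happens; the multiplicative machine requires the categorical bimorphism data up front and produces the spectrally enriched functor all at once. Restructuring the argument around that machine's actual input — in particular the $S^+_{H,K}$-indexed data — is what is missing.
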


Since we have not yet defined the category $\mathbb{B}^G$ we cannot include a proof here.  The proof of the proposition is deferred to \cref{sec:tech pf app}.

We would like to say that the assignment $H\mapsto \mathcal P^H_{\underline{S}}$ is the object function of a strict $2$-functor and then apply to \cref{MMProp4.6} to construct a spectral Mackey functor.  Unfortunately, this does not easily assemble into a strict $2$-functor since the composition of induction functors is not strictly associative.  Instead, the induction functors are only associative up to coherent natural isomorphisms, i.e.\ $I^H_KI^K_J\cong I^H_J$ and both of the two ways of reducing the composition of three inductions to a single induction are the same.  Maps between $2$-categories that respect composition up to natural isomorphism are known as \emph{pseudofunctors}, and it turns out this data is sufficient for our purposes.
\begin{proposition}\label{pseudo is enough}
    Suppose $F\colon \mathbb{B}^G\to {\rm Wald}$ is a pseudofunctor which satisfies condition $(\star)$.  Then there is a spectral Mackey functor $K_G(F)$ with homotopy equivalences of spectra \[K_G(F)^H \simeq K(F(H))\] and, after passing through these equivalences, the transfers, restrictions, and conjugations are given by applying $K$ to to the categorical transfers, restrictions, and conjugations induced by $F$. 
\end{proposition}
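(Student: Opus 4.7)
The plan is to reduce \cref{pseudo is enough} to \cref{MMProp4.6} by replacing the pseudo-functor $F$ with an equivalent strict $2$-functor. The strategy is to apply a standard strictification procedure: produce a strict $2$-functor $\tilde F\colon \mathbb{B}^G \to \mathrm{Wald}$ together with a pseudo-natural equivalence $\eta\colon F \xrightarrow{\sim} \tilde F$, then invoke \cref{MMProp4.6} on $\tilde F$ and transport the result across $\eta$.

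First, I would recall the strictification theorem for pseudo-functors into $\mathrm{Cat}$ (due to Power, and extensively used in the $2$-categorical literature): for any pseudo-functor $F$ out of a small $2$-category, there is a strict $2$-functor $\tilde F$ together with a pseudo-natural equivalence $F \simeq \tilde F$. Concretely, one model for $\tilde F(H)$ is the category whose objects are pairs $(J, A)$ with $J$ a subgroup and $A \in F(J)$, decorated with a chosen morphism $J \to H$ in $\mathbb{B}^G$; composition becomes strictly associative because one concatenates formal morphisms instead of evaluating $F$. For $S\in \mathbb{B}^G(H,K)$ the functor $\tilde F(S)$ simply post-composes the chosen morphism with $S$, making strict associativity and unitality manifest.

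Second, I would verify that $\tilde F$ lands in the $2$-category $\mathrm{Wald}$. The Waldhausen structure on $\tilde F(H)$ is transported from $F(H)$ along the equivalence $\eta_H\colon F(H) \xrightarrow{\sim} \tilde F(H)$: a morphism in $\tilde F(H)$ is a cofibration (resp.\ weak equivalence) iff its image under a quasi-inverse to $\eta_H$ is. Then each $\tilde F(S)$ is exact and each structural natural isomorphism of $\tilde F$ is a natural weak equivalence. Condition $(\star)$ is preserved: since $\eta$ is pseudo-natural, the coproduct and zero-object identifications in $F(K)$ transport along $\eta_K$ to give the required isomorphisms $\tilde F(S\amalg T)(A) \cong \tilde F(S)(A) \vee \tilde F(T)(A)$ and $\tilde F(\emptyset)(A) \cong 0_K$ in $\tilde F(K)$.

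Third, I apply \cref{MMProp4.6} to $\tilde F$ to obtain a spectral Mackey functor $K_G(\tilde F)$ with $K_G(\tilde F)^H \simeq K(\tilde F(H))$, and define $K_G(F) := K_G(\tilde F)$. The pseudo-natural equivalence $\eta$ induces equivalences of Waldhausen categories $\eta_H\colon F(H) \xrightarrow{\sim} \tilde F(H)$, hence equivalences of spectra $K(\eta_H)\colon K(F(H)) \xrightarrow{\sim} K(\tilde F(H)) \simeq K_G(F)^H$, giving the first claim. For the compatibility of transfers, restrictions, and conjugations: pseudo-naturality of $\eta$ says that for each generating morphism $S \in \mathbb{B}^G(H,K)$ there is a natural isomorphism $\eta_K \circ F(S) \cong \tilde F(S) \circ \eta_H$ in $\mathrm{Wald}$, and applying $K$ yields the required homotopy-commutative squares identifying the structure maps of $K_G(F)$ with $K(F(S))$.

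The main obstacle is the technical bookkeeping in the second step, namely confirming that the strictification construction — originally formulated for $\mathrm{Cat}$-valued pseudo-functors — can be carried out inside the $2$-category $\mathrm{Wald}$ so that all the transported cofibrations, weak equivalences, exactness conditions, and the coproduct/zero-object axioms of $(\star)$ survive intact. Once this is in place, the rest of the argument is formal and follows the pattern of \cite{malkiewich/merling:2016}.
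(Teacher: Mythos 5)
Your proposal follows the same route as the paper: strictify the underlying $\mathrm{Cat}$-valued pseudo-functor via Power's theorem, pull back the Waldhausen structures along the resulting equivalences, check that condition $(\star)$ survives (since coproducts and zero objects are not Waldhausen-specific data and transport along the pseudo-natural equivalence), and then apply \cref{MMProp4.6}. The reasoning and the key citation match the paper's proof.
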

\begin{proof}
    By forgetting structure, the pseudofunctor $F$ determines a pseudofunctor $F\colon \mathbb{B}^G\to {\rm Cat}$ to the $2$-category of categories, functors, and natural transformations.  It is well known (see \cite{Power:1989}) that all such pseudofunctors are pseudonaturally equivalent to some strict $2$-functor $\widehat{F}\colon \mathbb{B}^G\to {\rm Cat}$.  That is, there are equivalences of categories $\widehat{F}(H)\simeq F(H)$ and these equivalences commute, up to natural isomorphisms, with the functors induced by all $1$-cells in $\mathbb{B}^G$.  We can pullback the Waldhausen structures on $F(H)$ along these equivalences to lift $\widehat{F}$ to a strict $2$ functor into ${\rm Wald}$.  We claim that $\widehat{F}$ continues to satisfy $(\star)$.  Granting this claim, we are done by applying \cref{MMProp4.6}.
    
    To prove the claim, fix an $S,T\in S_{H,K}$ and $A\in \widehat{F}(H)$.  Then we have
    \[
        \widehat{F}(S\amalg T)(A)\cong F(S\amalg T)(A)\xleftarrow{\cong} F(S)(A)\vee F(T)(A)\cong     \widehat{F}(S)(A)\vee \widehat{F}(T)(A)
    \]
    where the middle isomorphism uses the fact that $F$ satisfies $(\star)$.  The key here is that being a zero object or a coproduct is not a Waldhausen property and so strictification procedure will not affect this.  That $\widehat{F}(\emptyset)(A)\cong 0_K$ is similar.
\end{proof}

The equivariant $A$-theory of \cite{malkiewich/merling:2016} is recovered from \cref{pseudo is enough}.

\begin{example} \label{A theory as a waldhausen Mackey}
    Let $F\colon\mathbb{B}^G\to \mathrm{Wald}$ be the $2$-functor which, on objects, sends $H$ to the Waldhausen category $R^H_{\fd}(X)$ of finitely dominated $H$-retractive spaces over a $G$-space $X$.  Then the resulting spectral Mackey functor is the equivariant $A$-theory $A_G(X)$ from \cite{malkiewich/merling:2016}, discussed in more detail in \cref{sec:bckgd on A theory}. Indeed, tracing through the arguments in \cite[Proposition 4.11]{malkiewich/merling:2016}, one sees that Malkiewich--Merling first construct pseudofunctorial transfers and restriction and then perform the standard strictification procedure.
\end{example}

The next proposition, together with \cref{pseudo is enough}, proves \cref{KTheoryExists}.

\begin{proposition}\label{spectral Mackey functor}
    For every coefficient ring $\underline{S}$ the assignment  $H\mapsto \mathcal{P}^H_{\underline{S}}$, together with the induction, restriction, and conjugation functors assembles into a pseudofunctor $\mathbb{B}^G\to {\rm Wald}$ satisfying condition $(\star)$.
\end{proposition}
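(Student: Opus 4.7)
The plan is to define the assignment on objects by $F(H) = \mathcal{P}^H_{\underline{S}}$ and on the basic 1-morphisms of $\mathbb{B}^G$ (transfers, restrictions, and conjugations) by the corresponding functors $I^K_H$, $R^K_H$, and $c_g$ from \cref{subsec:induction restriction}. These are all exact functors between exact categories (by \cref{IndResExact}, and because conjugation is an isomorphism of categories), so each lands in $\mathrm{Wald}$. Any 1-morphism in $\mathbb{B}^G(H,K)$ can be described as a formal span, or a disjoint union of such, so we extend $F$ to an arbitrary 1-morphism by assigning the appropriate composite of the basic functors in the evident way.

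To upgrade this data to a pseudo-functor, I then need to produce, for each $2$-cell in $\mathbb{B}^G$, a natural isomorphism between the corresponding composites of $I$, $R$, and $c$. The key inputs are \cref{conjugation lemmas}, which already supplies strict equalities among conjugations and between conjugations and induction/restriction, and the double coset isomorphism
\[
    R^K_H I^K_J \cong \bigoplus_{\gamma \in J\backslash K/H} I^H_{H\cap J^{\gamma}} R^{J^{\gamma}}_{H\cap J^{\gamma}} c_{\gamma}
\]
of \cref{Ind Res double coset}. Together these furnish all the required coherence isomorphisms, by allowing any composite of basic functors to be rewritten in a canonical normal form. Since induction, restriction, and conjugation all commute with colimits, verifying the pentagon and triangle axioms reduces to checking identities on the representable modules $\underline{S}_{X}$, where everything becomes a classical statement about finite $G$-sets---essentially the Mackey relations underlying the structure of $\mathbb{B}^G$ itself.

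Condition $(\star)$ is then essentially a formal consequence: for any $A \in \mathcal{P}^H_{\underline{S}}$ and $S, T \in S_{H,K}$, the functor associated to an elementary span is built from $I^K_H$, which is a left adjoint and hence commutes with coproducts in both the $G$-set parameter and the module argument. This gives $F(S \amalg T)(A) \cong F(S)(A) \vee F(T)(A)$, and similarly $F(\emptyset)(A) \cong 0_K$. The main obstacle I anticipate is the careful bookkeeping required to check the coherence axioms in full generality, particularly for triple composites where the double coset formula can be applied in more than one order. This is precisely the point at which deferring to \cref{sec:tech pf app} is natural, since making these verifications explicit requires first laying out the full definition of the 2-category $\mathbb{B}^G$.
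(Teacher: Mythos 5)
Your overall plan is in the same spirit as the paper's appendix proof: send $H$ to $\mathcal{P}^H_{\underline S}$, send basic spans to (composites of) $I$, $R$, $c$, extend over orbit decompositions, and reduce coherence checks to representables. The observation about condition $(\star)$ following from the fact that $\mathcal{P}$ is built as a direct sum over orbits is also essentially what the paper does.

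However, there is a real gap in your treatment of the $2$-cells of $\mathbb{B}^G$, and your route to coherence is both harder and slightly misstated. A $2$-cell in $\mathbb{B}^G$ is a map of spans, i.e.\ a $G$-map $\varphi\colon A\to B$ over $G/H\times G/K$, and a pseudo-functor must carry this to a \emph{natural transformation} $\mathcal{P}(\alpha)\Rightarrow\mathcal{P}(\beta)$, not a natural isomorphism; in general $\varphi$ collapses orbits and the induced map cannot be invertible. Neither \cref{conjugation lemmas} nor the double coset formula produces these transformations. What you need, and what the paper supplies, is the counit of the $I^B_A\dashv R^B_A$ adjunction: for a chain $A\le B\le C$ it gives
\[
\epsilon^B_A\colon I^C_A R^C_A\;\cong\;I^C_B\,I^B_A R^B_A\,R^C_B\;\Rightarrow\;I^C_B R^C_B,
\]
and the image of a general $2$-cell is assembled from these counits (padded by conjugations) orbit by orbit. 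Your proposal is silent on this, and it is not something that ``normal form rewriting'' can conjure: the double coset formula tells you two composites of $I,R,c$ are isomorphic, but a map of spans is asking for a directed comparison between two \emph{different} $1$-cells.

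Separately, for the compositor isomorphisms and their associativity, your plan of directly rewriting composites into a normal form and verifying the pentagon on representables would be quite laborious. The paper avoids this entirely by observing (via \cref{all functors are kan extensions}, presenting $I$, $R$, $c$ as left Kan extensions along the corresponding functors on finite $G$-sets) that $\mathcal{P}(\omega)$ is a left adjoint to an explicit composite $Q(\omega)^*$, and that $Q(\omega_2\ast\omega_1)\cong Q(\omega_2)Q(\omega_1)$ by the elementary double coset formula for $G$-sets. Uniqueness of left adjoints then produces the compositors and makes their associativity automatic, with no pentagon to check by hand. Your approach could in principle be made to work, but you should be aware that the paper's left-adjoint trick is doing genuine labor-saving work here, and that the missing $2$-cell construction via counits is an essential ingredient you would still need to add.
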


The fact that induction, restriction, and conjugation are compatible in a pseudofunctorial way is the data of \cref{Ind Res double coset} and \cref{conjugation lemmas}.  We leave the full details of the proof of \cref{spectral Mackey functor} to \cref{corollary: proof of proposition from main body} as they are quite technical and not essential to understanding the rest of the paper.

\subsection{A splitting on $K$-theory}\label{subsec:KT split}

In this subsection, we will prove the following theorem which is analogous to the $A$-theory splitting from \cite{badzioch/dorabiala:2016} and a splitting on $K$-theory from \cite{Luck}.

\begin{theorem}\label{K theory splitting}
    For any coefficient ring $\underline{S}$ there is a splitting of $K$-theory spectra
    \[
       K_G(\underline{S})^G\simeq K(\mathrm{Proj}_{\underline{S}})\simeq \prod\limits_{(H)\leq G}  K(\underline{S}_{\theta}^H)
    \]
    where the product is taken over conjugacy classes of subgroups of $G$ and $\underline{S}_{\theta}^H$ is the twisted group ring.
\end{theorem}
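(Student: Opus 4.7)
The first equivalence $K_G(\underline{S})^G \simeq K(\mathrm{Proj}_{\underline{S}})$ should follow directly from the construction of $K_G(\underline{S})$ in \cref{KTheoryExists}: by design, the $G$-level of the pseudo-functor we fed into \cref{pseudo is enough} is precisely $\mathcal{P}^G_{\underline{S}} = \mathrm{Proj}_{\underline{S}}$, so there is nothing to prove beyond unpacking the definition.

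The product splitting is the substantive content. My plan is to exhibit an equivalence of Waldhausen categories
\[
    \mathrm{Proj}_{\underline{S}} \simeq \prod_{(H) \leq G} \mathrm{Proj}_{\underline{S}^H_\theta}
\]
and then apply $K$-theory, which takes finite products of exact categories to finite products of spectra. For the inclusion direction, for each conjugacy class $(H)$ the representable module $\underline{S}_{G/H}$ is a finitely generated projective $\underline{S}$-module whose endomorphism ring is $\underline{S}^H_\theta$ by the analysis in \cref{sec:bg on coeff sys}. This gives a Morita-type functor $\iota^H \colon \mathrm{Proj}_{\underline{S}^H_\theta} \to \mathrm{Proj}_{\underline{S}}$ sending $N \mapsto \underline{S}_{G/H} \otimes_{\underline{S}^H_\theta} N$; note that $\iota^H(\underline{S}^H_\theta) \cong \underline{S}_{G/H}$ recovers the representable. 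For the projection direction, for each $(H)$ I would construct a functor $\Psi^H \colon \mathrm{Proj}_{\underline{S}} \to \mathrm{Proj}_{\underline{S}^H_\theta}$ that extracts the ``pure $H$-type'' part of a projective module, in the spirit of the M\"obius-style splitting functors appearing in \cite[\S 10]{Luck}.

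To prove the resulting decomposition $\underline{M} \cong \bigoplus_{(H)} \iota^H\Psi^H(\underline{M})$, I would first verify it on free modules. Since every finite $G$-set $X$ decomposes as $X \cong \coprod_i G/H_i$, we have $\underline{S}_X \cong \bigoplus_i \underline{S}_{G/H_i} = \bigoplus_i \iota^{H_i}(\underline{S}^{H_i}_\theta)$, so free modules are manifestly in the essential image of the total functor $\bigoplus \iota^H$. The decomposition for general finitely generated projectives then follows by passing to direct summands of free modules, together with the fact that $\iota^H$ preserves projectives and direct summands. The main obstacle will be the definition and analysis of $\Psi^H$: naively setting $\Psi^H(\underline{M}) = \underline{M}^H$ is insufficient because there are nonzero morphisms $\underline{S}_{G/H} \to \underline{S}_{G/K}$ whenever $H$ is subconjugate to $K$, so the subcategories $\iota^H(\mathrm{Proj}_{\underline{S}^H_\theta})$ are not strictly orthogonal inside $\mathrm{Proj}_{\underline{S}}$. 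I would handle this by an inductive argument on the subgroup poset, splitting off the contribution of a maximal conjugacy class first (where $\Psi^H = (-)^H$ is correct) and quotienting out its image before continuing. Equivalently, one can filter $\mathrm{Proj}_{\underline{S}}$ by orbit-type support, verify each filtration step splits as an exact sequence of categories, and invoke Waldhausen's additivity theorem to produce the product of $K$-theory spectra.
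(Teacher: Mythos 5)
Your identification $K_G(\underline{S})^G \simeq K(\mathrm{Proj}_{\underline{S}})$ is correct and essentially definitional from the construction.

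The core of your plan, however, has a genuine defect: there is no equivalence of Waldhausen categories $\mathrm{Proj}_{\underline{S}} \simeq \prod_{(H)} \mathrm{Proj}_{\underline{S}^H_\theta}$, and you should recognize this as a dead end rather than as something to patch. Already for $G = C_2$ and $\underline{S}$ the constant coefficient ring, there is a nonzero morphism $\underline{S}_{G/e} \to \underline{S}_{G/G}$, whereas a product category has no nonzero morphisms between objects supported on distinct factors; the two categories have different $\mathrm{Hom}$-groups, so they are not equivalent. Relatedly, while the object-level decomposition $\underline{M} \cong \bigoplus_{(H)} \iota^H\Psi^H(\underline{M})$ does hold (by iterating \cref{projective splitting sequence}), the splittings are not canonical, so the $\Psi^H$ you want cannot be assembled into well-defined functors on all of $\mathrm{Proj}_{\underline{S}}$. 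The splitting is available only at the level of $K$-theory spectra, not of categories, so one cannot first build the categorical equivalence and then ``apply $K$.''

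The final two sentences of your proposal are, by contrast, essentially the paper's proof --- but they are not merely ``equivalent'' to your main plan; they are the only viable route. The paper filters $\mathrm{Proj}_{\underline{S}}$ by isotropy (the subcategories $\mathcal{P}_{\leq i}$, indexed by a linearization of the subconjugacy poset), proves that for $P \in \mathcal{P}_{\leq i}$ the smallest submodule $P_i$ containing $P^{H_i}$ fits into a \emph{split} short exact sequence $P_i \to P \to P/P_i$ with $P/P_i \in \mathcal{P}_{\leq(i-1)}$ (\cref{projective splitting sequence}, which is the genuine technical content and which you do not address), and feeds the resulting split sequences through Waldhausen's additivity theorem to obtain $K(\mathcal{P}_{\leq i}) \simeq K(\mathcal{P}_i) \times K(\mathcal{P}_{\leq(i-1)})$. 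It then identifies the graded piece $\mathcal{P}_i$ (the category of $H_i$-generated projectives) with $\mathrm{Proj}_{\underline{S}^{H_i}_\theta}$ via evaluation at $G/H_i$, with quasi-inverse $\Phi_i$; your $\iota^H$ agrees with $\Phi_i$ since both represent $Q \mapsto \mathrm{Mod}_{\underline{S}^{H_i}_\theta}(Q,(-)^{H_i})$ by adjunction. So the skeleton of your closing paragraph is right, the opening paragraph is wrong, and the real work --- proving the splitting lemma and identifying the graded pieces --- is not carried out in your proposal.
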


Before proving this theorem, we highlight some important examples.

\begin{example}\label{example: K theory of constant Z}
    If $\underline{S} = A_{G/G} = \underline{\ZZ}$ is the constant $\ZZ$ coefficient ring, then $\underline{S}^H = \ZZ$ has trivial $WH$ action for every $H$.  The splitting becomes
    \[
        K_G(A_{G/G})^G \simeq \prod\limits_{(H)\leq G} K(\ZZ [WH]).
    \]
    Since $A_{G/G}$ is the unit for coefficient rings, this is the $K$-theory of finitely generated projective coefficient systems.  This result can be seen as an analog of a splitting in the $K$-theory of Mackey functors from certain groups which is due to Greenlees \cite{greenlees:1992}.

\end{example}

\begin{example} \label{homotopy orbits twisted group ring}
    Let $X$ be a based $G$-space with $X^H$ connected for all $H$.  Define $\underline{S}=\underline{\ZZ[\pi_1X]}$ by
    \[
        \underline{\ZZ[\pi_1X]}^H = \ZZ[\pi_1(X^H)]
    \]
     with restrictions induced by the inclusions of fixed points $X^H\to X^K$ for $K\leq H$. The $WH$-actions are induced by the actions of $WH$ on $X^H$.  In this setting, the splitting becomes
     \[
        K_G(\underline{\ZZ[\pi_1X]})^G \simeq \prod\limits_{(H)\leq G} K(\ZZ[\pi_1(X^H)]_{\theta}[WH])
     \]
     This splitting will play a key role in our description of linearization in \cref{sec:Equivariant Linearization}.
\end{example}
\begin{example}
     Let us give a concrete example of $K_G(\underline{\ZZ[\pi_1X]})^G$. Let $X=S^2$ with action by the group $C_2$ by reflection over the plane containing the equator.  Then $X^{C_2} = S^1$ which gives 
     \[
        K_G(\underline{\ZZ[\pi_1X]})^G\simeq K(\ZZ[t^{\pm}])\times K(\ZZ[C_2])
     \]
     where the first term comes from identifying the group ring $\ZZ[\ZZ]$ with the ring of Laurent polynomials.  The $K$-theory of the ring of Laurent polynomials admits another splitting by the fundamental theorem of $K$-theory so we have
     \[
         K_G(\underline{\ZZ[\pi_1X]})^G\simeq K(\ZZ)\times \Sigma K(\ZZ)\times K(\ZZ[C_2]).
     \]
\end{example}

The remainder of this subsection is devoted to the proof of \cref{K theory splitting}, using a similar method as in \cite{badzioch/dorabiala:2016}.
Let $\{(e)=(H_0),(H_1),\dots, (H_n)=(G)\}$ be the set of conjugacy classes of subgroups of $G$ where $(H_j)\leq (H_i)$ implies $j\leq i$. Here $(H_j)\leq (H_i)$ means $H_j$ is subconjugate to $H_i$ in $G$ (with $(H_j)<(H_i)$ if $H_j$ is subconjugate but not conjugate to $H_i$).

Let $\underline{S}$ be a coefficient ring and let $\mathcal{P}$ be the category of finitely generated projective $\underline{S}$ modules. For $1\leq i\leq n$, let $\mathcal{P}_{\leq i}$ be the subcategory of finitely generated projective modules which vanish on $G/H_j$ for all $j>i$.  We have a sequence of inclusions of subcategories
\[
    0 = \mathcal{P}_{\leq 0} \to \dots \to \mathcal{P}_{\leq n} = \mathcal{P}
\]
and we note that each subcategory is closed under admissible extensions, admissible quotients, and admissible subobjects. If these categories were all abelian, these inclusions would induce fiber sequences on $K$-theory 
\[
    K(\mathcal{P}_{\leq (i-1)})\to K(\mathcal{P}_{\leq i}) \to K(\mathcal{P}_{\leq i}/\mathcal{P}_{\leq(i-1)})
\]
where $\mathcal{P}_{\leq i}/\mathcal{P}_{\leq(i-1)}$ is a quotient category. However, since the categories of projective modules are only guaranteed to be exact, we will need to use slightly different methods. In particular, we need to replace the notion of a quotient category with a new category we call $\mathcal{P}_{i}$.
\begin{definition}
    A $\underline{S}$-module $\underline{M}$ is \textit{$H_i$-generated} if the smallest submodule of $\underline{M}$ containing all of $\underline{M}^{H_i}$ is all of $\underline{M}$.  We denote the category of $H_i$-generated projective modules by $\mathcal{P}_i$.
\end{definition}

\begin{proposition}\label{projective splitting sequence}
    For every $P\in \mathcal{P}_{\leq i}$, there exists a split exact sequence of $\underline{S}$-modules
    \[
        P_i\to P\to P/P_i
    \]
    where $P_i$ is an $H_i$-generated object in $\mathcal{P}_{\leq i}$ and $P/P_i$ is in $\mathcal{P}_{\leq (i-1)}$.  Moreover, this sequence is unique up to isomorphism.
\end{proposition}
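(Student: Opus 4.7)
The plan is to define $P_i$ intrinsically as the submodule of $P$ generated by $P^{H_i}$, establish the statement for free modules directly, and then reduce the projective case to the free one via an adjunction. By construction, $P_i$ is $H_i$-generated; the condition $P^{H_j}=0$ for $j>i$ passes from $P$ to $P_i$, and $P^{H_i}\subseteq P_i$ forces $(P/P_i)^{H_i}=0$. Once projectivity is established, these observations place $P_i$ in $\mathcal{P}_{\leq i}$ and $P/P_i$ in $\mathcal{P}_{\leq i-1}$.

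For the free case $P=\underline{S}_X$, the assumption $P\in\mathcal{P}_{\leq i}$, combined with the chosen ordering of conjugacy classes, forces every orbit in $X$ to have stabilizer type $(K)$ with $(H_j)\not\leq(K)$ for all $j>i$. I partition $X=X^{(i)}\sqcup Y$ where $X^{(i)}$ collects the orbits of type $(H_i)$ and $Y$ collects the rest; every orbit in $Y$ has $(H_i)\not\leq(K)$, so $\underline{S}_Y^{H_i}=0$. Each representable $\underline{S}_{G/H_i^g}$ is $H_i$-generated (using a $G$-equivariant iso $G/H_i\to G/H_i^g$, the image of the canonical generator $eH_i$ sits at level $H_i$ and still generates), so $\underline{S}_{X^{(i)}}$ is $H_i$-generated. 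The orbit decomposition then gives $\underline{S}_X\cong \underline{S}_{X^{(i)}}\oplus\underline{S}_Y$, with $P_i=\underline{S}_{X^{(i)}}$ and $P/P_i=\underline{S}_Y\in \mathcal{P}_{\leq i-1}$.

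For the projective case, I invoke the adjunction $L_i\dashv \mathrm{ev}_{H_i}$ between $\mathrm{Mod}_{\underline{S}}$ and $\mathrm{Mod}_{\underline{S}^{H_i}_{\theta}}$, where $\mathrm{ev}_{H_i}(M)=M^{H_i}$ and $L_i$ sends $\underline{S}^{H_i}_{\theta}$ to $\underline{S}_{G/H_i}$; this adjunction follows from the identification $\mathrm{Hom}_{\underline{S}}(\underline{S}_{G/H},\underline{M})\cong\underline{M}^H$ established earlier. Choose a free $F=\underline{S}_X$ in $\mathcal{P}_{\leq i}$ with $P\oplus Q\cong F$, and split $F=F_i\oplus F'$ by the free case. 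Since $P^{H_i}$ is a summand of the free $\underline{S}^{H_i}_{\theta}$-module $F^{H_i}$ and $L_i$ preserves summands (being a left adjoint), $L_i(P^{H_i})$ is a summand of $L_i(F^{H_i})\cong F_i$; in particular it is projective. The counit $L_i(P^{H_i})\to P$ is injective with image $P_i$: it is the identity on $H_i$-levels, and its image is the submodule of $P$ generated by $P^{H_i}$ since $L_i(P^{H_i})$ is $H_i$-generated.

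The main obstacle is producing the retraction that splits the sequence. The chain $P_i\hookrightarrow F_i\hookrightarrow F$ is a composite of summand inclusions, so there is a retraction $r\colon F\to P_i$. Restricting $r$ to $P\subseteq F$ yields $r|_P\colon P\to P_i$; since $r$ restricts to the identity on $P_i\subseteq F$, the map $r|_P$ is a retraction of $P_i\hookrightarrow P$, producing the splitting $P\cong P_i\oplus\ker(r|_P)$. The kernel is projective as a summand of $P$, and lies in $\mathcal{P}_{\leq i-1}$ because $\mathrm{ev}_{H_i}(r|_P)=\mathrm{id}$ forces $\ker(r|_P)^{H_i}=0$. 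Uniqueness follows from the intrinsic characterization: any splitting $P\cong A\oplus B$ with $A$ being $H_i$-generated in $\mathcal{P}_{\leq i}$ and $B\in\mathcal{P}_{\leq i-1}$ satisfies $A^{H_i}=P^{H_i}$, and the $H_i$-generated condition forces $A=P_i$.
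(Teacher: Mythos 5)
Your approach is correct in outline but takes a genuinely different route from the paper, and there is one gap worth closing. The paper establishes the splitting by showing directly that $P/P_i$ is projective: it picks a surjection $\underline{S}_X\twoheadrightarrow P$ with isotropy of $X$ bounded by $H_i$, splits $X=X_i\amalg X'$ by isotropy, uses the Yoneda description of maps out of $\underline{S}_{X_i}$ to identify $r(\underline{S}_{X_i})=P_i$, and then lifts a section $s\colon P\to\underline{S}_X$ to a section $\bar{s}\colon P/P_i\to\underline{S}_{X'}$ using that maps out of $P_i$ are determined at level $G/H_i$ and $\underline{S}_{X'}^{H_i}=0$. You instead argue on the other side of the sequence: you show $P_i$ is projective by identifying it with $L_i(P^{H_i})$, a summand of $F_i$, and then produce an explicit retraction $r\colon F\to P_i$ which you restrict to $P$. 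Your argument effectively front-loads the $\Phi_i=L_i$ machinery that the paper only builds afterward (\cref{Phi functors}), whereas the paper keeps this proposition elementary; both strategies are legitimate.

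The gap is in the sentence justifying injectivity of the counit $\epsilon_P\colon L_i(P^{H_i})\to P$. The two facts you cite — that $\epsilon_P$ is the identity at level $H_i$, and that its image is $P_i$ because $L_i(P^{H_i})$ is $H_i$-generated — do establish that the image is $P_i$, but they do not force injectivity: an $H_i$-generated module can map onto a quotient by an isomorphism at level $H_i$ while killing elements at smaller levels (e.g.\ for $G=C_2$, $H_i=G$, the map $\underline{\ZZ}\to N$ with $N^G=\ZZ$, $N^e=0$ is the identity at level $G$ but not injective). Fortunately the claim is true and the fix is short: by naturality of the counit applied to the split monomorphism $\iota\colon P\hookrightarrow F$, one has $\iota\circ\epsilon_P=\epsilon_F\circ L_i(\iota^{H_i})$; the right-hand side is injective because $L_i(\iota^{H_i})$ is a split mono and $\epsilon_F$ is the isomorphism $L_i(F^{H_i})\cong F_i\subset F$ established in the free case, so $\epsilon_P$ is injective. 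You essentially use this commutative square anyway when you claim $P_i\hookrightarrow F_i\hookrightarrow F$ is a composite of summand inclusions, so you should make the naturality argument explicit and drop the misleading justification.

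A smaller quibble: the claim that $P=\underline{S}_X\in\mathcal{P}_{\leq i}$ forces every orbit of $X$ to avoid isotropy types above $i$ can fail if $\underline{S}^{H_j}=0$ for some $j>i$. This does not affect the application, since in the projective step you are free to \emph{choose} a free cover $F=\underline{S}_X$ whose orbits have bounded isotropy (discarding orbits of type $H_j$, $j>i$, since maps from $\underline{S}_{G/H_j}$ into $P$ are zero), but the free case should be stated that way rather than as a consequence of $P\in\mathcal{P}_{\leq i}$.
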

\begin{proof}
    Since $P/P_i$ must vanish on $G/H_i$ and $P_i$ must be $H_i$-generated, the only choice is to take $P_i\subset P$ to be the smallest submodule which contains all of $P^{H_i}$.  It follows that there is an exact sequence \[
        P_i\to P\to P/P_i
    \] and that this sequence is unique up to isomorphism with respect to the stated properties.  It remains to show this sequence splits.

    Pick a finite $G$-set $X$ and a surjection $r\colon \underline{S}_X\twoheadrightarrow P$.  Since $P$ vanishes on $G/H_j$ for all $j>i$, we may assume that the isotropy of $X$ is bounded above by $H_i$.  That is, $X$ is isomorphic to a finite union of transitive $G$-sets of the form $G/H_j$ for $j\leq i$.  Write $X = X_i\amalg X'$, where $X_i$ is the subset of elements with stabilizer conjugate to $H_i$ and $X'$ has isotropy strictly bounded by $i$.  This gives a splitting $\underline{S}_{X}\cong \underline{S}_{X_i}\oplus \underline{S}_{X'}$.  Since $\underline{S}_{X'}(G/H_i) =0$, it follows that $r$ restricted to $\underline{S}_{X_i}$ surjects onto $P(G/H_i)$.  The Yoneda lemma implies that maps out of $\underline{S}_{X_i}$ are determined entirely by where they send certain elements at level $G/H_i$, and it follows that the image $r(\underline{S}_{X_i})$ is exactly $P_i$.  
    
    Consider the following commutative square:
    \[
        \begin{tikzcd}
            \underline{S}_{X} \ar[two heads, "r"]{r} \ar[two heads]{d} & P\ar[two heads]{d}\\
            \underline{S}_{X}/\underline{S}_{X_i} \ar[two heads, "\overline{r}"]{r} & P/P_i
        \end{tikzcd}
    \]
    where the vertical maps are quotients and the map $\overline{r}$ is the induced map on the quotient, which exists because $r(\underline{S}_{X_i}) = P_i$.  Since $P$ is projective, there is a map $s\colon P\to \underline{S}_X$ such that $r\circ s = {\rm id}_{P}$. Since morphisms out of $P_i$ are determined by where they send elements in $P_i^{H_i}$, and $\underline{S}_{X'}^{H_i}=0$, we see that $s(P_i)\subset \underline{S}_{X_i}$.  It follows that there is an induced map $\overline{s}\colon P/P_i\to \underline{S}_X/\underline{S}_{X_i}$ which splits $\overline{r}$ and so $P/P_i$ is a direct summand of $\underline{S}_X/\underline{S}_{X_i}\cong \underline{S}_{X'}$ and is therefore projective.  The projection $P\twoheadrightarrow P/P_i$ then splits and we are done.
\end{proof}

\begin{corollary}    For any coefficient ring $\underline{S}$ there is a splitting of $K$-theory spectra
    \[
        K(\mathcal{P})\simeq \prod\limits_{i=1}^n  K(\mathcal{P}_i)
    \]
\end{corollary}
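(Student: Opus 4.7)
The plan is to show by induction on $i$ that $K(\mathcal{P}_{\leq i})\simeq \prod_{j=1}^{i} K(\mathcal{P}_j)$; taking $i=n$ recovers the statement since $\mathcal{P}=\mathcal{P}_{\leq n}$. The base case $\mathcal{P}_{\leq 0}=0$ is trivial, so the content lies in establishing
\[
    K(\mathcal{P}_{\leq i})\;\simeq\; K(\mathcal{P}_i)\times K(\mathcal{P}_{\leq i-1})
\]
at each inductive step. I intend to extract this from Waldhausen's additivity theorem applied to the natural split cofiber sequence $P_i\hookrightarrow P\twoheadrightarrow P/P_i$ of \cref{projective splitting sequence}.

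First, I will promote $P\mapsto P_i$ to an exact functor $F\colon \mathcal{P}_{\leq i}\to\mathcal{P}_i$. Any morphism $f\colon P\to Q$ in $\mathcal{P}_{\leq i}$ sends $P^{H_i}$ into $Q^{H_i}\subset Q_i$, and since $P_i$ is the submodule generated by $P^{H_i}$, this forces $f(P_i)\subset Q_i$. Passing to the induced map on quotients gives an exact functor $H\colon \mathcal{P}_{\leq i}\to \mathcal{P}_{\leq i-1}$, $P\mapsto P/P_i$. Together these form an exact functor $q\colon \mathcal{P}_{\leq i}\to \mathcal{P}_i\times\mathcal{P}_{\leq i-1}$, and in the opposite direction the direct sum gives an exact functor $j\colon \mathcal{P}_i\times\mathcal{P}_{\leq i-1}\to\mathcal{P}_{\leq i}$, $(A,B)\mapsto A\oplus B$.

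The crucial computation is that $q\circ j=\mathrm{id}$ on the nose: since $B\in\mathcal{P}_{\leq i-1}$ vanishes at $G/H_i$, we have $(A\oplus B)^{H_i}=A^{H_i}$, forcing $(A\oplus B)_i=A$ and $(A\oplus B)/A=B$. The other composite $j\circ q\colon P\mapsto P_i\oplus (P/P_i)$ is not literally the identity, but the natural cofiber sequence $F\Rightarrow \mathrm{id}\Rightarrow H$ of exact endofunctors of $\mathcal{P}_{\leq i}$ is precisely the input to Waldhausen's additivity theorem, which yields $K(\mathrm{id}_{\mathcal{P}_{\leq i}})\simeq K(j\circ q)$ as self-maps of $K(\mathcal{P}_{\leq i})$. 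Consequently, $K(j)$ and $K(q)$ are inverse homotopy equivalences.

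The point requiring the most care is that $F$ really is exact in the Waldhausen sense, i.e., that it sends admissible monics to admissible monics and preserves pushouts along them. Since every admissible monic in $\mathcal{P}_{\leq i}$ is a direct summand inclusion of the form $P\hookrightarrow P\oplus C$, this reduces to the observation that generation by $H_i$-fixed points commutes with direct sums, so $(P\oplus C)_i=P_i\oplus C_i$ and the induced map is again a summand inclusion. Once this routine bookkeeping is verified, additivity delivers the splitting and the induction concludes.
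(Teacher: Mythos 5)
Your argument is correct, and it uses the same two key ingredients as the paper — the split exact sequence $P_i\to P\to P/P_i$ from \cref{projective splitting sequence} together with Waldhausen's additivity theorem and an induction on the isotropy filtration. The only difference is which standard formulation of additivity you invoke: the paper passes through the auxiliary category $\mathcal{E}_i$ of split extensions, proves that $\Psi\colon\mathcal{P}_{\leq i}\to\mathcal{E}_i$, $P\mapsto[P_i\to P\to P/P_i]$, is an exact equivalence, and then applies the "extension category" form of additivity (the functor $\mathcal{E}_i\to\mathcal{P}_i\times\mathcal{P}_{\leq i-1}$ is a $K$-equivalence). You instead use the "cofibration sequence of exact functors" form: from $F\rightarrowtail\mathrm{id}\twoheadrightarrow H$ you conclude $K(\mathrm{id})\simeq K(F\oplus H)=K(j)\circ K(q)$, and combine this with the on-the-nose (really: canonical-isomorphism-level) identity $q\circ j\cong\mathrm{id}$ to see $K(q)$ and $K(j)$ are mutually inverse. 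These two formulations of additivity are famously equivalent, and both proofs bottom out in the same facts: exactness of $P\mapsto P_i$ and $P\mapsto P/P_i$ (which in the paper's packaging is the assertion that $\Psi$ is exact), and the elementary check that $(A\oplus B)_i=A$ and $(A\oplus B)/A\cong B$ when $A\in\mathcal{P}_i$, $B\in\mathcal{P}_{\leq i-1}$ (which in the paper's packaging is essential surjectivity of $\Psi$ plus the observation that morphisms of sequences are determined by the middle term). Your version is marginally more streamlined because it avoids constructing the auxiliary category, at the cost of having to verify exactness of $F$ and $H$ directly; the paper's version hides that verification inside the (easy) claim that $\Psi$ is an exact equivalence. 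One small inaccuracy worth flagging: $q\circ j$ is naturally isomorphic to the identity rather than equal to it on the nose, since $(A\oplus B)/A$ is only canonically isomorphic to $B$; this is of course harmless for the $K$-theory conclusion.
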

\begin{proof}
    Let $\mathcal{E}_i$ be the Waldhausen category whose objects are split exact sequences
    \[
        Q\to P\to P/Q
    \]
    where $P$ is in $\mathcal{P}_{\leq i}$, $Q$ is $H_i$-generated and $P/Q$ is in $\mathcal{P}_{\leq (i-1)}$. The Waldhausen structure is that of the extension category (see \cite[\S II.9.3]{Kbook}). By the Waldhausen additivity theorem \cite[Proposition 1.3.2(1)]{waldhausen:1983}, the functor 
    \[
        \mathcal{E}_i\to \mathcal{P}_i\times \mathcal{P}_{\leq (i-1)}
    \]
    which projects onto the left and right terms of the exact sequences induces an equivalence on $K$-theory.  We will show the functor $\mathcal{P}_{\leq i}\to \mathcal{E}_i$ which sends $P$ to the sequence 
    \[
        \Psi(P) = [P_i\to P\to P/P_i]
    \]
    of \cref{projective splitting sequence} is an exact equivalence.  With this in hand, our splitting follows from induction since we have
    \[
        K(\mathcal{P}_{\leq i})\simeq K(\mathcal{E}_i)\simeq K(\mathcal{P}_i)\times K(\mathcal{P}_{\leq(i-1)}).
    \]

    Since everything in sight is projective, we see that every object in $\mathcal{E}_i$ is equivalent to one of the form 
    \[
        Q\to Q\oplus Q'\to Q'
    \]
    where the maps are the projection and inclusions.  This sequence is evidently isomorphic to $\Psi(Q\oplus Q')$ and so $\Psi$ is essentially surjective.  Moreover, every map between sequences is entirely determined by what happens in the middle of the sequence and so fullness, faithfulness, and exactness of $\Psi$ all follow.
\end{proof}

To finish the proof of \cref{K theory splitting}, we need to understand the categories $\mathcal{P}_i$ in terms of more recognizable algebraic objects. We will show that these categories are equivalent to the categories of projective modules over the twisted group rings  $\underline{S}^{H_i}_{\theta}$.

Evaluation at $G/H_i$ gives us a functor from $\underline{S}$-modules to $\underline{S}^{H_i}_{\theta}$-modules and our first order of business is to show that evaluation sends objects in $\mathcal{P}_i$ to projective modules. 

\begin{lemma}\label{eval is projective}
    If $P$ is in $\mathcal{P}_{\geq i}$ then $P^{H_i}$ is a finitely generated projective $\underline{S}^{H_i}_{\theta}$-module.
\end{lemma}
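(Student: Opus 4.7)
The plan is to reduce to the free case via the structure theorem for $H_i$-generated projectives and then carry out an explicit computation at the level of the twisted group ring.  Since $P$ is finitely generated and projective, it is a direct summand of some free module $\underline{S}_X$ for a finite $G$-set $X$.  Using essentially the same argument appearing in the proof of \cref{projective splitting sequence}, the hypothesis on $P$ allows us to arrange that every orbit of $X$ has isotropy conjugate to $H_i$; that is, $X$ is a finite disjoint union of orbits of the form $G/H_i^{g}$ for various $g\in G$.  Once this reduction is in place, the conclusion will follow because projectivity and finite generation are both preserved by direct summands of finitely generated free modules.

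The main content is then the free module calculation: I claim $\underline{S}_X^{H_i}$ is a finitely generated free module over the twisted group ring $\underline{S}^{H_i}_\theta$.  By additivity over orbits, it suffices to treat a single orbit, and by \cref{conjugation lemmas} it suffices to take $X = G/H_i$.  First I would check that $(G/H_i)^{H_i} = W_G H_i$ as a free $W_G H_i$-set: a coset $gH_i$ is $H_i$-fixed iff $g^{-1}H_i g \subseteq H_i$, which by finite-group cardinality forces $g\in N_G(H_i)$, so the fixed set is $N_G(H_i)/H_i = W_G H_i$ with its regular action.  Unpacking definitions, $\underline{S}_{G/H_i}^{H_i} = \underline{S}^{H_i}\otimes \ZZ\{W_G H_i\}$ with the diagonal Weyl action $w\cdot(s\otimes w') = w(s)\otimes ww'$.

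The second step is to identify this explicitly with the left regular module of $\underline{S}^{H_i}_\theta[W_G H_i]$.  From the multiplication rule of \cref{defn: twisted group ring}, left multiplication by $w$ in the twisted group ring sends $s\cdot w'$ to $s^w \cdot ww'$, which matches the diagonal action described above under the evident additive bijection.  This identification immediately shows $\underline{S}_{G/H_i}^{H_i}$ is free of rank one over $\underline{S}^{H_i}_\theta$, and by additivity $\underline{S}_X^{H_i}$ is finitely generated free.  Taking summands gives that $P^{H_i}$ is a finitely generated projective $\underline{S}^{H_i}_\theta$-module, as desired.

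The main obstacle I anticipate is precisely the compatibility between the diagonal Weyl action coming from the coefficient system structure and the left multiplication action on the twisted group ring; carrying this out requires threading the $\theta$-twist through the formulas and keeping careful track of conventions on left versus right cosets.  However, this is a routine verification and requires no new input beyond the definitions already assembled in \cref{sec:bg on coeff sys}.
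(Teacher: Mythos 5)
Your proof is correct, but it takes a genuinely different route from the paper's argument, and one step is slightly overclaimed. The paper first uses \cref{projective splitting sequence} to reduce to the case that $P$ is $H_i$-generated, then starts from a surjection $f\colon (\underline{S}^{H_i}_{\theta})^n \to P^{H_i}$ on the algebra side and lifts it via the Yoneda lemma to a map $F\colon \underline{S}_{n(G/H_i)}\to P$; $H_i$-generation forces $F$ to be surjective, projectivity of $P$ splits it, and evaluating the splitting at $G/H_i$ finishes. You instead go in the opposite direction: start from a presentation of $P$ as a summand of $\underline{S}_X$ on the coefficient-system side and evaluate at $G/H_i$, using the explicit identification $\underline{S}_{G/H_i}^{H_i}\cong \underline{S}^{H_i}_{\theta}$. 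That identification is in fact implicit in the paper's Yoneda step (in the assertion that $F^{H_i}=f$), so your argument makes explicit something the paper relies on silently; this is clarifying, and it bypasses the $H_i$-generated reduction entirely. The paper's route has the advantage of matching the Yoneda-style bookkeeping used later (e.g.\ in the construction of $\Phi_i$), while yours is more self-contained and concrete.

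One small correction to the reduction step: the hypothesis $P\in\mathcal{P}_{\leq i}$ only lets you arrange that the isotropy of $X$ is \emph{bounded above} by $H_i$ (this is all the argument in \cref{projective splitting sequence} gives), not that every orbit has isotropy conjugate to $H_i$ — for example $P=\underline{S}_{G/e\amalg G/H_i}$ lies in $\mathcal{P}_{\leq i}$ but is not a summand of any $\underline{S}_X$ with all orbits of type $G/H_i^g$. The fix is immediate and doesn't change your conclusion: write $X=X_i\amalg X'$ with $X_i$ the orbits of type $G/H_i^g$; since $(X')^{H_i}=\varnothing$, one has $\underline{S}_{X}^{H_i}=\underline{S}_{X_i}^{H_i}$, and your free-module computation applies to $X_i$. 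With that adjustment the argument goes through.
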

\begin{proof}
    By \cref{projective splitting sequence} there is a short exact sequence
    \[
        P_i\to P\to P/P_i
    \]
    where $P_i\in \mathcal{P}_i$ and $P/P_i\in \mathcal{P}_{\geq (i-1)}$.  It follows that $P_i^{H_i} = P^{H_i}$ and so we can reduce to the case that $P\in \mathcal{P}_i$ is $H_i$-generated.

    Since $P^{H_i}$ is finitely generated, there is a surjection $f\colon (\underline{S}^{H_i}_{\theta})^n\to P^{H_i}$ for some $n$.  By the Yoneda lemma, this corresponds to a unique morphism $F\colon \underline{S}_{n(G/H_i)}\to P$ defined by $F^{H_i}=f$.  This map is surjective at level $G/H_i$, and is therefore surjective since $P$ is $H_i$-generated.  Since $P$ is projective the map $F$ admits a section, which gives a section of $f$ at level $G/H_i$. Thus $P^{H_i}$ is a direct summand of $(\underline{S}^{H_i}_{\theta})^n$ and is therefore projective.
\end{proof}

We now show that the exact functor 
\[
ev_{G/H_i}\colon \mathcal{P}_{i}\to {\rm Proj}_{\underline{S}^{H_i}_{\theta}}
\]
is an equivalence. This map being full and faithful is the assertion that for any $P$ and $T$ in $\mathcal{P}_{i}$ the functor $ev_{G/H_i}$ induces isomorphisms
\[
    \mathcal{P}_i(P,T)\cong {\rm Proj}_{\underline{S}^{H_i}_{\theta}}(P^{H_i},T^{H_i})
\]
and in fact we can prove a stronger result.  

\begin{lemma}\label{Phi functors}
    Let $\underline{S}$ be a coefficient ring. For every projective $\underline{S}^{H_i}_{\theta}$-module $Q$ there is an object $\Phi_i(Q)$ in $\mathcal{P}_i$ such that there are isomorphisms of abelian groups
    \[
        {\rm Mod}_{\underline{S}}(\Phi_i(Q),M) \cong {\rm Mod}_{\underline{S}^{H_i}_{\theta}}(Q,M(G/H_i))
    \]
    for any $\underline{S}$-module $M$.
\end{lemma}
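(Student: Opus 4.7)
The plan is to produce $\Phi_i$ as a left adjoint to evaluation at $G/H_i$, constructed first on (finitely generated) free modules via the Yoneda lemma and then extended to arbitrary finitely generated projectives by splitting idempotents.

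First I would treat the rank-one free module $Q = \underline{S}^{H_i}_\theta$ and set $\Phi_i(Q) := \underline{S}_{G/H_i}$. By extension-restriction of scalars together with the Yoneda lemma on $\mathcal{O}_G^{\mathrm{op}}$, for any $\underline{S}$-module $\underline{M}$ there is a natural identification
\[
    \mathrm{Mod}_{\underline{S}}(\underline{S}_{G/H_i},\underline{M}) \;\cong\; \mathrm{Coeff}^G(A_{G/H_i},\underline{M}) \;\cong\; \underline{M}(G/H_i),
\]
and the right-hand side agrees with $\mathrm{Mod}_{\underline{S}^{H_i}_\theta}(\underline{S}^{H_i}_\theta, \underline{M}(G/H_i))$ via ``evaluate at $1$.'' A direct sum argument gives the free case $Q = (\underline{S}^{H_i}_\theta)^n$ with $\Phi_i(Q) = \underline{S}_{n\cdot G/H_i}$. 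I also need to observe that $\underline{S}_{G/H_i}$ lies in $\mathcal{P}_i$: it is clearly projective, and it is $H_i$-generated because the generator $1\otimes eH_i \in (\underline{S}_{G/H_i})^{H_i}$ hits every element of $(G/H_i)^K$ under the various maps $G/K \to G/H_i$ in $\mathcal{O}_G$ (and hence generates the whole coefficient system under the $\underline{S}$-action).

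Next, given a finitely generated projective $\underline{S}^{H_i}_\theta$-module $Q$, choose an idempotent $e \colon (\underline{S}^{H_i}_\theta)^n \to (\underline{S}^{H_i}_\theta)^n$ whose image is $Q$. Via the natural isomorphism of the previous paragraph, $e$ corresponds to an idempotent endomorphism $\widetilde{e}$ of $\underline{S}_{n\cdot G/H_i}$. Since $\mathrm{Mod}_{\underline{S}}$ is abelian, idempotents split, and I define $\Phi_i(Q)$ to be the image of $\widetilde{e}$, a direct summand of $\underline{S}_{n\cdot G/H_i}$. Being a retract of an $H_i$-generated projective, $\Phi_i(Q)$ is itself in $\mathcal{P}_i$ (the projection of a generating set at level $H_i$ is again a generating set at level $H_i$). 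Applying $\mathrm{Mod}_{\underline{S}}(-,\underline{M})$ to the splitting and using the free case gives
\[
    \mathrm{Mod}_{\underline{S}}(\Phi_i(Q),\underline{M}) \;\cong\; \mathrm{im}\bigl(e^{\ast} \colon \underline{M}(G/H_i)^n \to \underline{M}(G/H_i)^n\bigr) \;\cong\; \mathrm{Mod}_{\underline{S}^{H_i}_\theta}(Q,\underline{M}(G/H_i)),
\]
the first identification being natural in $\underline{M}$ and the last being the splitting realization of $Q$ inside $(\underline{S}^{H_i}_\theta)^n$.

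The main subtlety I expect is bookkeeping around the twisted group ring structure: the abelian group $\underline{M}(G/H_i)$ inherits its $\underline{S}^{H_i}_\theta$-action from the coefficient system structure via \cref{modules over coeff rings}, and one must verify that the Yoneda isomorphism and idempotent-splitting step are genuinely $\underline{S}^{H_i}_\theta$-linear rather than only $\mathbb{Z}$-linear. This reduces to the unit-case observation that $\mathrm{Mod}_{\underline{S}}(\underline{S}_{G/H_i}, \underline{M}) \cong \underline{M}(G/H_i)$ intertwines the endomorphisms of $\underline{S}_{G/H_i}$ (which assemble into $\underline{S}^{H_i}_\theta$) with the ambient $\underline{S}^{H_i}_\theta$-action on $\underline{M}(G/H_i)$; once this is established the rest of the argument propagates cleanly through finite direct sums and idempotent splitting.
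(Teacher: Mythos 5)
Your proposal is correct and follows essentially the same route as the paper: establish the free case via the Yoneda/extension-of-scalars identification $\mathrm{Mod}_{\underline{S}}(\underline{S}_{n(G/H_i)},\underline{M})\cong \underline{M}(G/H_i)^n$, then transport an idempotent presenting $Q$ across this isomorphism and split it in the abelian category $\mathrm{Mod}_{\underline{S}}$. The paper phrases the idempotent step by choosing a complement $N$ with $(\underline{S}^{H_i}_\theta)^n\cong Q\oplus N$ and working with the projection $(i_Q,0)$, while you work directly with an arbitrary idempotent $e$ with image $Q$ — but these are the same argument up to notation.
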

\begin{proof}
    Pick a projective $S_{H_i}$-module $N$ such that $(S_{H_i})^n\cong Q\oplus N$ for some $n$.  For any $\underline{S}$-module $M$ there are natural isomorphisms of abelian groups
    
    \begin{align}
        {\rm Mod}_{\underline{S}}(A_{n(G/H_i)},M) & \cong {\rm Mod}_{\underline{S}^{H_i}_{\theta}}((\underline{S}^{H_i}_{\theta})^n, M^{H_i})\nonumber\\
         & \cong {\rm Mod}_{\underline{S}^{H_i}_{\theta}}(Q, M^{H_i})\oplus {\rm Mod}_{\underline{S}^{H_i}_{\theta}}(N, M^{H_i}) \label{free mapping splitting}
    \end{align}

    Picking $M=A_{n(G/H_i)}$, the identity on $M$ corresponds to the sum of a maps $(i_Q,0)$ and $(0,i_N)$ where $i_Q$ is the inclusion of $Q$ into $(\underline{S}^{H_i}_{\theta})^n$ along the isomorphism $(S_{H_i})^n\cong Q\oplus N$.  The map $(i_Q,0)$ is an idempotent endomorphism of $A_{n(G/H_i)}$ which produces a splitting 
    \[
        A_{n(G/H_i)}\cong \Phi_i(Q)\oplus \Phi_i(N)
    \]
    where $\Phi_i(Q) = \rm{Im}(\rm{id_Q},0)$.  That $\Phi_i(Q)$ satisfies the universal property of the statement follows from \cref{free mapping splitting}.  To see that $\Phi_i(Q)$ is $H_i$-generated, note that it is the homomorphic image of $ A_{n(G/H_i)}$ which is also $H_i$-generated. To see that $\Phi_i(Q)^{H_i}=Q$, note that at level $G/H_i$ the map $(\rm{id_Q},0)$ is simply given by the projection of $(\underline{S}^{H_i}_{\theta})^n$ onto $Q$.
\end{proof}

\begin{remark}
    For any $P$ in $\mathcal{P}_{\leq i}$, note that the proof above shows that $P_i\cong \Phi_i(P^{H_i})$. Moreover, for every projective $\underline{S}^{H_i}_{\theta}$-module $Q$, we have $\Phi_i(Q)^{H_i}=Q$ which incidentally implies that $ev_{G/H_i}$ is essentially surjective.
\end{remark}

We want to show that the assignment $Q\mapsto \Phi_i(Q)$ is actually a functor.  If we can prove this fact then the lemma tells us that $\Phi_i$ is an inverse to $ev_{G/H_i}$.
To this end, note that for any fixed $Q$ the coefficient system $\Phi_i(Q)$ represents the functor $R^Q\colon Mod_{\underline{S}}\to {\rm Ab}$ given by
    \[
        R^Q(M) = {\rm Mod}_{S_{H_i}}(Q,M^{H_i}).
    \]
These assemble into a functor 
\[
        R^{(-)}\colon {\rm Proj}_{\underline{S}^{H_i}_{\theta}}^{op}\to  {\rm Fun}(Mod_{\underline{S}}, {\rm Ab}),
\] whose image lands in the subcategory of represented functors.  By the enriched Yoneda lemma, there must be a functor
\[
    \Phi_i^{op}\colon \rm{Proj}^{op}_{S_{H_i}}\to \rm{Fun}(\rm{Mod}_{\underline{S}},\rm{Ab})
\]
so that  $R^{(-)} \cong Y\circ \Phi_i^{op}$, where $Y$ is the Yoneda embedding.  Then the functor $\Phi_i = (\Phi_i^{op})^{op}$ is a quasi-inverse to evaluation at $G/H_i$, proving our claim that $\mathcal{P}_i$ is categorically equivalent to $\mathrm{Proj}_{\underline{S}^{H_i}_{\theta}}$.

\begin{corollary}\label{cor:Kfib seq in mod}
    There is a split fiber sequence of spectra \[
    K(\mathcal P_{\leq i-1}) \to K(\mathcal{P}_{\leq i}) \xrightarrow{K(ev_{G/H_i})} K(\underline{S}_{\theta}^{H_i}).
    \]
\end{corollary}\begin{proof}
    As shown in the proof of \cref{K theory splitting}, there is a split fiber sequence\[
    K(\mathcal P_{\leq i-1})\to K(\mathcal P_{\leq i}) \to K(\mathcal P_i).
    \] \cref{Phi functors} then shows that $ev_{G/H_i}\colon \mathcal P_i\to {\rm Proj}_{\underline{S}_\theta^{H_i}}$ induces an equivalence after $K$-theory which is split by $\Phi_i$. 
\end{proof}

\begin{remark}
    We could also consider the $K$-theory of the category $\mathcal M$ of finitely generated $\underline{S}$-modules instead of $\mathcal P$. Defining $\mathcal M_{\leq i}$ analogous to $\mathcal{P}_{\leq i}$, one can show that the quotient category $\mathcal M_{\leq i}/\mathcal M_{\leq (i-1)}$ is equivalent to the category $\mathcal M_i$ of finitely generated modules over $\underline{S}^{H_i}_{\theta}$. In this case, one can then show there is a split fiber sequence\[
K(\mathcal{M}_{\leq (i-1)})\to K(\mathcal{M}_{\leq i}) \to K(\mathcal M_{\leq i}/\mathcal M_{\leq (i-1)})
\] which induces a splitting of spectra
    \[
        K(\mathcal{M})\simeq \prod\limits_{i=1}^n K(\mathcal M_{i}).
    \] The spectrum $K(\mathcal M)$ is analogous to the $G$-theory of a ring.
\end{remark}

\subsection{Perfect complexes}\label{subsec:perf}
In this subsection, we give an alternate construction of the $K$-theory of a coefficient ring using perfect complexes.  This approach has several advantages coming from the fact that the Waldhausen structure on perfect complexes admits a cylinder functor.  For our purposes, this construction is a more natural home for the categorical version of the linearization map considered in the next section.

\begin{definition}
    A chain complex over a ring is \emph{perfect} if it is quasi-isomorphic to a bounded complex of finitely generated projective modules.  The category $\rm{Perf}_R$ of perfect chain complexes over $R$ forms a Waldhausen category whose cofibrations are degree-wise monics and whose weak equivalences are quasi-isomorphisms.  
\end{definition}
Since the restriction and induction functors on $\underline{S}$-modules are an exact adjunction which preserve projective objects we obtain an adjunction on categories of perfect complexes. 
\begin{lemma}\label{lemma: adjunction on perfect complexes}
    The restriction and induction of coefficient systems induces an adjunction
    \[
    \begin{tikzcd}[column sep =large]
    \mathrm{Perf}_{R^G_K(\underline{S})} \ar[shift left=2mm, "I^K_H"]{r} & \mathrm{Perf}_{R^G_H(\underline{S})} \ar[shift left=2mm,"R^K_H"]{l}
    \end{tikzcd}
\]
for any coefficient ring $\underline{S}$.
\end{lemma}

The Gillet-Waldhausen theorem (see \cite[1.11.7]{thomason/trobaugh:1990}) says that the $K$-theory of $\rm{Perf}_R$ is equivalent to the $K$-theory of $\rm{Proj}_R$ and thus the two constructions give rise to the same $K$-groups.  The same proof gives us the analogous result in this setting.

\begin{proposition}
    Let $\underline{S}$ be a coefficient ring.  The functor $\rm{Proj}_{\underline{S}}\to \rm{Perf}_{\underline{S}}$ which sends a finitely generated projective module $P$ to the chain complex which is $P$ in degree zero and 0 elsewhere induces a homotopy equivalence on $K$-theory.
\end{proposition}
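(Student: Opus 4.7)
The plan is to mimic the classical proof of the Gillet--Waldhausen theorem as presented in \cite[1.11.7]{thomason/trobaugh:1990}. The key algebraic inputs are both already in place: $\mathrm{Proj}_{\underline{S}}$ is an exact category with enough projectives, established earlier in this section, and by the very definition of perfectness every object of $\Perf_{\underline{S}}$ is quasi-isomorphic to a bounded complex of finitely generated projective $\underline{S}$-modules. Granting these facts, the argument is entirely formal and proceeds in two steps.

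For the first step, I will introduce the intermediate Waldhausen category $\mathrm{Proj}^b_{\underline{S}}$ of bounded chain complexes of finitely generated projective $\underline{S}$-modules, with degreewise split monomorphisms as cofibrations and quasi-isomorphisms as weak equivalences. Applying Waldhausen's approximation theorem, with the cylinder functor given by the standard mapping cylinder of chain complexes, I will show that the inclusion $\mathrm{Proj}^b_{\underline{S}} \hookrightarrow \Perf_{\underline{S}}$ induces an equivalence on $K$-theory. For the second step, the degree-zero inclusion $\mathrm{Proj}_{\underline{S}} \hookrightarrow \mathrm{Proj}^b_{\underline{S}}$ is shown to be a $K$-theory equivalence by induction on the length of the complex, applying Waldhausen's additivity theorem to the stupid-truncation short exact sequences
\[
    0 \to \sigma_{\geq n} C \to C \to \sigma_{< n} C \to 0.
\]
These are degreewise split and hence genuine cofibration sequences in $\mathrm{Proj}^b_{\underline{S}}$, and combined with the fact that the shift $P\mapsto P[1]$ acts as $-1$ on $K$-theory (via the cofibration sequence $P \to 0 \to P[1]$), one obtains an Euler-characteristic-type homotopy inverse to the degree-zero inclusion.

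The main thing to verify carefully, and thus the primary obstacle, is the factorization axiom of the approximation theorem: given a map $P \to C$ from a bounded projective complex to a perfect complex, one must produce a factorization through a quasi-isomorphism. This is the step at which ``enough projectives'' is used in earnest, since the factorization is built by iteratively lifting through projective covers, exactly as in the classical construction of projective resolutions. Because $\mathrm{Proj}_{\underline{S}}$ has enough projectives, no new algebraic input is required beyond what has already been established in this section, and the proof goes through exactly as in the classical case.
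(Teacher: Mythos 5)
The paper does not spell out a proof; it appeals to \cite[1.11.7]{thomason/trobaugh:1990} and asserts that ``the same proof gives us the analogous result in this setting,'' since the needed algebraic inputs (the module category $\mathrm{Mod}_{\underline{S}}$ is abelian with enough projectives, $\mathrm{Proj}_{\underline{S}}$ is exact, and every perfect complex is quasi-isomorphic to a bounded complex of finitely generated projectives) have all been established. Your proposal reconstructs what that ``same proof'' looks like and takes the same overall route: an intermediate category of bounded complexes, the approximation theorem to get from strict complexes to perfect complexes, and additivity to get from $\mathrm{Proj}_{\underline{S}}$ to strict complexes. Step 1 is fine.

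However, Step 2 as written has a genuine gap. You propose applying Waldhausen additivity to the stupid-truncation functors $\sigma_{\geq n}$ and $\sigma_{<n}$ on $\mathrm{Proj}^b_{\underline{S}}$, where the weak equivalences are the quasi-isomorphisms. The problem is that stupid truncation does not preserve quasi-isomorphisms: a complex like $[\,\cdots \to 0 \to P \xrightarrow{\mathrm{id}} P \to 0 \to \cdots\,]$ is quasi-isomorphic to $0$, but $\sigma_{\geq 1}$ kills the acyclicity. So $\sigma_{\geq n}$ and $\sigma_{<n}$ are not exact functors in the Waldhausen sense on this category, and additivity cannot be applied to the stupid-truncation cofiber sequence as stated. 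The classical Gillet--Waldhausen argument circumvents this by a more careful induction: for instance, one filters by the subcategories of complexes supported in a fixed range $[a,b]$, exhausting $\mathrm{Proj}^b_{\underline{S}}$ as a filtered colimit, and compares these either using isomorphisms as weak equivalences first (where stupid truncation \emph{is} exact and additivity gives $K(\mathrm{Ch}^{[a,b]}_{\mathrm{iso}})\simeq K(\mathrm{Proj})^{b-a+1}$) and then passing to quasi-isomorphisms, or by an Eilenberg-swindle/good-truncation argument that stays within exact functors. A smaller point: you write that ``$\mathrm{Proj}_{\underline{S}}$ is an exact category with enough projectives''; what the paper actually provides, and what the factorization step of the approximation theorem actually uses, is that the ambient abelian category $\mathrm{Mod}_{\underline{S}}$ has enough projectives, so that one can lift chain maps from bounded-above complexes of projectives along surjective quasi-isomorphisms.
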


Recall that for $1\leq i\leq n$, we defined $\mathcal{P}_{\leq i}$ to be the subcategory of finitely generated projective $\underline{S}$-modules which vanish on $G/H_j$ for all $j>i$.  We will call a perfect complex of $\underline{S}$-modules \emph{$H_i$-bounded} if it is quasi-isomorphic to a bounded complex consisting of modules in $\mathcal{P}_{\leq i}$; that is, if all the constituent modules are $H_i$-bounded and projective.  We denote the category of $H_i$-bounded perfect complexes by $\rm{Perf}_{\leq i}$.  
\begin{lemma}
    If $M_{*}$ an $H_i$-bounded perfect complex of $\underline{S}$-modules then $M_*(G/H_i)$ is a perfect complex of $\underline{S}_{\theta}^{H_i}$-modules.
\end{lemma}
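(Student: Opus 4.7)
The plan is to unpack the definition of $H_i$-boundedness and transport it across the evaluation functor $\mathrm{ev}_{G/H_i}\colon \mathrm{Mod}_{\underline{S}}\to \mathrm{Mod}_{\underline{S}_\theta^{H_i}}$. By assumption, $M_*$ is quasi-isomorphic to a bounded complex $N_*$ whose terms $N_k$ lie in $\mathcal{P}_{\leq i}$. The goal is to show that $N_*(G/H_i)$ is a bounded complex of finitely generated projective $\underline{S}_\theta^{H_i}$-modules and that evaluation at $G/H_i$ preserves the quasi-isomorphism $M_*\xrightarrow{\sim} N_*$.

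First, I would observe that evaluation at the orbit $G/H_i$ is an exact functor from $\mathrm{Mod}_{\underline{S}}$ to $\mathrm{Mod}_{\underline{S}_\theta^{H_i}}$, since kernels, cokernels, and (co)limits in the functor category $\mathrm{Coeff}^G$ are computed objectwise, and the $\underline{S}_\theta^{H_i}$-module structure on $M(G/H_i)$ commutes with this by \cref{modules over coeff rings}. Since exact functors commute with the formation of homology, evaluation at $G/H_i$ sends the quasi-isomorphism $M_*\xrightarrow{\sim} N_*$ to a quasi-isomorphism $M_*(G/H_i)\xrightarrow{\sim} N_*(G/H_i)$ of complexes of $\underline{S}_\theta^{H_i}$-modules.

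Next, I would apply \cref{eval is projective} termwise: since each $N_k$ lies in $\mathcal{P}_{\leq i}$, and $\mathcal{P}_{\leq i}\subset \mathcal{P}_{\geq 0}$ in the obvious way so the hypothesis of that lemma applies, the module $N_k(G/H_i)$ is a finitely generated projective $\underline{S}_\theta^{H_i}$-module. Consequently $N_*(G/H_i)$ is a bounded complex of finitely generated projective $\underline{S}_\theta^{H_i}$-modules, and combining with the quasi-isomorphism above shows that $M_*(G/H_i)$ is perfect.

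The only potentially subtle point is making sure \cref{eval is projective} applies to each $N_k$. That lemma is stated for modules in $\mathcal{P}_{\geq i}$ (meaning those arising in the splitting sequence of \cref{projective splitting sequence} above level $i$), but its conclusion is obtained by exactly the argument we need: using the split exact sequence to reduce to the $H_i$-generated case and then surjecting from a finite free module. Since every $N_k\in \mathcal{P}_{\leq i}$ is in particular a finitely generated projective module whose evaluation at $G/H_i$ agrees with the evaluation of its $H_i$-generated summand, the same argument applies verbatim, so there is no genuine obstacle here beyond citing the earlier lemma carefully.
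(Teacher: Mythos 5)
Your proposal is correct and follows essentially the same argument as the paper: pick a quasi-isomorphic bounded complex with terms in $\mathcal{P}_{\leq i}$, apply \cref{eval is projective} termwise, and use exactness of evaluation at $G/H_i$ to transport the quasi-isomorphism. You are also right to flag the $\mathcal{P}_{\geq i}$ in the statement of \cref{eval is projective}; it appears to be a typo for $\mathcal{P}_{\leq i}$, since its proof invokes \cref{projective splitting sequence}, which is stated for $\mathcal{P}_{\leq i}$ (though the clause ``$\mathcal{P}_{\leq i}\subset \mathcal{P}_{\geq 0}$'' in your write-up is not meaningful and should just be replaced by the observation that the cited lemma really does apply to all of $\mathcal{P}_{\leq i}$).
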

\begin{proof}
    Pick an $H_i$-bounded projective complex $P_*$ and a quasi-isomorphism $P_*\to M_*$.  By \cref{eval is projective}, $P_*^{H_i}$ is a bounded complex of projective $\underline{S}_{\theta}^{H_i}$-modules.  Since evaluation at any $G/H_i$ is exact it preserves quasi-isomorphisms and the result follows.
\end{proof}

The lemma gives us well defined functors
\[
    ev_{G/H_i}\colon \mathrm{Perf}_{\leq i}\to \mathrm{Perf}_{\underline{S}_{\theta}^{H_i}}
\]
for all $i$. Although is not clear how to define $\Phi_i$ on perfect complexes, the splitting exists up to homotopy after applying $K$-theory.

\begin{proposition}
    For any $1\leq i\leq n$ there is a homotopy equivalence of spectra
    \[
        K(\mathrm{Perf}_{\leq i})\simeq K(\mathrm{Perf}_{\leq (i-1)})\times K(\Perf_{\underline{S}^{H_i}_{\theta}}).
    \]
    where the projection to the right summand is induced by evaluation at $G/H_i$.
\end{proposition}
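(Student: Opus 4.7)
The strategy is to transport the splitting of $K(\mathcal{P}_{\leq i})$ established in the corollary following \cref{projective splitting sequence} to the perfect-complex world via a Gillet--Waldhausen style comparison. First, I would observe that $\mathcal{P}_{\leq i}$ inherits an exact structure from $\mathcal{P}$, and that by definition $\mathrm{Perf}_{\leq i}$ consists of complexes quasi-isomorphic to bounded complexes in $\mathcal{P}_{\leq i}$. Applying the coefficient-ring version of Gillet--Waldhausen already invoked in \cref{subsec:perf} to this subcategory yields
\[
    K(\mathcal{P}_{\leq i})\simeq K(\mathrm{Perf}_{\leq i}),
\]
induced by the inclusion of a module as a complex concentrated in degree zero, and analogously for $i-1$; the inclusion $\mathcal{P}_{\leq (i-1)}\hookrightarrow \mathcal{P}_{\leq i}$ intertwines these equivalences in the evident way.

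Next, I would combine this with the inductive decomposition $K(\mathcal{P}_{\leq i})\simeq K(\mathcal{P}_{\leq (i-1)})\times K(\mathcal{P}_i)$ from the proof of the corollary following \cref{projective splitting sequence}. The categorical equivalence $ev_{G/H_i}\colon \mathcal{P}_i\xrightarrow{\simeq} \mathrm{Proj}_{\underline{S}^{H_i}_\theta}$ constructed via the functor $\Phi_i$ of \cref{Phi functors}, together with the classical Gillet--Waldhausen identification $K(\mathrm{Proj}_{\underline{S}^{H_i}_\theta})\simeq K(\underline{S}^{H_i}_\theta)$, then yields
\[
    K(\mathrm{Perf}_{\leq i})\simeq K(\mathrm{Perf}_{\leq (i-1)})\times K(\underline{S}^{H_i}_\theta).
\]
To identify the right-hand projection with $K(ev_{G/H_i})$, I would trace through the composite: on $\mathcal{P}_{\leq i}$ the projection sends a module $P$ to its $H_i$-generated summand $P_i$ and then to $P_i^{H_i}$, and since $P/P_i$ vanishes at $G/H_i$ by construction, $P_i^{H_i}=P^{H_i}$. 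Hence the composite projection agrees with $ev_{G/H_i}$ after applying $K$-theory, which is also consistent with the homotopy section of $K(ev_{G/H_i})$ already produced in \cref{perf splitting}.

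The main obstacle is the Gillet--Waldhausen step for the \emph{subcategory} $\mathcal{P}_{\leq i}$: one must verify that resolutions of $H_i$-bounded perfect complexes by bounded complexes of $H_i$-bounded projectives can be arranged in a manner satisfying the hypotheses of \cite[1.11.7]{thomason/trobaugh:1990}. This is essentially built into the definition of $\mathrm{Perf}_{\leq i}$, but the formal check --- together with the compatibility of these resolutions with the inclusion $\mathcal{P}_{\leq (i-1)}\hookrightarrow \mathcal{P}_{\leq i}$ --- is the only step that is not purely formal. Once it is in hand, the remainder of the argument is bookkeeping with the equivalences and splittings from the preceding subsection.
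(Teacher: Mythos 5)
Your proposal is correct, but it takes a genuinely different route from the paper's. The paper argues directly in the perfect-complex world: it defines $\mathcal{Q}\subset\mathrm{Perf}_{\leq i}$ to be the subcategory of complexes $M_*$ with $M_*^{H_i}$ exact, invokes the Waldhausen fibration (localization) theorem to get a fibration sequence $K(\mathcal{Q})\to K(\mathrm{Perf}_{\leq i})\to K(\underline{S}^{H_i}_\theta)$, splits it using \cref{perf splitting}, and then proves by an explicit construction (using the functor $\Phi_i$) that $\mathcal{Q}$ is exact-equivalent to $\mathrm{Perf}_{\leq(i-1)}$. You instead transport the module-level splitting $K(\mathcal{P}_{\leq i})\simeq K(\mathcal{P}_{\leq(i-1)})\times K(\mathcal{P}_i)$ across Gillet--Waldhausen comparisons $K(\mathcal{P}_{\leq j})\simeq K(\mathrm{Perf}_{\leq j})$ applied to the filtration pieces.

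What each approach buys: yours has the merit that it reuses the additivity-theorem splitting wholesale and makes the identification of the projection with $ev_{G/H_i}$ essentially a diagram chase; the price is that you must run Gillet--Waldhausen (and an approximation-theorem comparison of $\mathrm{Ch}^b(\mathcal{P}_{\leq i})$ with $\mathrm{Perf}_{\leq i}$) for each filtration stage, and you are right to flag this as the one non-formal step. It does go through --- $\mathcal{P}_{\leq i}$ is closed under direct summands and under kernels of split epimorphisms, and $\mathrm{Perf}_{\leq i}$ is \emph{defined} to consist of complexes quasi-isomorphic to bounded $\mathcal{P}_{\leq i}$-complexes --- but checking these hypotheses in a readable way is most of the work. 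The paper's route bypasses that bookkeeping: the fibration theorem and \cref{perf splitting} produce the splitting with $ev_{G/H_i}$ visibly as the projection, and the remaining content is the (perhaps more interesting) categorical fact that a perfect $H_i$-bounded complex with exact $G/H_i$-evaluation is quasi-isomorphic to an $H_{i-1}$-bounded one, which is established by an explicit quotient construction $Q_*\mapsto Q_*/\Phi_i(Q_*^{H_i})$. Your argument does not need that categorical fact, while the paper's does not need to re-run Gillet--Waldhausen at each stage; both are valid and roughly comparable in length.

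One small clarification worth making explicit if you were to write this up: for the projection identification, you should note that the two $ev_{G/H_i}$ functors (on modules and on perfect complexes) sit in the commuting square already recorded in the proof of \cref{perf splitting}, which is what lets the module-level identification you verify descend to the perfect-complex level.
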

\begin{proof}
Observe that there is a commutative diagram\[
    \begin{tikzcd}
        \mathcal P_{\leq i-1} \ar[r] \ar[d] & \mathcal{P}_{\leq i} \ar[r, "ev_{G/H_i}"] \ar[d] & {\rm Proj}_{\underline{S}_\theta^{H_i}} \ar[d] \\
        \Perf_{\leq i-1} \ar[r] & \Perf_{\leq i} \ar[r, swap, "ev_{G/H_i}"] & \Perf_{\underline{S}_\theta^{H_i}}
    \end{tikzcd}
    \] where where the vertical functors send a module $M$ to the chain complex which is $M$ concentrated in degree zero. In particular, by the Gillet--Waldhausen Theorem, all the vertical maps induce equivalences after $K$-theory; note that $\mathcal P_{\leq i}$ is closed under extensions as being in  this category is a levelwise vanishing condition. Since the top row becomes a split fiber sequence after $K$-theory by \cref{cor:Kfib seq in mod}, so does the bottom row. 
\end{proof}

Crucially, this result allows us to interpret the projection maps in the splitting as coming from evaluation of perfect $\underline{S}$-complexes instead of just evaluation of projective modules.  This observation becomes important in our analysis of the linearization map after the splitting.

\begin{remark}
    There is an alternative proof of this result that uses the Waldhausen Localization Theorem \cite[Theorem V.2.1]{Kbook} on $\Perf_{\leq i}$ with $v$ being the usual quasi-isomorphisms and $w$ being those morphisms which are quasi-isomorphisms after evaluating at $G/H_i$. One then identifies the subcategory of $w$-acyclic objects of $\mathcal P_{\leq i}$ with  $\mathcal P_{\leq i-1}$ and $K(\Perf_{\leq i}, w)\simeq K(\Perf_{\underline{S}_\theta^{H_i}})\simeq K({\underline{S}_\theta^{H_i}})$. 
    As a corollary of this approach, we obtain a different proof of the splitting of \cref{K theory splitting} via the commutative diagram in the proof above. 
\end{remark}

\subsection{A comparison with the $K$-theory of $G$-rings}\label{subsec:Mona comparison}
    
Throughout this subsection, fix a ring $R$ with an action by a finite group $G$ through ring automorphisms. Merling \cite{merling:2017} constructs a genuine $G$-spectrum $K_{\theta}(R)$ associated to any ring with $G$-action. We will show how the $K$-theory of coefficient systems relates to Merling's construction when $\abs{G}$ is invertible in $R$. For example, $R$ may be a number field with Galois group $G$.

With the invertibility hypotheses on $|G|$, it was shown in \cite[4.0.1]{Brazelton} that Merling's $K$-theory can be described by
    \[
       K_{\theta}(R)^H \simeq K(R_{\theta}[H])
    \]
with the transfers given by extending scalars along the inclusions $R_{\theta}[H]\to R_{\theta}[K]$ and restrictions induced by restriction of scalars.  
    
    Let $\FP(R)$ be the fixed point coefficient system from \cref{ex:FP(R)} defined by $\FP(R)^H = R^H$. Using \cref{K theory splitting}, we have
    \[
        K_G(\FP(R))^H \simeq \prod\limits_{(I)\leq H} K(R^I_{\theta}[W_HI]).
    \]
    Note that taking $I=e$ we always have a summand given by $K(R_{\theta}[H])\simeq K_{\theta}(R)^H$.  We now show that this summand arises from a splitting on the spectrum level, i.e. $K_{\theta}(R)$ is a direct summand of $K_G(R)$.
    
    \begin{proposition}
        Let $R$ be a $G$-ring with $|G|$ invertible in $R$.  There is a map of $G$-spectra, in the equivariant stable homotopy category, $K_{\theta}(R)\to K_G(R)$ such that the induced map $\pi^H_n(K_{\theta}(R))\to \pi^H_{n}(K_G(R))$ is the inclusion of a direct summand for all $H\leq G$ and all $n$.
    \end{proposition}   
    \begin{proof}
        For any $H\leq G$, we have
        \[
            K_G(R)^H\simeq K(\mathrm{Proj}(\FP_H(R)))
        \]
        where $\FP_H(R)$ is the restriction of $\FP(R)$ to an $H$-coefficient system. The desired map of $G$-spectra is induced by the functors $
            \Phi_H\colon \mathrm{Mod}(R_{\theta}[H])\to \mathrm{Mod}(\FP_H(R))$ given by \[
           \Phi(N)\colon H/K \mapsto \left\{\begin{array}{cc}
               N & K=e, \\
               0 & K\neq e.
           \end{array}\right.
            \] We first show there is a functor        \[
            F\colon \mathrm{Mod}(\FP_H(R))\to \mathrm{Mod}(R_{\theta}[H])
        \] which splits $\Phi$ and we then show that $\Phi$ indeed induces a map of $G$-spectra.
            
Define the functor $F$ by
        \[
            F(M) = M(H/e)\left/\sum\limits_{e<K\leq H} R^K_e(M(H/K))\right..
        \]
        Note that $F(M)$ is a module over $R_{\theta}[H]$ because we quotiented by an $H$-fixed sub $R$-module of $M(H/e)$. There is an adjunction $F\dashv \Phi$ and because $\Phi$ is evidently exact we may conclude that $F$ preserves projectives. The adjunction further shows that $\Phi$ agrees with the functor $\Phi_1$ of \cref{Phi functors} on projective modules and so $\Phi$ also preserves projective objects.  Since $F$ splits $\Phi$, the induced maps $\pi^H_n(K_{\theta}(R))\to \pi^H_{n}(K_G(R))$ are all inclusions of direct summands. 
        
        It remains to show that the functors $\Phi_H$ induce a map of $G$-spectra. To do so, we apply a technical result, \cref{Theorem: construction examples and maps}(3) from \cref{subsec: appendix constructing examples}.  In the notation of \cref{Theorem: construction examples and maps}, we have $R(G/H) = \Proj(\FP_H(R))$, $P(G/H) = \Proj(R_{\theta}[H])$, and $L=\Phi$. Note that the $\Phi_H$ commutes on the nose with the restriction and conjugation functors so $\Phi\colon R\Rightarrow P$ is indeed a pseudonatural transformation.  Thus it suffices to check that for any canonical quotient $q\colon G/H\to G/K$ that the ``mate diagram''
\[\begin{tikzcd}
	{\Proj(\FP_H(R))} & {\Proj(\FP_H(R))} && {\Proj(R_{\theta}[H])} & {\Proj(R_{\theta}[H])} \\
	{\Proj(\FP_K(R))} & {\Proj(\FP_K(R))} && {\Proj(R_{\theta}[K])} & {\Proj(R_{\theta}[K])}
	\arrow["{I^K_H}"', from=1-1, to=2-1]
	\arrow[""{name=0, anchor=center, inner sep=0}, equals, from=1-2, to=1-1]
	\arrow["{\Phi_H}"', from=1-4, to=1-2]
	\arrow[""{name=1, anchor=center, inner sep=0}, equals, from=1-5, to=1-4]
	\arrow["{R_{\theta}[K]\otimes_{-}}", from=1-5, to=2-5]
	\arrow["{R^K_H}"', from=2-2, to=1-2]
	\arrow[""{name=2, anchor=center, inner sep=0}, equals, from=2-2, to=2-1]
	\arrow["{\mathrm{res}}", from=2-4, to=1-4]
	\arrow["{\Phi_K}", from=2-4, to=2-2]
	\arrow[""{name=3, anchor=center, inner sep=0}, equals, from=2-5, to=2-4]
	\arrow["\epsilon"', shorten <=6pt, shorten >=9pt, Rightarrow, from=0, to=2]
	\arrow["\eta", shorten <=6pt, shorten >=9pt, Rightarrow, from=1, to=3]
\end{tikzcd}\]
is inhabited by an invertible $2$-cell.  Here, $\epsilon$ and $\eta$ are the counit and unit of the two adjunctions.  If $M$ is any $R_{\theta}[H]$-module, then $I^K_H(\Phi_H(M))$ vanishes on all orbits $K/P$ for  $P\neq e$ by \cref{remark: induction preserves upward vanishing}.  Since $\Phi_K(R_{\theta}[K]\otimes_{R_{\theta}[H]}M)$ also has this property, by definition, it suffices to compute what happens at the $K/e$-level.  Using \cref{induction is extension of scalars}, we see that the map of interest is equivalent to
\[
    R_{\theta}[K]\otimes_{R_{\theta}[H]}M\xrightarrow{R_{\theta}[K]\otimes_{R_{\theta}[H]}\eta} R_{\theta}[K]\otimes_{R_{\theta}[H]}R_{\theta}[K]\otimes_{R_{\theta}[H]}M\xrightarrow{\mu\otimes_{R_{\theta}[H]}M} R_{\theta}[K]\otimes_{R_{\theta}[H]}M
\]
which is the identity by the triangle identities for extension and restriction of scalars.
    \end{proof} 

        

    \subsection{Reduced $K$-theory}\label{subsec: reduced K theory}
    In this subsection, we construct the \textit{reduced} $K$-theory of a coefficient ring. 
    Classically, the reduced $K$-theory is defined as the cofiber \[
    \widetilde{K}(R) := {\rm cofib}(K(\ZZ)\to K(R))
    \] of the map induced by the unique ring map $\ZZ\to R$. In $\widetilde{K}_0(R)$, this has the effect of taking the quotient of $K_0(R)$ by all the free modules; the class of a projective module $P$ is zero in $\widetilde{K}_0(R)$ if and only if $P$ is stably free. 

    We would like to proceed analogously in the equivariant setting. However, unlike in the non-equivariant setting, the initial coefficient ring $\underline{\mathbb{Z}}$ (the constant coefficient ring on $\ZZ$) has projective modules which are not free as $\underline{\ZZ}$-modules. Consequently take the cofiber of the map $K_G(\underline{\ZZ})\to K_G(\underline{S})$ induced by the unique coefficient ring map $\underline{\ZZ}\to \underline{S}$ does not have the desired effect of killing free modules. 
    
    Instead, we replace $\underline{\mathbb{Z}}$ with the subcategory of free $\underline{S}$-modules.  For any coefficient ring $\underline{S}$ let $\mathrm{Free}_{\underline{S}}$ denote the full subcategory of $\mathrm{Mod}_{\underline{S}}$ containing the finitely generated free modules.  Since $\mathrm{Free}_{R^G_H(\underline{S})}\subset \mathrm{Proj}_{R^G_H(\underline{S})}$, the assignment
    \[
        G/H\mapsto \mathrm{Free}_{R^G_H(\underline{S})}
    \]
is the object function of a pseudofunctor $\mathbb{B}^G\to \mathrm{Wald}$ and applying \cref{pseudo is enough} we obtain a genuine $G$-spectrum $K^{\rm free}_G(\underline{S})$. The inclusion of free modules into projective modules always induces a map of spectra $K^{\rm free}_G(\underline{S})\to K_G(\underline{S})$.  

\begin{definition}
    Let $\underline{S}$ be a coefficient ring.  The \emph{reduced algebraic $K$-theory} of $\underline{S}$, denoted $\widetilde{K}_G(\underline{S})$, is the cofiber of the map $K_G^{\mathrm{free}}(\underline{S})\to K_G(\underline{S})$.
\end{definition}

We detail what happens on $\pi_0$.  Since the free modules over an $H$-Mackey functor are in bijection with finite $H$-sets, it is straightforward to check that $\pi_0^{H}(K^{\mathrm{free}}_G(\underline{S}))$ is isomorphic to the Burnside ring $\Omega(H)$ of finite $H$-sets with addition given by disjoint union and multiplication given by cartesian product.  Thus for every subgroup $H\leq G$ there is a short exact sequence
\[
   0\to  \Omega(H)\to \pi_0^H(K_G(\underline{S}))\to \widetilde{K}_G(\underline{S})\to 0.
\]
In specific examples we can identify the copy of $\Omega(H)$ more explicitly.
\begin{example}\label{example: reduced K of constant Z}
    Recall from \cref{example: K theory of constant Z} that there is a splitting
    \[
        \pi^G_0(K_G(\underline{\mathbb{Z}}))\cong \prod\limits_{(H)\leq G} K_0(\mathbb{Z}[W_GH])
    \]
    where the product is over conjugacy classes of subgroups $H\leq G$ and $W_GH = N_G(H)/H$ is the Weyl group.  For all $H$ the augmentation $\mathbb{Z}[W_GH]\to \mathbb{Z}$ induces a splitting on $K$-theory which gives us
    \[
        K_0(\mathbb{Z}[W_GH])\cong K_0(\mathbb{Z})\times \widetilde{K}_0(\mathbb{Z}[W_GH])\cong \mathbb{Z}\times \widetilde{K}_0(\mathbb{Z}[W_GH])
    \]
    and putting this all together we have a splitting
    \[
        \pi^G_0(K_G(\underline{\mathbb{Z}}))\cong \left(\prod\limits_{(H)\leq G} \mathbb{Z}\right)\oplus\left(\prod\limits_{(H)\leq G} \widetilde{K}_0(\mathbb{Z}[W_GH])\right)\cong \Omega(G)\oplus \left(\prod\limits_{(H)\leq G} \widetilde{K}_0(\mathbb{Z}[W_GH])\right)
    \]
    where the second isomorphism comes from the observation that the Burnside ring $\Omega(G)$ is the free abelian group on the generators $G/H$, one for each conjugacy class of subgroup $H\leq G$.  
    
    This example shows explicitly that not every projective module over $\underline{\mathbb{Z}}$ is stably free.  For instance, when $G = C_{23}$ is a cyclic group of order $23$ then the group $\widetilde{K}_0(\mathbb{Z}[W_G1]) = \widetilde{K}_0(\mathbb{Z}[C_{23}])$.  By a theorem of Rim \cite{Rim} this group is isomorphic to the ideal class group of $\mathbb{Z}[\xi_{23}]$, where $\xi_{23}$ is a $23$rd root of unity, which is a cyclic of order $3$.
\end{example}

As the last example shows, the inclusion $\Omega(G)\to \pi_0^G(K_G(\underline{\mathbb{Z}}))$ splits. Since the image of the map $\pi_0^G(K_G(\underline{\mathbb{Z}}))\to \pi_0^G(K_G(\underline{S}))$ contains the class of all free $\underline{S}$ modules, the inclusion $\Omega(G)\to \pi_0^G(K_G(\underline{S}))$ factors through $\pi_0^G(K_G(\underline{\mathbb{Z}}))$. Thus whenever the inclusion $K_G(\underline{\mathbb{Z}})\to K_G(\underline{S})$ splits we will have a $\Omega(G)$ as a direct summand of $\pi_0^G(K_G(\underline{S}))$.  The kernel of the projection $\pi_0^G(K_G(\underline{S}))\to \Omega(G)$ is the reduced $K_0$ group.

\begin{example}\label{example: reduced K zero of a space}
    Let $X$ be a $G$-space which is $G$-connected.  Let $\underline{S} = \mathbb{Z}[\underline{\pi_1(X)}]$ be the coefficient ring of \cref{homotopy orbits twisted group ring} defined by 
    \[
        \underline{S}^H = \mathbb{Z}[\pi_1(X^H)].
    \]
    The augmentation maps $\mathbb{Z}[\pi_1(X^H)]\to \mathbb{Z}$ assemble into a map of coefficient systems $\underline{S}\to \underline{\mathbb{Z}}$ which splits the unit.   Thus $K_G(\underline{\mathbb{Z}})$ is a retract of $K_G(\underline{S})$ and the observations of the last paragraph imply there is a direct sum decomposition
    \[
      \pi_0^G(K_G(\underline{S}))\cong \Omega(G)\oplus \pi_0^G\widetilde{K}_G(\underline{S})  .
    \]
\end{example}

\section{Equivariant \textit{A}-theory and linearization}\label{sec:Equivariant Linearization}

In \cite{waldhausen:1983}, Waldhausen defines the $A$-theory of a space as the $K$-theory of the category of finitely dominated retractive spaces over $X$.

\begin{definition}
Let $R(X)$ be the category of \emph{retractive spaces over $X$}. The objects are spaces $Y$ together with maps\[
X \xrightarrow{i_Y} Y \xrightarrow{r_Y} X
\] which compose to the identity. Morphisms are maps $Y\to Y'$ rel $X$. A space over $X$ is \emph{finite} if it is obtained from $X$ by attaching finitely many cells.  A space over $X$ is \emph{finitely dominated} if it is a homotopy retract of a finite space over $X$.  We will write $R_{\fd}(X)$ for the full subcategory of finitely dominated retractive spaces over $X$.  

The category $R_{\fd}(X)$ is a Waldhausen category \cite{waldhausen:1983}. The Waldhausen structure is given by defining $wR_{\fd}(X)$ to be weak homotopy equivalences and $coR_{\fd}(X)$ to be those maps with the fiberwise homotopy extension property. The $A$-theory of $X$ is defined as
\[
    A(X) = K(R_{\fd}(X)).
    \]
\end{definition} 
The linearization map $\ell:A(X) \to K(\mathbb Z[\pi_1X])$ is a $2$-connected map relating the $A$-theory of a space $X$ to the $K$-theory of the group ring on $\pi_1(X)$, and is induced by sending $Y \in R_{\fd}(X)$  to the relative chain complex $C_\bullet(\tilde{Y},\tilde{X}) \in \mathrm{Perf}(\mathbb Z[\pi_1 (X)])$ \cite{waldhausen:1983,Klein-Malkiewich}.  The linearization plays an important role in computations related to $A$-theory; see \cite{Klein-Rognes} and \cite{Dundas} for discussion. 

In this section, we discuss how this story generalizes to the equivariant setting. We begin by reviewing the definition of equivariant $A$-theory from \cite{malkiewich/merling:2016} before constructing a version of the linearization map whose target is the genuine $G$-spectrum $K_G(\ZZ[\underline{\pi_1(X)}])$ from \cref{homotopy orbits twisted group ring}. We then show that this map is $2$-connected and recovers expected geometric invariants like the equivariant Wall finiteness obstruction and Whitehead torsion, as defined in \cite{Luck}. 

\subsection{Background on equivariant $A$-theory}\label{sec:bckgd on A theory}
If $X$ is a $G$-space, $R(X)$ inherits a $G$-action via exact functors, making it a Waldhausen $G$-category. In \cite{malkiewich/merling:2016}, Malkiewich--Merling first construct the \textit{coarse} equivariant $A$-theory of a $G$-space $X$ from the categorical homotopy fixed points of $R(X)$. This construction is called a coarse theory because the weak equivalences are just determined by the underlying space $X$, and do not take fixed points into account. In order to ensure the theory detects genuine equivalences, Malkiewich--Merling build a genuine equivariant $A$-theory spectrum from the categories of $H$-equivariant retractive spaces over $X$ for $H\leq G$.

\begin{definition}
For $H\leq G$, let $R^H_{\fd}(X)$ denote the Waldhausen category with\begin{itemize}
    \item objects: $H$-equivariant finitely dominated retractive spaces over $X$, i.e. $H$-spaces $Y$ with equivariant maps $i_Y$, $r_Y$ which are equivariant homotopy retracts of finite $H$-CW complexes rel $X$;
    \item morphisms: $H$-equivariant maps of retractive spaces;
    \item cofibrations/weak equivalences: $H$-equivariant maps which are cofibrations/weak equivalences of $H$-spaces; that is, maps $Y\to Z$ so that $Y^J\to Z^J$ is a cofibration/weak equivalence for all $J\leq H$.
\end{itemize}
\end{definition}

These Waldhausen categories come equipped with restriction, conjugation, and transfer maps which are deduced from categorical formulas but admit a nice geometric description.

\begin{theorem}[{\cite[Proposition 4.14]{malkiewich/merling:2016}}]\label{thm:fmlas for res cong transf}
Let $H\leq K$ and let $q\colon G/H\to G/K$ be the canonical quotient. Then\begin{itemize}
    \item[(i)] The restriction \begin{align*}
        R_H^K\colon R_{\fd}^K(X) &\to R_{\fd}^H(X)
    \end{align*} sends $(Y, i_Y, r_Y)$ to itself, with the $K$-action restricted to a $H$-action.
    \item[(ii)] Suppose $H'\leq G$ is conjugate to $H$, so $H'=gHg^{-1}$ for some $g\in G$. Then conjugation is given by\begin{align*}
        c_g\colon R_{\fd}^{H'}(X) &\to R_{\fd}^{H}(X)\\
        (Y, i_Y, r_Y) &\mapsto (Y, i_Y\circ g, g^{-1}\circ r_Y)
    \end{align*} where $h\in H$ acts on $Y$ by $g^{-1}hg$.
    \item[(iii)] The transfer $T_H^K\colon R_{\fd}^H(X)\to R_{\fd}^K(X)$ sends $(Y, i_Y, r_Y)$ to the pushout \[
    \begin{tikzcd}
    K\times_H X \ar[r] \ar[d] & X\ar[d] \\
    K\times_H Y \ar[r] & T_H^K(Y)
    \end{tikzcd}.
    \]\end{itemize}
\end{theorem}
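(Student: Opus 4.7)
The plan is to unpack, for each of (i), (ii), and (iii), the abstract categorical definitions of restriction, conjugation, and transfer coming from the Malkiewich--Merling pseudo-functor $\mathbb{B}^G \to \mathrm{Wald}$ associated to the $G$-category $R(X)$, and verify that each coincides with the stated formula on objects (together with routine functoriality, exactness, and preservation of finite domination). The overall strategy closely parallels the coefficient-system analysis of \cref{sec:bg on coeff sys}, with $G$-sets replaced by $G$-equivariant retractive spaces over $X$.

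Parts (i) and (ii) are essentially bookkeeping. The restriction $R^K_H$ is the forgetful functor along the inclusion $H \hookrightarrow K$, so a $K$-equivariant retractive space becomes an $H$-equivariant one on the same underlying data; the only substantive check is that finite $H$-domination follows from finite $K$-domination by restricting the ambient finite $K$-CW structure. Conjugation by $g$ comes from the isomorphism $\mathrm{Conj}_H^{g^{-1}}$ of orbit categories used in the excerpt; on an object, it transports the $H'$-action to an $H$-action along the conjugation isomorphism $H \cong H'$, and the unique way to keep the retractive structure maps equivariant for the twisted action is precisely to precompose $i_Y$ with $g$ and postcompose $r_Y$ with $g^{-1}$.

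Part (iii) is the principal obstacle. The categorical transfer in the Malkiewich--Merling framework should send an $H$-equivariant object to a freely induced $K$-equivariant object compatible with the ambient $G$-structure on $X$. Naive induction produces $K \times_H Y$, which lives over $K \times_H X$ rather than over $X$; the displayed pushout glues this along the canonical $K$-equivariant collapse $K \times_H X \to X$ (coming from the ambient $K$-action on $X$) to land in $K$-retractive spaces over $X$. I would verify that this is the intended transfer by a direct universal-property argument exhibiting it as left adjoint to $R^K_H$ at the level of underlying categories, and then check that the assignment is exact and preserves finite domination: induction and pushouts preserve $K$-equivariant cofibrations and weak equivalences over $X$, while induction sends a finite $H$-CW domination of $Y$ to a finite $K$-CW domination of $T^K_H(Y)$. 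To confirm that the three formulas together realize the pseudo-functor structure on $\mathbb{B}^G$, I would finally verify the Mackey double-coset identity at the level of retractive spaces, which reduces to the standard decomposition $K/H = \coprod_\gamma J\gamma H/H$ together with distributivity of induction and pushout over disjoint unions of orbits; this step is the direct geometric analog of \cref{Ind Res double coset}.
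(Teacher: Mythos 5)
This statement is imported from Malkiewich--Merling as \cite[Proposition 4.14]{malkiewich/merling:2016}; the present paper does not reprove it, so there is no internal argument to compare your attempt against. Your sketch is a reasonable reconstruction of what a proof would look like: parts (i) and (ii) are indeed direct unwindings of the span-based definitions, and for (iii) the left-adjoint characterization you propose is legitimate, since in the Burnside $2$-category $\mathbb{B}^G$ the spans $t^K_H$ and $r^K_H$ are reverse spans and hence adjoint $1$-cells, and any $2$-functor preserves adjunctions, so the abstract transfer $\mathcal{P}(t^K_H)$ is automatically left adjoint to restriction. Exhibiting the pushout as a left adjoint at the level of equivariant retractive spaces over $X$ then identifies it with the transfer, as you say; one could also simply trace the span $t^K_H = [G/H \xleftarrow{=} G/H \to G/K]$ through the construction directly, which is closer in spirit to how Malkiewich--Merling present it.

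One step in your plan is outside the scope of this statement: the Mackey double-coset verification you append at the end is part of showing that $H\mapsto R^H_{\fd}(X)$ assembles into a pseudo-functor (that is \cite[Proposition 4.11]{malkiewich/merling:2016}, analogous to \cref{Ind Res double coset} and \cref{spectral Mackey functor} in the coefficient-system case), not part of identifying the three individual functors. It would be needed to justify the existence of $A_G(X)$ as a genuine $G$-spectrum, but not to justify the formulas in (i)--(iii). Otherwise your exactness and finite-domination preservation checks are the right auxiliary verifications.
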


In \cite[\S 4]{malkiewich/merling:2016}, Malkiewich--Merling show the data of $K(R_{\fd}^H(X))$ and the structure maps assembles into a spectral Mackey functor, and hence corresponds to a genuine $G$-spectrum.

\begin{definition}
The \textit{genuine equivariant $A$-theory} of a $G$-space $X$ is the spectral Mackey functor $A_G(X) := K_G(R_{\fd}(X))$ where\[
A_G(X)(G/H) = A_G(X)^H = K(R_{\fd}^H(X)),
\] with restriction and transfer induced by the formulas from \cref{thm:fmlas for res cong transf}.
\end{definition}

\begin{remark}
    Malkiewich--Merling's definition of equivariant $A$-theory uses categories of homotopy finite, as opposed to finitely dominated, spaces over a $G$-space $X$.  The difference is analogous to the choice of whether to define the $K$-theory of a ring in terms of the category of finitely generated free, as opposed to projective, modules.  Just as in the algebraic setting the only difference in the associated $K$-groups occurs at level $0$. 
\end{remark}
    
Non-equivariantly, $A$-theory of a connected space $X$ defined using homotopy finite retractive spaces always has $A_0(X)\cong \mathbb{Z}$ with the isomorphism given by an Euler characteristic.  On the other hand, defining $A$-theory using the category of finitely dominated retractive spaces over $X$ yields the more interesting group $A_0(X)\cong K_0(\mathbb{Z}[\pi_1(X)])$ with the isomorphism given by the linearization map.  Since the goal of this paper is to develop the linearization map in the equivariant setting we are naturally led to the definition of $A$-theory in terms of finitely dominated retractive spaces.

We now turn to the construction of equivariant linearization. For the remainder of this section we will let $X$ be a $G$-space that is equivariantly connected, meaning that $X^H$ is connected for all $H \leq G$; we will extend our constructions to more general $G$-spaces in \cref{subsec:non cntd linearization}. There is a split fibration sequence
\begin{align}\label{first sequence}
    X \to X_{hG} \to BG
\end{align}
that induces a short exact sequence 
\begin{align*}
    1 \to \pi_1(X) \to \tilde{G} \xrightarrow{\pi} G \to 1,
\end{align*}
where $\tilde{G}=\pi_1(X_{hG})$.  Moreover, the universal cover $\tilde{X}$ receives an action from $\tilde{G}$, making $C_\bullet(\tilde{X})$ a chain complex of $\mathbb Z[\tilde{G}]$-modules. The $G$-action on $\pi_1(X)$ extends to an action on $\mathbb Z[\pi_1(X)]$ through ring automorphisms. 
\begin{lemma}\label{lemma: homotopy orbits pi one is pi_1 twisted}
    There is an isomorphism of rings
    \[
        \ZZ[\pi_1(X_{hG})]\cong \ZZ[\pi_1(X)]_{\theta}[G]
    \]
    where the latter ring is the twisted group ring.
\end{lemma}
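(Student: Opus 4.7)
The plan is to realize $\pi_1(X_{hG})$ as a semidirect product $\pi_1(X) \rtimes G$ and then observe that the integral group ring of such a semidirect product is, essentially by definition, the twisted group ring of $\mathbb{Z}[\pi_1(X)]$.

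First, I would construct the splitting. Since $X^H$ is connected for every $H \leq G$, in particular $X^G$ is nonempty, so pick a basepoint $x_0 \in X^G$. The constant map $EG \to X$ with value $x_0$ descends to a section $BG \to X_{hG} = EG \times_G X$, $[e] \mapsto [e, x_0]$, of the fibration $X \to X_{hG} \to BG$. Taking fundamental groups at $[e, x_0]$ and using that $X$ is connected and $\pi_2(BG) = 0$, the long exact sequence of the fibration collapses to a split short exact sequence $1 \to \pi_1(X, x_0) \to \tilde{G} \to G \to 1$, giving $\tilde{G} \cong \pi_1(X, x_0) \rtimes_\theta G$ where $\theta$ is the conjugation action of (the image of) $G$ on (the image of) $\pi_1(X)$ inside $\tilde{G}$.

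Second, I would identify the action $\theta$ with the natural $G$-action on $\pi_1(X, x_0)$ induced by functoriality of $\pi_1$ applied to the self-maps $g \cdot (-) \colon X \to X$ (which preserve $x_0$ since $x_0 \in X^G$). Concretely, an element $g \in G$ lifts to the loop in $X_{hG}$ given by the path $t \mapsto [\gamma_g(t), x_0]$ for a path $\gamma_g$ in $EG$ from $e$ to $g$, and a loop $\alpha$ in $X$ at $x_0$ includes as $[e, \alpha]$. Composing and using the identification $[\gamma_g(1), x_0] = [e, g^{-1} x_0] = [e, x_0]$ in $X_{hG}$, one computes that the conjugate loop is homotopic to $[e, g \cdot \alpha]$, which is precisely the image of $\alpha$ under the action of $g$ on $\pi_1(X, x_0)$. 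This is a standard basepoint-tracking argument for the Borel construction.

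Third, for any group $\pi$ equipped with a $G$-action $\theta$ by automorphisms, I would write down a ring isomorphism $\mathbb{Z}[\pi \rtimes_\theta G] \xrightarrow{\sim} \mathbb{Z}[\pi]_\theta[G]$ by sending the basis element $(p, g)$ to $p \cdot g$. Both sides are free abelian groups on pairs $(p, g)$, and by \cref{defn: twisted group ring} the multiplications match:
\[
    (p_1, g_1)(p_2, g_2) = (p_1 \cdot \theta_{g_1}(p_2),\, g_1 g_2) \quad \longleftrightarrow \quad (p_1 g_1)(p_2 g_2) = (p_1 p_2^{g_1}) g_1 g_2.
\]
Applying this to $\pi = \pi_1(X)$ with the $G$-action identified in the second step completes the proof.

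The only nontrivial step is the second one: matching the abstract conjugation action in $\tilde{G}$ with the geometric action of $G$ on $\pi_1(X)$. Everything else is either a standard consequence of the long exact sequence in homotopy or an immediate comparison of multiplication formulas.
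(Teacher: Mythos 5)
Your proof is correct and follows essentially the same route as the paper: both identify $\pi_1(X_{hG})$ as the semidirect product $\pi_1(X)\rtimes G$ via the split fibration $X\to X_{hG}\to BG$ and then match multiplication formulas to get the twisted group ring. The paper simply asserts the splitting and the identification of the conjugation action with the geometric $G$-action, whereas you verify both; that extra care is welcome but does not change the argument.
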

\begin{proof}
    The splitting of \eqref{first sequence} implies that $\tilde{G} \cong \pi_1(X) \rtimes G$ where the $G$-action on $\pi_1(X)$ is induced by the action on $X$.  Thus it suffices to prove the isomorphism of rings \[\mathbb Z[\pi_1(X) \rtimes G] \cong \mathbb Z[\pi_1(X)]_{\theta}[G].\]
    Note that the underlying abelian group of either side is isomorphic to the free abelian group on the set $\pi_1(X)\times G$.  In either case, the multiplication is determined by the rule 
    \[
        (\alpha,g)\cdot (\beta,h) = (\alpha(g\beta),gh)
    \]
    which implies the desired isomorphism.
\end{proof}

The next corollary is immediate from the last lemma and \cref{homotopy orbits twisted group ring}.

\begin{corollary}
    Let $X$ be a $G$-connected $G$-space and let $\underline{S}  = \mathbb{Z}[\underline{\pi_1(X)}]$ be the coefficient system from \cref{homotopy orbits twisted group ring}.  There is a splitting on $G$-fixed points
    \[
        K_G(\underline{S})^G\simeq \prod\limits_{(H)\leq G} K(\mathbb{Z}[\pi_1(X^H_{hWH})])
    \]
    where the product is over conjugacy classes of subgroups $H\leq G$.
\end{corollary}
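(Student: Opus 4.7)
The plan is to chain together two earlier results: the fixed-point splitting of $K_G(\underline{S})^G$ from \cref{K theory splitting}, and the ring-level identification in \cref{lemma: homotopy orbits pi one is pi_1 twisted} of a group ring of a homotopy orbit as a twisted group ring. The corollary essentially reduces to applying the latter at each subgroup separately, after reading off the relevant terms in the former.

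First, I would instantiate \cref{K theory splitting} at the coefficient ring $\underline{S} = \mathbb{Z}[\underline{\pi_1(X)}]$. By definition of this coefficient ring, $\underline{S}^H = \mathbb{Z}[\pi_1(X^H)]$ and the $WH$-action is the one induced by the $WH$-action on $X^H$. The splitting therefore takes the explicit form
\[
    K_G(\underline{S})^G \simeq \prod_{(H)\leq G} K\bigl(\mathbb{Z}[\pi_1(X^H)]_{\theta}[WH]\bigr),
\]
where the product ranges over conjugacy classes of subgroups.

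Next, for each $H$ I would apply \cref{lemma: homotopy orbits pi one is pi_1 twisted} to the $WH$-space $X^H$ in place of $X$. The hypotheses are satisfied: since $X$ is $G$-connected, every fixed-point set $X^J$ is connected, and the identification $(X^H)^{J/H} = X^J$ for $H \leq J \leq N_G(H)$ shows that $X^H$ is $WH$-connected; in particular $X^G \neq \emptyset$ provides a $G$-fixed, hence $WH$-fixed, basepoint so that the splitting of the Borel fibration needed in the proof of that lemma is available. The lemma then yields a ring isomorphism
\[
    \mathbb{Z}[\pi_1(X^H)]_{\theta}[WH] \;\cong\; \mathbb{Z}\bigl[\pi_1(X^H_{hWH})\bigr].
\]
Substituting this term-by-term into the displayed splitting above gives the claimed equivalence, and functoriality of $K$ turns the ring isomorphisms into equivalences of spectra.

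There is no real obstacle here; the content is already packaged in the two cited results. The only thing worth being careful about is checking that the $WH$-action on $\mathbb{Z}[\pi_1(X^H)]$ appearing in \cref{K theory splitting} really is the one induced by $WH \curvearrowright X^H$ (so that the proof of \cref{lemma: homotopy orbits pi one is pi_1 twisted} applies verbatim), and that $X^H$ has the $WH$-connectivity needed for that lemma — both of which follow from the definition of $\underline{S}$ and the $G$-connectedness hypothesis.
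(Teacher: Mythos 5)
Your argument is exactly the paper's (which simply declares the corollary ``immediate from the last lemma and \cref{homotopy orbits twisted group ring}''): instantiate \cref{K theory splitting} via \cref{homotopy orbits twisted group ring}, then apply \cref{lemma: homotopy orbits pi one is pi_1 twisted} to the $WH$-space $X^H$ termwise. You have, if anything, been more careful than the paper in verifying that $X^H$ is $WH$-connected with a $WH$-fixed basepoint so the lemma applies; this check is correct and worth having made explicit.
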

\begin{remark}\label{remark: identify reduced K theory of spaces}
    The augmentation map $\ZZ[\pi_1(X^H_{hWH})]\to \ZZ$ induces a splitting on $K$-theory spectra
    \[
        K(\ZZ[\pi_1(X^H_{hWH})])\simeq K(\ZZ)\times \widetilde{K}(\ZZ[\pi_1(X^H_{hWH})]).
    \]
    Applying this splitting for each conjugacy class of subgroup $H\leq G$, and comparing with \cref{example: reduced K zero of a space} we identify the zeroth level of the equivariant $K$-theory of $\ZZ[\underline{\pi_1(X)}]$ as
    \[
        \pi_0^G(K(\ZZ[\underline{\pi_1(X)}]))\cong \Omega(G)\oplus \left(\widetilde{K}(\ZZ[\pi_1(X^H_{hWH})])\right)
    \]
    where $\Omega(G)$ is the Burnside ring of $G$.  
\end{remark}

In view of the corollary, there is a candidate for the linearization map on $G$-fixed points given by the non-equivariant linearizations applied factorwise 
\[
    A_G(X)^G \simeq \prod_{(H) \leq G} A(X^{H}_{hWH}) \to \prod_{(H) \leq G} K(\mathbb Z[\pi_1(X^H_{hWH})]\simeq K_G(\ZZ[\underline{\pi_1(X)}])^G.
\]

Although this map is evidently $2$-connected, it is a bit ad hoc. From a theoretical point of view, we would like an equivariant linearization map to come from a map of genuine $G$-spectra instead of just a map on fixed points.
The main objective of this section is to show that this factor-wise description lifts to a map of genuine $G$-spectra.

 \subsection{Lifting the linearization}
 Let $X$ be a based $G$-space.  Recall from \cref{A theory as a waldhausen Mackey} that equivariant $A$-theory is obtained from a pseudofunctor $\underline{R}(X)\colon \mathbb{B}^G\to \mathrm{Wald}$ which sends a subgroup $H\leq G$ to the category $R^H_{\fd}(X)$ of homotopy finite $H$-retractive spaces over $X$. 

Recall the coefficient system $\ZZ[\underline{\pi_1(X)}]$ from \cref{homotopy orbits twisted group ring}, with \[\ZZ[\underline{\pi_1(X)}](G/H)=\ZZ[{\pi_1(X^H)}]\] and restrictions induced by $X^H\to X^K$ for $K\leq H$. To construct the equivariant linearization, we will construct a pseudonatural transformation from $\underline{R}(X)$ to the pseudofunctor $\mathcal{P}\colon \mathbb{B}^G\to \mathrm{Wald}$ from \cref{spectral Mackey functor} which defines the the algebraic $K$ theory of $\ZZ[\underline{\pi_1(X)}]$.  This amounts to constructing functors
\[
    L^H\colon R^H_{\fd}(X)\to \mathrm{Perf}(R^G_H(\ZZ[\underline{\pi_1(X)}]))
\]
for all $H$ which induce a map on $K$-theory spectra and then checking that these functors are sufficiently compatible with the transfers and restrictions.  We will then show that the map $L^G$ recovers the factorwise description of the linearization map described at the end of last section.

 We begin by constructing the functor $L^G$; the functor $L^H$ for $H\leq G$ are constructed analogously.  For a space $X$ we write $\widetilde{X}$ for the universal cover of $X$.  For a retractive space $Y$ over $X$ we write $\widetilde{Y}$ for the pullback of $\widetilde{X}$ along the retraction $Y\to X$.  This is always a retractive space over $\widetilde{X}$. 

 \begin{definition}
     Let $X$ be a based $G$-space and suppose that $Y\in R^G_{\fd}(X)$. Let $L^G(Y)$ denote the chain complex of coefficient systems
     \[
        G/H\mapsto C_{*}(\widetilde{Y^H},\widetilde{X^H}).
     \]
 \end{definition}

 To see that $L^G(Y)$ is actually a chain complex of coefficient systems we need to construct the restriction maps. Note that for any $H\leq K\leq G$ the inclusions 
     \[
        X^K\to X^H \quad \mathrm{and}\quad Y^K\to Y^H
     \]
     induce maps on universal covers which, in turn, give chain maps
     \[
        R^K_H\colon C_{*}(\widetilde{Y^K},\widetilde{X^K})\to C_{*}(\widetilde{Y^H},\widetilde{X^H}) 
     \]
     which serve as restrictions.  The construction of Weyl group actions is essentially identical, giving us a chain complex of coefficient systems as claimed.
 
  \begin{proposition}\label{chains are perfect}
      The coefficient system $L^G(Y)$ is a chain complex of modules over $\ZZ[\underline{\pi_1(X)}]$.
 \end{proposition}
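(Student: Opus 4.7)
The strategy is to invoke \cref{modules over coeff rings}: to show that $L^G(Y)$ is a complex of $\ZZ[\underline{\pi_1(X)}]$-modules, it suffices to verify that, for every $H\leq G$, the chain complex $C_*(\widetilde{Y^H},\widetilde{X^H})$ carries a module structure over the twisted group ring $\ZZ[\pi_1(X^H)]_\theta[W_GH]$, and that these actions commute with the restriction maps $R^K_H$.

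The first ingredient is the underlying $\pi_1(X^H)$-module structure. Because $\widetilde{X^H}$ is the universal cover of $X^H$, it carries a free action of $\pi_1(X^H)$ (after choosing a basepoint). By construction, $\widetilde{Y^H}$ is the pullback of $\widetilde{X^H}$ along the retraction $Y^H\to X^H$, so the $\pi_1(X^H)$-action lifts to $\widetilde{Y^H}$ and preserves the image of $\widetilde{X^H}$. Consequently, $C_*(\widetilde{Y^H},\widetilde{X^H})$ is a complex of $\ZZ[\pi_1(X^H)]$-modules.

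The second ingredient is the compatibility with the Weyl-group action. The Weyl group $W_GH$ acts on the pair $(Y^H,X^H)$, and after choosing consistent lifts to the universal covers, it acts on $C_*(\widetilde{Y^H},\widetilde{X^H})$ compatibly with the $\pi_1(X^H)$-action in the sense required by \cref{twisted group ring modules}: for $g\in W_GH$, $\alpha\in\pi_1(X^H)$, and a chain $m$, we have $g(\alpha\cdot m)=\alpha^g\cdot(gm)$, where $\alpha^g$ denotes the natural Weyl-twist. This is because the Weyl action on $\pi_1(X^H)$ is induced by conjugating loops by paths coming from the Weyl action on $X^H$, which is precisely the action used in the twisted group ring. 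This promotes the chain complex to one of $\ZZ[\pi_1(X^H)]_\theta[W_GH]$-modules.

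Finally, for $H\leq K$, the inclusion $X^K\hookrightarrow X^H$ induces a ring map $\ZZ[\pi_1(X^K)]\to \ZZ[\pi_1(X^H)]$ (the restriction in $\ZZ[\underline{\pi_1(X)}]$), and the chain map $C_*(\widetilde{Y^K},\widetilde{X^K})\to C_*(\widetilde{Y^H},\widetilde{X^H})$ is linear with respect to this restriction of scalars; this is essentially functoriality of the universal cover construction on based maps. The main bookkeeping obstacle is keeping track of basepoints and the choice of lifts of the Weyl action to universal covers, but once these are fixed consistently across all subgroups (e.g.\ by picking a $G$-fixed basepoint of $X$ and pulling it back), the required compatibilities are natural.
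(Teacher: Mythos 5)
Your proposal is correct and follows essentially the same route as the paper: reduce to a levelwise statement via \cref{modules over coeff rings}, then use \cref{twisted group ring modules} to recast the twisted-group-ring module structure as a $\ZZ[\pi_1(X^H)]$-module with a semilinear $W_GH$-action coming from the deck transformation action on the universal cover. You are somewhat more explicit than the paper in two places: you spell out how the $\pi_1(X^H)$-action on $\widetilde{X^H}$ lifts through the pullback to $\widetilde{Y^H}$, and you address the compatibility with restriction maps across subgroups (which is part of the hypothesis of \cref{modules over coeff rings} but left implicit in the paper's one-paragraph proof). Both additions are in the right direction and close small gaps the paper glosses over.
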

 \begin{proof}
  We want to show that the coefficient system \[G/H\mapsto C_{n}(\widetilde{Y^H},\widetilde{X^H}) \]
     is actually a $\ZZ[\underline{\pi_1(X)}]$-module for every $n$. By \cref{modules over coeff rings}, this statement is equivalent to each chain group $C_{n}(\widetilde{Y^H},\widetilde{X^H})$  being a module over the twisted group rings 
     \[
        \ZZ[\underline{\pi_1(X)}](G/H)_{\theta}[WH] \cong \ZZ[{\pi_1 (X^H)}]_{\theta}[WH].
     \]

    By \cref{twisted group ring modules}, a system $\ZZ[\pi_1(X^H)]_{\theta}[WH]$ is the same as a system of module over $\ZZ[\pi_1(X^H)]$ with semi-linear $WH$-action, such that the actions of $\pi_1*(X^H)$ are compatible with the restrictions. By standard arguments, $\pi_1(X^H)$ acts naturally on the pair $(\widetilde{Y^H},\widetilde{X^H})$ and checking this action is semilinear is straightforward. Additionally, in the standard model of the universal cover (for instance, in \cite{hatcher}) the universal cover construction is functorial in the action of the fundamental group action, in the sense that for any based continuous map $f\colon A\to B$ the diagram
    \[
        \begin{tikzcd}[column sep = large]
                \pi_1(A)\times \widetilde{A} \arrow[r,"\pi_1(f)\times \widetilde{f}"]\arrow[d]  & \pi_1(B)\times \widetilde{B} \ar[d]\\
    \widetilde{A}\ar[r,swap, "\widetilde{f}"] & \widetilde{B}
        \end{tikzcd}
    \]
    commutes, where the vertical arrows are the fundamental group actions.  It follows that the action $\ZZ[\pi_1(X^H)]$ on $C_{n}(\widetilde{Y^H},\widetilde{X^H})$ is compatible with the restriction maps, since these are induced by inclusions of based spaces.
 \end{proof}

Next, we show that $L^G(Y)$ is actually a perfect complex.

\begin{lemma}\label{L of finite complex}
    If $Y$ is a retractive space over $X$ which is also a finite $G$-CW complex relative to $X$ then the $\ZZ[\underline{\pi_1(X)}]$-complex $L^G(Y)$ is a perfect complex.
\end{lemma}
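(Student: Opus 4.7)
The plan is to induct on the number of $G$-cells in the relative $G$-CW structure of $(Y,X)$. For the base case $Y = X$, we have $L^G(X) = 0$, which is trivially perfect. For the inductive step, suppose $Y'$ is obtained from $Y$ by attaching a single equivariant cell $G/H \times D^n$ along $G/H \times S^{n-1}$. The cofibration $Y \hookrightarrow Y'$ of retractive spaces induces, after passing to $K$-fixed points and pulling back to the universal cover $\widetilde{X^K}$, a short exact sequence of chain complexes of $\ZZ[\pi_1(X^K)]$-modules, and these assemble into a short exact sequence
\[
    0 \to L^G(Y) \to L^G(Y') \to L^G(Y')/L^G(Y) \to 0
\]
of chain complexes of $\ZZ[\underline{\pi_1(X)}]$-modules.

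The main step is to identify the quotient. At level $K$, the pair $(Y'^K, Y^K)$ has $|(G/H)^K|$ many relative $n$-cells, each of which lifts through the covering $\widetilde{Y'^K} \to Y'^K$ to $|\pi_1(X^K)|$ cells, so
\[
    C_*\bigl(\widetilde{Y'^K}, \widetilde{Y^K}\bigr) \;\cong\; \ZZ[\pi_1(X^K)] \otimes \ZZ\{(G/H)^K\}
\]
concentrated in degree $n$. Comparing with the levelwise description of the free module $\ZZ[\underline{\pi_1(X)}]_{G/H} := \ZZ[\underline{\pi_1(X)}] \square A_{G/H}$ from \cref{free coefficient systems} and the discussion preceding \cref{eval is projective}, we recognize the quotient $L^G(Y')/L^G(Y)$ as $\ZZ[\underline{\pi_1(X)}]_{G/H}[n]$, which is a finitely generated free $\ZZ[\underline{\pi_1(X)}]$-module in a single degree. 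In particular, it is perfect.

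By induction, $L^G(Y)$ is perfect, and the quotient is perfect, so $L^G(Y')$ is perfect since perfect complexes form a thick subcategory of the derived category of $\ZZ[\underline{\pi_1(X)}]$-modules (closed under extensions and quasi-isomorphisms). The main obstacle is the bookkeeping in identifying the quotient: one must verify not only the levelwise isomorphism above, but also that the restriction maps $R^K_J$ induced by the inclusions $X^K \hookrightarrow X^J$ and $Y'^K \hookrightarrow Y'^J$, together with the Weyl group actions, match the coefficient-system structure of $\ZZ[\underline{\pi_1(X)}]_{G/H}$. Both compatibilities follow from the naturality of the pullback $\widetilde{Y'} = Y' \times_X \widetilde{X}$ under inclusions of fixed points and under the induced actions of the Weyl groups on $(G/H)^K$.
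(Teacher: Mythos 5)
Your proof is correct and takes essentially the same approach as the paper: induction on the number of $G$-cells, with the contribution of a single cell $G/H\times D^n$ identified as the free coefficient-system module $\ZZ[\underline{\pi_1(X)}]\square A_{G/H}$ in degree $n$ via the cellular chain complex on universal covers of the fixed-point spaces. The only cosmetic difference is that the paper leaves the inductive step implicit (observing directly that the resulting cellular complex is bounded and degree-wise free), while you make it explicit via the short exact sequence and closure of perfect complexes under extensions.
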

\begin{proof}
    By induction, it suffices to prove this when $Y$ is obtained from $X$ by attaching a single $G$-CW cell; let us assume it is of the form
    \[
        e = G/K\times \Delta^n
    \]
    for some $K\leq G$ and some $n$ and let $\varphi\colon \partial e\to X$ be the attaching map.  For any $H\leq G$ we have
    \[
        Y^H = e^H\cup_{\varphi} X^H
    \]
    and thus $Y^H$ is obtained from $X^H$ by attaching $|(G/K)^H|$ $n$-cells.  This gives an isomorphism
    \[
        C^{CW}_n(\widetilde{Y^H},\widetilde{X^H})\cong \ZZ[\pi_1(X^H)]\otimes \ZZ[(G/K)^H]
    \]
    with the Weyl group action coming from the action on both terms in the tensor product.  By definition, $\ZZ[(G/K)^H]$ is precisely the value of the free coefficient system $A_{G/K}$ generated by $G/K$ at level $G/H$.  The upshot is that we have an isomorphism
    \[
         L^G(Y)\cong \ZZ[\underline{\pi_1(X)}]\square A_{G/K}
    \]
    which is a finitely generated free $\ZZ[\underline{\pi_1(X)}]$-module.
\end{proof}

\begin{corollary}\label{L lands in perfect}
    If $Y$ is a homotopy retract of a finite $G$-CW complex relative to $X$ then $L^G(Y)$ is a perfect complex over $\ZZ[\underline{\pi_1(X)}]$.
\end{corollary}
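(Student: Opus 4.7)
The plan is to reduce to \cref{L of finite complex} by showing that $L^G$ carries a homotopy retract in $R^G(X)$ to a retract in the derived category of $\ZZ[\underline{\pi_1(X)}]$-modules, and then to invoke the fact that perfect complexes are closed under retracts. By hypothesis there is a finite $G$-CW complex $Z$ relative to $X$ together with morphisms $i\colon Y \to Z$ and $r\colon Z \to Y$ in $R^G(X)$ such that $r \circ i$ is equivariantly homotopic to $\mathrm{id}_Y$ rel $X$. By \cref{L of finite complex}, $L^G(Z)$ is a perfect complex, so it is enough to exhibit $L^G(Y)$ as a homotopy retract of $L^G(Z)$.

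To build this retraction, I would first verify that $L^G$ sends equivariant homotopies rel $X$ to chain homotopies of $\ZZ[\underline{\pi_1(X)}]$-module complexes. For each subgroup $H \leq G$ the equivariant homotopy restricts to a homotopy of pairs $(Y^H, X^H)$ which lifts to the universal covers and yields the classical prism operator as a chain homotopy of $\ZZ[\pi_1(X^H)]_{\theta}[WH]$-modules. By naturality of the prism construction under the inclusions $X^K \hookrightarrow X^H$ for $K \leq H$, these chain homotopies assemble into a single chain homotopy of coefficient systems, showing that $L^G(r) \circ L^G(i) \simeq \mathrm{id}_{L^G(Y)}$.

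Granted this, $L^G(Y)$ is a retract of the perfect complex $L^G(Z)$ in the derived category, and the conclusion follows from the standard thick-subcategory argument: since $\mathrm{Mod}_{\ZZ[\underline{\pi_1(X)}]}$ is abelian with enough projectives (as established in \cref{sec:bg on coeff sys}), one can lift the idempotent on $L^G(Z)$ corresponding to the retract to the homotopy category of bounded complexes of finitely generated projectives via the usual truncation-and-resolution procedure, producing a bounded projective complex quasi-isomorphic to $L^G(Y)$.

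The main obstacle I anticipate is the compatibility check in the second paragraph, namely that the fixed-point prism operators at varying subgroups are simultaneously natural in the subgroup (for restriction) and semilinear (for the Weyl-group actions). This should be routine once one fixes coherent choices of basepoints and lifts to the universal covers of the pairs $(Y^H, X^H)$ across all $H \leq G$, but it is the only place where genuine equivariant bookkeeping is required.
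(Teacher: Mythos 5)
Your proposal is correct and follows essentially the same route as the paper: reduce to the previous lemma by noting $L^G(Y)$ becomes a chain homotopy retract of the perfect complex $L^G(Z)$, then invoke the homological-algebra fact that such retracts are perfect. The paper is terse on both steps (it simply asserts the chain-homotopy-retract claim and cites Pedersen for the rest), whereas you spell out the prism-operator argument and appeal to the thick-subcategory/idempotent-splitting formulation; the latter is the same content as Pedersen's Lemma~1.9 and Remark~1.10, though be aware that this step is slightly delicate (the homotopy category of bounded complexes of f.g.\ projectives is not a priori idempotent complete, so one must argue as in Pedersen via an explicit construction or via idempotent completeness of perfect complexes in the derived category rather than by naively splitting in $K^b(\mathrm{Proj})$).
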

\begin{proof}
    Since $Y$ is a homotopy retract of a finite CW complex, the previous lemma implies that $L^G(Y)$ is a chain homotopy retract of a perfect complex.  The result then follows from a bit of homological algebra, following \cite[Lemma 1.9 and Remark 1.10]{pedersen}. The arguments there, although stated for modules over a ring, translate to our setting as they do not use anything but homological algebra.
\end{proof}
    
 To summarize, we constructed a functor $L^G\colon  R^G_{\fd}(X)\to \mathrm{Perf}(\ZZ[\underline{\pi_1(X)}])$ which sends a space $Y\in R^G_{\fd}(X)$ to the coefficient system $L^G(Y)$ defined by
 \[
    L^G(Y)(G/H) = C_*(\widetilde{Y^H},\widetilde{X^H})
 \]
 which is perfect by \cref{L lands in perfect}.  Note that $L^G$ is evidently functorial and sends $G$-weak equivalences to quasi-isomorphisms and $G$-cofibrations to levelwise inclusions.
 Although this functor does not preserve pushouts along cofibrations, it nonetheless induces a well-defined map on $K$-theory, via a construction due to Thomason \cite[p. 334]{waldhausen:1983}. The proof of the following result is the same as \cite[Sections 7.2--7.3]{Klein-Malkiewich} for the non-equivariant linearization.
 
\begin{proposition}\label{prop:L induces map on fx pts}
     The functor $L^G$ induces a map  \begin{equation}\label{linearization on fixed points}
    K(L^G)\colon A_G(X)^G\to K_G(\ZZ[\underline{\pi_1(X)}])^G
 \end{equation}
 on $K$-theory spectra.
 \end{proposition}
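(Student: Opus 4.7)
The plan is to follow Thomason's strategy, as executed non-equivariantly in \cite[Sections 7.2--7.3]{Klein-Malkiewich}, for inducing a $K$-theory map from a functor that is exact only up to quasi-isomorphism. First, I would verify that $L^G$ takes values in perfect complexes (\cref{L lands in perfect}), preserves the zero object, carries $G$-cofibrations to degreewise monomorphisms of chain complexes of coefficient systems, and sends $G$-weak equivalences to quasi-isomorphisms. Preservation of cofibrations follows because a cofibration in $R^G_{\fd}(X)$ restricts to a cofibration on $H$-fixed points and pulls back to a cofibration after forming $\widetilde{(-)}$, so singular chains give honest inclusions. Preservation of weak equivalences follows because a $G$-weak equivalence $Y\to Z$ rel $X$ induces weak equivalences $Y^H\to Z^H$ for every $H$, which lift to equivalences of the corresponding pullback covers, and hence to quasi-isomorphisms of relative chain complexes.

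The crux of the argument is that $L^G$ fails to preserve pushouts along cofibrations, because universal covers do not commute with pushouts; what must be shown instead is that for any cofibration $A\cof B$ in $R^G_{\fd}(X)$, the canonical comparison map
\[
    L^G(B)/L^G(A) \longrightarrow L^G(B/A)
\]
is a quasi-isomorphism of $\ZZ[\underline{\pi_1(X)}]$-complexes. I would establish this first for finite relative $G$-CW pairs by induction on the number of attached cells, using the explicit computation in the proof of \cref{L of finite complex} (each $G$-cell $G/K\times \Delta^n$ contributes a free summand $\ZZ[\underline{\pi_1(X)}]\square A_{G/K}$ to the relative complex on both sides of the comparison), and then extend to arbitrary cofibrations of finitely dominated objects via the homological algebra arguments of \cite{pedersen} used in the proof of \cref{L lands in perfect}.

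Granting this weak exactness, Thomason's construction (cf.\ \cite[p.~334]{waldhausen:1983}) produces the desired spectrum map by applying $L^G$ to the $S_\bullet$-construction levelwise. An $n$-simplex of $S_nR^G_{\fd}(X)$ is a flag $Y_0\cof\cdots\cof Y_n$ together with chosen subquotients $Y_{j/i}$, and $L^G$ yields a flag of perfect complexes together with complexes $L^G(Y_{j/i})$ that are only quasi-isomorphic, not equal, to the actual subquotients $L^G(Y_j)/L^G(Y_i)$. Replacing each $L^G(Y_{j/i})$ by the honest subquotient produces a genuine $n$-simplex of $S_n\mathrm{Perf}(\ZZ[\underline{\pi_1(X)}])$, and this replacement is a weak equivalence by the previous step, so the construction descends to a well-defined map on $|wS_\bullet(-)|$ and yields $K(L^G)$. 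The main obstacle will be establishing the quasi-isomorphism in the comparison map above; everything else is formal book-keeping once that is in hand.
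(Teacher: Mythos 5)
Your overall plan matches the paper's: apply Thomason's model for $K$-theory, check that $L^G$ preserves cofibrations and weak equivalences, and reduce the existence of $K(L^G)$ to weak exactness of $L^G$, i.e.\ to showing the comparison $L^G(B)/L^G(A)\to L^G(B/A)$ is a quasi-isomorphism for every cofibration $A\cof B$. (The paper phrases this as ``$L^G$ preserves homotopy cocartesian squares''; the two formulations are equivalent once the other preservation properties are in hand.)

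Where you and the paper diverge --- and where your argument has a gap --- is in how the key quasi-isomorphism is established. The paper reduces in two steps: pushouts, cofibrations, and weak equivalences of chain complexes of coefficient systems are computed levelwise, and $H$-fixed points preserve homotopy cocartesian squares (cofibrations in $R^G_{\fd}(X)$ are by definition fixed-pointwise cofibrations), so the claim reduces to the non-equivariant statement that $Y\mapsto C_*(\widetilde{Y},\widetilde{X})$ preserves homotopy cocartesian squares, which is the singular-chain excision/Mayer--Vietoris argument of Klein--Malkiewich. Crucially, that excision argument applies to \emph{any} cofibration of retractive spaces, not merely relative CW inclusions.

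Your cell-induction plus ``Pedersen extension'' does not reach this generality. The cell induction handles finite relative $G$-CW inclusions, but cofibrations of finitely dominated retractive spaces need not be of that form, and a finitely dominated object is \emph{not} in general weakly equivalent to a finite relative $G$-CW one --- the Wall finiteness obstruction measures precisely this failure, which is the whole point of working with $R_{\fd}$ rather than homotopy-finite spaces. And Pedersen's Lemma 1.9/Remark 1.10, as invoked in \cref{L lands in perfect}, assert that a chain-homotopy retract of a perfect complex is perfect; this is a property of a \emph{single} complex, and does not by itself transport the quasi-isomorphism status of a \emph{comparison map} across a retraction unless you also supply a compatibly structured retraction of the entire cofibration-and-cofiber data, which finite domination of $A$ and $B$ individually does not provide. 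The paper's reduction to non-equivariant excision sidesteps the issue entirely; you would need to replace your extension step by that excision argument (or something equivalent) to close the gap.
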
\begin{proof}
 Although $L^G$ is not an exact functor (as it does not preserve pushouts), it will still induce a map on $K$-theory spectra provided we use a slightly different model than the $S_\bullet$-construction. In particular, the $S'_\bullet$-construction (defined in \cite{BM:08} and discussed in \cref{subsec: appendix S dot prime}) produces an equivalent $K$-theory spectrum as the $S_\bullet$-construction but is functorial in the larger class of \textit{weakly exact} functors, i.e. functors that preserve the Waldhausen structure up to weak equivalence.
 
In particular, in order to show that $L^G$ induces a map on $K$-theory, it suffices to show that $L^G$ preserves homotopy cocartesian squares. Since pushouts, cofibrations, and weak equivalences in the category of chain complexes over $\ZZ[\underline{\pi_1(X)}]$ are computed levelwise, it suffices to check that the functor $R^{G}_{\fd}(X)\to \Perf_{\ZZ[\pi_1(X^H)]}$ given by
\[
    Y\mapsto C_*(\widetilde{Y^H},\widetilde{X^H})
\]
preserves homotopy cocartesian squares.  Since taking fixed points preserves homotopy cocartesian squares we can reduce further to checking only that the functor $R_{\rm fd}(X)\to \Perf_{\ZZ[\pi_1(X)]}$ given by
\[
    Y\mapsto C_*(\widetilde{Y},\widetilde{X})
\]
preserves homotopy cocartesian squares, which follows from an argument using Mayer-Vietoris and excision (c.f.\ \cite[Section 7.3]{Klein-Malkiewich}).
 \end{proof}
 
Similarly, for any $H\leq G$ we have a functor
 \[
    L^H\colon R^H_{\fd}(X)\to \mathcal{P}^H(\ZZ[\underline{\pi_1(X)}])
 \]
 defined exactly the same way as $L^G$ but treating $X$ as an $H$-space, and this functor induces a map $K(L^H)\colon A_G(X)^H\to K_G(\ZZ[\underline{\pi_1(X)}])^H$.  

In the remainder of this section, we will show that these functors have several desirable properties.  First, we show the functors $L^H$ for various $H\leq G$ are compatible with the categorical transfer and restrictions on $R^{(-)}_{\fd}(X)$ and $\mathcal{P}(\underline{\ZZ[\pi_1X]})$. 
We write $\res^K_H\colon R^K_{\fd}(X)\to R^H_{\fd}(X)$ and $\tr^K_H\colon R^H_{\fd}(X)\to R^K_{\fd}(X)$ for the transfer and restriction of retractive spaces over $X$.  If $Z\in R^K_{\fd}(X)$ and $Y\in R^H_{\fd}(X)$ recall that $\res^K_H(Z)$ is just $Z$ with restricted action.  The transfer of $Y$ is given by 
    \begin{align*}
        tr_H^K(Y)=\mathrm{colim}(K \times_H Y \leftarrow K \times_H X \rightarrow X)
    \end{align*}
    which can be rewritten as $|K/H|$ copies of $Y$ glued together along their respective copy of $X$. This is a $K$-space via the action of $K$ on $X$ together with permutation of the factors of $Y$ according to the action of $K$ on the coset $K/H$ and the residual action of $H$ on $Y$.

    Let $y\in {tr}^K_H(Y)$ be a point which lives in the component corresponding to the coset $\alpha H\in K/H$.  For any $J\leq K$, the point $y$ is in $({tr}_H^K(Y))^J$ if and only if $\alpha H\in (K/H)^J$ and $y$ is fixed by the residual action of $J^{\alpha}=\alpha^{-1}J\alpha\subset H$ on $Y$.  Thus we have an equality
    \begin{equation}\label{equation: fixed points of transfer}
        ({tr}_H^K(Y))^J = \bigcup\limits_{\alpha\in (K/H)^J} (Y^{J^{\alpha}}\cup X^{J^{\alpha}})
    \end{equation}

 \begin{theorem}\label{linearization commutes with tr/res}
The functors $L^H:R_{\fd}^H(X) \to \mathcal{P}^H(\underline{\ZZ}[\pi_1(X)])$ induce a map on $K$-theory $G$-spectra in the equivariant stable homotopy category.
\end{theorem}
\begin{proof}
    We consider $R^{-}_{\fd} = R^{-}_{\fd}(X)$ and $\mathcal{P}^{-} = \mathcal{P}^{-}(\underline{\ZZ}[\pi_1(X)])$ as contravariant functors from $\mathcal{O}_G$ into Waldhausen categories. On morphisms, they send a canonical quotient $q\colon G/H\to G/K$ to the restriction map along the subgroup inclusion $H\leq K$. On isomorphisms $G/H\to G/(gHg^{-1})$ we use the conjugation functors.  It is a straightforward consequence of the definition that the linearization map commutes with the restriction and conjugation functors, and thus this is a natural transformation between these two functors. 
    
    By \cref{Theorem: construction examples and maps}, it suffices to show that for any canonical quotient map $q\colon G/H\to G/K$, the \emph{mate} of the square
\[\begin{tikzcd}
	{\mathcal{P}^H} && {R^H_{\fd}} \\
	{\mathcal{P}^K} && {R^K_{\fd}}
	\arrow["{L^H}"', from=1-3, to=1-1]
	\arrow["{R^K_H}"', swap, from=2-1, to=1-1]
	\arrow["{\res^K_H}", swap, from=2-3, to=1-3]
	\arrow["{L^K}", from=2-3, to=2-1]
\end{tikzcd}\]
is a weak equivalence in $\mathcal{P}^K$, i.e. all the maps comprising the natural transformation are quasi-isomorphisms of chain complexes.  The mate is defined in \cref{subsec: appendix constructing examples}, but for the purposes of this proof it suffices to know that the mate is the natural transformation
\[
    I^K_HL^K\Rightarrow L^H\tr^K_H
\]
given by the diagram
\[\begin{tikzcd}
	{\mathcal{P}^H} & {\mathcal{P}^H} && {R^H_{\fd}} & {R^H_{\fd}} \\
	{\mathcal{P}^K} & {\mathcal{P}^K} && {R^K_{\fd}} & {R^K_{\fd}}
	\arrow["{I^K_H}"', from=1-1, to=2-1]
	\arrow[equal,""{name=0, anchor=center, inner sep=0}, from=1-2, to=1-1]
	\arrow["{L^H}"', from=1-4, to=1-2]
	\arrow[equal,""{name=1, anchor=center, inner sep=0}, from=1-4, to=1-5]
	\arrow["{\tr^K_H}", from=1-5, to=2-5]
	\arrow["{R^K_H}"', from=2-2, to=1-2]
	\arrow[equal,""{name=2, anchor=center, inner sep=0}, from=2-2, to=2-1]
	\arrow["{\res^K_H}", from=2-4, to=1-4]
	\arrow["{L^K}", from=2-4, to=2-2]
	\arrow[equal,""{name=3, anchor=center, inner sep=0}, from=2-5, to=2-4]
	\arrow["\epsilon"', shorten <=6pt, shorten >=9pt, Rightarrow, from=0, to=2]
	\arrow["\eta", shorten <=6pt, shorten >=9pt, Rightarrow, from=1, to=3]
\end{tikzcd}\]
where $\eta$ and $\epsilon$ are the unit and counit of the $\tr^K_H \dashv \res^K_H$ and $I^K_H\dashv R^K_H$ adjunctions, respectively.  

Fix a $Y\in R^H_{\fd}$, and any subgroup $J\leq K$.  Since quasi-isomorphism of chain complexes over a coefficient ring can be checked levelwise, it suffices to show that the map
\[
    I^K_HL^K(Y)(K/J)\Rightarrow L^H\tr^K_H(Y)(K/J)
\]
is a quasi-isomorphism.  According to the diagram defining the mate, this decomposes as the composite
\begin{align*}
    I^K_HL^H(Y)(K/J) & \xrightarrow{I^K_HL^K(\eta_Y)}I^K_HL^H\res^K_H\tr^K_H(Y)(K/J)\\
    & \xrightarrow{=} I^K_HR^K_HL^K\tr^K_H(Y)(K/J)\\
    & \xrightarrow{\epsilon_{L^K\tr^K_H(Y)}}L^K\tr^K_H(Y)(K/J)
\end{align*}

The source is, by \cref{induction on coefficient systems} and the definition of $L^H$,
\[
I^K_HL^H(Y)(K/J) \cong \bigoplus\limits_{\alpha\in (K/H)^J} C_{*}(\widetilde{Y^{J^{\alpha}}},\widetilde{X^{J^{\alpha}}})
\]
and the target is
\[
    L^K\tr^K_H(Y)(K/J)\cong C_*(\widetilde{(\tr^K_H(Y))^J},\widetilde{X^J})\cong C_* \left( \bigcup\limits_{\alpha\in (K/H)^J} \widetilde{Y^{J^{\alpha}}} ,\widetilde{X^{J}}\right).
\]
Using \eqref{equation: fixed points of transfer} we identify the $J$-fixed points of $\eta\colon Y\to \res^K_H\tr^K_H(Y)$ with the inclusion 
\[
    \widetilde{Y^{J^{\alpha}}}\xrightarrow{\alpha = eH} \bigcup\limits_{\alpha\in (K/H)^J} \widetilde{Y^{J^{\alpha}}}
\]
It follows that the map of interest is induced, on each component $\alpha$ in the domain, by the inclusion of $\widetilde{Y^{J^{\alpha}}}\to \bigcup\limits_{\alpha\in (K/H)^J} \widetilde{Y^{J^{\alpha}}}$.  This map is a quasi-isomorphism because relative singular homology takes finite relative unions to direct sums.
\end{proof}



 \begin{definition}
The \emph{equivariant linearization} $L\colon A_G(X)\to K_G(\ZZ[\underline{\pi_1(X)}])$ is the functor defined levelwise by applying $K$-theory to the functors
\[
     L^H\colon R^H_{\fd}(X) \to \mathcal{P}^H(\ZZ[\underline{\pi_1(X)}]).
 \]
 \end{definition}
 
 \begin{remark}
 The map of spectra underlying $L$ is the the classical linearization map $A(X)\to K(\ZZ[\pi_1 (X)])$ as described in \cite[Section 7]{Klein-Malkiewich}.
 \end{remark}

 Now we show how to interpret the linearization after the splittings of \cref{K theory splitting} and the splitting of $A$-theory \eqref{A theory splitting}.  This result allows us to infer several properties of the equivariant linearization from the classical analog.  
 
\begin{theorem}\label{linearization splitting}
The following diagram commutes up to homotopy:\[
\begin{tikzcd}
A_G(X)^G \ar[r, "L^G"] \ar[d, swap, "\simeq"] & K_G(\ZZ[\underline{\pi_1(X)}])^G\ar[d, "\simeq"] \\
\prod\limits_{(H)\leq G} A(X_{hWH}^H) \ar[r, swap, "\prod \ell_H"] & \prod\limits_{(H)\leq G} K(\ZZ[\pi_1(X^H_{hWH})])
\end{tikzcd}
\]
where the maps $\ell_H$ are the ordinary linearization maps for the spaces $X^H_{hWH}$.
\end{theorem}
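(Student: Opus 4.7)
My plan is to leverage the parallel inductive structure of the two splittings and reduce the theorem to a factor-wise comparison. Both splittings arise from filtrations indexed by the ordering $(e)=(H_0)<(H_1)<\cdots<(H_n)=(G)$ of conjugacy classes of subgroups: on the $K$-theory side via the Waldhausen subcategories $\mathrm{Perf}_{\leq i}$ of perfect $\ZZ[\underline{\pi_1 X}]$-complexes that vanish above level $H_i$ (see \cref{subsec:perf}), and on the $A$-theory side via the analogous Badzioch--Dorabiala filtration of $R^G_{\fd}(X)$ by subcategories in which $Y^H \to X^H$ is a weak equivalence for $(H)>(H_i)$. The first observation is that the linearization $L^G$ sends the $A$-theory filtration into the $K$-theory filtration: if $Y^H\to X^H$ is a weak equivalence at levels $(H)>(H_i)$ then the relative chains $C_*(\widetilde{Y^H},\widetilde{X^H})$ are acyclic there, so $L^G(Y)\in \mathrm{Perf}_{\leq i}$. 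Thus $L^G$ is compatible with the filtrations, and by induction on $i$ it suffices to check that the square commutes factor-by-factor.

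Fix a subgroup $H\leq G$ and consider the two projections onto the $(H)$-factor. By \cref{perf splitting}, the $K$-theory projection $K_G(\ZZ[\underline{\pi_1 X}])^G \to K(\ZZ[\pi_1(X^H_{hWH})])$ is induced by evaluation at $G/H$, so the composite with $L^G$ sends $Y\in R^G_{\fd}(X)$ to the perfect $\underline{S}^H_{\theta}$-complex
\[
    L^G(Y)(G/H) = C_*(\widetilde{Y^H},\widetilde{X^H}),
\]
which is a complex of modules over $\ZZ[\pi_1(X^H)]_\theta[WH]\cong \ZZ[\pi_1(X^H_{hWH})]$ by \cref{lemma: homotopy orbits pi one is pi_1 twisted}. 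On the other side, at the functorial level underlying Badzioch--Dorabiala's splitting, the projection to $A(X^H_{hWH})$ is given (up to $K$-theoretic equivalence) by the Borel construction $Y\mapsto Y^H_{hWH}$ viewed as a retractive space over $X^H_{hWH}$, and the classical linearization $\ell_H$ then produces $C_*(\widetilde{Y^H_{hWH}},\widetilde{X^H_{hWH}})$.

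So the theorem reduces to producing a natural quasi-isomorphism of $\ZZ[\pi_1(X^H_{hWH})]$-complexes
\[
    C_*\bigl(\widetilde{Y^H_{hWH}},\,\widetilde{X^H_{hWH}}\bigr) \;\simeq\; C_*\bigl(\widetilde{Y^H},\,\widetilde{X^H}\bigr).
\]
This follows from the standard identification $\widetilde{Z_{hWH}} \cong EWH\times \widetilde{Z}$ for a connected $WH$-space $Z$, applied to $Y^H$ and $X^H$. Indeed, $EWH\times\widetilde{Z}$ is simply connected and carries a free action of the semidirect product $\pi_1(Z)\rtimes WH \cong \pi_1(Z_{hWH})$ whose quotient is $Z_{hWH}$, hence it realizes the universal cover. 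Under this identification, $C_*(\widetilde{Z_{hWH}})\cong C_*(EWH)\otimes_\ZZ C_*(\widetilde{Z})$ and the augmentation $C_*(EWH)\to \ZZ$ is a quasi-isomorphism of free $\ZZ[WH]$-resolutions, which gives the required quasi-isomorphism of $\ZZ[\pi_1(Z)]_\theta[WH]$-complexes after passing to the relative complex.

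The main obstacle is promoting these functor-level comparisons to a genuine homotopy of maps of spectra, since the inductive splittings themselves are only defined up to homotopy in the $K$-theory category. I would address this by exhibiting the comparison already at the level of Waldhausen categories: in both filtrations, the splittings come from Thomason-style cofibration sequences, and $L^G$ carries the $A$-theoretic cofibration sequence of categories to the $K$-theoretic one because the quasi-isomorphism above is natural in $Y$ and respects cofibrations and weak equivalences. Applying Waldhausen's $S_\bullet$-construction then produces a homotopy commutative diagram of spectra, which assembled over all $(H)$ yields the claimed compatibility of the splittings with $L^G$.
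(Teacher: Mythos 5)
Your proposal follows the same skeleton as the paper's proof — compatibility of $L^G$ with the isotropy filtrations, reduction to a factor-wise comparison, identification of the two projections as evaluation at $G/H$ (resp.\ $Y\mapsto Y^H_{hWH}$), and finally a natural quasi-isomorphism of relative chain complexes. The difference is how you prove that quasi-isomorphism. The paper compares $\widetilde{Y^{H_i}}$ and $\widetilde{Y^{H_i}_{hWH_i}}$ via the fibration $X^{H_i}\to X^{H_i}_{hWH_i}\to BWH_i$, concluding agreement on $\pi_{\geq 2}$ and then doing a careful semidirect-product computation of $\pi_1$. You instead propose the product identification $\widetilde{Z_{hWH}}\cong EWH\times\widetilde{Z}$ together with Künneth and the contractibility of $EWH$. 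Your route is more direct and avoids the $\pi_1$ chase.

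However, there is a gap in how you justify the product identification for $Z=Y^H$. In the construction of $L^G$, $\widetilde{Y^H}$ is \emph{not} the universal cover of $Y^H$; it is the pullback of $\widetilde{X^H}\to X^H$ along the retraction $Y^H\to X^H$. The space $Y^H$ need not be connected, the retraction need not be $\pi_1$-injective, and hence $EWH\times\widetilde{Y^H}$ need not be simply connected — so the argument ``it is simply connected, carries a free $\pi_1(Y^H)\rtimes WH$-action, hence realizes the universal cover'' does not apply to $Y^H$. The statement you need is that the pullback of $\widetilde{X^H_{hWH}}\cong EWH\times\widetilde{X^H}$ along $Y^H_{hWH}\to X^H_{hWH}$ is $EWH\times\widetilde{Y^H}$ (with $\widetilde{Y^H}$ the pullback). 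This is true and can be checked directly, since the universal-cover case only needs to be invoked for $X^H$ (which is connected and based) and pullback then commutes with the product; but as written your justification silently treats $\widetilde{Y^H}$ as a universal cover, which is the step the paper spends the most effort avoiding. Once that identification is stated correctly, the Künneth argument and the $\ZZ[\pi_1(X^H)]_\theta[WH]$-equivariance of the augmentation $C_*(EWH)\to\ZZ$ do deliver the required natural quasi-isomorphism, and your plan to promote it to a homotopy of spectra via a natural transformation of Waldhausen functors matches what the paper does.
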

\begin{proof}
We claim the functor $L^G$ respects the filtrations on the categories $R^G_{\fd}(X)$ and $\mathcal{P}(\ZZ[\underline{\pi_1(X)}])$ which produce the splittings. Indeed, if $Y\in R_{\leq i}(X)$ (meaning that $Y\setminus X$ has no isotropy conjugate to $H_j$ for $j>i$) and $j>i$ we see that $L(Y)(G/H_j)=C_*(\widetilde{Y^{H_j}} , \widetilde{X^{H_j}})\cong 0$ which is precisely what it means for $L^G(Y)$ to be in $\mathrm{Perf}_{\leq{i}}(\ZZ[\underline{\pi_1(X)}])$ as defined in \cref{subsec:perf}. 

From this observation, it follows the map $L^G$ induces after the splitting can be identified with the wedge of the maps 
\[
    L^G_i = L^G|_{R_i(X)} \colon R_i(X)\to \mathrm{Perf}_i(\ZZ[\underline{\pi_1(X)}])
\]
where $R_i(X)$ is the subcategory of retractive spaces $Y$ such that every element in $Y\setminus X$ has isotropy exactly conjugate to $H_i$.
For every $i$ we will construct a natural transformation $\epsilon$ inhabiting the square
\[
    \begin{tikzcd}
        R_i(X) \ar["L^G_i"]{r} \ar[swap, "p"]{d} & \mathcal{P}_i(\ZZ[\underline{\pi_1(X)}]) \ar["q"]{d} \\
        R_{\fd}(X^{H_i}_{hWH_i}) \ar[swap, "\ell_{H_i}"]{r} & \mathrm{Perf}(\ZZ[\pi_1(X^{H_i}_{hWH_i})]) \ar[shorten = 4mm, from = 1-2,to=2-1,Rightarrow, "\epsilon"']
    \end{tikzcd}
\]
where the vertical functors are isomorphisms after $K$-theory.  We show that the component of $\epsilon$ at every $Y\in R_i(X)$ is a quasi-isomorphism; the result then follows immediately. 

The functor $p$ is defined by $p(Y) = Y^{H_i}_{hWH_i}$ and the functor $q$ sends a perfect complex over $\ZZ[\underline{\pi_1(X)}]$ to its evaluation at $G/H_i$.  From the definitions, we compute
\[
    (\ell_{H_i}\circ p) = C_*(\widetilde{Y^{H_i}_{hWH_i}},\widetilde{X^{H_i}_{hWH_i}})
\]
and
\[
    (q\circ L^G_i)(Y) = C_*(\widetilde{Y^{H_i}},\widetilde{X^{H_i}}).
\]
Define $\epsilon_Y$ to be the map induced by the map of pairs
\begin{equation}\label{map of pairs}
    (\widetilde{Y^{H_i}},\widetilde{X^{H_i}})\to (\widetilde{Y^{H_i}_{hWH_i}},\widetilde{X^{H_i}_{hWH_i}})
\end{equation}
which we get by picking any basepoint in $EWH_i$. 

To see this is a quasi-isomorphism consider the commutative diagram:
\[
    \begin{tikzcd}
        \widetilde{X^{H_i}} \ar["\widetilde{f}"]{r} \ar{d} & \widetilde{X^{H_i}_{hWH_i}} \ar{d} & \\
        X^{H_i} \ar[swap, "f"]{r} & X^{H_i}_{hWH_i} \ar{r} & BWH_i 
    \end{tikzcd}
\]
where the bottom row is the fibration \eqref{first sequence} and the vertical maps are the universal covering fibrations.  Since $BWH_i$ has homotopy concentrated in degrees $0$ and $1$, the map $f$ is an isomorphism on all homotopy groups above degree $1$.  Since both vertical maps also have this property we see that the map $\widetilde{f}$ is a weak homotopy equivalence. 

For $Y$, the same argument shows that the map $\widetilde{Y^{H_i}}\to \widetilde{Y^{H_i}_{hWH_i}}$ induces an isomorphism on all $\pi_n$ for $n\geq2$. 
 Unfortunately, $\widetilde{Y^{H_i}}$ and $\widetilde{Y^{H_i}_{hWH_i}}$ might fail to be simply connected and so we need to check that this map also gives an isomorphism on fundamental groups.  By \cite[Corollary 2.2.3]{May-Ponto}, and the fact that coverings induce isomorphisms on $\pi_2$, we see there is a short exact sequence
 \[
    0\to \pi_1(\widetilde{Y^{H_i}})\to \pi_1(Y^{H_i})\xrightarrow{r_*}\pi_1(X^{H_i})\to 0
 \]
 which is right-split giving an isomorphism
 \[
    \pi_1(Y^{H_i})\cong \pi_1(\widetilde{Y^{H_i}})\rtimes \pi_1(X^{H_i})
 \]
 and identifies $\pi_1(\widetilde{Y^{H_i}})\cong \ker(r_*)$.  On the other hand, the same argument shows that 
 \[
    \pi_1(Y^{H_i}_{hWH_i}))\cong \pi_1(\widetilde{Y^{H_i}_{hWH_i}})\rtimes \pi_1(X^{H_i}_{hWH_i})
 \]
 and identifies $\pi_1(\widetilde{Y^{H_i}_{hWH_i}})$ with $\ker(r_{hWH_i})_*$. The split fibration \eqref{first sequence} gives natural isomorphisms
 \[
    \pi_1(\widetilde{Y^{H_i}_{hWH_i}})\cong \pi_1(\widetilde{Y^{H_i}})\rtimes WH_i
 \]
 and so we have 
 \[
    \pi_1(\widetilde{Y^{H_i}_{hWH_i}}) \cong \ker(r_{hWH_i})_*\cong \ker(r_*\rtimes WH_i)\cong \ker(r_*)\cong \pi_1(\widetilde{Y^{H_i}})
 \]
 finishing the proof that the map $\widetilde{Y^{H_i}}\to \widetilde{Y^{H_i}_{hWH_i}}$ is a weak homotopy equivalence.

The analysis above shows that the map \eqref{map of pairs} induces a homology isomorphism on pairs and thus induces quasi-isomorphisms on singular chain complexes.
\end{proof}

We can now leverage facts about non-equivariant linearization to say things about equivariant linearization.  Of particular interest is the following result.
\begin{theorem}\label{linearization is 2 connected}
    The equivariant linearization map \[L\colon A_G(X)\to K_G(\ZZ[\underline{\pi_1(X)}])\] is a $2$-connected map of genuine $G$-spectra.  That is, it induces an isomorphisms on $\pi^H_0$ and $\pi^H_1$ for all subgroups $H\leq G$.
\end{theorem}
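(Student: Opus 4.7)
The plan is to reduce the equivariant statement to the classical non-equivariant result via the splittings already established. Concretely, to show that $L$ is equivariantly $2$-connected, it suffices to show that for every subgroup $H\leq G$ the map on $H$-fixed points $L^H\colon A_G(X)^H\to K_G(\ZZ[\underline{\pi_1(X)}])^H$ induces isomorphisms on $\pi_0$ and $\pi_1$. So the first step is to pass to $H$-fixed points, one subgroup at a time.

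For each $H\leq G$, I would apply the $H$-fixed point analog of \cref{linearization splitting}. The splittings of equivariant $A$-theory from \cite{badzioch/dorabiala:2016} and the splittings from \cref{K theory splitting} both naturally hold not just on $G$-fixed points but on $H$-fixed points, yielding
\[
    A_G(X)^H\simeq \prod_{(K)\leq H} A\bigl(X^K_{hW_HK}\bigr)
    \quad\text{and}\quad
    K_G(\ZZ[\underline{\pi_1(X)}])^H\simeq \prod_{(K)\leq H} K\bigl(\ZZ[\pi_1(X^K_{hW_HK})]\bigr),
\]
where the products run over $H$-conjugacy classes of subgroups of $H$. The same argument used to prove \cref{linearization splitting} — tracking the filtration by isotropy on retractive spaces, noting that $L^H$ preserves it, and comparing the induced maps on subquotients via the natural quasi-isomorphism from $(\widetilde{Y^K},\widetilde{X^K})$ to $(\widetilde{Y^K_{hW_HK}},\widetilde{X^K_{hW_HK}})$ — shows that under these equivalences $L^H$ is identified, up to homotopy, with the product of the ordinary linearization maps $\ell_K\colon A(X^K_{hW_HK})\to K(\ZZ[\pi_1(X^K_{hW_HK})])$.

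Once this identification is in place, the conclusion is immediate. Each classical linearization map $\ell_K$ is known to be $2$-connected by Waldhausen \cite{waldhausen:1983}; since $\pi_0$ and $\pi_1$ of a finite product of spectra are the corresponding products of homotopy groups, the product map $\prod_{(K)\leq H}\ell_K$ is also $2$-connected. Thus $L^H$ is $2$-connected for every $H\leq G$, which is exactly the statement that $L$ is equivariantly $2$-connected.

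The main obstacle is the bookkeeping required to upgrade \cref{linearization splitting} from $G$-fixed points to arbitrary $H$-fixed points, since the splitting of $K_G(\underline{S})^G$ established in \cref{subsec:KT split} was phrased only for the top group. However, this is essentially a matter of re-running the filtration argument for the restricted $H$-coefficient ring $R^G_H(\ZZ[\underline{\pi_1(X)}])$ (whose value at $H/K$ is $\ZZ[\pi_1(X^K)]$) and invoking the connectedness hypothesis $X^K$ connected for all $K\leq H$ to identify $\pi_1(X^K_{hW_HK})$ correctly via \cref{lemma: homotopy orbits pi one is pi_1 twisted}; no new ideas beyond those already used in \cref{linearization splitting} are required.
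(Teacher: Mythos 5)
Your proposal is correct and follows the same strategy as the paper: reduce to the classical $2$-connectedness of Waldhausen's linearization via the splitting on fixed points. The paper states the conclusion as immediate from \cref{linearization splitting}; you make explicit the (small, but real) step of noting that the $G$-fixed-point splitting theorem applies verbatim at every $H$-level because $A_G(X)^H$ and $K_G(\underline{S})^H$ are obtained by restricting the $G$-action to $H$ throughout, so nothing beyond the argument of \cref{linearization splitting} is needed.
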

\begin{proof}
    This is immediate from \cref{linearization splitting} since the ordinary linearization maps $\ell_H$ are $2$-connected.
\end{proof}

\subsection{Linearization for non-$G$-connected spaces}\label{subsec:non cntd linearization}
In this section, we outline how to extend the construction of the linearization map to $G$-spaces $X$ which are not equivariantly connected. However, we still require that $X^G\neq \varnothing$ and $\abs{\pi_0(X^H)}<\infty$ for all $H\leq G$. The key idea is to replace $\ZZ[\underline{\pi_1(X)}]$ with an equivariant version of the fundamental groupoid \cite[Section 1.1]{CMW} (see also the \textit{fundamental category} of \cite[Definition 8.15]{Luck}).

\begin{definition}\label{defn:eq fundamental gpd}
    If $X$ is a $G$-space, the \textit{equivariant fundamental groupoid} (for $G$ finite) is a category $\Pi_G(X)$ whose objects are $G$-maps $x\colon G/H\to X$. A morphism $(\alpha, \omega)\colon (x\colon G/H\to X)\to (y\colon G/K \to X)$ is a $G$-map $\alpha\colon G/H\to G/K$ along with an equivalence class $\omega$ of paths in $X^H$ from $x$ to $y\circ \alpha$ (where two paths are equivalent if they are homotopic rel endpoints).
\end{definition}

There is a projection map $\pi\colon \Pi_G(X)\to \mathcal O_G$ which sends $x\colon G/H\to X$ to $G/H$ and $(\alpha, \omega)\mapsto \alpha$. The fiber of $\pi$ over $G/H$ is the non-equivariant fundamental groupoid $\Pi(X^H)$ \cite[Remark 1.4]{CMW}. It follows that $\Pi_G(X)$ is equivalent to the Grothendieck construction of the following functor.
 
\begin{definition}
    Let $\Pi(\underline{X})\colon \mathcal{O}_G^{\op}\to \Gpd$ be the coefficient system of groupoids which sends $G/H$ to the non-equivariant fundamental groupoid $\Pi(X^H)$. A map $G/H\to G/K$ in $\mathcal O_G$ induces a functor $\Pi(X^K)\to \Pi(X^H)$ induced by the inclusion $X^K\hookrightarrow X^H$.
\end{definition}

The definition above is more clearly a generalization of $\ZZ[\underline{\pi_1(X)}]$; in particular, when $X$ is $G$-connected, we have $\Pi(X^H)\simeq B(\pi_1(X^H))$ for all $H\leq G$. In the remainder of this section, we will freely pass between $\Pi_G(X)$ and $\Pi(\underline{X})$.

\begin{definition}
    An \textit{equivariant local system on $X$} is a functor $\Pi_G(X)\to \Ab$.
\end{definition}

An equivariant local system on $X$ is equivalently specified by local systems
    \[
        M_H\colon \Pi(X^H)\to \Ab
    \]
    for all $H\leq G$, along with natural transformations 
\[\begin{tikzcd}
	{\Pi(X^H)} && \Ab \\
	\\
	{\Pi(X^K)}
	\arrow[""{name=0, anchor=center, inner sep=0}, "{M_H}", from=1-1, to=1-3]
	\arrow["f"', from=1-1, to=3-1]
	\arrow["{M_K}"', from=3-1, to=1-3]
	\arrow["{\eta_f}"', shorten <=14pt, shorten >=19pt, Rightarrow, from=0, to=3-1]
\end{tikzcd}\]
for every $f\in \mathcal{O}_G^{\op}$ which are suitably compatible. 
Note that if $X$ is $G$-connected, then an equivariant local system is same thing as a $\ZZ[\underline{\pi_1(X)}]$-module. In general, the category of equivariant local systems is an abelian category with enough projectives. We note that may also replace abelian groups with any other abelian category $\cat C$ (such as chain complexes), and we will refer to functors $\Pi_G(X)\to \cat C$ as \textit{equivariant local systems in $\cat C$}.

\begin{lemma}\label{lem: left adj of ev on H for loc sys}
    Let $\cat C$ be an abelian category. For all $H\leq G$, the evaluation functor \[ev_H\colon \Fun(\Pi_G(X), \cat C)\to \Fun(\Pi(X^H), \cat C)\] has a left-adjoint which preserves projective objects.
\end{lemma}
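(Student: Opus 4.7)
The plan is to realize $ev_H$ as a restriction functor along an inclusion of groupoids and then construct the left adjoint as a left Kan extension, with preservation of projectives following from a standard adjunction argument.

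First I would identify $ev_H$ as a restriction functor. Viewing $\Pi_G(X)$ as equivalent to the Grothendieck construction of $\Pi(\underline{X})\colon \mathcal{O}_G^{\op}\to \Gpd$, there is a canonical faithful functor $\iota_H\colon \Pi(X^H)\to \Pi_G(X)$ which sends $z\in X^H$ to the corresponding $G$-map $G/H\to X$ and sends a morphism $[\omega]\colon z_1\to z_2$ in $\Pi(X^H)$ to the morphism $(\id_{G/H},[\omega])$ of $\Pi_G(X)$. Directly from the definitions, $ev_H$ is the precomposition functor $\iota_H^* = -\circ\iota_H$.

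Next I would define $L_H := \iota_{H,!}$ as the pointwise left Kan extension along $\iota_H$. Since $\cat{C}$ is a cocomplete abelian category (which for the cases of interest in this paper, such as $\mathrm{Ab}$ or chain complexes, is automatic), the Kan extension exists and its universal property gives the adjunction $L_H\dashv ev_H$.

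To conclude that $L_H$ preserves projectives, I would first observe that $ev_H$ is exact: limits and colimits in a functor category $\Fun(-,\cat{C})$ are computed pointwise, and precomposition along any functor preserves pointwise kernels and cokernels. Then for projective $P\in\Fun(\Pi(X^H),\cat{C})$, the adjunction isomorphism
\[
\hom_{\Fun(\Pi_G(X),\cat{C})}(L_HP,-) \;\cong\; \hom_{\Fun(\Pi(X^H),\cat{C})}(P,ev_H(-))
\]
exhibits the left-hand side as the composition of the exact functor $ev_H$ with the exact functor $\hom_{\Fun(\Pi(X^H),\cat{C})}(P,-)$ (exact by hypothesis on $P$), hence itself exact. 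Therefore $L_HP$ is projective.

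The argument is essentially formal category theory once the Grothendieck construction viewpoint is set up. The only minor subtlety is that $\iota_H$ is faithful but not full --- because morphisms in $\Pi_G(X)$ between fiber objects over $G/H$ include the nontrivial $W_GH$-action on $X^H$ --- but this does not affect either the construction of $L_H$ or the exactness argument for $ev_H$.
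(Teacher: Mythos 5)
Your proof is correct and takes essentially the same route as the paper: realize $ev_H$ as restriction along the inclusion $\Pi(X^H)\hookrightarrow\Pi_G(X)$ of the fiber over $G/H$, take the left Kan extension as the left adjoint, and deduce preservation of projectives from exactness of $ev_H$. The extra details you supply (the explicit description of $\iota_H$, its failure to be full, and the cocompleteness caveat on $\cat C$) are all accurate and match what the paper leaves implicit.
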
\begin{proof}
    The left adjoint is the left Kan extension along the inclusion of the fiber $\Pi(X^H)\hookrightarrow \Pi_G(X)$. The fact that the left adjoint preserves projective objects follows from the fact that its right adjoint $ev_H$ is exact.
\end{proof}

We now follow the methods of the previous section to construct the linearization. For ease of notation we write $\Pi_H(X) := R^G_H(\Pi_G(X)) = \Pi_H(R^G_H X)$ and freely pass between $\Pi_H(X)$ and $R^G_H\Pi(\underline{X})$. As before, we will construct functors\[
L^H \colon R^H_{\fd}(X) \to \Perf(\Pi_H(X))
\] which induce the linearization $L$ on equivariant $K$-theory spectra. It again suffices to describe $L^H$ for $H=G$. The goal is to construct $L^G$ so that \cref{chains are perfect}, \cref{L of finite complex}, and \cref{L lands in perfect} hold when $\ZZ[\underline{\pi_1(X)}]$ is replaced with $\Pi(\underline{X})$.

\begin{definition}
    Given $Y\in R^G_{\fd}(X)$ and $x\in X^H$, define \[
    L^G(Y)(x) = C_*(\widetilde{Y^H_{x}}, \widetilde{X^H_{x}})
    \] where $X^H_{x}$ is the connected component of $X^H$ containing $x$, $\widetilde{X^H_{x}}$ is the universal cover of $X^H_x$ with respect to the basepoint $x$, $Y^H_{x}$ is the preimage of $X_x^H$ under the retraction, and $\widetilde{Y^H_{x}}$ is the pullback of ${Y^H_{x}}$ along $\widetilde{X^H_{x}}\to {X^H_{x}}$.
\end{definition}

We now explain what happens to the Weyl group actions in the disconnected setting, which is similar to the discussion in \cite{andrzejewski:1986}. Let $X_\alpha^H$ be a connected component of $X^H$, so then $NH_\alpha = \{n\in NH\mid nX^H_\alpha = X^H_\alpha\}$ and $WH_\alpha = \{w\in WH\mid wX^H_\alpha = X^H_\alpha\}$ act on $X^H_\alpha$. In particular, if $x\in X^H$ is in the connected component $X^H_\alpha$, then $L^G(Y)(x)$ is a $\ZZ[\pi_1(X^H_\alpha)]_\theta[WH_\alpha]$-module.
The following is the analog of \cref{chains are perfect}.

\begin{proposition}
The assignment $L^G(Y)$ extends to an equivariant local system of chain complexes.
\end{proposition}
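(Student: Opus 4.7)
The plan is to exhibit $L^G(Y)$ as a functor $\Pi_G(X) \to \mathrm{Ch}$, which by the discussion above is the same data as an equivariant local system of chain complexes. Using that every morphism $(\alpha,\omega)$ in $\Pi_G(X)$ factors as a ``path part'' $(\mathrm{id},\omega)$ followed by an ``orbit part'' $(\alpha,\mathrm{const})$, we build $L^G(Y)$ in two steps and then check composition.

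First I would handle the path part. Fix $H\leq G$ and a connected component $X^H_\alpha$. Standard covering space theory says that any path $\omega$ in $X^H_\alpha$ from $x$ to $x'$ determines an isomorphism of pointed covers $\widetilde{X^H_\alpha}_x \xrightarrow{\sim} \widetilde{X^H_\alpha}_{x'}$, depending only on the homotopy class of $\omega$ rel endpoints, and this assignment is functorial in composition of paths. Pulling back along the retraction $Y^H_\alpha\to X^H_\alpha$ promotes this to an isomorphism $\widetilde{Y^H_\alpha}_x \xrightarrow{\sim} \widetilde{Y^H_\alpha}_{x'}$ of pairs, which after applying $C_*$ gives the required chain isomorphism $\omega_\ast\colon L^G(Y)(x)\to L^G(Y)(x')$. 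This exhibits $L^G(Y)|_{\Pi(X^H)}$ as a local system on $X^H$; since paths only exist within a connected component, it suffices to define it on each component separately.

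Next I would handle the orbit part. A $G$-map $\alpha\colon G/H\to G/K$ is specified by an element $gK$ with $H^g\subseteq K$; if $y\colon G/K\to X$ lies in $X^K$ at the coset $eK$, then $y\circ\alpha$ lies in $X^{H^g}\subseteq X^H$ after translating by $g$. The inclusion $X^{H^g}\hookrightarrow X^H$ (combined with conjugation by $g$) induces on a connected component a map of universal covers lifting the inclusion of components, and hence a map of pairs $(\widetilde{Y^{H^g}_{\bullet}},\widetilde{X^{H^g}_{\bullet}}) \to (\widetilde{Y^H_\bullet},\widetilde{X^H_\bullet})$. Applying $C_*$ gives the chain map $L^G(Y)(y\circ\alpha)\to L^G(Y)(y)$ in the appropriate direction. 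Combined with the path part, this defines $L^G(Y)$ on all of $\Pi_G(X)$.

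The main thing to verify is that these assignments are coherent: that path action and inclusion-of-fixed-points action commute in the expected way, that composition of orbit maps is respected (this is where the conjugation bookkeeping of \cref{conjugation lemmas} and \cref{Ind Res double coset} shows up), and that the resulting functor descends through $\Pi(\underline{X})$ so that the Weyl-group actions on each connected component are by semi-linear automorphisms of the twisted group ring $\ZZ[\pi_1(X^H_\alpha)]_\theta[WH_\alpha]$. All of these are consequences of naturality of covering spaces and of the fact that the inclusion $X^K\hookrightarrow X^H$ and the conjugation action commute with the choice of basepoint up to canonical path. The hardest bookkeeping step will be confirming that the $WH_\alpha$-equivariance described after the definition of $L^G(Y)$ agrees with the one induced from the orbit-category action; this reduces to the observation that an element $n\in NH_\alpha$ acts both on the pair $(Y^H_\alpha,X^H_\alpha)$ and on the fundamental group $\pi_1(X^H_\alpha,x)$ by conjugation, and these two actions match by construction of the universal cover. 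With these verifications in hand, the resulting functor $\Pi_G(X)\to\mathrm{Ch}$ is the desired equivariant local system of chain complexes.
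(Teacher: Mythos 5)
Your sketch takes essentially the same approach as the paper's proof: first handle morphisms lying over the identity in $\mathcal O_G$ via covering-space theory, so that $L^G(Y)$ restricts to a local system on each $\Pi(X^H)$, then handle the orbit-category part via inclusion of fixed-point sets composed with conjugation, and observe the remaining coherences. One small notational slip in your orbit part: the map $X^{H^g}\to X^H$ is a homeomorphism (translation by $g$), not an inclusion, since $H^g$ and $H$ are conjugate rather than nested; the genuine inclusion at play is $X^K\hookrightarrow X^{H^g}$, which holds because $H^g\leq K$, and the paper packages the same data dually via an $\mathcal O_G$-map $G/K\to G/H$ yielding the inclusion $X^H\hookrightarrow X^{K^g}$.
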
\begin{proof}
First observe that a morphism $x\to x'$ in $\Pi(X^H)$ induces a map of chain complexes\[
 C_*(\widetilde{Y^H_{x}}, \widetilde{X^H_{x}}) \xrightarrow{\cong} C_*(\widetilde{Y^{H}_{x'}}, \widetilde{X^{H}_{x'}}),
\] so $L^G(Y)$ defines a local system on $\Pi(X^H)$. Now, if $f\colon G/K\to G/H$ is a map in $\mathcal O_G$ and $x\in X^H$, then we observe that there are maps $X^H_{x}\hookrightarrow X^{K^g}_{x} \xrightarrow{\cong} X^{K}_{g^{-1}\cdot x}$ where $f(eK)=gH$. This induces a chain map \[
 C_*(\widetilde{Y^{H}_{x}}, \widetilde{X^{H}_{x}}) \to C_*(\widetilde{Y^{K}_{g^{-1}\cdot x}}, \widetilde{X^{K}_{g^{-1}\cdot x}})
    \] and it is straightforward to check that this defines a natural transformation $\eta_f$ of local systems, and hence $L^G(Y)$ is indeed an equivariant local system.
\end{proof}

Note that the Weyl group action on $X^H$ appears when $K=H$ and the map $f$ in the proof above is an isomorphism in $\mathcal O_G$.
We now show that the linearization takes homotopy finite spaces to perfect systems, meaning that the values are quasi-isomorphic to a chain complex of equivariant local systems which is bounded and levelwise projective.

\begin{lemma}
    If $Y\in R^G_{\rm hf}(X)$, then $L^G(Y)$ is a perfect equivariant local system.
\end{lemma}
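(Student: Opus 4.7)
The plan is to mimic the proof of \cref{L of finite complex} in the broader setting of equivariant local systems. We induct on the number of equivariant cells of $Y$ relative to $X$. The base case $Y = X$ yields $L^G(X) = 0$, which is trivially perfect. For the inductive step, write $Y = Y' \cup_\varphi (G/K \times D^n)$. By the same argument as in the proof of \cref{prop:L induces map on fx pts}, the functor $L^G$ sends the defining pushout square of this cell attachment to a homotopy cocartesian square of equivariant local systems of chain complexes, yielding a cofiber sequence
\[
L^G(Y') \cof L^G(Y) \quot L^G(Y)/L^G(Y').
\]
Since $L^G(Y')$ is perfect by the inductive hypothesis and perfect equivariant local systems of chain complexes are closed under extensions, it suffices to show the quotient is a free equivariant local system concentrated in degree $n$.

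To identify the quotient, fix a point $x \colon G/H \to X$ and let $X^H_x$ denote the connected component of $X^H$ containing $x$. The $H$-fixed points of $Y$ are obtained from those of $Y'$ by attaching $|(G/K)^H|$ many $n$-cells, one for each coset $gK \in (G/K)^H$. Passing to the universal cover of each connected component separately, only those cells whose attaching sphere $\varphi(\{gK\} \times S^{n-1})$ lies in $X^H_x$ (after composing with $r_Y$) contribute to the chain complex at $x$, and each such cell contributes a free rank-one summand over $\ZZ[\pi_1(X^H_x)]$ in degree $n$. Tracking the compatibilities with morphisms in $\Pi_G(X)$ and invoking \cref{induction on coefficient systems}, one recognizes this assignment as the left Kan extension along the inclusion $\Pi(X^K) \hookrightarrow \Pi_G(X)$ of the free rank-one local system concentrated at $\varphi(eK, *) \in X^K$. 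By \cref{lem: left adj of ev on H for loc sys} this is a projective (in fact free) equivariant local system, completing the induction.

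The principal obstacle is the careful bookkeeping of connected components of the fixed-point spaces $X^H$ and of the Weyl-group-type symmetries permuting the $n$-cells of $Y^H$ parametrized by $(G/K)^H$. This is ultimately controlled by the Mackey-style decomposition of induced coefficient systems established in \cref{induction on coefficient systems}, together with the component-wise computation of universal covers used to define $L^G$.
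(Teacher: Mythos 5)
Your overall strategy mirrors the paper's: reduce to a single attached cell $G/K\times\Delta^n$ via cofiber sequences and closure of perfect complexes under extensions, then identify the resulting cellular chunk as the left Kan extension of a free local system along $\Pi(X^K)\hookrightarrow\Pi_G(X)$, concluding by \cref{lem: left adj of ev on H for loc sys}. The pointwise computation you give of the cellular chains (cells in $Y^H$ over $X^H_x$ indexed by those $gK\in(G/K)^H$ whose attaching sphere lands in $X^H_x$, each contributing a free rank-one $\ZZ[\pi_1(X^H_x)]$-summand) is correct and matches what one expects the Kan extension to look like.

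The gap is in the sentence ``Tracking the compatibilities with morphisms in $\Pi_G(X)$ and invoking \cref{induction on coefficient systems}, one recognizes this assignment as the left Kan extension.'' This is the entire technical content of the lemma and it is asserted rather than proven. Moreover \cref{induction on coefficient systems} is not the right tool: that lemma computes Mackey-style induction $I^K_H$ for coefficient systems on the \emph{orbit category}, whereas here you need a formula for the left Kan extension of a local system along the inclusion of a fiber $\Pi(X^K)$ into the Grothendieck construction $\Pi_G(X)$. No such formula is proved in the paper, and the bookkeeping is genuinely different because morphisms in $\Pi_G(X)$ carry both an orbit-category component and a path component; you would have to establish that the comma-category colimit defining the Kan extension reproduces exactly the cell count you wrote down, with the correct identifications across conjugations and path classes. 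The paper sidesteps this bookkeeping entirely by verifying the \emph{universal property} directly: it shows that a natural transformation $\eta\colon L^G(Y)\Rightarrow M$ is determined by its components at the $K$-level, since $L^G(Y)(x)=0$ whenever the isotropy of $x$ is not subconjugate to $K$, and otherwise naturality along the conjugation-and-quotient morphism to $G/K$ forces $\eta_x$. You should either prove the Kan extension identification by this universal-property argument, or else state and prove the pointwise Kan extension formula for equivariant local systems that you are implicitly relying on. (A minor further point: \cref{lem: left adj of ev on H for loc sys} only gives ``projective,'' not ``free''; the parenthetical ``in fact free'' would need separate justification, though it is not needed for perfection.)
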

\begin{proof}
    As in \cref{L of finite complex}, it suffices to prove the claim for $Y=X\cup e_n$, where $e_n = G/K\times \Delta^n$ for some $K\leq G$. We will show that $L^G(Y)$ satisfies the universal property of the left adjoint of \cref{lem: left adj of ev on H for loc sys} applied to $Y$, i.e.\[
    {\rm Hom}(L^G(Y), M) \cong {\rm Hom}(L^G(Y)(G/H), M(G/H))
    \] where the left side is taken in the category of equivariant local systems of chain complexes and the right side replaces the domain category with $\Pi_G(X^H)$. Recall that $L^G(Y)(G/H)$ is $L^G(Y)$ restricted to $\Pi(X^H)$.

    A natural transformation $\eta\colon L^G(Y)\Rightarrow M$ is specified by a compatible collection of maps of chain complexes\[
    \eta_x\colon C_*(\widetilde{Y^H_x}, \widetilde{X^H_x}) \to M(x)
    \] for $x\in X^H$. Observe that if $H\not\leq_G K$, then $C_*(\widetilde{Y^H_x}, \widetilde{X^H_x})=0$ since $Y=X\cup e_n$. Otherwise, if $H^g\leq K$, then $\eta_x$ may be written as\[
        L^G(Y)(x) \xrightarrow{c_g} L^G(Y)(g^{-1}xg) \to M(g^{-1}xg) \xrightarrow{c_g^{-1}} M(x)
    \] which is to say that $\eta$ is equivalent to the data of a morphism $L^G(Y)(G/H)\Rightarrow M(G/H)$, as claimed.
\end{proof}

Since the proof of \cref{L lands in perfect} is purely formal, it applies in this context as well and so we deduce the desired corollary.

\begin{corollary}
    If $Y\in R^G_{\fd}(X)$, then $L^G(Y)$ is a perfect equivariant local system.
\end{corollary}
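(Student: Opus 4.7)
The plan is to mimic the proof of \cref{L lands in perfect} with $\ZZ[\underline{\pi_1(X)}]$-modules replaced by equivariant local systems; the previous lemma supplies exactly the finite-case input we need, and the reduction is purely formal.

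First, I would unpack the definition of finitely dominated: $Y \in R^G_{\fd}(X)$ is an equivariant retract up to homotopy of some $Y' \in R^G_{\rm hf}(X)$ relative to $X$. So we have $G$-maps $Y \to Y' \to Y$ over and under $X$ whose composite is $G$-homotopic rel $X$ to the identity. I would then observe that the functor $L^G$, being defined levelwise by relative singular chain complexes of pairs of (universal covers of connected components of fixed points of) $G$-spaces, carries equivariant homotopies rel $X$ to chain homotopies of equivariant local systems. Therefore $L^G(Y)$ is a chain homotopy retract of $L^G(Y')$ in the category of equivariant local systems of chain complexes.

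Next, by the preceding lemma applied to $Y'$, the complex $L^G(Y')$ is a perfect equivariant local system, i.e.\ quasi-isomorphic to a bounded complex of projectives in $\Fun(\Pi_G(X), \Ab)$. It then remains to invoke the purely homological fact that a chain homotopy retract of a perfect complex in an abelian category with enough projectives is again perfect. This is the same argument used in \cref{L lands in perfect}: following \cite[Lemma 1.9, Remark 1.10]{pedersen}, one constructs a bounded projective resolution of the retract out of one of the ambient complex by splitting off the idempotent determined by the retraction, and nothing in the argument uses anything specific to modules over a ring—only the abelian structure and the existence of enough projectives on $\Fun(\Pi_G(X),\Ab)$, both of which were recorded earlier.

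There is no genuine obstacle here beyond bookkeeping: the only potential subtlety is verifying that $L^G$ really does send an equivariant homotopy rel $X$ to a chain homotopy of equivariant local systems, but this is immediate because the formation of $\widetilde{Y^H_x}$ from $Y^H_x$ is functorial in $Y$ and a $G$-homotopy rel $X$ restricts on each fixed-point component to an ordinary homotopy rel $X^H_x$, which lifts to a homotopy of pairs $(\widetilde{Y^H_x}, \widetilde{X^H_x})$ and hence induces a chain homotopy on relative singular chains, compatibly in $x \in X^H$ and in $H$.
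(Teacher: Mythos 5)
Your argument is exactly the one the paper intends: express $Y$ as a homotopy retract rel $X$ of some homotopy finite $Y'$, note that $L^G$ carries the homotopy retraction to a chain homotopy retraction of equivariant local systems, invoke the preceding lemma to see $L^G(Y')$ is perfect, and close with the purely homological fact (via Pedersen) that a chain homotopy retract of a perfect complex in an abelian category with enough projectives is perfect. The paper merely compresses all of this into the remark that the proof of \cref{L lands in perfect} is formal and carries over verbatim; your unpacking, including the check that $L^G$ respects equivariant homotopies rel $X$ levelwise, matches that argument step for step.
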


Piecing these results together, we obtain a linearization map for $G$-spaces $X$ which are not $G$-connected.

\begin{theorem}
    The functors $\{L^H\}_{H\leq G}$ induce a linearization map on $G$-spectra\[
    L\colon A_G(X) \to K_G(\Pi(\underline{X})).
    \]
\end{theorem}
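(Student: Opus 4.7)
The plan is to follow the same strategy as in the $G$-connected case: first show each functor $L^H$ induces a well-defined map on $K$-theory spectra, then verify compatibility with the categorical transfers, restrictions, and conjugations, and finally assemble the result using the pseudo-functorial machinery from \cref{pseudo is enough}.

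First, I would show that each $L^H \colon R^H_{\fd}(X) \to \Perf(\Pi_H(X))$ induces a map on $K$-theory via the Thomason construction, exactly as in \cref{prop:L induces map on fx pts}. This reduces to showing that $L^H$ preserves homotopy cocartesian squares. Since pushouts, cofibrations, and quasi-isomorphisms of chain complexes of equivariant local systems are detected pointwise over $\Pi_H(X)$, and since taking fixed points and passing to connected components both preserve homotopy cocartesian squares, this reduces to checking the classical statement that for a based space $Z$, the functor $Y \mapsto C_*(\widetilde{Y},\widetilde{Z})$ preserves homotopy cocartesian squares of retractive spaces, which is the standard Mayer--Vietoris/excision argument of \cite[Section 7.3]{Klein-Malkiewich} applied component-by-component.

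Second, I would verify that the functors $\{L^H\}_{H \leq G}$ are compatible with restrictions, conjugations, and transfers, generalizing \cref{linearization commutes with tr/res}. Restrictions and conjugations are essentially formal, since taking chains of universal covers of fixed-point components commutes in the obvious way with the geometric formulas in \cref{thm:fmlas for res cong transf}. For transfers, the key computation is that for $Y \in R^H_{\fd}(X)$ and $J \leq K$, we have
\[
    (\mathrm{tr}^K_H Y)^J \;=\; \bigcup_{\alpha \in (K/H)^J} \bigl(Y^{J^\alpha} \cup X^{J^\alpha}\bigr),
\]
exactly as in the proof of \cref{linearization commutes with tr/res}. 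Evaluating $L^K(\mathrm{tr}^K_H Y)$ at a point $x \in X^J$ then decomposes $C_*(\widetilde{(\mathrm{tr}^K_H Y)^J_x}, \widetilde{X^J_x})$ into a sum indexed by $(K/H)^J$ of chain complexes of the form $C_*(\widetilde{Y^{J^\alpha}_{\alpha^{-1}x}},\widetilde{X^{J^\alpha}_{\alpha^{-1}x}})$, and this is precisely the value of the induction of $L^H(Y)$ via the left adjoint of \cref{lem: left adj of ev on H for loc sys} at the point $x$.

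Third, with these compatibilities established, the collection $\{L^H\}$ assembles into a pseudo-natural transformation $\underline{R}(X) \Rightarrow \mathrm{Perf}(\Pi(\underline{X}))$ of pseudo-functors $\mathbb{B}^G \to \mathrm{Wald}$. Applying $K$-theory levelwise and invoking \cref{pseudo is enough} (together with the strictification of pseudo-natural transformations of pseudo-functors that accompanies it) produces the desired map of spectral Mackey functors, i.e.\ a map of genuine $G$-spectra $L \colon A_G(X) \to K_G(\Pi(\underline{X}))$.

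The main obstacle will be the transfer-compatibility step, since in the non-$G$-connected setting the target category consists of equivariant local systems on $\Pi_G(X)$ rather than modules over a single coefficient ring, and the induction formula must be interpreted in terms of the left Kan extension along $\Pi(X^{J^\alpha}) \hookrightarrow \Pi_K(X)$. One has to track how the basepoint choices in the different connected components $X^{J^\alpha}_{\alpha^{-1}x}$ interact under the Weyl group actions and under the identifications $X^{K^g}_x \xrightarrow{\cong} X^K_{g^{-1}x}$ in order to see the levelwise splitting as an instance of the induction formula; once this bookkeeping is handled, the remainder of the argument is a direct translation of the $G$-connected proofs.
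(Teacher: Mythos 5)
Your proposal is correct and follows the same three-step strategy as the paper's proof: use the Thomason construction to get maps on $K$-theory by reducing preservation of homotopy cocartesian squares to connected components, check compatibility with transfers and restrictions by generalizing \cref{linearization commutes with tr/res}, and then assemble via the pseudo-functor machinery. You flesh out the transfer-compatibility step in more detail than the paper does (the paper says only that ``essentially the same proof'' works), and your identification of the basepoint-bookkeeping issue as the main delicacy matches where the real care is needed, but the route is the same.
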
\begin{proof}
    We first claim that the functors $L^H\colon R^H_{\rm fd}(X)\to \Perf(\Pi_H(X))$ induce maps on $K$-theory spectra. This follows from an argument identical to \cref{prop:L induces map on fx pts}, using the $S_\bullet '$-construction and reducing the claim that $L^H$ preserves cocartesian squares to the same claim on each connected component of $X^H$. Second, we observe that essentially the same proof as in \cref{linearization commutes with tr/res} shows that the functors $L^H$ commute with the categorical restriction and transfer maps. Consequently, we obtain a natural transformation which induces a map of genuine $G$-spectra $L\colon A_G(X)\to K(\Pi(\underline{X}))$, as claimed.
\end{proof}

\begin{remark}
    One can also deduce that $L$ is $2$-connected in this case by proving a version of \cref{linearization splitting}. To do this, one first proves that \[
    K(\Pi(\underline{X}))^G \simeq \prod_{(H)\leq G} K(\Pi(X^H_{hWH})),
    \] and the method of proof is essentially the same as \cref{K theory splitting}, where one filters the category of $\Pi(\underline{X})$-modules by isotropy and shows that the evaluation functor ${\rm ev}_{G/H}$ from projective ``$H$ generated'' equivariant local systems to projective local systems on $X^H_{hWH}$ is an exact equivalence of categories (see also \cite[\S 10]{Luck}).
\end{remark}
\subsection{Recovering geometric invariants from equivariant $A$-theory}\label{subsec: geometric invariants}

The (non-equivariant) $K$-theory of the group ring $\ZZ[\pi_1(X)]$ encodes many important geometric invariants, such as the Euler characteristic, Wall finiteness obstruction, and Whitehead torsion. In light of the $2$-connected linearization map $A(X)\to K(\ZZ[\pi_1(X)])$, these invariants lift from $K(\ZZ[\pi_1(X)])$ to $A$-theory. Indeed, $A(X)$ encodes more information about the geometry of $X$ than $K(\ZZ[\pi_1(X)])$, as is evidenced by the stable parametrized $h$-cobordism theorem \cite{waldhausen:1983, WaldhausenJahrenRognes}. 

In the 1970s-1980s, mathematicians constructed equivariant versions of the Euler characteristic \cite{Luck}, Wall finiteness \cite{Baglivo, andrzejewski:1986}, and Whitehead torsion \cite{Luck, illman, araki/kawakubo:scob} which lived in equivariant analogues of the $K$-groups of $\ZZ[\pi_1(X)]$. In this section, we show how to recover these invariants (which were not previously known to come from the homotopy groups of some genuine $G$-spectrum) from the equivariant $A$-theory of Malkiewich--Merling, using the equivariant linearization map constructed in \cref{sec:Equivariant Linearization}. 

We obtain our results by comparing the $K$-theory of coefficient systems from \cref{sec:KT of CS} with constructions of L\"uck. In \cite{Luck}, L\"uck constructs equivariant Wall finiteness obstructions and Whitehead torsion which live in the $K$-theory of a certain \textit{EI categories}, which are defined to be categories in which every endomorphism is an isomorphism (see \cite[Definition 9.2]{Luck}).
Examples of EI categories are the orbit category $\mathcal{O}_G$ as well as its opposite $\mathcal{O}_G^{\op}$.  Another important example for our purposes is the equivariant fundamental groupoid $\Pi_G(X)$ of a $G$-space $X$ (\cref{defn:eq fundamental gpd}).  

\begin{definition}
    A \textit{module} over an EI category $\cat{C}$ is a functor $\cat{C}^{\op}\to \Ab$ and a morphism of modules is a natural transformation; the category of modules is denoted by $\mathrm{Mod}(\cat{C})$. 
\end{definition}

The category $\mathrm{Mod}(\cat{C})$ is abelian with enough projectives.  Given a $\cat{C}$-module $M$ and any subset of elements $S\subset \cup_{a\in \mathrm{ob}\cat{C}}M(a)$, we write $\cat{C}S$ for the smallest submodule of $M$ containing every element in $S$.  We say that $M$ is \textit{finitely generated} if $M = \cat{C}S$ for some finite set $S$. We write $\mathrm{FPMod}(\cat{C})$ for the category of finitely generated projective $\cat{C}$-modules, which is an exact category, hence also a Waldhausen category.

\begin{definition}
    The $K$-theory of an EI category is $K(\mathrm{FPMod}(\cat{C}))$.
\end{definition}

In \cite[Section 14]{Luck}, it is shown that the low-degree homotopy groups of $K(\Pi_G(X))$, and quotients of these groups, are a natural home for the equivariant Euler characteristic, Wall finiteness obstruction, and Whitehead torsion.  

\begin{theorem}[{\cite[Section 14]{Luck}}]
    For a $G$-finitely dominated $G$-space $X$ there exists abelian groups $U^G(X)$, $\mathrm{Wa}^G(X)$, natural in $X$, such that $K_0(\Pi_G(X))\cong U^G(X)\oplus \mathrm{Wa}^G(X)$.  Moreover there is an element $\omega_G(X)\in \mathrm{Wa}^G(X)$, natural in $X$, which is zero if and only if $X$ has the $G$-homotopy type of a finite $G$-CW complex.
\end{theorem}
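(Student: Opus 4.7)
The plan is to identify L\"uck's group $K_0(\Pi_G(X))$ with $\pi_0^G K_G(\Pi(\underline{X}))$ and then read off the splitting and the obstruction from the technology we have already developed. First I would verify the identification: L\"uck's category of finitely generated projective modules over the EI-category $\Pi_G(X)$ is equivalent to finitely generated projective $\Pi(\underline{X})$-modules via the Grothendieck construction, exactly as the paper passes between $\Pi_G(X)$ and $\Pi(\underline{X})$ in \cref{subsec:non cntd linearization}. Under this identification $K_0(\Pi_G(X)) \cong \pi_0^G K_G(\Pi(\underline{X}))$.

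Next I would invoke the splitting
\[
  K_G(\Pi(\underline{X}))^G \simeq \prod_{(H)\leq G} K(\Pi(X^H_{hWH}))
\]
noted at the end of \cref{subsec:non cntd linearization}; on $\pi_0$ this further decomposes over $\pi_0(X^H_{hWH})$ into a sum of $K_0(\ZZ[\pi_1(X^H_{hWH},\beta)])$. Each such summand splits via the augmentation to $K(\ZZ)$, as in \cref{remark: identify reduced K theory of spaces}. Setting
\[
  U^G(X) := \bigoplus_{(H)\leq G} \bigoplus_{\beta\in \pi_0(X^H_{hWH})} \ZZ, \qquad \mathrm{Wa}^G(X) := \bigoplus_{(H)\leq G} \bigoplus_{\beta\in \pi_0(X^H_{hWH})} \widetilde{K}_0(\ZZ[\pi_1(X^H_{hWH},\beta)])
\]
yields the desired direct sum decomposition. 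Naturality in $X$ follows from naturality of each ingredient (fixed points, homotopy orbits, fundamental group, augmentation).

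To produce $\omega_G(X)$, view $X$ as an object of $R^G_{\fd}(X)$ via the identity retraction and apply the equivariant linearization $L^G\colon R^G_{\fd}(X)\to \Perf(\Pi(\underline{X}))$. The class $[L^G(X)] \in \pi_0^G K_G(\Pi(\underline{X})) \cong K_0(\Pi_G(X))$, projected via the splitting, defines $\omega_G(X) \in \mathrm{Wa}^G(X)$. If $X$ is $G$-homotopy equivalent to a finite $G$-CW complex, then \cref{L of finite complex} (adapted to the non-connected setting of \cref{subsec:non cntd linearization}) shows that $L^G(X)$ is quasi-isomorphic to a bounded complex of finitely generated \emph{free} $\Pi(\underline{X})$-modules, so its class lies in the image of $\pi_0^G(K^{\mathrm{free}}_G(\Pi(\underline{X})))$ and hence projects to $0$ in $\mathrm{Wa}^G(X)$.

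The converse is the main obstacle. Given $\omega_G(X)=0$, one must inductively construct a finite $G$-CW model, proceeding by subgroup order from largest to smallest using the isotropy filtration $\mathcal{P}_{\leq i}$ of \cref{subsec:KT split}. At each stage, the vanishing of the component of $\omega_G(X)$ in $\widetilde{K}_0(\ZZ[\pi_1(X^{H_i}_{hWH_i},\beta)])$ is precisely the classical Wall finiteness obstruction for the relative problem of extending the finite model already built at higher isotropy to one realizing $X^{H_i}$; here one invokes Wall's original theorem non-equivariantly on each component of each $X^{H_i}_{hWH_i}$. The delicate point is ensuring that a finite model chosen at stage $i$ can be made equivariantly compatible with the finite model at higher isotropy, i.e.\ that the classical Wall construction can be performed \emph{relative} to the already-built equivariant skeleton and cell attachments can be made $WH_i$-equivariantly. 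This compatibility is what the filtration argument of \cref{projective splitting sequence} makes possible on the algebraic side, and the geometric realization follows by equivariant obstruction theory applied componentwise.
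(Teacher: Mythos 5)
The paper does not prove this theorem: it is cited verbatim from L\"uck \cite[Section 14]{Luck} and used as a black box to derive \cref{corollary: wall,corollary: whitehead}. Your proposal is therefore an attempt to reprove L\"uck's result using the machinery of this paper, which is an interesting idea but has two substantive problems.

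First, your construction of $\omega_G(X)$ is broken as written. Taking $Y = X$ with the identity retraction forces the section $i_Y$ to also be the identity (since $r_Y\circ i_Y = \mathrm{id}_X$), so $Y$ is the \emph{zero object} of $R^G_{\fd}(X)$ and $L^G(Y)(G/H) = C_*(\widetilde{X^H},\widetilde{X^H}) = 0$ for all $H$. Thus $[L^G(Y)]=0$ identically and your $\omega_G(X)$ vanishes for every $X$, which cannot be correct. The object you want is $X\amalg X$ with the fold retraction and one-copy inclusion, as the paper makes explicit in \cref{corollary: wall}; then $L^G(X\amalg X)(G/H)\simeq C_*(\widetilde{X^H})$, which is the genuine finiteness obstruction data.

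Second, and more seriously, the converse direction (vanishing of $\omega_G(X)$ implies $X$ has the $G$-homotopy type of a finite $G$-CW complex) is where essentially all of the content of L\"uck's theorem lives, and your argument there is a sketch rather than a proof. Invoking ``equivariant obstruction theory applied componentwise'' does not by itself address the actual difficulty: at stage $i$ of the isotropy induction you must attach $G$-cells of orbit type $G/H_i$ to the already-built finite skeleton so as to realize the $H_i$-fixed-point homotopy type while preserving equivariance, and one must verify that the classical Wall obstruction for this relative problem is exactly the $(H_i)$-component of $\omega_G(X)$ and that nothing in the induction can reintroduce an obstruction at higher isotropy. The algebraic filtration of \cref{projective splitting sequence} suggests this should work, but it is an algebraic statement about projective modules, not a geometric construction. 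L\"uck (following Andrzejewski, Baglivo, and others) carries out a careful inductive equivariant CW approximation to close this gap, and that work cannot be elided. As it stands, your proposal reduces the theorem to ``the equivariant Wall finiteness theorem,'' which is the thing to be proved.
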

\begin{theorem}[{\cite[Section 14]{Luck}}]
Let $G$ be a finite $G$-CW complex.  There is a group $\mathrm{Wh}^G(X)$, which is a quotient of $K_1(\Pi_G(X))$, such that for any $G$-homotopy equivalence $f\colon X\to X$ there is an element $\tau(f)\in \mathrm{Wh}^G(X)$ which is zero if and only if $f$ is a simple $G$-homotopy equivalence.
\end{theorem}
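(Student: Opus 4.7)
The plan is to follow the classical construction of Whitehead torsion, adapted to modules over the equivariant fundamental groupoid $\Pi_G(X)$, which plays the role of the group ring $\ZZ[\pi_1(X)]$ in the non-equivariant story.

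First, I would define $\mathrm{Wh}^G(X)$ as a suitable quotient of $K_1(\Pi_G(X))$. The group $K_1(\Pi_G(X))$ admits an explicit description as the abelianization of the group of automorphisms of finitely generated free $\Pi_G(X)$-modules modulo elementary matrices. To obtain the Whitehead group, I would quotient by the subgroup generated by ``trivial units'' — for each object $x\colon G/H\to X$ of $\Pi_G(X)$, the automorphism group of the representable free module contains a canonical copy of $\pm \pi_1(X^H,x)$, and these should be killed. The splitting proved in \cref{K theory splitting} organizes this: on the $(H)$-summand one quotients $K_1(\ZZ[\pi_1(X^H)]_\theta[WH])$ by $\pm \pi_1(X^H)$ and an appropriate contribution from $WH$, exactly recovering the known equivariant Whitehead groups of Illman and Araki–Kawakubo.

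Next, I would construct the torsion element $\tau(f)$. Given a $G$-homotopy equivalence $f\colon X\to X$, form the equivariant mapping cylinder $M_f$. This is a finite $G$-CW complex containing $X$ as an equivariant deformation retract. The relative cellular chain complex $C_*^{\mathrm{cell}}(\widetilde{M_f},\widetilde{X})$, viewed as a chain complex of $\Pi_G(X)$-modules exactly as in the construction of $L^G$ in \cref{sec:Equivariant Linearization}, is a bounded acyclic complex of finitely generated free modules once one picks equivariant lifts of the cells and basepoints in each fixed set. An acyclic bounded complex of finitely generated free modules over any ringoid has a well-defined torsion in $K_1$ via the standard alternating contraction construction; passing to the quotient $\mathrm{Wh}^G(X)$ yields $\tau(f)$, which one checks is independent of the $G$-CW structure on $M_f$ because different $G$-CW structures differ by equivariant elementary expansions and collapses whose torsion lies in the subgroup killed in $\mathrm{Wh}^G(X)$.

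The ``if'' direction of the characterization is then essentially formal: by definition, a simple $G$-homotopy equivalence factors as a composition of elementary equivariant expansions and collapses, each contributing zero to $\tau$. The ``only if'' direction is the main obstacle — this is the equivariant realization theorem. One must show that when $\tau(f)=0$ in $\mathrm{Wh}^G(X)$, the mapping cylinder $M_f$ can be obtained from $X$ by an explicit sequence of equivariant elementary expansions and collapses. I would proceed by induction over the partially ordered set of conjugacy classes $(H)$, smallest-isotropy-first. At each stage, the splitting of \cref{K theory splitting} identifies the relevant piece of the obstruction with $\mathrm{Wh}(\pi_1(X^H_{hWH}))$, so one can invoke (a twisted version of) the classical realization theorem of J.H.C.\ Whitehead on that stratum, trading handles of orbit type $G/H$ against one another, and then move up the poset. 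The genuinely new equivariant input needed is that one has enough equivariant handle moves — i.e., for each orbit type $G/H$ there exist elementary expansions attaching cells of orbit type $G/H$ realizing the generating trivial units — which is standard for equivariant CW complexes but must be checked orbit-by-orbit.
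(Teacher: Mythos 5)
The paper does not prove this statement: it is quoted verbatim as a cited result from L\"uck's book \cite[Section 14]{Luck}, and the paper's contribution in \cref{subsec: geometric invariants} is to compare L\"uck's construction with $K_G(\ZZ[\underline{\pi_1(X)}])$ and then lift the torsion invariant to $\pi_1^G(A_G(X))$ via the $2$-connected linearization map. Your blind proof is therefore attempting to reprove a background theorem that the present paper treats as a black box.

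As a roadmap for L\"uck's theorem, your outline is in the right spirit --- define $\mathrm{Wh}^G(X)$ as a quotient of $K_1(\Pi_G(X))$ by trivial units, define $\tau(f)$ through the cellular chain complex of the mapping cylinder, and argue one direction is formal while the other is a realization/collapse argument by induction over isotropy types. However, there are substantive gaps that would need to be filled to turn this into a proof. First, the well-definedness of $\tau(f)$ is justified by asserting that ``different $G$-CW structures on $M_f$ differ by equivariant elementary expansions and collapses,'' which is either circular (that is precisely the kind of simple-equivalence you are trying to detect) or an invocation of the equivariant topological invariance of Whitehead torsion, a deep theorem in its own right. The correct argument is local: one first replaces $f$ by a cellular map via equivariant cellular approximation, then checks independence of the choices of equivariant lifts, base points on each path component of each fixed set, and orderings of cells, showing these choices change the $K_1$-class only by trivial units. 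Second, the ``$\tau(f)=0$ implies simple'' direction is the core of L\"uck's argument and is only gestured at; in particular the claim that one can proceed ``smallest-isotropy-first'' and ``trade handles of orbit type $G/H$ against one another'' using the splitting of \cref{K theory splitting} needs justification, because elementary expansions and collapses of a single orbit type $G/H$ affect the underlying fixed-point spaces $X^{H'}$ for all $H'$ subconjugate to $H$, and so do not respect the direct-sum splitting of $K_1(\Pi_G(X))$ on the nose. L\"uck's proof manages this interaction with a careful relative construction over the poset of isotropy types, and the direction of the induction is dictated by which strata one may perturb without disturbing cells already placed.

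A smaller point: your identification of the killed subgroup ``$\pm\pi_1(X^H)$ and an appropriate contribution from $WH$'' should be made precise --- in the $(H)$-summand $K_1(\ZZ[\pi_1(X^H_{hWH})])$ the correct quotient is by the classes of $1\times 1$ matrices $[\pm\gamma]$ for $\gamma\in\pi_1(X^H_{hWH})\cong\pi_1(X^H)\rtimes WH$, matching the discussion in the remark preceding \cref{corollary: whitehead}, where these classes are identified as the image of $\pi_1^s$ under $L\circ\alpha$.
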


We observe here that for $G$-connected $G$-spaces $X$, the category $\mathrm{Mod}(\Pi_G(X))$ is equivalent to the category of modules over the coefficient system $\ZZ[\underline{\pi_1(X)}]$. 

\begin{proposition}
    For a $G$-connected $G$-space $X$ there is an equivalence of abelian categories between $\Mod(\Pi_G(X))$ and the category of modules over the coefficient system $\ZZ[\underline{\pi_1(X)}]$.
\end{proposition}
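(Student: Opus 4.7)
Since $X$ is $G$-connected, $X^G$ is nonempty; fix a basepoint $x_0 \in X^G$. For each $H \leq G$, let $x_0^H \colon G/H \to X$ denote the constant $G$-map $gH \mapsto x_0$, and let $\Pi_G^0(X) \subset \Pi_G(X)$ be the full subcategory on the set of objects $\{x_0^H : H \leq G\}$. The plan is to use $\Pi_G^0(X)$ as an intermediary: first identify $\Mod(\Pi_G(X)) \simeq \Mod(\Pi_G^0(X))$ via a categorical equivalence, and then match the data of the latter with modules over $\ZZ[\underline{\pi_1(X)}]$ through \cref{modules over coeff rings}.

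First I would verify that the inclusion $\Pi_G^0(X) \hookrightarrow \Pi_G(X)$ is an equivalence of categories. Essential surjectivity is immediate from $G$-connectedness: for any object $x \colon G/H \to X$, the point $x(eH) \in X^H$ is connected to $x_0$ by a path $\gamma$ in $X^H$, and $(\mathrm{id}_{G/H}, [\gamma])$ is then an isomorphism $x \xrightarrow{\sim} x_0^H$ in $\Pi_G(X)$. Fullness and faithfulness are automatic for a full subcategory. Precomposition with this equivalence induces an equivalence of abelian categories $\Mod(\Pi_G(X)) \simeq \Mod(\Pi_G^0(X))$.

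Next I would unwind the hom-sets of $\Pi_G^0(X)$. A direct computation using the composition law of $\Pi_G(X)$ shows that $\End_{\Pi_G^0(X)}(x_0^H) \cong \pi_1(X^H, x_0) \rtimes W_G H$ with the natural Weyl action on $\pi_1$; passing to integral group algebras and invoking \cref{twisted group ring modules} identifies $\ZZ[\End(x_0^H)] \cong \ZZ[\pi_1(X^H)]_\theta[W_G H]$. For $H \leq K$, the canonical quotient $q \colon G/H \to G/K$ together with the constant path at $x_0$ provides a morphism $x_0^H \to x_0^K$ which generates $\Pi_G^0(X)(x_0^H, x_0^K)$ as a bimodule over the endomorphism monoids.

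Finally I would match a functor $M \colon \Pi_G^0(X)^{\op} \to \Ab$ with the data of a $\ZZ[\underline{\pi_1(X)}]$-module given by \cref{modules over coeff rings}. Setting $\underline{M}^H := M(x_0^H)$, the endomorphism action produces a $\ZZ[\pi_1(X^H)]_\theta[W_G H]$-module structure, the morphism $(q, c_{x_0})$ induces a restriction $\underline{M}^K \to \underline{M}^H$, and the coefficient-system compatibilities are forced by the composition relations above. The assignment $M \mapsto \underline{M}$ is visibly functorial and exact; an inverse is obtained by defining $M(x) := \underline{M}^H$ after choosing any path $\gamma$ from $x$ to $x_0^H$ in $X^H$, with different choices differing by a $\pi_1(X^H)$-element that acts canonically on $\underline{M}^H$.

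The main obstacle will be carefully unwinding the composition in $\Pi_G(X)$ to see the semidirect product structure and to track variance conventions (EI-category modules are contravariant, while a coefficient system is an underlying contravariant functor on $\mathcal{O}_G$ with a levelwise action, so matching these requires some care). The key computation to verify is that composing the endomorphism $(n, c_{x_0})$ of $x_0^H$ with a loop class $(\mathrm{id}, \omega)$ in the two orders yields $(n, \omega)$ one way and $(n, n\cdot\omega)$ the other, which is precisely the defining twist relation of the group ring $\ZZ[\pi_1(X^H)]_\theta[W_G H]$ from \cref{defn: twisted group ring}.
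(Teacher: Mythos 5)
Your overall strategy---skeletalize $\Pi_G(X)$ using a basepoint $x_0 \in X^G$ and then read off the module data level by level through \cref{modules over coeff rings}---is a reasonable way to prove this (the paper itself does not supply a proof). The reduction to $\Pi_G^0(X)$ via $G$-connectedness is fine, and the identification $\End_{\Pi_G^0(X)}(x_0^H) \cong \pi_1(X^H)\rtimes W_GH$ together with \cref{lemma: homotopy orbits pi one is pi_1 twisted} correctly produces the twisted group ring action at each level.

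The gap is in the claim that the morphism $(q, c_{x_0})$ generates $\Pi_G^0(X)(x_0^H, x_0^K)$ as a bimodule over the endomorphism monoids when $H \leq K$. Unwinding, this would require the hom-set $\mathcal{O}_G(G/H, G/K) \cong (G/K)^H$ to be a single orbit under the two-sided action of $W_GH \times W_GK$, i.e.\ every $g$ with $H^g \leq K$ would lie in $N_G(H)N_G(K)$. This fails in general: take $G = \mathrm{PSL}_2(7)$, $K$ a Sylow $2$-subgroup ($\cong D_8$, self-normalizing), and $H = \langle s \rangle$ a non-central involution in $K$. All involutions of $G$ are conjugate, so all five involutions of $K$ give elements of $(G/K)^H$, but $N_G(K) = K$ can only move $s$ within its own $D_8$-conjugacy class; one computes $\abs{(G/K)^H} = 5$ while the bi-orbit of the canonical quotient has size $2$. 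Consequently, recording only the canonical quotient restrictions does not determine the rest of the coefficient-system structure by ``composition relations,'' and in particular the conjugation morphisms $G/H \to G/H^g$ (for $H \neq H^g$) are genuinely missing from your dictionary.

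The fix does not require the generation claim at all. A coefficient system is a functor on $\mathcal{O}_G^{\op}$, and every morphism of $\Pi_G^0(X)$ decomposes uniquely as $(\alpha, \omega)$ with $\alpha \in \mathcal{O}_G(G/H, G/K)$ and $\omega \in \pi_1(X^H, x_0)$. Given $M \colon \Pi_G^0(X)^{\op}\to\Ab$, you should set $\alpha^* := M(\alpha, c_{x_0})$ for \emph{every} $\alpha$ in $\mathcal{O}_G$; the morphisms $(\alpha, c_{x_0})$ are closed under composition, so this directly defines the underlying coefficient system $\underline{M}$, including the conjugations and non-canonical subconjugacies. The endomorphisms $(\id, \omega)$ then give the $\pi_1$-action, and functoriality of $M$ supplies both the twisted group ring relation and the compatibility required by \cref{modules over coeff rings}. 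With this adjustment the inverse you describe (choosing a path to $x_0^H$, noting independence of choice up to the $\pi_1(X^H)$-action) produces the equivalence. As written, however, the proof asserts a false generation statement at a load-bearing point, so I would not accept it without this correction.
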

\begin{corollary}
    For $X$ a $G$-connected $G$-space there is an equivalence of spectra
    \[
        K_G(\ZZ[\underline{\pi_1(X)}])^G\simeq K(\mathrm{FPMod}(\Pi_G(X))).
    \]
\end{corollary}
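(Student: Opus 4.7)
The plan is to deduce this immediately from the preceding proposition combined with \cref{KTheoryExists}. The preceding proposition asserts that there is an equivalence of abelian categories
\[
    \Phi\colon \Mod(\Pi_G(X)) \xrightarrow{\sim} \Mod_{\ZZ[\underline{\pi_1(X)}]},
\]
so my first step would be to argue that $\Phi$ restricts to an exact equivalence on the subcategories of finitely generated projective modules. That $\Phi$ preserves projectivity is formal for any equivalence of abelian categories, since projectivity is characterized by the right-exactness of $\Hom$ out of the object. The one thing to check is that the two notions of ``finitely generated'' agree under $\Phi$. On the coefficient ring side, $M\in \Mod_{\underline{S}}$ is finitely generated when it admits a surjection from $\underline{S}_X$ for some finite $G$-set $X$; on the EI side, an object is finitely generated when it is generated by finitely many elements living in the values at orbits. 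Both characterizations say that the object is a quotient of a finite sum of representables, so the equivalence $\Phi$ carries one class onto the other.

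Next I would note that the exact structures on both sides coincide under $\Phi$: in each case, the admissible monomorphisms and epimorphisms in the subcategory of finitely generated projectives are the split ones, which are preserved by any additive equivalence. Consequently $\Phi$ restricts to an exact equivalence
\[
    \mathrm{FPMod}(\Pi_G(X)) \xrightarrow{\sim} \mathcal{P}^G_{\ZZ[\underline{\pi_1(X)}]}.
\]
Since Waldhausen $K$-theory sends exact equivalences of exact categories to homotopy equivalences of spectra, we obtain
\[
    K(\mathrm{FPMod}(\Pi_G(X))) \simeq K(\mathcal{P}^G_{\ZZ[\underline{\pi_1(X)}]}).
\]

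Finally, by \cref{KTheoryExists}, the right-hand side is precisely $K_G(\ZZ[\underline{\pi_1(X)}])^G$, which gives the desired equivalence. The only non-formal step is the preservation of finite generation, and that reduces to the matching description of representable generators in the two module categories under $\Phi$; I do not expect any real obstacle here.
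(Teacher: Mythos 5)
Your argument is correct and coincides with what the paper leaves implicit: the corollary is stated without proof immediately after the proposition giving the equivalence of abelian module categories, with the understanding that such an equivalence carries finitely generated projectives to finitely generated projectives and hence induces an equivalence on $K$-theory, after which \cref{KTheoryExists} identifies $K(\mathcal{P}^G_{\ZZ[\underline{\pi_1(X)}]})$ with $K_G(\ZZ[\underline{\pi_1(X)}])^G$. Your justification of the finite-generation step via matching up representable generators on both sides is exactly the right observation.
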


The $2$-connectedness of our linearization map (\cref{linearization is 2 connected}) allows us to then lift L\"uck's invariants to the equivariant $A$-theory of Malkiewich--Merling. The class of the Wall finiteness obstruction can be made explicit by identifying $\pi_0^G(A_G(X))$ with the ``universal additive invariant'' of retractive $G$-spaces over $X$, in the sense of \cite[Theorem 6.9]{Luck}.

\begin{corollary}\label{corollary: wall}
    Let $X$ be a $G$-finitely dominated $G$-connected $G$-space.  There is a splitting $\pi^G_0(A_G(X))\cong U^G(X)\oplus \mathrm{Wa}^G(X)$.  Under this isomorphism, the class $\omega_G(X)$ is the $\mathrm{Wa}^G(X)$ is component of the class of the retractive space $[X\amalg X]\in \pi^G_0(A_G(X))$.  The component of $[X\amalg X]$ in $U^G(X)\cong \Omega(X)$ is the equivariant Euler characteristic of $X$.
\end{corollary}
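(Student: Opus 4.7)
The plan is to transport L\"uck's splitting of $K_0(\Pi_G(X))$ along the equivariant linearization map. First, \cref{linearization is 2 connected} supplies an isomorphism
\[
\pi_0^G(L)\colon \pi_0^G(A_G(X))\xrightarrow{\cong} \pi_0^G(K_G(\ZZ[\underline{\pi_1(X)}])).
\]
Since $X$ is $G$-connected, the equivalence $\mathrm{Mod}(\Pi_G(X))\simeq \mathrm{Mod}_{\ZZ[\underline{\pi_1(X)}]}$ restricts to an exact equivalence on finitely generated projectives, and the corollary immediately preceding the statement identifies the target above with $K_0(\Pi_G(X))$. Pulling back L\"uck's splitting $K_0(\Pi_G(X))\cong U^G(X)\oplus \mathrm{Wa}^G(X)$ along this composite yields the claimed decomposition of $\pi_0^G(A_G(X))$.

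Next I would compute the image of $[X\amalg X]$ under the linearization. Treating $X\amalg X$ as a retractive space with inclusion of the first summand and the fold retraction, taking $H$-fixed points gives $(X\amalg X)^H=X^H\amalg X^H$, and unpacking the definition of $L^G$ yields a levelwise quasi-isomorphism
\[
L^G(X\amalg X)(G/H)=C_*(\widetilde{(X\amalg X)^H},\widetilde{X^H})\simeq C_*(\widetilde{X^H})
\]
of complexes of $\ZZ[\pi_1(X^H)]_{\theta}[W_GH]$-modules. Under our equivalence with $\Pi_G(X)$-modules, this recovers precisely the chain complex L\"uck uses to build his finiteness obstruction $\omega_G(X)$, so the $\mathrm{Wa}^G(X)$-summand of $[X\amalg X]$ is $\omega_G(X)$.

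For the final claim about the $U^G(X)$-component, I would combine our splitting with the splitting
\[
\pi_0^G(K_G(\underline{S}))\cong \Omega(G)\oplus \pi_0^G(\widetilde{K}_G(\underline{S}))
\]
from \cref{example: reduced K zero of a space}, applied with $\underline{S}=\ZZ[\underline{\pi_1(X)}]$. Under this splitting the image of $[X\amalg X]$ in $\Omega(G)$ is determined by the virtual $G$-CW structure of $X$, which by definition is the equivariant Euler characteristic $\chi^G(X)\in \Omega(G)$.

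I expect the main obstacle to be the precise identification in the second paragraph: L\"uck works directly with the EI category $\Pi_G(X)$ and his finiteness obstruction is defined via a particular choice of representative, whereas our linearization naturally targets the coefficient ring $\ZZ[\underline{\pi_1(X)}]$. Verifying that these two representatives agree (modulo the trivial summand $U^G(X)$) under the categorical equivalence requires careful bookkeeping but no genuinely new ideas.
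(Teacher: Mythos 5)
Your proposal is essentially the route the paper has in mind, which is only sketched in the paragraph preceding the corollary (invoking the $2$-connectedness of $L$, the equivalence $\mathrm{Mod}(\Pi_G(X))\simeq\mathrm{Mod}_{\ZZ[\underline{\pi_1(X)}]}$, and L\"uck's ``universal additive invariant'' characterization of $K_0(\Pi_G(X))$ from \cite[Theorem 6.9]{Luck}). Where you differ slightly is in how you pin down the image of $[X\amalg X]$: the paper leans on the universal-additive-invariant property to avoid a chain-level computation, whereas you compute $L^G(X\amalg X)$ directly. That is a legitimate alternative, and actually more explicit: $\widetilde{(X\amalg X)^H}$ is the pullback of $\widetilde{X^H}$ along the fold retraction, so $L^G(X\amalg X)(G/H)=C_*(\widetilde{X^H}\amalg\widetilde{X^H},\widetilde{X^H})\cong C_*(\widetilde{X^H})$ by an isomorphism (you call it a quasi-isomorphism, but it is already an iso of complexes), recovering L\"uck's representing cycle on the nose. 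Two small things to tighten: first, the display in the statement reads $U^G(X)\cong\Omega(X)$, which as you correctly internalize should be $\Omega(G)$ in the $G$-connected case; second, the final sentence about the Euler characteristic is stated a bit loosely — since $X$ is only $G$-finitely dominated it has no honest $G$-CW structure, and the component in $\Omega(G)$ should be described as the (levelwise) augmentation of the finitely dominated complex $C_*(\widetilde{X^H})$ rather than as a ``virtual $G$-CW structure.'' With those adjustments, your argument fills in exactly the gaps the paper leaves implicit.
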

\begin{remark}
    We note that the group $\mathrm{Wa}^G(X)$ can be identified with the the reduced $K$-theory, from \cref{subsec: reduced K theory}, of the coefficient ring $K_G(\ZZ[\underline{\pi_1(X)}])$.
\end{remark}

We can similarly lift the Whitehead torsion to the homotopy groups of $A_G(X)$.  Recall that given any weak equivalence $f\colon a\to b$ in a Waldhausen category $\cat{C}$ there is an associated element $[f]\in K_1(\cat{C})$.  
\begin{corollary}\label{corollary: whitehead}
    Let $X$ be a finite $G$-CW complex which is $G$-connected and let $f\colon X\to X$ be a $G$-homotopy equivalence. There is a surjection $q\colon \pi_1^G(A_G(X))\to \mathrm{Wh}^G(X)$, and $q([f]) = \tau_G(f)$.
\end{corollary}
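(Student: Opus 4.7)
The plan is to construct $q$ as the composition
\[
\pi_1^G(A_G(X)) \xrightarrow{L_*} \pi_1^G(K_G(\ZZ[\underline{\pi_1(X)}])) \xrightarrow{\sim} K_1(\Pi_G(X)) \twoheadrightarrow \mathrm{Wh}^G(X),
\]
where $L_*$ is an isomorphism by \cref{linearization is 2 connected}, the middle isomorphism is obtained from the corollary identifying $K_G(\ZZ[\underline{\pi_1(X)}])^G \simeq K(\mathrm{FPMod}(\Pi_G(X)))$, and the final arrow is L\"uck's defining quotient map for $\mathrm{Wh}^G(X)$. Surjectivity of $q$ is then automatic from the surjectivity of the last map, so the substantive content is the identification $q([f]) = \tau_G(f)$.

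To verify this identity, I would trace the class $[f] \in \pi_1^G(A_G(X))$ through the composition. The class $[f]$ arises from $f$ regarded as a self-equivalence in the Waldhausen category $R^G_{\fd}(X)$, and applying $L^G$ sends this to the $K_1$-class of the induced chain self-equivalence of the equivariant cellular chain complex $L^G(X)$, viewed as a perfect complex of $\ZZ[\underline{\pi_1(X)}]$-modules. Under the equivalence between finitely generated projective $\ZZ[\underline{\pi_1(X)}]$-modules and $\mathrm{FPMod}(\Pi_G(X))$, this matches L\"uck's construction in \cite[Section 14]{Luck}, where $\tau_G(f)$ is by definition the $K_1$-class of the induced self-equivalence of the cellular chain complex of $X$ over the equivariant fundamental groupoid, taken modulo the subgroup cutting out $\mathrm{Wh}^G(X)$.

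The main obstacle will be showing that Waldhausen's $K_1$-class of a self-equivalence in $R^G_{\fd}(X)$ agrees, after linearization, with the classical $K_1$-class of the induced chain automorphism of a complex of projective modules. This is an equivariant incarnation of the Gillet-Waldhausen comparison (\cref{subsec:perf}) at the level of $\pi_1$, and although it is essentially standard, it requires carefully unwinding the $S_\bullet$-construction for self-equivalences and comparing with the classical construction of $K_1$ via automorphisms of free modules. Once this identification is in hand, matching the resulting element with L\"uck's torsion is a bookkeeping exercise that reconciles our isotropy-indexed splitting (\cref{K theory splitting}) with L\"uck's analogous decomposition of $K_1(\Pi_G(X))$, using that both filtrations arise from orbit type.
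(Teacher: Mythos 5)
The paper gives no written proof of this corollary; it presents it as an immediate consequence of the $2$-connected linearization (\cref{linearization is 2 connected}), the equivalence $K_G(\ZZ[\underline{\pi_1(X)}])^G \simeq K(\mathrm{FPMod}(\Pi_G(X)))$, and L\"uck's presentation of $\mathrm{Wh}^G(X)$ as a quotient of $K_1(\Pi_G(X))$, which is exactly the composition you write down for $q$. Your proposal matches the paper's implicit approach, and the two points you single out as requiring care --- identifying Waldhausen's $K_1$-class of a weak equivalence with the classical $K_1$-class of the linearized chain equivalence, and reconciling the orbit-type splitting of \cref{K theory splitting} with L\"uck's filtration --- are precisely the steps the paper also leaves unspoken.
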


We end with a discussion of how Whitehead torsion is related to Malkewich--Merling's work on an equivariant version of the stable parametrized $h$-cobordism theorem. In \cite{malkiewich/merling:2022}, Malkiewich--Merling show that when $M$ is a pointed smooth $G$-manifold, there is a cofiber sequence of $G$-spectra\[
{\bf H}_G(M)\to \Sigma^{\infty}_G M \xrightarrow{\alpha_M} A_G(M)
\] where ${\bf H}_G(M)$ is a $G$-spectrum whose fixed points can be identified ${\bf H}_G(M)^G\simeq \prod_{(H)\leq G} \mathcal H^{\infty}(M^H_{hWH})$ in terms of spaces of (non-equivariant) $h$-cobordisms. Ongoing work of Malkiewich--Merling, joint with Goodwillie--Igusa \cite{GIMM:23}, seeks to describe ${\bf H}_G(M)^G$ as a space of stable $G$-equivariant $h$-cobordisms, although connecting equivariant Whitehead torsion to equivariant cobordisms in the form of an $h$-cobordism theorem is more subtle than in the non-equivariant setting  \cite{araki/kawakubo:scob, steinberger/west:hcob}.

\begin{definition}
    Let ${\rm Wh}^a_G(X)$ be the \textit{equivariant algebraic Whitehead spectrum}, defined abstractly as the cofiber\[
    \Sigma^\infty_G X_+ \to K_G(\ZZ[\underline{\pi_1(X)}]) \to {\rm Wh}_G^a(X)
    \] where the first map is defined as the composition $\Sigma^\infty_G X_+ \xrightarrow{\alpha} A_G(X) \xrightarrow{L} K_G(\ZZ[\underline{\pi_1(X)}])$.  When $G=e$ is the trivial group we omit the $G$ and just write $\mathrm{Wh}^a(X)$.  
\end{definition}
\begin{remark}
    Non-equivariantly, the composite $L\circ \alpha$ can be understood on $\pi_1$ as sending a class $g\in \pi_1^s(X)$ to the class of the $1\times 1$ matrix $[g]$ in $K_1(\ZZ[\pi_1(X)])$. Since Whitehead torsion lives the in the quotient of $K_1(\ZZ[\pi_1(X)])$ by such elements we recognize the algebraic Whitehead group the natural home for Whitehead torsion.  The observations that follow amount to the fact that this same story carries through in the equivariant setting.
\end{remark}

We denote the cofiber of $\Sigma^{\infty}_G M \xrightarrow{\alpha_M} A_G(M)$ by $\mathrm{Wh}_G(X)$ and call this the equivariant \textit{geometric Whitehead spectrum} of $X$.  If $X$ is a compact smooth $G$-manifold, the work of Malkiewich--Merling \cite{malkiewich/merling:2022} identifies \[
{\rm Wh}_G(X)^K \simeq \prod_{(H)\leq K} {\rm Wh}(X^H_{hWH})
\] where ${\rm Wh}(X^H_{hWH})$ is the Whitehead spectrum of $X^H_{hWH}$.

\begin{theorem}
    For every $K\leq G$, there is a commutative diagram of spectra\[\begin{tikzcd}
    {\rm Wh}_G(X)^K \ar[r, "\simeq"] \ar[d] & \prod_{(H)\leq K} {\rm Wh}(X^H_{hWH}) \ar[d] \\
    {\rm Wh}_G^a(X)^K \ar[r, swap, "\simeq"] & \prod_{(H)\leq K} {\rm Wh}^a(X^H_{hWH})
    \end{tikzcd}
    \] where the vertical maps are $2$-connected.
\end{theorem}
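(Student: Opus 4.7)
The plan is to take $K$-fixed points of both cofiber sequences defining the Whitehead spectra and exhibit a splitting summand-by-summand, then read off $2$-connectivity from the non-equivariant statement.

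First, I would apply $(-)^K$ to the two defining cofiber sequences
\[
    \Sigma^\infty_G X_+ \xrightarrow{\alpha} A_G(X) \to \mathrm{Wh}_G(X) \quad \text{and} \quad \Sigma^\infty_G X_+ \xrightarrow{L\circ \alpha} K_G(\ZZ[\underline{\pi_1(X)}]) \to \mathrm{Wh}_G^a(X),
\]
noting that fixed points preserve cofiber sequences of genuine $G$-spectra. Each of the first two terms in each sequence admits a known splitting indexed by conjugacy classes $(H)\leq K$: the tom Dieck splitting for $(\Sigma^\infty_G X_+)^K$, the Badzioch--Dorabia\l{}a splitting \eqref{A theory splitting} for $A_G(X)^K$ (applied to $X$ regarded as a $K$-space), and \cref{K theory splitting} for $K_G(\ZZ[\underline{\pi_1(X)}])^K$. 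In all three cases the indexing summand corresponds to $A(X^H_{hWH})$, $\Sigma^\infty (X^H_{hWH})_+$, or $K(\ZZ[\pi_1(X^H_{hWH})])$ respectively.

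The next step is to check that $\alpha^K$ and $(L\circ\alpha)^K$ respect these splittings. For $\alpha$, this compatibility is precisely how Malkiewich--Merling identify $\mathrm{Wh}_G(X)^K\simeq \prod_{(H)\leq K}\mathrm{Wh}(X^H_{hWH})$ in \cite{malkiewich/merling:2022}. For $L$, the compatibility is our \cref{linearization splitting} (applied to $X$ as a $K$-space), which identifies $L^K$ with $\prod_{(H)\leq K}\ell_H$ where $\ell_H$ is the classical linearization for $X^H_{hWH}$. Combining these with the fact that the composite $L\circ \alpha$ then splits as a product of the classical composites $\ell_H\circ \alpha_{X^H_{hWH}}$, we obtain the cofiber sequence for $\mathrm{Wh}_G^a(X)^K$ as the product of the non-equivariant cofiber sequences defining $\mathrm{Wh}^a(X^H_{hWH})$. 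This yields the bottom horizontal equivalence, and the resulting diagram commutes up to homotopy because the commutativity holds summand-wise.

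For the vertical $2$-connectivity claim, after passing through the splittings the vertical map becomes the product of the induced maps $\mathrm{Wh}(X^H_{hWH})\to \mathrm{Wh}^a(X^H_{hWH})$ on non-equivariant Whitehead spectra. In each case we have a map of cofiber sequences which is the identity on $\Sigma^\infty(X^H_{hWH})_+$ and the classical linearization $\ell_H$ in the middle; since $\ell_H$ is $2$-connected by Waldhausen \cite{waldhausen:1983}, the long exact sequence of homotopy groups and the five lemma force the map on cofibers to be $2$-connected as well. Products of $2$-connected maps remain $2$-connected on each $\pi_n^H$, so the vertical map is $2$-connected as claimed.

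The main technical point to verify carefully is the compatibility of $\alpha$ with the fixed-point splittings on both $\Sigma^\infty_G X_+$ and $A_G(X)$; once this is in hand, our \cref{linearization splitting} slots in cleanly on the algebraic side, and the rest is a diagram chase followed by the five lemma.
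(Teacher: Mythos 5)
Your proposal is correct and follows essentially the same route as the paper: a map of cofiber sequences with identity on $\Sigma^\infty_G X_+$ and $L$ in the middle, combined with the compatibility of $\alpha$ and $L$ with the fixed-point splittings, yields both the splitting of $\mathrm{Wh}_G^a(X)^K$ and the $2$-connectivity of the vertical maps. The one minor difference in emphasis is where $2$-connectivity is established: the paper concludes $\mathrm{Wh}_G(X)\to\mathrm{Wh}_G^a(X)$ is $2$-connected directly at the level of genuine $G$-spectra from \cref{linearization is 2 connected}, whereas you first pass through the fixed-point splittings and argue factor-wise from Waldhausen's classical result; since \cref{linearization is 2 connected} is itself deduced from that classical result via \cref{linearization splitting}, the two arguments have the same content.
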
\begin{proof}
    There is a commutative diagram \[
\begin{tikzcd}
    \Sigma^{\infty}_G X_+ \ar[r, "\alpha"] \ar[d, equal] & A_G(X)\ar[r] \ar[d, "L"] & {\rm Wh}_G(X) \ar[d] \\
   \Sigma^{\infty}_G X_+ \ar[r] & K_G(\ZZ[\underline{\pi_1(X)}]) \ar[r] & {\rm Wh}_G^a(X)
\end{tikzcd}
\] and, since $L$ is $2$-connected, the induced map ${\rm Wh}_G(X)\to {\rm Wh}_G^a(X)$ is also $2$-connected. Moreover, since $\alpha$ and $L$ are compatible with the splittings on fixed points, we also obtain a splitting\[
{\rm Wh}_G^a(X)^K \simeq \prod_{(H)\leq K} {\rm Wh}^a(X^H_{hW_GH})
\] which is compatible with the analogous splitting on ${\rm Wh}_G(X)^K$. 
\end{proof}

Comparing this argument to L\"uck's, we see that applying $\pi_1$ to the result above recovers the isomorphism from \cite[Theorem 14.16]{Luck}. 

\appendix
\section{Proof of \cref{MMProp4.6}}\label{sec:tech pf app}

This appendix is devoted to the proof \cref{MMProp4.6}. We assume that all of our $G$-sets come equipped with a choice of total ordering, but we do not require morphisms of $G$-sets to respect this ordering.

\begin{lemma}[{\cite[Definition 4.3]{malkiewich/merling:2016}}] \label{associative pullbacks}
    The category of totally ordered $G$-sets admits choices of coproducts, products, and pullbacks which are associative.
\end{lemma}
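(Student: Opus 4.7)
The plan is to use the total orderings to pick canonical representatives of coproducts, products, and pullbacks in such a way that the relevant associativity isomorphisms become equalities of underlying sets.

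For coproducts, I would define $A \amalg B$ to be the disjoint union equipped with the concatenation ordering: every element of $A$ precedes every element of $B$, with the internal orderings of $A$ and $B$ preserved. The $G$-action is the evident one. Then for three ordered $G$-sets $A,B,C$, both $(A\amalg B)\amalg C$ and $A\amalg(B\amalg C)$ are the same set $A\sqcup B\sqcup C$ with the same total ordering (elements of $A$, then $B$, then $C$), so the coproduct is strictly associative. The empty set is the strict unit.

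For products, I would equip $A\times B$ with the lexicographic ordering induced by the orderings of $A$ and $B$, i.e.\ $(a,b)<(a',b')$ iff $a<a'$, or $a=a'$ and $b<b'$. Lexicographic ordering is associative on the nose: both $(A\times B)\times C$ and $A\times(B\times C)$ agree with the ordered set $A\times B\times C$ under the canonical bijection, since the lex order on triples can be computed by grouping the first two or the last two coordinates and obtaining the same comparisons. The diagonal $G$-action respects this.

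For pullbacks, given $f\colon A\to C$ and $g\colon B\to C$, I would define $A\times_C B$ as the subset $\{(a,b)\in A\times B : f(a)=g(b)\}$ of the lexicographic product, with the induced ordering and componentwise $G$-action. Associativity then follows from associativity of products: for a span $A\to D\leftarrow B\to E\leftarrow C$, both iterated pullbacks $(A\times_D B)\times_E C$ and $A\times_D(B\times_E C)$ are literally the same ordered subset
\[
\{(a,b,c)\in A\times B\times C \mid f(a)=g(b),\ h(b)=k(c)\}
\]
of $A\times B\times C$ equipped with the lex ordering, so they coincide as totally ordered $G$-sets.

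The main point to be careful about is simply bookkeeping: verifying that the inherited orderings from the various ways of bracketing do indeed coincide as orderings on the same underlying set, and that $G$-equivariance is preserved (which is immediate since the $G$-action ignores the ordering and the set-theoretic associativity is standard). No step should be more than a direct check; the role of the lemma is really to fix a convention that will let later constructions of transfers and products in $\mathbb{B}^G$ be strictly associative rather than merely pseudo-associative.
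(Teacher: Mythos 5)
The paper does not give its own proof here; it cites Malkiewich--Merling's Definition~4.3 directly, so there is nothing internal to compare against. Taken on its own merits, your proposal has the right idea — use the total orderings to pick canonical representatives, with concatenation order for coproducts, lexicographic order for products, and the subset of the lex product for pullbacks — and those are the correct choices for the orderings to line up.

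There is, however, a gap you gloss over: you repeatedly assert that the two iterated constructions are ``literally the same set,'' but in the standard set-theoretic implementations they are not. The disjoint union $(A\amalg B)\amalg C$ has elements of the form $((a,0),0)$ while $A\amalg(B\amalg C)$ has elements $(a,0)$; similarly $(A\times B)\times C$ consists of pairs $((a,b),c)$, which are different Kuratowski pairs from the $(a,(b,c))$ appearing in $A\times(B\times C)$. The two sides are canonically isomorphic but not equal, and ``the orders agree under the canonical bijection'' is weaker than strict associativity. This is exactly where the total ordering earns its keep, and you need to invoke it explicitly: a totally ordered finite set has a unique order isomorphism to the ordinal $\{1,\dots,n\}$, so one should \emph{define} $A\amalg B$, $A\times B$, and $A\times_C B$ to be the appropriate ordinal $\{1,\dots,m\}$ equipped with the transported $G$-action and the (concatenation, lexicographic, induced) ordering. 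Once every construction is forced to land on a canonical ordinal, the underlying set of any iterated construction genuinely depends only on the multiset of cardinalities and the order relation, and the associativity isomorphisms you describe become identities. Your final paragraph hints at this (``the same underlying set''), but without the explicit passage to ordinals the argument as written only produces canonical isomorphisms, which is not what the lemma — or the strictness needed downstream in the construction of $\mathbb{B}^G$ — requires.
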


We now recall the construction, due to \cite{GuillouMay:2011} and \cite{malkiewich/merling:2016}, of a $2$-category $\mathbb{B}^G$ which serves as a domain for our categorical Mackey functors. The objects of $\mathbb{B}^G$ are the subgroups of $G$.
For any two subgroups $K,H\leq G$, let $S_{H,K}$ denote the category of finite $G$-sets over $G/H\times G/ K$. The objects of $S_{H,K}$ can be thought of as spans of $G$-equivariant morphisms
\[
     G/H \xleftarrow{r} A \xrightarrow{t} G/K .
\]
The object $A$ in the span above is called \textit{the middle} $G$-set of the span.  A morphism of spans consists of a $G$-map between the middle $G$-sets such that the obvious diagram commutes.  This category of spans admits a coproduct, given by the coproduct of middle $G$-sets. We set the morphism category to be $\mathbb{B}^G(H,K) = S_{H,K}$.

We need to show that $\mathbb{B}^G$ is actually a strict $2$-category.  Given two spans $[G/H \xleftarrow{f} A \xrightarrow{g} G/K]$ and $[G/K \xleftarrow{h} B\xrightarrow{k} G/L]$, we ``compose'' them by taking a pullback in $G$-sets
\begin{equation}\label{burnsideComposition}
			\begin{tikzcd}
				& & A\times_{G/K} B \ar[bend right, "f\circ \pi_1"']{ddll} \ar["\pi_1"]{dl}\ar[bend left,"k\circ \pi_2"]{ddrr} \ar["\pi_2"']{dr} & & \\
				& A \ar["g"']{dr} \ar["f"]{dl} & & B \ar["k"']{dr} \ar["h"]{dl} & \\
				G/H & & G/K & & G/L	
			\end{tikzcd}.
    \end{equation}
For $L,K,H\leq G$, this composition gives a pairing
\begin{equation}\label{unbased pairing}
    \ast \colon S_{H,K}\times S_{K,L}\to S_{H,L}
\end{equation}
by the composition of spans as in \eqref{burnsideComposition} with pullbacks chosen as in \cref{associative pullbacks}. Note that the $\ast$ map is associative on the nose with these choices of pullback.  The unit in $S_{H,H}$ should be the identity span on $G/H$, though as noted in \cite[Remark 4.1]{malkiewich/merling:2016}  these elements only serve as right units for $\ast$. To address this issue, we insert an additional object $1_H$ into the category $S_{H,H}$ which is isomorphic to the identity span on $G/H$ and defines pullbacks so that this element is an honest unit. With these choices, $\mathbb{B}^G$ is a strict $2$-category.

In \cref{MMProp4.6}, we claim that a spectral Mackey functor can be constructed from a strict $2$-functor $F\colon \mathbb{B}^G\to \mathrm{Wald}$.  This proposition makes use of a result of Malkiewich--Merling \cite[Proposition 4.6]{malkiewich/merling:2016}, which gives a concrete list of data needed to build spectral Mackey functors.

Making use of Malkiewich--Merling's result requires us to introduce a slight variation on the morphism categories $S_{H,K}$ in order to make them Waldhausen categories. For $H,K\leq G$, let $S_{H,K}^+$ be the category of finite retractive $G$-sets over $G/H\times G/K$. The main difference between $S_{H,K}$ and $S_{H,K}^+$ is that the latter category has a zero object. In particular, $S_{H,K}^+$ is a Waldhausen category with weak equivalences given by isomorphism and cofibrations given by injections.

Malkiewich--Merling define a horizontal composition 
\[
    \bullet\colon S_{H,K}^+\times S_{K,L}^+\to S_{H,L}^+
\]
by the rule
\[
    X\bullet Y = (X_{\circ}\ast Y_{\circ})_+
\]
where $\ast$ is the pairing \eqref{unbased pairing}.

We have maps between the objects of these two categories,\[
\begin{tikzcd}
    {\rm Ob} S_{H,K} \ar[r, bend right=15, swap, "(-)_+"] & {\rm Ob} S_{H,K}^+ \ar[l, bend right=15, swap, "(-)_{\circ}"]
\end{tikzcd}
\] where $Y_+ = Y\amalg (G/H\times G/K)$ for $Y\in S_{H,K}$ and $X_{\circ} = (X\setminus \mathrm{Im}(s),r)$ for $(X,r,s)\in S_{H,K}^+$. Although $(-)_+$ extends to a functor, $(-)_{\circ}$ is just a function on objects -- it cannot be extended to morphisms.
\begin{remark}\label{remark: map from SHK to K theory of SHK plus}
    We record the following observation for later reference.  Since $S_{H,K}^+$ has split cofibrations, in the sense of \cite[Section 1.8]{waldhausen:1983}, there is a group completion map $|S_{H,K}^+|\to \Omega|w_{\bullet}S_{\bullet}(S_{H,K})^+| = \Omega^{\infty}K(S_{H,K}^+)$.  This yields a map
    \[
        |S_{H,K}|\xrightarrow{(-)_+} |S_{H,K}^+|\to \Omega|w_{\bullet}S_{\bullet}(S_{H,K}^+)|
    \]
    where the second map is group completion.  
\end{remark}

In \cite{malkiewich/merling:2016}, Malkiewich--Merling work with a spectrally-enriched category $\mathcal{G}\mathcal{B}_{\mathrm{Wald}}$ which has subgroups $H\leq G$ as objects and morphism spectra given by $K(S_{H,K}^+)$.  A theorem of Guillou--May \cite{GuillouMay:2011}, together with some technical results of Bohmann-Osorno \cite{BohmannOsorno2020}, allows Malkiewich--Merling to construct genuine $G$-spectra from the data of spectrally enriched functors $\mathcal{G}\mathcal{B}_{\mathrm{Wald}}\to \mathrm{Sp}$.  For more details see \cite[Theorem 4.5]{malkiewich/merling:2016} and the preceding discussion. Their result, stated below for reference, provides sufficient categorical data to produce spectrally-enriched functor $\mathcal{GB}_{\mathrm{Wald}}\to\mathrm{Sp}$ and thus a genuine $G$-spectrum.

\begin{proposition}[{\cite[Proposition 4.6]{malkiewich/merling:2016}}]\label{Appendix B 1-7}
    The following data determines a spectral Mackey functor:
    \begin{enumerate}
        \item for every $H\leq G$, a Waldhausen category $R^H$,
        \item for every $S\in S_{H,K}^+$, an exact functor $R^{S}\colon R^H\to R^{K}$,
        \item for every morphism $f\colon S\to S'$ in $S_{H,K}^+$, there is a natural transformation $R^f\colon R^S\Rightarrow R^{S'}$,
        \item for every object $A\in R^H$ the rules $S\mapsto R^S(A)$ and $f\mapsto R^f$ define a functor $S_{H,K}^+\to R^K$,
        \item there are isomorphisms $R^{(\emptyset_+)}(A) \cong 0_K$ and $R^{(S_{\circ}\amalg T_{\circ})_+}(A)\cong R^{S}(A)\vee R^T(A)$ where $\vee$ is the coproduct in $R^K$,
        \item the functor $R^{(1_{H})_+}$ is the identity on $R^H$ for all $H$,
        \item for every $S\in S_{H,K}^+$ and $T\in S_{H,L}^+$ we have an equality of functors
        \[
            R^{S\bullet T} = R^TR^{S},
        \]
        and if $f\colon S\to S'$ and $g\colon T\to T'$ are morphisms in $S_{H,K}^+$ and $S_{K,L}^+$ respectively then the diagram
        \begin{equation}\label{1-7 coherence diagram}
            \begin{tikzcd}[row sep =large, column sep =large]
                R^TR^S \ar[r,"R^T\cdot R^f"] \ar[d, swap, "R^g\cdot R^S"] & R^TR^{S'} \ar[d,"R^g\cdot R^{S'}"]\\
                R^{T'}R^S \ar[r, swap, "R^{T'}\cdot R^f"] & R^{T'}R^{S'}
            \end{tikzcd}
        \end{equation}
        commutes in the category of functors from $R^H$ to $R^L$.
    \end{enumerate}
\end{proposition}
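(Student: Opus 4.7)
The plan is to assemble the data (1)--(7) into a spectrally-enriched functor from the Guillou--May category $\mathcal{GB}_{\mathrm{Wald}}$ to the category of spectra, and then invoke the Guillou--May theorem to convert this into a spectral Mackey functor.

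First, on objects set $H\mapsto K(R^H)$; this uses only item (1) together with Waldhausen's $K$-theory functor. Next, items (2)--(4) together say that for each pair $H,K\leq G$, the assignment $S\mapsto R^S$ is a functor $S_{H,K}^+\to \mathrm{ExFun}(R^H, R^K)$, where the codomain is the category of exact functors with objectwise cofibrations and weak equivalences. Applying a multiplicative refinement of $K$-theory (in the style of Bohmann--Osorno and GMMO) to this bifunctorial data produces, after adjoining, a map of spectra
\[
    K(S_{H,K}^+) \longrightarrow F(K(R^H),K(R^K)),
\]
where $F$ denotes the mapping spectrum. These maps are the candidate enrichment data.

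The heart of the argument is verifying that this enrichment is compatible with composition and units. This is where conditions (5)--(7) are essential: condition (7) encodes compatibility of the composition $\bullet$ on spans with composition of functors at the categorical level, and the coherence diagram \eqref{1-7 coherence diagram} ensures that this identification is natural in $2$-cells so that after applying $K$-theory the induced pairing $K(S_{H,K}^+)\wedge K(S_{K,L}^+)\to K(S_{H,L}^+)$ is matched strictly by composition in the mapping spectra. Condition (6) handles strict unitality with respect to the units $1_H$ inserted into $S_{H,H}$, while condition (5) ensures $R^S$ preserves the zero object and coproducts required for multiplicative $K$-theory to output an honest map of spectra. Together these produce a spectrally-enriched functor $\mathcal{GB}_{\mathrm{Wald}}\to \mathrm{Sp}$.

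Finally, invoke the Guillou--May theorem \cite{GuillouMay:2011} (via the refinements of Bohmann--Osorno \cite{BohmannOsorno2020}) to convert this spectrally-enriched functor into a genuine $G$-spectrum realizing the desired spectral Mackey functor. The main obstacle is the bookkeeping in the middle step: one must check that the pullback-based composition of spans in $\mathcal{GB}_{\mathrm{Wald}}$, which is strictly associative under the conventions of \cref{associative pullbacks}, is matched on the nose by composition of the functors $R^S$, not merely up to coherent isomorphism. Conditions (5)--(7) are engineered precisely so that these strict compatibilities hold, eliminating the need for any further strictification before invoking the Guillou--May machinery.
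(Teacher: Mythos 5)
Your proposal correctly reconstructs the argument that the paper attributes to Malkiewich--Merling \cite[Proposition 4.6]{malkiewich/merling:2016}: conditions (1)--(7) assemble into a spectrally-enriched functor $\mathcal{GB}_{\mathrm{Wald}}\to\mathrm{Sp}$ by applying multiplicative $K$-theory to the bi-exact pairing $S_{H,K}^+\times R^H\to R^K$, with (5)--(7) guaranteeing strict compatibility with the pullback-based composition and units, and the Guillou--May theorem (refined by Bohmann--Osorno) converting this into a genuine $G$-spectrum. The paper itself does not give a proof of this proposition --- it simply cites it and gives exactly this outline in the paragraph preceding the statement --- so your reconstruction matches the paper's (implicit) approach.
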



To prove \cref{MMProp4.6}, we need to show that (1)--(7) above can be obtained from a strict $2$-functor $F\colon  \mathbb{B}^G\to \mathrm{Wald}$ satisfying the condition 
 \begin{itemize}
    \item[($\star$)] for any $A\in F(H)$ and any $S,T\in S_{H,K}$ we have $F(S\amalg T)(A)\cong F(S)(A)\vee F(T)(A)$ and $F(\emptyset)(A) \cong 0_K$ where $\vee$ is the coproduct in $F(K)$ and $0_K\in F(K)$ is the zero object. 
    \end{itemize}
which is essentially the same as (5) above.  

Let us say a bit more about condition $(\star)$. The inclusions $S,T\to S\coprod T$ in $S_{H,K}$ give natural transformations $F(S)\vee F(T)\Rightarrow F(S\coprod T)$ where the coproduct on the right is given by taking coproduct objectwise. The first half of condition $(\star)$ is that this particular map is an isomorphism.  Note that if $F$ is a strict $2$-functor that satisfies $(\star)$, then for any coproduct $A\amalg B \amalg C$ the two canonical choices of isomorphism $F(A)\vee F(B)\vee F(C)\cong F(A\amalg B\amalg C)$ are the same. 

Because Waldhausen categories have a zero object, there is always a \textit{collapse} map $F(S)(A)\vee F(T)(A)\to F(S)$ given by the coproduct of the identity and the zero map.  The collapse maps assemble into a natural transformation $F(S)\vee F(T)\to F(S)$ and thus, by $(\star)$, a transformation $F(S\amalg T)\to F(S)$. It follows from the discussion in the previous paragraph that the projection $F(A\amalg B\amalg C)\to F(A)$ and the composite $F(A\amalg B\amalg C)\to F(A\amalg B)\to F(A)$ are the same natural transformation; this fact is used in the (4) below.

\begin{proof}[Proof of \cref{MMProp4.6}]
    We address (1)--(7) of \cref{Appendix B 1-7} in order:
    \begin{enumerate}
        \item Define $R^H =F(H)$.
        \item For $S\in S_{H,K}^+$, define $R^S = F(S_{\circ})$.
        \item For $f\colon S\to T$ in $S_{H,K}^+$, let $U^f = f^{-1}(G/H\times G/K)$ so there is decomposition
        \[  
            S_{\circ}  = U^f_{\circ}\amalg (S\setminus U^f)
        \]
        and $f$ restricts to a map $\overline{f}\colon (S\setminus U^f)\to T_{\circ}$.  Define $R^{f}\colon R^S\to R^{T}$ to be the composite
        \[
            R^{S} = F(S_{\circ}) \cong F(U^f_{\circ})\vee F(S\setminus U^f) \to F(S\setminus U^f)\xrightarrow{F(\overline{f})} F(T_{\circ}) = R^{T}
        \]
        where the unlabeled arrow is the projection and the unlabeled isomorphism is provided by $(\star)$.
        \item We need to show that for $f\colon S\to T$ and $g\colon T\to W$ that there is an equality $R^{g\circ f} = R^g\circ R^f$.  Note that we have $U^f\subset U^{(g\circ f)}$.  We write $V = U^{(g\circ f)}\setminus U^{f}$ so that $S\setminus U^{f} = V\amalg S\setminus U^{(g\circ f)}$ Consider the diagram
            \[
            \begin{tikzcd}
                F(S_{\circ})\ar[r] \ar[ddd,"\cong"']  & F(S\setminus U^f) \ar[r,"F(\overline{f})"] \ar[d,"\cong"]& F(T_{\circ})  \ar[d,"\cong"] \\
                &  F(V)\vee F(S\setminus U^{(g\circ f)})  \ar[r,"F(\overline{f})"]\ar[d] & F(U^g_{\circ})\vee F(T\setminus U^g) \ar[d] \\
               &  F(S\setminus U^{(g\circ f)}) \ar[r,"F(\overline{f})"] \ar[d,equal]& F(T\setminus U^g) \ar[d,"F(\overline{g})"]\\
                F(U^{(g\circ f)}_{\circ})\vee F(S\setminus U^{(g\circ f)}) \ar[r]& F(S\setminus U^{(g\circ f)})  \ar[r,"F(\overline{g\circ f})"]  & F(W_{\circ})
            \end{tikzcd}
            \]
            where every unlabeled arrow is a collapse.  The left rectangle commutes because both ways of going around represent collapse maps associated to the same coproduct decomposition of $S_{0}$; see the discussion preceding this proof.   The top-right square commutes because $F$ is a strict $2$-functor. The middle-right square commutes because the collapse maps are natural.  The bottom-right square commutes because on $S\setminus U^{(g\circ f)}$ we have $\overline{g}\circ \overline{f} = \overline{g\circ f}$ and $F$ is a strict $2$-functor.  Since the top-right composite is $R^g\circ R^f$ and the left-bottom composite is $R^{(g\circ f)}$ we are done.
        \item This is immediate from $(\star)$.
        \item This follows from the fact that $F$ is a strict $2$-functor and $R^{(1_{H})_+} = F((1^+_H)_{\circ}) = F(1_{H})$.
        \item For $S\in S_{H,K}^+$ and $T\in S_{H,L}^+$ we have
        \[
            R^{S\bullet T} = F((S\bullet T)_{\circ}) = F(S_{\circ}\ast T_{\circ}) =F(T_{\circ})F(S_{\circ}) = R^TR^S
        \]
        proving the first part. That the diagram \eqref{1-7 coherence diagram} commutes follows from $F$ being a strict $2$-functor and coherence for pasting diagrams in the $2$-category of categories.
    \end{enumerate}
\end{proof}

\section{Constructing examples of spectral Mackey functors and morphisms}

The previous appendix lays out a $2$-categorical framework for constructing spectral Mackey functors via $K$-theory.  In this appendix we address some outstanding technical problems in order to facilitate the construction of examples and, most importantly for the body of the paper, morphisms.  The first issue we address is how one actually constructs a morphism of spectral Mackey functors from this framework.  At the most basic level this is straightforward: given two pseudofunctors $F,F'\colon \mathbb{B}^G\to \Wald$ which satisfy $(\star)$, one can check that any pseudonatural transformation $\alpha\colon F\Rightarrow F'$ which is comprised of exact functors will (after some strictification) induce a map on the associated spectral Mackey functors.  In the main body of the paper we would like to apply this to a natural transformation which is constructed out of the linearization maps.

There are two issues with this approach.  First, the linearization maps are not exact, since they do not preserve coproducts.  They are, however, \textit{weakly exact}, in the sense that they preserve coproducts up to weak equivalence.  The fix for this issue is to use a construction of algebraic $K$-theory which is functorial in weakly exact functors.  The construction we use is the $S_{\bullet}'$-construction developed by Blumberg--Mandell in \cite{BM:08}. The first part of this appendix recalls this construction, and discusses the multifunctoriality of this construction which is necessarily for it to be used to construct a genuine $G$-spectra as in \cite{malkiewich/merling:2016}.

The second issue with using linearization to construct a map of spectral Mackey functors is that it is not actually a pseudonatural transformation.  Rather, it is a lax transformation, meaning that the structure $2$-cells are not all invertible.  In particular, it only commutes with the transfer morphisms up to a quasi-isomorphism.  Morally, this is not a problem; indeed, Waldhausen observes in \cite[\S 1.3]{waldhausen:1983} that such a natural transformation will produce homotopy coherent data after applying $K$-theory.
Thus one expects that after applying $K$-theory there is a well defined linearization map, at least in the homotopy category.  On the other hand, making this argument precise requires some care, and occupies the second part of this appendix, \cref{subsec: appendix making maps}. 

In the last part of the appendix, \cref{subsec: appendix constructing examples}, we address the problem of how one actually constructs such a lax transformation.  One can, in principle, do this entirely by hand, however this is a rather arduous task.  Indeed, even the construction of a pseudofunctor $F\colon \mathbb{B}^G\to \Wald$ is quite involved. 
We lay out a more systematic approach to this problem, showing in \cref{Theorem: construction examples and maps} that actually much of what we need exists for entirely formal reasons.  Explicitly, we show that given sufficiently nice pseudofunctors $F,F'\colon \mathcal{O}_G^{\op}\to \Wald$ that there is a canonical way to extend these to pseudofunctors $\overline{F},\overline{F'}\colon \mathbb{B}^G\to \Wald$. Moreover, given a sufficiently nice transformation $\alpha\colon F\Rightarrow F'$ we obtain a lax transformation $\overline{\alpha}\colon \overline{F}\Rightarrow \overline{F'}$ which is good enough to induce a map on $K$-theory $G$-spectra.  We believe \cref{Theorem: construction examples and maps} constitutes the absolute minimum that one needs to check to construct examples and morphisms, although we do not make this precise. We expect this result to have applications beyond the present work. 

The remainder of this appendix is structured as follows.  In \cref{subsec: appendix S dot prime} we discuss the $S_{\bullet}'$-construction of Blumberg--Mandell. In \cref{subsec: appendix making maps} we explain how lax natural transformations can be used to construct maps of spectral Mackey functors obtained from $K$-theory.  In \cref{subsec: appendix constructing examples} we explain how to construct examples of pseudofunctors $F\colon \mathbb{B}^G\to \Wald$ as well as lax natural transformation between such pseudofunctors.  \cref{subsec: appendix constructing examples}, especially \cref{Theorem: construction examples and maps}, contains the essential information which is used in the main body of the paper.  We note that the promised proof of \cref{spectral Mackey functor} is found in \cref{corollary: proof of proposition from main body}.

\subsection{The $S_{\bullet}'$ construction}\label{subsec: appendix S dot prime}
In \cite[\S 2]{BM:08}, Blumberg--Mandell define a different model for Waldhausen $K$-theory using an $S'_\bullet$-construction, which essentially replaces the pushout squares in the $S_\bullet$-construction with more general \textit{weak pushout} squares. The $S'_\bullet$-construction is defined for Waldhausen categories with \textit{functorial factorization of weak cofibrations} (FFWC) which are also saturated, i.e. whose weak equivalences satisfy the 2-of-3 property. 

\begin{definition}[{\cite[Definition 2.2]{BM:08}}]
    A morphism $f\colon A\to B$ in a Waldhausen category $\cat C$ is a \textit{weak cofibration} if there is a zig-zag of weak equivalences (in the arrow category of $\cat C$) from $f$ to a cofibration. A Waldhausen category has \textit{functorial factorization of weak cofibrations} (FFWC) if every weak cofibration can be factored (in the arrow category) as a cofibration followed by a weak equivalence.
\end{definition}

The condition FFWC is a very mild assumption, and every Waldhausen category discussed in this paper has FFWC, as remarked below. Note also that every Waldhausen category in this paper is saturated.

\begin{example}
    If the cofibrations are a Waldhausen category $\cat{C}$ are monics then every weak cofibration is an actual cofibration and thus $\cat{C}$ has FFWC trivially.  Alternatively, observe that if $\cat{C}$ is a full subcategory of a model category $\cat{D}$ and the cofibrations and weak equivalences of $\cat{C}$ are those of $\cat{D}$ then the factorization axioms for a model category imply that $\cat{C}$ has FFWC.  Every Waldhausen category in this paper falls into one of these two categories, thus they all have FFWC.
\end{example}

\begin{definition}[cf. {\cite[Def. 2.7]{BM:08}}]\label{defn:S'}
Let $S'_n\cat C\subseteq \Fun(\mathrm{ar}[n], \cat C)$ be the full subcategory on those functors $A$ such that:\begin{itemize}
        \item[(i)] the initial map $*\cof A_{ii}$ is a weak equivalence for all $i\in [n]$, 
        \item[(ii)] the map $A_{ij}\cof A_{ik}$ is a weak cofibration for all $i\leq j\leq k$,
        \item[(iii)] for every $i\leq j\leq k$, the square\[
        \begin{tikzcd}
            A_{ij} \ar[r] \ar[d] & A_{ik} \ar[d] \\
            A_{jj}\ar[r] & A_{jk}
        \end{tikzcd}
        \] is a weak pushout, meaning it is weakly equivalent (via a zig-zag of morphisms of diagrams) to a pushout square where one of the parallel sets of arrows are cofibrations. 
    \end{itemize}
    Just as with the $S_\bullet$-construction, the categories $S'_n\cat C$ assemble into a simplicial Waldhausen category, and we let $wS'_\bullet \cat C$ denote the simplicial subcategory of weak equivalences; see \cite[Definition 2.7]{BM:08} for details. 
\end{definition}

There is an evident simplicial inclusion $S_\bullet \cat C\to S'_\bullet \cat C$, and if $\cat C$ is a saturated Waldhausen category with FFWC, then the inclusion $wS_\bullet \cat C\to wS'_\bullet \cat C$ is a weak equivalence {\cite[Theorem 2.9]{BM:08}}. The $S_{\bullet}'$-construction can be iterated, just like the $S_{\bullet}$-construction, and the resulting symmetric spectrum $n\mapsto \Omega\lvert N(w_{\bullet}S_{\bullet}^{(n)})\rvert$ is an $\Omega$-spectrum which is stably equivalent to the usual $K$-theory spectrum.

A benefit of the $S_\bullet'$-construction, particularly for our purposes, is that it is functorial in a larger class of functors than exact ones. 

\begin{definition}
    A functor $F\colon \cat C\to \cat D$ of Waldhausen categories is \textit{weakly exact} if it preserves the zero object, cofibrations, and weak equivalences, and whenever \[
    \begin{tikzcd}
        A \ar[r, >->] \ar[d] & B \ar[d] \\
        C \ar[r, >->] & D
    \end{tikzcd}
    \] is a pushout square in $\cat C$, then the induced map $F(C)\cup_{F(A)} F(B) \to F(D)$ is a weak equivalence. 
\end{definition}

Observe that such a functor will preserve weak pushouts (and hence induce a map after $S_\bullet'$), since whenever there is a zig-zag\[
        \begin{tikzcd}
            A_{ij} \ar[r] \ar[d] & A_{ik} \ar[d] \\
            A_{jj}\ar[r] & A_{jk}
        \end{tikzcd} \xleftrightarrow{\sim} \begin{tikzcd}
        A \ar[r, >->] \ar[d] & B \ar[d] \\
        C \ar[r, >->] & D
    \end{tikzcd}
        \] of weak equivalences of diagrams in $\cat C$, so that the latter is a pushout, then there is a zig-zag of weak equivalences \[
        \begin{tikzcd}
            F(A_{ij}) \ar[r] \ar[d] & F(A_{ik}) \ar[d] \\
           F( A_{jj})\ar[r] & F(A_{jk})
        \end{tikzcd} \xleftrightarrow{\sim}  \begin{tikzcd}
        F(A) \ar[r, >->] \ar[d] & F(B) \ar[d] \\
        F(C) \ar[r, >->] & F(D)
    \end{tikzcd} \xleftarrow{\sim} \begin{tikzcd}
        F(A) \ar[r, >->] \ar[d] & F(B) \ar[d] \\
        F(C) \ar[r, >->] & F(B)\cup_{F(A)} F(C)
    \end{tikzcd}
        \] of diagrams in $\cat D$.

\begin{example}
    For a $G$-space $X$, the linearization functor $L^G\colon R^G_{\fd}(X) \to \mathcal P^G(\underline{\ZZ}[\pi_1(X)])$ is weakly exact; this is precisely the content of \cref{prop:L induces map on fx pts}.  
\end{example}

Recall from \cite[\S 1.3]{waldhausen:1983} that a \textit{weak equivalence} of exact functors is a natural transformation $\eta\colon F\Rightarrow F'$ so that every component of $\eta$ is a weak equivalence in the target Waldhausen category; such a notion also makes sense for weakly exact functors. As in \cite[Proposition 1.3.1]{waldhausen:1983}, such natural transformations induce homotopies upon taking $K$-theory.

\begin{proposition}
    A weak equivalence of weakly exact functors induces a natural homotopy between $wS'_\bullet F$ and $wS'_\bullet F'$.
\end{proposition}

We will make use of this proposition in \cref{subsec: appendix making maps}. Before we do so, we need to ensure that spectral Mackey functors may be constructed using the $S_\bullet'$-construction equally well as they are with the $S_\bullet$-construction.
The translation from Mackey functors in Waldhausen categories to spectral Mackey functors via $K$-theory is achieved through the multifunctoriality of the $S_{\bullet}$-construction, and we now explain how the same arguments can be made using $S_{\bullet}'$. For more detailed definitions and the relevant arguments for $S_\bullet$, we refer the reader to \cite{ElmendorfMandell} and \cite{Zak}. 

\begin{definition}
    Let $M_1,\dots, M_n$ and $N$ be saturated Waldhausen categories with FWCC.  A functor
    \[
        F\colon M_1\times \dots \times M_n\to N
    \]
    is \textit{weakly $n$-exact} if \begin{enumerate}
        \item $F(M_1,\dots,M_n)$ is the zero object if any $M_i$ is the zero object,
        \item $F$ is weakly exact in each variable separately,
        \item Given cofibrations $M_{10}\cof M_{11}, \dots, M_{n0}\cof M_{n1}$, the functor \[C(i_1, \dots, i_n) = F(M_{1i_1}, \dots, M_{ni_n})\colon [1]\times \dots \times [1]\to \cat N\]
        is cubically cofibrant in the sense of \cite[Definition 2.1]{BM:2011}. 
    \end{enumerate}
    The cubical cofibrancy condition in this case amounts to checking that for every $1\leq j<k \leq n$, we have a square\[
    \begin{tikzcd}
        F(M_{10}, \dots, M_{j0}, \dots, M_{k0}, \dots, M_{n0}) \ar[r] \ar[d] & F(M_{10}, \dots, M_{j1}, \dots, M_{k0}, \dots, M_{n0})\ar[d] \\
        F(M_{10}, \dots, M_{j0}, \dots, M_{k1}, \dots, M_{n0}) \ar[r] & F(M_{10}, \dots, M_{j1}, \dots, M_{k1}, \dots, M_{n0}) 
    \end{tikzcd}
    \] and we ask for the map from the pushout of the span to the lower-right entry of the diagram to be a cofibration. 
\end{definition}
Hence $F$ is weakly $n$-exact functor if it satisfies the conditions of being $n$-exact, except that it is only weakly exact in every variable. It is shown in \cite{Zak} that the collection of Waldhausen categories and $n$-exact functors forms a closed symmetric multicategory, and the same arguments work to show that Waldhausen categories and weakly $n$-exact functors form a closed symmetric multicategory and that the assignment $M\mapsto K'(M):=\Loop \abs{wS'_\bullet M}$ assembles into a symmetric multifunctor $K'\colon \Wald'\to \Sp$.

Let $\Wald^{ex}\subset \Wald^{wex}$ denote the sub-multicategory of Waldhausen categories and multiexact functors.

\begin{proposition}\label{proposition: multinatural transformation}
    There is multinatural transformation $\Phi$ filling the triangle of multifunctors
\[\begin{tikzcd}
	\Wald^{ex} && {\Wald^{wex}} \\
	& \Sp
	\arrow[""{name=0, anchor=center, inner sep=0}, "\subset", hook, from=1-1, to=1-3]
	\arrow["K"', from=1-1, to=2-2]
	\arrow["{K'}", from=1-3, to=2-2]
	\arrow["\phi", shorten <=6pt, shorten >=5pt, Rightarrow, from=2-2, to=0]
\end{tikzcd}.\]
Moreover, $\phi_M\colon K(M)\to K'(M)$ is the stable equivalence induced by \cite[Theorem 2.9]{BM:08}
\end{proposition}
\begin{proof}
    Let $C_1,\dots, C_n$ and $D$ be saturated Waldhausen categories with FWCC.  To check that $\phi$ is a multinatural transformation (see e.g. \cite[Definition 4.5]{BohmannOsorno2020}), we need to check that the diagram
\[\begin{tikzcd}
	{\Wald^{\ex}(C_1,\dots,C_n;D)} & {\Sp(K'(C_1),\dots,K'(C_n);K'(D))} \\
	{\Sp(K(C_1),\dots,K(C_n);K(D))} & {\Sp(K(C_1),\dots,K(C_n);K'(D))}
	\arrow["{K'}", from=1-1, to=1-2]
	\arrow["K", from=1-1, to=2-1]
	\arrow["{(\phi_{C_1},\dots,\phi_{C_n})^*}", from=1-2, to=2-2]
	\arrow["{(\phi_{D})_*}", from=2-1, to=2-2]
\end{tikzcd}\]
commutes. Here, $\Sp$ denotes the multicategory of symmetric spectra and so by definition a multimorphism in $\Sp(X_1,\dots,X_n;Y)$ consists of a map of symmetric spectra $X_1\wedge\dots\wedge X_n\to Y$. Given a weakly exact functor, $F\colon C_1\times\dots\times C_n\to D$, we need to check that the diagram of symmetric spectra
\[\begin{tikzcd}
	{K(C_1)\wedge\dots\wedge K(C_n)} && {K(D)} \\
	{K'(C_1)\wedge\dots\wedge K'(C_n)} && {K'(D)}
	\arrow["{K(F)}", from=1-1, to=1-3]
	\arrow["{\phi_{C_1}\wedge\dots\wedge\phi_{C_n}}"', from=1-1, to=2-1]
	\arrow["{\phi_D}", from=1-3, to=2-3]
	\arrow["{K'(F)}", from=2-1, to=2-3]
\end{tikzcd}\]
commutes.  But this follows from the observation that the natural transformation $\phi$ is induced by an inclusion of simplicial categories $S_{\bullet}(C)\to S_{\bullet}'(C)$ for any Waldhausen category $C$.
\end{proof}

Let $\mathcal{GB}_{\Wald}' = K'_{\bullet}(\mathbb{B}^G_+)$ be the $\Sp$-enriched category with objects subgroups $H\leq G$ and morphism spectra $\mathcal{GB}_{\Wald}'= K'(S_{H,K})$.  The multinatural transformation $\phi$ induces a map of $\Sp$-categories $\phi_*\colon \mathcal{GB}_{\Wald}\to \mathcal{GB}_{\Wald}'$ which is a stable equivalence on mapping spectra.

\begin{corollary}[{\cite[Proposition 2.4]{MGM:EnrichedModelCats}}]
    The map $\phi_*$ induces is a Quillen equivalence between the categories of $\Sp$-enriched functors $\Fun_{\Sp}(\mathcal{GB}_{\Wald},\Sp)$ and $\Fun_{\Sp}(\mathcal{GB}_{\Wald}',\Sp)$.  
\end{corollary}

Thus we may use either $K$ or $K'$ equally well to produce spectral Mackey functors.  In particular, Malkiewich and Merling's construction of equivariant $A$-theory can be achieved equally well with the $S_{\bullet}'$ construction in place of the $S_{\bullet}$-construction. 

\subsection{Morphisms of spectral Mackey functors}\label{subsec: appendix making maps}

This section addresses the construction of morphisms between spectral Mackey functors obtained from pseudofunctors $F\colon \mathbb{B}^G\to \Wald$ via \cref{MMProp4.6}.  Let $\Wald$ denote the $2$-category of Waldhausen categories with $\Wald_2(\cat C, \cat D)$ the category of exact functors and natural isomorphisms. 
The linearization functors $L = \{L^H\}_{H\leq G}$ will not assemble into a \textit{pseudo}-natural transformation, essentially because linearization is only weakly exact. For instance, $L$ will not preserve coproducts on the nose as in general there is only a quasi-isomorphism between $C_*(Y_1\cup_X Y_2, X)$ and $C_*(Y_1,X)\oplus C_*(Y_2, X)$. However, we claim that $L$ forms what might be called a ``weakly natural transformation,'' in the sense that it is a lax natural transformation whose $2$-cell components are weak equivalences. 

More precisely, let $\Wald^{wex}$ denote the $2$-category with the same objects as $\Wald$, but with $\Wald^{wex}(\cat{C},\cat{D})$ the category of weakly exact functors and \textit{natural weak equivalences} of functors, which (as in \cite[Section 1.3]{waldhausen:1983}) are natural transformations $\eta\colon F\Rightarrow F'$ such that $\eta_x\colon Fx\xrightarrow{\sim} F'x$ is a weak equivalence for all objects $x$. We can also consider $\Wald^{wex}$ as a simplicially enriched category by doing a change of enrichment along the nerve; we denote the resulting simplicial category by $\Wald^{wex}_{\Delta}$. Note that we have an inclusion of simplicial categories $\Wald_{\Delta}\subseteq \Wald^{wex}_{\Delta}$.

\begin{proposition}[{cf. \cite[Proposition 1.3.1]{waldhausen:1983}}]\label{proposition: K theory is simplically enriched}
    The $S_{\bullet}'$ construction induces a map of simplicial categories
    \[K_{\Delta}\colon \cat W^{wex}_{\Delta} \to \Top,\] where $\Top(X,Y) = \Sing_*(\Map(X,Y))$.
\end{proposition}\begin{proof}
   Waldhausen observes that a natural weak equivalence $\eta\colon F\Rightarrow F'$ induces a natural homotopy $K(\eta)$ between $K(F)$ and $K(F')$.  The same observation holds for the $S_{\bullet}'$ construction in place of the $S_{\bullet}$-construction.
\end{proof}

\begin{definition}
    Let $\cat C$ be a $2$-category. A \textit{weakly natural transformation} $\eta$ between two pseudofunctors $F, F'\colon \cat C\to \cat W^{wex}$ is a lax natural transformation.
\end{definition}

Explicitly, this means that for each $1$-cell $f\colon x\to y$ in $\cat C$, the naturality diagram for $f$ commutes ``up to weak equivalence,'' i.e. the $2$-cell $\alpha_f\colon G(f)\circ \alpha_x \Rightarrow \alpha_y \circ F(f)$ is a weak equivalence of exact functors.  Our goal is to prove the following result.

\begin{theorem}\label{Theorem: weak transformations give maps}
    A weakly natural transformation between pseudofunctors $\mathbb{B}^G\to \cat W^{wex}$ induces a map of the associated genuine $G$-spectra in the equivariant stable homotopy category.
\end{theorem}

This proof is most easily effectuated using the language of $\infty$-categories (by which we mean quasi-categories).  We briefly recall the $\infty$-categorical setup of spectral Mackey functor due to Barwick \cite{BarwickOne}. We write $\Span(\Set^G)_{(2,1)}$ for the bicategory whose objects are finite $G$-sets, morphisms are spans of finite $G$-sets, and $2$-cells are \emph{isomorphisms} of spans.  There is an evident inclusion $\Span(\Set^G)_{(2,1)}\to \Span(\Set^G)$ where the target has the same objects and $1$-cells but additional $2$-cells. Since $\Span(\Set^G)_{(2,1)}$ is a $(2,1)$-category we can obtain an $\infty$-category by applying the \emph{Duskin nerve} functor $N^D_{*}\colon \Cat_{2}\to \mathrm{sSet}$.  We summarize the relevant properties of the Duskin nerve.
\begin{theorem}[{\cite{Duskin,BullejosFaroBlanco}}]\label{theorem: Duskin facts}
    The Duskin nerve functor has the following properties:
    \begin{itemize}
        \item the objects and morphisms in in $N^D_*(\cat{C})$ are those of $\cat{C}$,
        \item if $\cat{C}$ is a $(2,1)$-category then $N^D_*(\cat{C})$ is a quasi-category.  Moreover, $N^D_*(\cat{C})$ is equivalent to the quasi-category obtained by first changing the enrichment of $\cat{C}$ to Kan complexes using the ordinary nerve and then applying the homotopy coherent nerve,
        \item the Duskin nerve carries lax transformations to simplicial homotopies.
    \end{itemize}
\end{theorem}
\begin{definition}
    The effective Burnside category for a finite group $G$ is the Duskin nerve $\mathcal{A}_{\mathrm{eff}}^G:= N^D_*(\Span(\Set^G)_{(2,1)})$.  
\end{definition}
\begin{remark}\label{remark: local group completion of the effective burnside category}
    For a $(2,1)$-category $\cat{C}$ let us write $N_{\Delta}(\cat{C})$ for the simplicial category with morphism simplicial sets given by the nerve of the morphism categories. Observe that the morphism sets in $N_{\Delta}(\Span(\Set^G)_{(2,1)})(X,Y)$ are precisely $N_{\bullet}(S_{X,Y}^{\cong})$ where $S_{X,Y}$ are as in \cref{sec:tech pf app} and the superscript $\cong$ denotes taking internal groupoid.
    
     For a spectral category $\cat{D}$, let us write $\Omega^{\infty}\cat{D}$ for the simplicial category with morphism simplicial sets given by applying $\mathrm{Sing}\ \Omega^{\infty}$ to the mapping spectra.  Observe that the morphism simplicial sets $\Omega^{\infty}(\mathcal{GB}_{\Wald})(X,Y)$ are precisely $\Sing\Omega^{\infty}K(S_{H,K^+})$.  Finally, note that taking adjuncts of the maps constructed in \cref{remark: map from SHK to K theory of SHK plus} yields a simplicial functor
     \[
        \Span_{(2,1)}(\Set^G)_{\Delta}\to\Omega^{\infty}(\mathcal{GB}_{\Wald}).
     \]
     Since both of these simplicial categories are locally Kan complexes, applying the simplicial nerve $N^{\mathrm{ch}}$ yields a functor of $\infty$-categories
     \begin{equation}\label{equation: local group completion}
        \mathcal{A}^G_{\mathrm{eff}}\to N^{\mathrm{ch}}\Omega^{\infty}(\mathcal{GB}_{\Wald})
     \end{equation}
     where the the source is identified with the effective Burnside category by the second bullet point of \cref{theorem: Duskin facts}. This map is the identity on objects and on morphism spaces is given by group completion. This map is a model for the ``local group completion'' functor described in \cite[3.8]{BarwickOne}.
\end{remark}

The effective Burnside category has products given by disjoint union of finite $G$-sets. If $\cat{C}$ is any additive $\infty$-category then we write $\mathrm{Mack}_{\infty}(\cat{C})$ for the $\infty$-category of product preserving functors $\mathcal{A}^G_{\mathrm{eff}}\to \cat{C}$. When $\cat{C} = \Sp$ is the $\infty$-category of spectra we will call an object in $\mathrm{Mack}_{\infty}(\Sp)$ a spectral Mackey functor.

\begin{theorem}[{\cite{BarwickOne}, \cite[Appendix A]{CMNN}}]
    There is an equivalence of $\infty$-categories between $\mathrm{Mack}_{\infty}(\Sp)$ and $\Sp^G$, the $\infty$-category of genuine $G$-spectra.
\end{theorem}

\begin{lemma}
    Any pseudofunctor $F\colon \mathbb{B}^G\to \Wald^{\mathrm{wex}}$ which satisfies condition $(\star)$ of \cref{MMProp4.6} determines a map of simplicial sets
    \[
        F_{\infty}\colon \mathcal{A}^G_{\mathrm{eff}}\to N^D_*(\Wald^{wex})
    \]
    which, on the objects $G/H$ and morphisms $[G/H\leftarrow A\rightarrow G/K]$, is given precisely by the data of $F$.
\end{lemma}
\begin{proof}
    We show in \cref{lem:BG extend to Span FinG} below that $F$ determines, an functor $\overline{F}\colon \Span(\Set^G)\to \Wald^{wex}$ which agrees with $F$ on the objects $G/H$ and morphisms between them.   Restricting $\overline{F}$ along the inclusion $\Span(\Set^G)_{(2,1)}\subset \Span(\Set^G)$ and then applying the Duskin nerve yields the result.
\end{proof}

We note that $N^D_*(\Wald^{wex})$ is not an $\infty$-category because it has non-invertible $2$-simplices.  On the other hand, notice that if we apply the simplicial (homotopy coherent) nerve to the simplicial functor of \cref{proposition: K theory is simplically enriched} we obtain a map of simplicial sets
\[
N_{\Delta}(K_{\Delta})\colon N_{\Delta}(\Wald^{wex}_{\Delta}) \to N_{\Delta}(\Top).
\]
By \cref{theorem: Duskin facts} we have $N_{\Delta}( \Wald^{wex}_{\Delta}) \simeq N_*^D(\Wald^{wex})$, and $N_{\Delta}(\Top)\simeq \cat S$ is the $\infty$-category of spaces. 

\begin{lemma}
    The map $N_{\Delta}(K_{\Delta})$ factors through $\Sp^{\geq 0} \simeq \Alg^{gp}_{\mathbb{E}_\infty}(\cat S)\subseteq \cat S$.
\end{lemma}\begin{proof}
      To see $K$-theory factors through $\Alg^{gp}_{\mathbb{E}_\infty}(\cat S)$, it suffices to know the natural homotopy $K(\eta)$ between $K(F)$ and $K(F')$ is a homotopy through maps of infinite loop spaces. This follows from the explicit delooping of $K$-theory via the iterated $S_\bullet'$-construction \cite[Proposition 1.5.3]{waldhausen:1983} and \cite{BM:08}, as $K(\eta)$ induces a homotopy on each iterate and hence defines a homotopy between maps of sequential spectra.
\end{proof}
\begin{definition}
    If $F\colon \mathbb{B}^G\to \Wald^{wex}$ is a pseudofunctor which satisfies $(\star)$ we write $K_{\infty}(F)\in \mathrm{Mack}_{\infty}(\Sp)$ for the spectral Mackey functor given by the composite
    \[
        \mathcal{A}^G_{\mathrm{eff}}\xrightarrow{F_{\infty}} N^D_*(\Wald^{wex})\xrightarrow{N_{\Delta}(K_{\Delta})} \Sp^{\geq 0}\hookrightarrow \Sp.
    \]
    Note that, by definition, $K_{\infty}(F)(G/H) = K(F(G/H))$, and the induced maps on transfers and restrictions are given by the maps obtained by applying $K$-theory to $F$.
\end{definition}

\begin{corollary}
    If $R,P\colon \mathbb{B}^G\to \Wald^{wex}$ are two pseudofunctors which satisfy condition $(\star)$ and $L\colon R\Rightarrow P$ is a weakly natural transformation then there is natural transformation of $\infty$-functors $K_{\infty}(L)\colon K_{\infty}(R)\Rightarrow K_{\infty}(P)$ which, on any orbit $G/H$, is given by the component morphism
    \[
        K(L_{G/H})\colon K(R(G/H))\to K(P(G/H)).
    \]
\end{corollary}
\begin{proof}
    This follows from the last bullet point of \cref{theorem: Duskin facts} and the definition of $K_{\infty}(F)$.
\end{proof}
To prove \cref{Theorem: weak transformations give maps}, all that remains is to identify the objects $K_{\infty}(F)$ and $K_G(F)$ in the equivariant stable homotopy category, for any pseudofunctor $F\colon \mathbb{B}^G\to \Wald^{wex}$ satisfying $(\star)$. We emphasize that this is not surprising: the two spectral Mackey functors have the same value at each finite $G$-set and the same definition of transfer and restriction.  On the other hand, the two objects $K_{\infty}(F)$ and $K_G(F)$ live in different categories of spectral Mackey functors, so a direct comparison requires some care. 

Given a spectral functor $H\colon \mathcal{GB}_{\mathrm{Wald}}\to \Sp_{\Sp}$, we may apply the functors $\Sing$ and $\Omega^{\infty}$ to the mapping spectra (as in \cref{remark: local group completion of the effective burnside category}) to obtain a simplicial functor $\Omega^{\infty}(H)\colon \Omega^{\infty}\mathcal{GB}_{\mathrm{Wald}}\to \Omega^{\infty}(\Sp)$.  Taking simplicial nerves yields an $\infty$-functor
\[
    N^{\mathrm{ch}}\Omega^{\infty}(H)\colon N^{\mathrm{ch}}\Omega^{\infty}\mathcal{GB}_{\mathrm{Wald}}\to N^{\mathrm{ch}}\Omega^{\infty}(\Sp)\simeq \Sp_{\infty}
\]
where the last object is the $\infty$-category of spectra, and by precomposing with the map \eqref{equation: local group completion} we obtain an $\infty$-categorical spectral Mackey functor $ \Phi(H)\colon \mathcal{A}^G_{\mathrm{eff}}\to \Sp_{\infty}.$ The following result completes the proof of \cref{Theorem: weak transformations give maps}.

\begin{theorem}
    There is an identification of objects $\Phi(K(F)) = K_{\infty}(F)$ in $\Mack_{\infty}(\Sp)$.
\end{theorem}
\begin{proof}
    Unwinding the definitions, one sees that $\Phi(H)$ agrees with $H$ on objects. We can also describe what happens on morphisms explicitly: for any morphism $f\colon X\to Y$ in $\mathcal{A}^G_{\mathrm{eff}}$ let $\overline{f}$ denote its image in $\Omega^{\infty}\mathcal{GB}_{\Wald}(X,Y)$ under the map \eqref{equation: local group completion}.  The based map $S^0\to \Omega^{\infty}\mathcal{GB}_{\Wald}(X,Y)$ adjoints to a map of spectra $\Sigma_{f}\colon \mathbb{S}\to \mathcal{GB}_{\Wald}(X,Y)$ and $\Phi(H)(f)$ is given by the composite
\[
  H(X) = \mathbb{S}\wedge H(X)\xrightarrow{\Sigma_f\wedge 1} \mathcal{GB}_{\Wald}(X,Y)\wedge H(X)\to H(Y)
\]
where the unlabeled map comes from the fact that $H$ is a spectral functor.  Observe that when $H = K(F)$ then it's value on any such $f$ is exactly given by applying $K$-theory to the exact functor $F(f)\colon F(X)\to F(Y)$.  Thus when $H = K(F)$ we see that $\Phi(K(F))$ and $K_{\infty}(F)$ are identically the same on objects and 1-simplicies.  The higher simplices in the Duskin nerve $\mathcal{A}^G_{\eff}$ record composable strings of morphisms in $\Span_{(2,1)}(\Set)$ and 2-cells relating their compositions, and both $\Phi(K(F))$ and $K_{\infty}(F)$ evaluate the 2-functor $F$ on the $1$- and $2$-cells and then apply multifunctorial $K$-theory.
\end{proof}

\subsection{Constructing examples}\label{subsec: appendix constructing examples}

All of the examples of pseudofunctors $\mathbb{B}^G\to \Wald$ (or, more generally, $\Wald^{wex}$) in this paper are essentially constructed by producing a $G$-category of Waldhausen categories, i.e. a functor $\mathcal{O}_G^{\op}\to \Wald$ which specifies a Waldhausen category for every $G/H$ along with restriction functors between them. In this section, we will justify this method and also show that weakly natural transformations of pseudofunctors $\mathbb{B}^G\to \Wald$ can similarly be constructed via pseudonatural transformations of pseudofunctors $\mathcal{O}_G^{\op}\to \Wald^{wex}$ that satisfy some checkable conditions. The upshot being that spectral Mackey functors, and maps between them, can be constructed using the simpler domain category $\mathcal{O}_G^{\op}$.

To pass from $\mathbb{B}^G$ to $\mathcal{O}_G^{\op}$, we pass through an intermediary bicategory $\Span(\mathbb{B}^G)$ and make use of some technical results of Dawson--Pare--Pronk \cite{DawsonParePronk}. In particular, we will exploit the fact that $\mathbb{B}^G$ is a full subcategory of the span bicategory $\Span(\Set_G)$ on the category of finite $G$-sets (as defined in \cite[\S 1]{DawsonParePronk}).

\begin{lemma}\label{lem:BG extend to Span FinG}
Restricting along the inclusion $\mathbb{B}^G\subseteq \Span(\Set_G)$ 
\[
\Fun_{\ps}^{lax}(\Span(\Set_G), \Wald^{wex}) \to \Fun_{\ps}^{lax}(\mathbb{B}^G, \Wald^{wex})
\] sends product-preserving pseudofunctors (i.e. $F$ such that $F(X\amalg Y) \cong F(X)\times F(Y)$ for all finite $G$-sets $X$ and $Y$) to pseudofunctors satisfying  $(\star)$ of \cref{MMProp4.6}. This functor is essentially surjective.
\end{lemma}
\begin{proof}
The functor is essentially surjective because $\Span(\Set^G)$ is generated under products by the objects in $\mathbb{B}^G$. The only thing left to prove is that when $F$ preserves products its restriction to $\mathbb{B}^G$ satisfies $(\star)$. Recall that the category $\Span(\Set^G)$ has finite products and coproducts which are both given by disjoint unions of finite $G$-sets. 

For any map $f\colon x\to y$ of finite $G$-sets we write 
    \[
        \rho_f = [y\xleftarrow{f} x\xrightarrow{=} x] \quad \mathrm{and} \quad \tau_{f} = [x \xleftarrow{=} x\xrightarrow{f} y]
    \]
    for the restriction and transfer along $f$, respectively.
     Suppose we are given two spans
    \[
      S = [G/H\xleftarrow{a} S\xrightarrow{b} G/K]\quad \textrm{and}\quad T= [G/H\xleftarrow{c} T\xrightarrow{d} G/K]  
    \]
    in $\mathbb{B}^G$.  In $\Span(\Set^G)$ we can factor the coproduct of these spans as
    \[
        [G/H\xleftarrow{a\amalg c} S\amalg T\xrightarrow{b\amalg d} G/K] =  \tau_{b\amalg c}\circ \rho_{a\amalg c}
    \]
 The map
       \[
         \rho_{a,c}=[G/H\xleftarrow{a\amalg c} S\amalg T\xrightarrow{=} S\amalg T]
        \]
        is the product of the restrictions along $a$ and $c$, and so if we assume that $F$ is product-preserving then $F(\rho_{a,c})$ must be the functor
        \[
            F(G/H)\xrightarrow{F(\rho_a)\times F(\rho_c)} F(S)\times F(T). 
        \]
        On the other hand, the map $b\amalg d\colon S\amalg T\to G/K$ factors as the composite
        \[
            S\amalg T\xrightarrow{\tau_b\amalg \tau_d} G/K\amalg G/K\xrightarrow{\nabla} G/K
        \]
        where $\nabla$ is the fold map.  Thus $\tau_{b\amalg d} = \tau_{\nabla}\circ (\tau_{b}\times \tau_d)$ and we can rewrite $F(S\amalg T)$ as the composite
        \[
             F(G/H)\xrightarrow{F(\rho_a)\times F(\rho_c)} F(S)\times F(T)\xrightarrow{F(\tau_b)\times F(\tau_{c})} F(G/K)^2\xrightarrow{F(\tau_{\nabla})}F(G/K).
        \]
        Thus, for any $a\in F(G/H)$, there is an isomorphism
        \[
            F(S\amalg T)(a)\cong F(\tau_{\nabla})(F(S)(a),F(T)(a))
        \]
        and we have established $(\star)$ as soon as we see that $F(\tau_{\nabla})$ is a coproduct on $F(G/K)$.  This follows from the observation that $\tau_{\nabla}$ is a left adjoint of $\rho_{\nabla}$ in $\Span(\Set^G)$ (c.f. \cite[p. 63]{DawsonParePronk}).  Since $\rho_{\nabla}$ is the diagonal, and $F$ preserves products, we see that $F(\rho_{\nabla})$ is the diagonal functor $F(G/K)\to F(G/K)^2$.  As pseudofunctors always preserve adjunctions in $2$-categories, it follows  that $F(\tau_{\nabla})$ is the left adjoint of the diagonal, i.e. it is a coproduct.
\end{proof}

The upshot is that one way to produce pseudofunctors $\mathbb{B}^G\to \Wald^{wex}$ is to  instead produce pseudofunctors $\Span(\Set_G)\to \Wald^{wex}$. While this may seem like a more difficult task \textit{a priori}, it is made easier by the universal property of $\Span(\Set_G)$ as shown in \cite{DawsonParePronk}. 
The statements of their results require some preliminary definitions, which we unpack only in our case of interest. 

\begin{definition}
    A weakly exact functor $f\colon \cat C\to \cat D$ in $\Wald^{wex}$ is a \textit{left adjoint} if there is a weakly exact functor $f^*\colon \cat D\to \cat C$ and natural weak equivalences\[
    \eta\colon \id_{\cat C}\Rightarrow f^*f \text{ and } \varepsilon\colon ff^*\Rightarrow \id_{\cat D}
    \] satisfying the usual triangle identities. The 1-cell $f^*$ is called a \textit{right adjoint} of $f$, and is unique up to natural isomorphism. 
\end{definition}

\begin{definition}
    Suppose there is a square in $\Wald^{wex}$

\begin{equation}\label{eq: mate square}
\begin{tikzcd}
	\cat A & \cat B \\
	\cat C & \cat D
	\arrow[""{name=0, anchor=center, inner sep=0}, "f", from=1-1, to=1-2]
	\arrow["h"', from=1-1, to=2-1]
	\arrow["k", from=1-2, to=2-2]
	\arrow[""{name=1, anchor=center, inner sep=0}, "g"', from=2-1, to=2-2]
	\arrow["\alpha"', shorten <=6pt, shorten >=6pt, Rightarrow, from=0, to=1]
\end{tikzcd}\end{equation}
inhabited by a natural weak equivalence $\alpha\colon kf\Rightarrow gh$.  If both $h$ and $k$ are left adjoints, then the \emph{mate} of the square \eqref{eq: mate square} is the square
\[
\begin{tikzcd}
	\cat A & \cat B \\
	\cat C & \cat D
	\arrow[""{name=0, anchor=center, inner sep=0}, "f", from=1-1, to=1-2]
	\arrow["{h^*}", from=2-1, to=1-1]
	\arrow[""{name=1, anchor=center, inner sep=0}, "g"', from=2-1, to=2-2]
	\arrow["{k^*}"', from=2-2, to=1-2]
	\arrow["\beta"', shorten <=6pt, shorten >=6pt, Rightarrow, from=0, to=1]
\end{tikzcd}
=
\begin{tikzcd}
	\cat A & \cat A & \cat B & \cat B \\
	\cat C & \cat C & \cat D & \cat D
	\arrow[""{name=0, anchor=center, inner sep=0}, "{=}", from=1-1, to=1-2]
	\arrow[""{name=1, anchor=center, inner sep=0}, "f", from=1-2, to=1-3]
	\arrow["h"', from=1-2, to=2-2]
	\arrow[""{name=2, anchor=center, inner sep=0}, "{=}", from=1-3, to=1-4]
	\arrow["k"', from=1-3, to=2-3]
	\arrow["{h^*}", from=2-1, to=1-1]
	\arrow[""{name=3, anchor=center, inner sep=0}, "{=}"', from=2-1, to=2-2]
	\arrow[""{name=4, anchor=center, inner sep=0}, "g"', from=2-2, to=2-3]
	\arrow[""{name=5, anchor=center, inner sep=0}, "{=}"', from=2-3, to=2-4]
	\arrow["{k^*}"', from=2-4, to=1-4]
	\arrow["\epsilon"', shorten <=6pt, shorten >=6pt, Rightarrow, from=0, to=3]
	\arrow["\alpha"', shorten <=6pt, shorten >=6pt, Rightarrow, from=1, to=4]
	\arrow["\eta"', shorten <=6pt, shorten >=6pt, Rightarrow, from=2, to=5]
\end{tikzcd}\]
where $\epsilon$ and $\eta$ are the counit and unit of the adjunctions $h\dashv h^*$ and $k\dashv k^*$, respectively. We will occasionally use the notation $\beta = \mathrm{mate}(\alpha)$.
\end{definition}
In formulas, the natural transformation $\beta$ is given by
\[
    fh^*\xRightarrow{\eta\cdot fh^*} k^*kfh^*\xRightarrow{k^*\cdot \alpha\cdot h^*} k^*ghh^*\xRightarrow{k^*g\cdot\epsilon} k^*g.
\]
The mate construction does not, in general, preserve the property of being an isomorphism.  That is, if $\alpha$ is an invertible $2$-cell there is no guarantee that $\mathrm{mate}(\alpha)$ needs to be invertible. For our purposes, we only require the mate to be a weak equivalence.

\begin{definition}
    An invertible $2$-cell $\alpha$ is \emph{weakly sinister} if $\mathrm{mate}(\alpha)$ is defined and is a natural weak equivalence.
\end{definition}

\begin{definition}
 A pseudofunctor $F\colon \Set_G\to \Wald^{wex}$ is \textit{left (resp. right) sinister} if it takes all morphisms in $\Set_G$ to left (resp. right) adjoints. If $F,F'\colon \cat{C}\to \mathcal{B}$ are both left or right sinister, then a pseudonatural transformation $\alpha\colon F\Rightarrow F'$ is \textit{weakly sinister} if for all $f\in \cat{C}$ the invertible $2$-cell $\alpha_f$ is weakly sinister.
\end{definition}

We need one more technical definition, which is the (contravariant) Beck condition. Given a right sinister pseudofunctor $F\colon \Set_G^{\op} \to \Wald^{wex}$ and any pullback square
    \begin{equation}\label{equation: pullback}
        \begin{tikzcd}
            b \ar[r,"f"] \ar[d,swap,"k"] & a \ar[d,"h"]\\
            d\ar[r,swap,"g"] & c
        \end{tikzcd}
    \end{equation}
    in $\Set_G$, there is a square
\begin{equation}\label{equation:Beck square}
\begin{tikzcd}
	Fa & Fb \\
	Fc & Fd
	\arrow[""{name=0, anchor=center, inner sep=0}, "{F(f)}", from=1-1, to=1-2]
	\arrow["{F(h)}", from=2-1, to=1-1]
	\arrow[""{name=1, anchor=center, inner sep=0}, "{F(g)}"', from=2-1, to=2-2]
	\arrow["{F(k)}"', from=2-2, to=1-2]
	\arrow["\mu", shorten <=6pt, shorten >=6pt, Rightarrow, 2tail reversed, from=0, to=1]
\end{tikzcd}
\end{equation}
where $\mu$ is the invertible structure $2$-cell of the pseudofunctor $F$. Since $F$ is right sinister, both vertical maps are right adjoints, with left adjoints $F(k)_!$ and $F(h)_!$. 

\begin{definition}
    A right sinister pseudofunctor $F\colon \Set_G^{\op}\to \Wald^{wex}$ satisfies the \emph{weak Beck condition} if for any pullback square \eqref{equation: pullback} in $\Set_G$ the mate of the corresponding $2$-cell \eqref{equation:Beck square} is weakly sinister.
\end{definition}

Generally speaking, this condition is easiest to remember as the presence of an isomorphism
\[
    F(g)\circ F(k)_!\cong F(h)_!\circ F(f).
\]In fact, the Beck condition is equivalent to $F$ having a ``double coset formula,'' in a way that we will make precise in \cref{corollary: double coset iso implies Beck}. First, we state the result \cite[Proposition 1.10]{DawsonParePronk}, specialized to our situation.

\begin{corollary}\label{corollary: DPP for restrictions}
    The inclusion $j\colon \Set_G^{\op}\to \Span(\Set_G)$ induces an equivalence of categories
    \[
    \mathrm{Fun}^{\rm lax}_{\mathrm{ps}}(\Span(\Set_G),\Wald^{wex})\simeq \mathrm{Fun}^{\rm rsin}_{\rm Beck}(\Set_G^{\op},\Wald^{wex})\] 
    where the category on the right consists of right-sinister pseudofunctors satisfying the Beck condition and weakly sinister pseudonatural transformations.
\end{corollary}

\begin{remark}
The statement above does not precisely match the one in \cite{DawsonParePronk}, as they state the result using the covariant inclusion $i\colon \Set_G\to \Span(\Set_G)$. We note that replacing $i$ with $j$ has the effect of replacing op-lax transformations with lax ones.
\end{remark}

This equivalence of categories sends a pseudofunctor $R\colon\Span(\mathrm{Set}^G) \to \Wald^{wex}$ to the pseudofunctor $Rj\colon \Set_G^{\op}\to \Wald^{wex}$ which agrees with $R$ on objects and sends $X\to Y$ to the associated restriction. A weakly natural transformation of pseudofunctors is sent to a pseudonatural transformation.

\begin{remark}\label{remark: DPP with product preserving}
The equivalence of \cref{corollary: DPP for restrictions} will send product-preserving functors to product-preserving functors. 
This follows from the fact that the inclusion $\Set_G^{\op}\to \Span(\Set_G)$ preserves products and that $F$ and $\tilde{F}$ agree on objects and restriction maps.
\end{remark}

Returning to the problem at hand, producing a weakly natural transformation between two product preserving pseudofunctors $R,P\colon \Span(\Set)^G\to \Wald^{wex}$ has been reduced to producing a pseudonatural transformation $j^*(R)\Rightarrow j^*(P)$ which is weakly sinister.  The next lemma provides an efficient means of checking when such a pseudonatural transformation is weakly sinister.


\begin{lemma}\label{lemma: ps natural transformation lemma}
Suppose that $R,P\colon \Set_G^{\op}\to \Wald^{wex}$ are two pseudofunctors which send disjoint unions to products of categories. A pseudonatural transformation $L\colon R\Rightarrow P$ is weakly sinister if for all canonical quotients $q\colon G/H\to G/K$ in $\Set_G$, the square
\[\begin{tikzcd}
	{R(G/H)} & {P(G/H)} \\
	{R(G/K)} & {P(G/K)}
	\arrow[""{name=0, anchor=center, inner sep=0}, "{L_{G/H}}", from=1-1, to=1-2]
	\arrow["{R(q)}", from=2-1, to=1-1]
	\arrow[""{name=1, anchor=center, inner sep=0}, "{L_{G/K}}"', from=2-1, to=2-2]
	\arrow["{P(q)}"', from=2-2, to=1-2]
	\arrow["{L_q}"', shorten <=6pt, shorten >=6pt, Rightarrow, from=0, to=1]
\end{tikzcd}\]
satisfies the contravariant Beck condition; that is, if $\mathrm{mate}(L_q)$ is a natural weak equivalence.
\end{lemma}
\begin{proof}
Given the assumptions, we want to show that for every $f\colon X\to Y$ in $\Set_G$, $\mathrm{mate}(L_f)$ is a natural weak equivalence. We note that this holds when $f$ is an isomorphism, since the functors $R$ and $P$ must send isomorphisms to categorical equivalences and squares with vertical equivalences always satisfy Beck condition (in particular their mates are isomorphisms). Thus the property of $\mathrm{mate}(L_f)$ being a natural weak equivalence is closed under pre- or post-composition with isomorphisms in $\Set_G$.
    
Recall that every morphism in $\Set_G$ is equivalent to a coproduct of maps of the form
    \[
        \coprod\limits_{i=1}^n G/H_i \xrightarrow{\amalg t_i}\coprod\limits_{i=1}^n G/H\xrightarrow{\nabla} G/H
    \]
    where $H_i\leq H$ for all $i$, $q_i\colon G/H_i\to G/H$ is the canonical quotient, and $\nabla$ is the fold map.  Since $\Wald^{wex}$ is a strict $2$-category, the mate of a composite of natural transformations is the composite of the mates, so the property of $\mathrm{mate}(L_f)$ being weakly invertible is preserved under composition. Condition (1) in the lemma statement is precisely that each $\mathrm{mate}(L_{q_i})$ is a natural weak equivalence, and the fact that $R$ and $P$ send disjoint unions to products implies that that this property is preserved under coproducts. Hence $\mathrm{mate}(L_{\amalg t_i})$ is a natural weak equivalence as well. It remains to show that $\mathrm{mate}(L_\nabla)$ is a natural weak equivalence. 

    For brevity, we write $X=G/H$ and carefully check the binary case $\nabla\colon X\amalg X\to X$. The non-binary case is essentially the same but with more cumbersome notation.  The condition that $R$ and $P$ are product-preserving implies that the composite
    \[
        R(X)\xrightarrow{R(\nabla)} R(X\amalg X)\cong R(X)\times R(X)
    \]
    is the diagonal $\Delta$ on $R(X)$.  Thus its left adjoint is the coproduct functor $\amalg\colon R(X)\to R(X)\times R(X)$. Thus we need to show that the natural transformation defined by the pasting diagram
\[\begin{tikzcd}
	{R(X)\times R(X)} & {R(X)\times R(X)} && {P(X)\times P(X)} \\
	& {R(X)} && {P(X)} & {P(X)}
	\arrow[""{name=0, anchor=center, inner sep=0}, "{=}", from=1-1, to=1-2]
	\arrow["\amalg"', from=1-1, to=2-2]
	\arrow["{L_X\times L_X}", from=1-2, to=1-4]
	\arrow["\oplus", from=1-4, to=2-5]
	\arrow["\Delta"', from=2-2, to=1-2]
	\arrow["{L_X}"', from=2-2, to=2-4]
	\arrow["\Delta", from=2-4, to=1-4]
	\arrow[""{name=1, anchor=center, inner sep=0}, "{=}"', from=2-4, to=2-5]
	\arrow["\eta", shorten <=8pt, shorten >=8pt, Rightarrow, from=0, to=2-2]
	\arrow["\epsilon"', shorten <=7pt, shorten >=7pt, Rightarrow, from=1-4, to=1]
\end{tikzcd}\]
is a natural weak equivalence.

Let us recall the description of the unit $\eta$ and counit $\epsilon$ of the $\amalg\dashv \Delta$ adjunction, which are the same in any category $\cat{C}$ with coproducts.  Suppose that $x,y\in \cat{C}$, then the unit
\[
    (x,y)\xrightarrow{\eta_{x,y}}(x\amalg y,x\amalg y)
\]
is the map $(i_c\colon x\to x\amalg y)\amalg (i_t\colon y\to x\amalg y)$.  The counit
\[
    x\amalg x\xrightarrow{\epsilon_x} x
\]
is the fold map.  For any $(x,y)\in R(X)^2$ the component of the natural transformation we are interested in is given by the composite
\[
    L_X(x)\amalg L_X(y) \xrightarrow{L_X(i_x)\amalg L_X(i_x)} L_X(x\amalg y)\amalg L_X(x\amalg y) \xrightarrow{\nabla} L(x\amalg y)
\]
which is the canonical assembly map.  Since $L_X$ is weakly exact, it preserves coproducts up to weak equivalence which is precisely the claim that this map is a weak equivalence.
\end{proof}


Finally, we may further reduce from $\Set_G$ to $\mathcal{O}_G$. For functors, we have the following observation.

\begin{lemma}\label{lemma: coefficient system to presheaf on SetG}
    Let $i\colon \mathcal{O}_G^{\op}\to (\Set_G)^{\mathrm{op}}$ denote the inclusion.  Given any pseudofunctor $R\colon \mathcal{O}_G^{\op}\to \Wald^{wex}$, there is a product preserving pseudofunctor $S\colon (\Set_G)^{\op}\to \Wald^{wex}$ such that $i^*(S)$ is naturally isomorphic to $R$.
\end{lemma}
\begin{proof}
    This follows from the fact that $\Set_G$ is the finite coproduct completion of $\mathcal{O}_G$, and therefore $\Set_G^{\op}$ is the finite product completion of $\mathcal{O}_G^{\op}$.
\end{proof}  

To construct appropriate morphisms between these functors, our goal is to translate the content of \cref{lemma: ps natural transformation lemma} to consider functors out of $\mathcal{O}_G^{\op}$ rather than $\Set_G^{\op}$. This essentially amounts to checking a ``push-pull isomorphism,'' or double coset formula, in the following way:
Consider a pullback in $\Set_G$,
\[\begin{tikzcd}
	P & {G/H} \\
	{G/K} & {G/L}
	\arrow["{\pi_2}", from=1-1, to=1-2]
	\arrow["{\pi_1}"', from=1-1, to=2-1]
	\arrow["q", from=1-2, to=2-2]
	\arrow["p"', from=2-1, to=2-2]
\end{tikzcd}\]
where $H,K\leq L$ and $p$ and $q$ are the canonical quotients. The $G$-set $P$ can be identified explicitly as
\begin{align*}
    P& \cong G/K\times_{G/L} G/H \cong G\times_{L}(L/K\times L/H)\\
    & \cong G\times_{L}\left(\coprod\limits_{\gamma\in H\backslash L/K} L/(H^{\gamma}\cap K)\right)\\
    & \cong \coprod\limits_{\gamma\in H\backslash L/K} G/(H^{\gamma}\cap K)
\end{align*}
where the second bijection uses the fact that induction preserves pullbacks, the third is the Mackey double coset formula.  With this identification, the map $\pi_1$ is the composite
\[
    \coprod\limits_{\gamma\in H\backslash L/K} G/(H^{\gamma}\cap K)\xrightarrow{\amalg q_\gamma} \coprod\limits_{\gamma\in H\backslash L/K} G/K\xrightarrow{\nabla} G/K
\]
where the maps $q_{\gamma}\colon G/(H^{\gamma}\cap K)\to G/K$ are the canonical quotients.  The map $\pi_2$ is identified with
\[
    \coprod\limits_{\gamma\in H\backslash L/K} G/(H^{\gamma}\cap K)\xrightarrow{c_{\gamma^{-1}}} \coprod\limits_{\gamma\in H\backslash L/K} G/(H\cap K^{\gamma^{-1}})\xrightarrow{\amalg p_\gamma} \coprod\limits_{\gamma\in H\backslash L/K} G/K\xrightarrow{\nabla} G/K
\]
where $p_{\gamma}\colon G/(H\cap K^{\gamma^{-1}})\to G/H$ is the canonical quotient. Thus the original pullback square may be rewritten as \[\begin{tikzcd}[column sep =large]
	\coprod_{\gamma\in H\backslash L/K} G/(H^{\gamma}\cap K) && {G/H} \\
	{G/K} && {G/L}
	\arrow["{\nabla\circ (\amalg p_\gamma)\circ (\amalg c_{\gamma^{-1}})}", from=1-1, to=1-3]
	\arrow["{\nabla\circ (\amalg q_\gamma)}"', from=1-1, to=2-1]
	\arrow["q", from=1-3, to=2-3]
	\arrow["p"', from=2-1, to=2-3]
\end{tikzcd}.\] If $F$ is product preserving, then $F$ will send fold maps to diagonals, hence the left adjoints, which exist because $F$ is right-sinister, must be coproducts. That is, the mate of the square obtained by applying $F$ to the pullback above is \[\begin{tikzcd}[column sep =large]
	\prod_{\gamma\in H\backslash L/K} F(G/(H^{\gamma}\cap K)) && F({G/H}) \\
	F({G/K}) && F({G/L})
	\arrow["{\Pi_{\gamma} F(p_\gamma)\circ F(c_{\gamma^{-1}})}", from=1-1, to=1-3]
	\arrow["{\Pi_{\gamma} F(q_\gamma)_!}"', swap, to=1-1, from=2-1]
	\arrow["q_!", swap, to=1-3, from=2-3]
	\arrow["p"', from=2-1, to=2-3]
\end{tikzcd}.\]

\begin{definition}\label{def:double coset iso}
    We say a right-sinister functor $F\colon \mathcal{O}_G^{\op}\to \Wald^{wex}$ has a \emph{double coset isomorphism} if for every pullback square as above, the natural transformation
\begin{equation}\label{equation: general double coset isomorphism}
    F(p)\circ F(q)_!\Rightarrow \bigoplus_{\gamma\in H\backslash L/K} F(q_{\gamma})_!F(p_{\gamma})F(c_{\gamma^{-1}})  
\end{equation}
is a natural isomorphism, which is the assertion that the functors $F(q)$ and and their left adjoints satisfy the Mackey double coset formula. 
\end{definition}

The purpose of this definition is to obtain the following corollary.

\begin{corollary}\label{corollary: double coset iso implies Beck}
    If $F\colon \mathcal{O}_G^{\op}\to \cat{W}$ is a right-sinister functor with a double coset isomorphism then its product preserving extension $\widetilde{F}\colon(\Set^G)^{\op}\to \cat{W}$ satisfies the Beck condition.
\end{corollary}\begin{proof}
    Let $F\colon \mathcal{O}_G^{\op}\to \cat{W}$ be a right-sinister pseudofunctor and let $\widetilde{F}\colon (\Set^G)^{\op}\to \cat{W}$ be the product preserving functor determined by $F$.  Then $\widetilde{F}$ is certainly right-sinister, and we are interested in identifying conditions on $F$ which guarantee that $\widetilde{F}$ satisfies the Beck condition.
    Up to isomorphism of diagrams, every pullback in $\Set_G$ is a union of pullbacks of the form
\[\begin{tikzcd}
	P & {G/H} \\
	{G/K} & {G/L}
	\arrow["{\pi_2}", from=1-1, to=1-2]
	\arrow["{\pi_1}"', from=1-1, to=2-1]
	\arrow["q", from=1-2, to=2-2]
	\arrow["p"', from=2-1, to=2-2]
\end{tikzcd}\]
where $H,K\leq L$ and $p$ and $q$ are the canonical quotients. Unpacking the mate construction, we see that the Beck condition is the assertion that the natural map in \eqref{equation: general double coset isomorphism} is an isomorphism.
\end{proof}

The next theorem gathers the preceding discussion together in one place for easy reference.

\begin{theorem}\label{Theorem: construction examples and maps}
    Suppose that $R\colon \mathcal{O}_G^{\op}\to \Wald^{wex}$ is a right-sinister pseudofunctor with a double coset formula. Then \begin{enumerate}
        \item[(1)] The assignment $G/H\mapsto R(G/H)$ extends to a pseudofunctor $\overline{R}\colon \mathbb{B}^G\to \Wald^{wex}$ which satisfies $(\star)$. The restriction functors associated to $H\leq K$ are given by $R(q)$ where $q\colon G/H\to G/K$ is the canonical quotient, and the transfers are given by the left adjoints of the restrictions. 
    \end{enumerate}
    Moreover, if $L\colon R\Rightarrow P$ is a pseudonatural transformation between two such pseudofunctors, then
    \begin{enumerate}
        \item[(2)] There exists an extension of $L$ to a lax natural transformation $\overline{L}\colon \overline{R}\Rightarrow \overline{P}$,
        \item[(3)] If for all $H\leq K\leq G$ the mate of the square
\[\begin{tikzcd}
	{R(G/H)} & {P(G/H)} \\
	{R(G/K)} & {P(G/K)}
	\arrow[""{name=0, anchor=center, inner sep=0}, "{L_{G/H}}", from=1-1, to=1-2]
	\arrow["{R(q)}", from=2-1, to=1-1]
	\arrow[""{name=1, anchor=center, inner sep=0}, "{L_{G/K}}"', from=2-1, to=2-2]
	\arrow["{P(q)}"', from=2-2, to=1-2]
	\arrow["{L_q}"', shorten <=6pt, shorten >=6pt, Rightarrow, from=0, to=1]
\end{tikzcd}\]
is weakly invertible, then $\overline{L}$ is a weakly natural transformation, so by \cref{Theorem: weak transformations give maps} it induces a map of $G$-spectra after $K$-theory.
    \end{enumerate}
\end{theorem}
\begin{proof}
    Claims (1) and (2) follow from applying, in order, \cref{lemma: coefficient system to presheaf on SetG}, \cref{corollary: double coset iso implies Beck}, \cref{corollary: DPP for restrictions}, and \cref{lem:BG extend to Span FinG}.  The third claim is \cref{lemma: ps natural transformation lemma} and \cref{corollary: DPP for restrictions}.
\end{proof}
\begin{example}
    The assignment $G/H\mapsto \Coeff^H$ defines a strict $2$-functor, with quotient maps $G/H\to G/K$ being sent to the restriction functors $R^K_H$ of \cref{induction on coefficient systems}, and conjugations being sent to the conjugation functors of \cref{definition: conjugation functors}.  This functor has a double coset isomorphism, given by \cref{Ind Res double coset}.
\end{example}
\begin{example}\label{example: perf and proj have double coset isomorphisms}
    We can extend the previous example in a few ways:\begin{itemize}
        \item[(1)]For any coefficient ring $\underline{S}$, the assignment $G/H\mapsto \Mod_{R^G_H(\underline{S})}$ is a strict $2$-functor $\mathcal{O}_G^{\op}\to \Wald^{wex}$ with the same structure maps as the last example.  Since the left adjoints are also the same as the last example, by \cref{induced functors on projectives}, the double coset isomorphism follows for free.
        \item[(2)] The assignment $G/H\mapsto \mathrm{Proj}_{R^G_H(\underline{S})}$, the category of finitely generated projective $R^G_H(\underline{S})$-modules is a strict $2$ functor with double coset isomorphisms for the same reasons.
        \item[(3)] Similarly, we may consider the assignment $G/H\mapsto \mathrm{Perf}_{R^G_H(\underline{S})}$, the category of perfect $R^G_H(\underline{S})$-complexes.  By \cref{lemma: adjunction on perfect complexes}, induction and restriction, applied to chain complexes pointwise, give an adjunction on this category.  Since the unit and counit are computed at each chain group separately, we obtain the double coset isomorphism once again.
    \end{itemize}   
\end{example}

An immediate consequence of the theorem and \cref{example: perf and proj have double coset isomorphisms} is the following corollary, which proves \cref{spectral Mackey functor}.
\begin{corollary}\label{corollary: proof of proposition from main body}
    Let $\underline{S}$ denote a $G$-coefficient ring and let $\mathcal{P}^G_{\underline{S}}$ denote either the category of finitely generated projective $R^G_H(\underline{S})$-modules or the category of perfect $R^G_H(\underline{S})$-complexes. The assignment $G/H\mapsto \mathcal{P}^H$ extends to a pseudofunctor $\mathbb{B}^G\to \Wald^{wex}$, satisfying condition $(\star)$, where the restrictions are restriction of Mackey functors, transfers are induction of Mackey functors, and conjugations are conjugation of Mackey functors.  
\end{corollary}

\bibliographystyle{alpha}
\bibliography{bibliography}
\end{document}